\def\N{{\mathbb N}}
\def\Z{{\mathbb Z}}
\def\Qq{{\mathbb Q}}
\def\R{{\mathbb R}}
\def\C{{\mathbb C}}
\def\H{{\mathfrak H}}
\def\sq{\hbox{\rlap{$\sqcap$}$\sqcup$}}
\def\qed{\ifmmode\sq\else{\unskip\nobreak\hfil
         \penalty50\hskip1em\null\nobreak\hfil\sq
         \parfillskip=0pt\finalhyphendemerits=0\endgraf}\fi}
\def\smat#1#2#3#4{\left(\begin{smallmatrix}#1&#2\\#3&#4\end{smallmatrix}\right)}
\newtheorem{theorem}{Theorem}
\newtheorem{lemma}[theorem]{Lemma}
\newtheorem{prop}[theorem]{Proposition}
\newtheorem{cor}[theorem]{Corollary}
\newtheorem{df}{Definition}
\numberwithin{theorem}{section}
\numberwithin{equation}{section}
\title[
Lifting to Siegel modular forms of half-integral weight
]
{
Lifting from pairs of two elliptic modular forms
to Siegel modular forms of half-integral weight of
even degree
}
\author[S.~Hayashida]{Shuichi Hayashida
}
\date{\today}
\keywords{Siegel modular forms, Jacobi forms, Maass relation}
\subjclass[2010]{11F46 (primary), 11F37, 11F50 (secondary)}
\begin{document}

\begin{abstract}
The aim of this paper is to show lifts from pairs of two elliptic modular forms
to Siegel modular forms of half-integral weight of even degree under the assumption
that the constructed Siegel modular form is not identically zero.
The key of the proof is to introduce a certain generalization of the Maass relation
for Siegel modular forms of half-integral weight of general degree.
\end{abstract}

\maketitle

\section{Introduction}\label{s:introduction}

\subsection{}

Lifts from pairs of two elliptic modular forms to Siegel modular form
of half-integral weight of degree two have been conjectured
by~Ibukiyama and the author\cite{HI}.
In the present article we will give a partial answer for the conjecture in~\cite{HI}
and shall generalize these lifts as lifts from pairs of two elliptic modular forms
to Siegel modular forms of half-integral weight of
\textit{any even degree} (Theorem~\ref{th:main}).

The construction of the lift can be regarded as a half-integral weight version
of the Miyawaki-Ikeda lift. The Miyawaki-Ikeda lift has been shown by Ikeda~\cite{Ik2}.
In the present article we will give a proof to the fact that
certain constructed Siegel modular forms of half-integral weight
are eigenforms, if it does not identically vanish.
Moreover, we will compute the $L$-function of the constructed Siegel modular forms
of half-integral weight.
The key of the proof of the lift in the present article
is to introduce a generalized Maass relation
for Siegel modular forms of half-integral weight
(Theorem~\ref{thm:maass_e_half},~\ref{th:maass_relation_half_cusp}).
Generalized Maass relations are certain relations among Fourier-Jacobi coefficients
of Siegel modular forms
and are regarded as relations among Fourier coefficients.
Theorem~\ref{thm:maass_e_half} is a generalization of the Maass relation
for generalized Cohen-Eisenstein series, which is a Siegel modular form
of half-integral weight of general degree.
And Theorem~\ref{th:maass_relation_half_cusp} is a generalization
of the Maass relation for Siegel cusp forms of half-integral weight of odd degree.

\subsection{}
We explain our results more precisely.

We denote by $M_{k-\frac12}^+(\Gamma_0^{(n)}(4))$ the generalized plus-space
of weight $k-\frac12$ of degree $n$, which is a certain subspace
of Siegel modular forms of half-integral weight
and is a generalization of the Kohnen plus-space
(see \cite{Ib} or
\S\ref{ss:fj_expansion_half} for the definition of generalized plus-space).
Let $F \in M_{k-\frac12}^+(\Gamma_0^{(n)}(4))$
be an eigenform for any Hecke operators.
We put
\begin{eqnarray*}
  Q_{F,p}(z) &=& \prod_{i=0}^n (1-\mu_{i,p} z) (1 - \mu_{i,p}^{-1} z),
\end{eqnarray*}
where complex numbers
$\{\mu_{i,p}^{\pm}\}$ are $p$-parameters of $F$ introduced in~\cite{Zhu:euler}
if $p$ is an odd prime. If $p=2$, then we define $\{\mu_{i,2}^{\pm}\}$ by using
the isomorphism between generalized plus-space and the space of
Jacobi forms of index $1$.
We denote the modified Zhuravlev $L$-function by
\begin{eqnarray*}
  L(s,F) &:=& \prod_{p} Q_{F,p}(p^{-s+k-\frac32}).
\end{eqnarray*}
The Zhuravlev $L$-function is originally introduced in~\cite{Zhu:euler}
without the Euler $2$-factor,
which is a generalization of the $L$-function of elliptic modular forms
of half-integral weight introduced in~\cite{Shi}.

We denote by $S_{k-\frac12}^{+}(\Gamma_0^{(n)}(4))$
the space of Siegel cusp forms in $M_{k-\frac12}^{+}(\Gamma_0^{(n)}(4))$.
The following theorem is the main result of this article.
\newtheorem*{th:main}{Theorem \ref{th:main}}
\begin{th:main}
  Let $k$ be an even integer and $n$ be a non-negative integer.
  Let $h \in S_{k-n+\frac12}^+(\Gamma_0^{(1)}(4))$
  and $g \in S_{k-\frac12}^+(\Gamma_0^{(1)}(4))$
  be eigenforms for all Hecke operators.
  Then there exists a $\mathcal{F}_{h,g} \in S_{k-\frac12}^+(\Gamma_0^{(2n-2)})$.
  Under the assumption that $\mathcal{F}_{h,g}$ is not identically zero, then $\mathcal{F}_{h,g}$
  is an eigenform with the $L$-function which satisfies
  \begin{eqnarray*}
    L(s,\mathcal{F}_{h,g})
    &=&
    L(s,g) \prod_{i=1}^{2n-3} L(s-i,h).
  \end{eqnarray*}
\end{th:main}

By numerical computations of Fourier coefficients of $\mathcal{F}_{h,g}$
we checked that
$\mathcal{F}_{h,g}$ does not identically vanish for some $(n,k)$.
(See \S\ref{s:examples_non_vanishing} for the detail).

Remark that the above theorem was first conjectured by Ibukiyama and the author~\cite{HI}  in the case of $n=2$ not only for even integer $k$, but also for odd integer $k$.

The construction of $\mathcal{F}_{h,g}$ was suggested by T. Ikeda to the author,
which is given by a composition of three maps
and an inner product.
These three maps are
a Ikeda lift (Duke-Imamoglu-Ibukiyama-Ikeda lift), a map of the Fourier-Jacobi expansion and an isomorphism between
Jacobi forms of index $1$ and Siegel modular forms of half-integral weight.
In \S\ref{s:maass_relation_siegel_cusp} we will explain the detail of
the construction of $\mathcal{F}_{h,g}$.

To prove Theorem~\ref{th:main} we use a generalized
Maass relation for generalized Cohen-Eisenstein series
(Theorem~\ref{thm:maass_e_half}).
Once we obtain Theorem~\ref{thm:maass_e_half},
it is not so hard to show Theorem~\ref{th:main}.
The most part of this article is devoted to show Theorem~\ref{thm:maass_e_half}.
We now explain the generalized Maass relation
for generalized Cohen-Eisenstein series (Theorem~\ref{thm:maass_e_half}).

Let $k$ be an \textit{even} integer and $\mathcal{H}_{k-\frac12}^{(n+1)}$ be
the generalized Cohen-Eisenstein series of degree $n+1$ of weight $k-\frac12$
(see \S\ref{ss:CE_J*} for the definition of 
generalized Cohen-Eisenstein series).
The form $\mathcal{H}_{k-\frac12}^{(n+1)}$ is a certain Siegel modular form
of weight $k-\frac12$ of degree $n+1$.

For integer $m$,
we denote by $e_{k-\frac12,m}^{(n)}$ the $m$-th Fourier-Jacobi coefficient
of $\mathcal{H}_{k-\frac12}^{(n+1)}$:
\begin{eqnarray}\label{eq:df_fourier_jacobi_cohen_eisenstein}
 \mathcal{H}_{k-\frac12}^{(n+1)}\!\!\left(\begin{pmatrix} \tau & z \\ ^t z & \omega \end{pmatrix}\right)
&=&
 \sum_{\begin{smallmatrix} m \geq 0 \\ m \equiv 0,3 \!\!\! \mod 4  \end{smallmatrix}}
 e_{k-\frac12,m}^{(n)}(\tau,z)\, e^{2\pi \sqrt{-1} m \omega},
\end{eqnarray}
where $\tau \in \H_n$ and $\omega \in \H_1$, and where $\H_n$ denotes
the Siegel upper half space of degree $n$.
We denote by $J_{k-\frac12,m}^{(n)}$ the space of Jacobi forms of degree $n$
of weight $k-\frac12$ of index $m$ (cf. \S\ref{ss:def_jacobi_half_weight})
and denote by $J_{k-\frac12,m}^{(n)*}$ (cf. \S\ref{ss:CE_J*}) a certain subspace of
$J_{k-\frac12,m}^{(n)}$.
Then, the above form $e_{k-\frac12,m}^{(n)}$ belongs to $J_{k-\frac12,m}^{(n)*}$.
Because $\mathcal{H}_{k-\frac12}^{(n+1)}$ belongs to the generalized plus-space
$M_{k-\frac12}^{+}(\Gamma_0^{(n+1)}(4))$,
we can show that
the form $e_{k-\frac12,m}^{(n)}$ is identically zero unless $m \equiv 0$, $3 \!\! \mod 4$.

We denote by $M_k(\Gamma_{n+2})$ the space of Siegel modular forms of
weight $k$ of degree $n+2$ and denote by $J_{k,1}^{(n+1)}$ the space
of Jacobi forms of weight $k$ of index $1$ of degree $n+1$.
We denote by $E_k^{(n)} \in M_k(\Gamma_n)$ the Siegel-Eisenstein series
of weight $k$ of degree $n$
(cf. (\ref{df:siegel_eisenstein}) in \S\ref{s:fourier_matrix})
and by $E_{k,1}^{(n)} \in J_{k,1}^{(n)}$ 
the Jacobi-Eisenstein series of weight $k$ of index $1$ of degree $n$
(cf. (\ref{df:jacobi_eisenstein}) in \S\ref{s:fourier_matrix}).
The form $\mathcal{H}_{k-\frac12}^{(n+1)}$ is constructed from $E_{k,1}^{(n+1)}$.
The diagram of the above correspondence is

\[
 \xymatrix{
	  E_k^{(n+2)} \in M_k(\Gamma_{n+2})  \ar[d]^{\mbox{}}
     &
   \\
	  E_{k,1}^{(n+1)} \in J_{k,1}^{(n+1)}  
                                        \ar[r]_{\mbox{}}
     &  \mathcal{H}_{k-\frac12}^{(n+1)} \ar[d]^{\mbox{}}  \in M_{k-\frac12}^{+}(\Gamma_0^{(n+1)}(4))
   \\
     &
         \displaystyle{
            \left\{e_{k-\frac12,m}^{(n)}\right\}_{m
            }
                           }
        \in
           \!\!\!\!\!
    	\displaystyle{ \bigotimes_{m \equiv 0,3\!\!\!\!\! \mod 4}
               \!\!\!\!\!
             J_{k-\frac12,m}^{(n)*}}.
	                   }
\]

In \S\ref{ss:hecke_operators} (for any odd prime $p$) and in \S\ref{ss:hecke_p2} (for $p=2$)
we will introduce index-shift maps
$\tilde{V}_{\alpha,n-\alpha}(p^2)$ $(\alpha = 0, ..., n)$,
which are certain linear maps
from $J_{k-\frac12,m}^{(n)*}$ to the space of holomorphic functions on $\H_n \times \C^{(n,1)}$. 
If $p$ is odd then
$\tilde{V}_{\alpha,n-\alpha}(p^2)$ is a linear map from
$J_{k-\frac12,m}^{(n)*}$ to $J_{k-\frac12,mp^2}^{(n)}$.
These maps $\tilde{V}_{\alpha,n-\alpha}(p^2)$
are certain generalizations of the $V_l$-map in \cite[p.43]{EZ}
for half-integral weight of general degrees.
For any $\phi \in J_{k-\frac12,m}^{(n)}$ and for any integer $a$ we define
$(\phi|U_a)(\tau,z) := \phi(\tau,az)$.

The following theorem is a generalization of the Maass relation
for the generalized Cohen-Eisenstein series,
where we use the symbol
\begin{equation*}
\begin{split}
  &e^{(n)}_{k-\frac12,m}|(\tilde{V}_{0,n}(p^2), \tilde{V}_{1,n-1}(p^2),...,\tilde{V}_{n,0}(p^2))\\
  &:=
  (e^{(n)}_{k-\frac12,m}|\tilde{V}_{0,n}(p^2), e^{(n)}_{k-\frac12,m}|\tilde{V}_{1,n-1}(p^2),...,
  e^{(n)}_{k-\frac12,m}|\tilde{V}_{n,0}(p^2)).
\end{split}
\end{equation*}
\newtheorem*{thm:maass_e_half}{Theorem \ref{thm:maass_e_half}}
\begin{thm:maass_e_half}
 Let $e_{k-\frac12,m}^{(n)}$ be the $m$-th Fourier-Jacobi coefficient of generalized Cohen-Eisenstein series
 $H_{k-\frac12}^{(n+1)}$. $($See~$($\ref{eq:df_fourier_jacobi_cohen_eisenstein}$)$$)$.
 Then we obtain
  \begin{eqnarray*}
  &&  \!\!\!\!\!\!\!\!\!\!
  e^{(n)}_{k-\frac12,m}|(\tilde{V}_{0,n}(p^2), \tilde{V}_{1,n-1}(p^2),...,\tilde{V}_{n,0}(p^2))\\
  &=&
  p^{k(n-1)-\frac12(n^2+5n-5)}
  \left(e^{(n)}_{k-\frac12,\frac{m}{p^2}}|U_{p^2},\, e^{(n)}_{k-\frac12,m}|U_p,\, e^{(n)}_{k-\frac12,mp^2} \right)
  \begin{pmatrix}
          0 & p^{2k-3} \\
          p^{k-2} & p^{k-2} \left(\frac{-m}{p}\right) \\
          0 & 1 \end{pmatrix}\\
  &&
  \times
  A_{2,n+1}^{p}\!\left( p^{k-\frac{n+2}{2}-\frac12} \right)
  \mbox{diag}(1,p^{1/2},\cdots, p^{n/2}).
\end{eqnarray*}
Here $A_{2,n+1}^p\!\left( p^{k-\frac{n+2}{2}-\frac12} \right)$
is a $2 \times (n+1)$ matrix which is introduced
in the beginning of \S\ref{s:maass_relation_cohen_eisen}
and the both sides of the above identity are vectors of forms.
For any prime $p$ we regard $ e_{k-\frac12,\frac{m}{p^2}}^{(n)}$ as zero,
if $\frac{m}{p^2}$ is not an integer or $\frac{m}{p^2} \not \equiv 0$, $3 \mod 4$.
The symbol $\left(\frac{ * }{p}\right)$ denotes the Legendre symbol for odd prime $p$,
and $\left(\frac{a}{2}\right)$ $:=$ $0,1,-1$ accordingly as $a$ is even,
$a \equiv \pm 1$ mod $8$ or $a \equiv \pm 3$ mod $8$.
\end{thm:maass_e_half}

Theorem~\ref{thm:maass_e_half} gives also a relation among Fourier coefficients
of Siegel-Eisenstein series of integral weight.
The Fourier coefficients of Ikeda lifts satisfy similar relations
to the ones of the Fourier coefficients of Siegel-Eisenstein series (see Theorem~\ref{th:maass_relation_half_cusp}
for the detail).
We call these relations of Fourier coefficients of Ikeda lifts also the generalized Maass relations.
The generalized Maass relation among Fourier coefficients of the Ikeda lift $I_{2n}(h)$
of $h$ gives a fact that $\mathcal{F}_{h,g}$ in  Theorem~\ref{th:main}
is an eigenform for all Hecke operators, since the form $\mathcal{F}_{h,g}$ is constructed
from $I_{2n}(h)$ (and $g$).
Moreover, the eigenvalues of $\mathcal{F}_{h,g}$ are calculated from
the generalized Maass relations of Fourier coefficients of $I_{2n}(h)$.
This is the reason why we need Theorem~\ref{thm:maass_e_half}
to show Theorem~\ref{th:main}.
For the detail of the proof of Theorem~\ref{th:main}
see \S\ref{s:maass_relation_siegel_cusp}.

\subsection{About generalized Cohen-Eisenstein series}

We remark that
the generalized Cohen-Eisenstein series
has been introduced by Arakawa~\cite{Ar3}.
These series are certain Siegel modular forms of half-integral weight.
The Cohen-Eisenstein series were originally introduced by Cohen~\cite{Co}
as one variable functions.
In the case of degree one, it is known that the Cohen-Eisenstein series correspond
to the Eisenstein series
with respect to $\mbox{SL}(2,\Z)$ by the Shimura correspondence.
The generalized Cohen-Eisenstein series is defined from the Jacobi-Eisenstein series
of index $1$ through the isomorphism between Jacobi forms of index $1$ and
Siegel modular forms of half-integral weight.

\subsection{About generalized Maass relations}

As for generalizations of the Maass relation,
Yamazaki \cite{Ya,Ya2} obtained certain relations among Fourier-Jacobi coefficients of
Siegel-Eisenstein series of arbitrary degree of integral weight of \textit{integer indices}.
For our purpose we generalize some results in \cite{Ya,Ya2} on Fourier-Jacobi coefficients
of Siegel-Eisenstein series of integer indices to indices of half-integral symmetric matrix of size $2$.
Here the right-lower part or the left-upper part of these matrices of the index is $1$.
We need to introduce certain index-shift maps on Jacobi forms of
indices of such matrix (cf. \S\ref{ss:hecke_operators}).
To calculate the action of index-shift maps on Fourier-Jacobi coefficients of Siegel-Eisenstein series,
we use a certain relation between Fourier-Jacobi 
coefficients of Siegel-Eisenstein series and Jacobi-Eisenstein series
(cf. Proposition~\ref{prop:fourier_jacobi}).
This relation has been shown by Boecherer \cite[Satz7]{Bo}.
We also need to show a certain identity relation between Jacobi forms of integral
weight of $2\times 2$ matrix index
and Jacobi forms of half-integral weight of integer index (Lemma~\ref{lemma:iota}).
Moreover, we need to show a compatibility between this identity relation and
index-shift maps (cf. Proposition~\ref{prop:iota_U},~\ref{prop:iota_hecke}).

Through these relations
we can show that the generalized Maass relation of
generalized Cohen-Eisenstein series (Theorem~\ref{thm:maass_e_half}) are equivalent to certain relations among Jacobi-Eisenstein series
of integral weight of indices of matrix of size $2$ (Proposition~\ref{prop:Eisenstein_Voperator}). 
Finally,
to obtain the generalized Maass relation in Theorem~\ref{thm:maass_e_half},
we need to calculate the action of index-shift maps on Jacobi-Eisenstein series
of integral weight of indices of matrix of size $2$ (cf. \S\ref{s:action_of_ism}).

\vspace{0.3cm}
\noindent
\textit{Remark 1.1}

In his paper~\cite{Ko2} Kohnen gives a generalization of the Maass relation
for Siegel modular forms of even degree $2n$.
His result is different from our generalization,
since his result is concerned with the Fourier-Jacobi coefficients
with $(2n-1) \times (2n-1)$ matrix index.
We remark that some characterizations of the Ikeda lift by using generalized
Maass relation in~\cite{Ko2} are obtained by Kohnen-Kojima~\cite{KK} and by Yamana~\cite{Yamana}.
The characterization of the Ikeda lift by using the generalized Maass relation in Theorem~\ref{th:maass_relation_half_cusp} is open problem.

\vspace{0.3cm}
\noindent
\textit{Remark 1.2}

In his paper~\cite[\S5]{Tani} Tanigawa has obtained the same identity in
Theorem~\ref{thm:maass_e_half}
for \textit{Siegel-Eisenstein series of half-integral weight of degree two}
with arbitrary level $N$ which satisfies $4|N$.
He showed the identity by using the formula of local densities under the assumption $p {\not|} N$.
In our case we treat the \textit{generalized Cohen-Eisenstein series} of arbitrary degree,
which has essentially level $1$. Hence our result contains the relation also for $p=2$.
Moreover, our result is valid for any general degree.

\vspace{0.3cm}
\noindent
\textit{Remark 1.3}

To show the generalized Maass relations in Theorem~\ref{thm:maass_e_half},~\ref{th:maass_relation_half_cusp},
we treat the following four things:
\begin{enumerate}
\item
Fourier-Jacobi expansion of Jacobi forms (cf. \S\ref{ss:fj_expansion}),
\item
Fourier-Jacobi expansion of Siegel modular forms of half-integral weight (cf.~\S
\ref{ss:fourier_jacobi_expansion_half}),
\item
An isomorphism between Jacobi forms of matrix index of integral weight
and Jacobi forms of integer index of half-integral weight (cf. \S\ref{ss:iota})
\item
Exchange relations between the Siegel $\Phi$-operator for Jacobi forms and the index-shift map
for Jacobi forms of matrix index or of half-integral weight (cf.~\S\ref{s:siegelop}).
This is an analogue of the result shown by Krieg~\cite{Kr} in the case of
Siegel modular forms of integral weight.
\end{enumerate}

\ \\

\subsection{}
This paper is organized as follows:
in Sect.~2, the necessary notation and definitions are reviewed.
In Sect.~3, the relation among Fourier-Jacobi coefficients of
the Siegel-Eisenstein series and the Jacobi-Eisenstein series
is derived, which is a modification of the result given by Boecherer~\cite{Bo}
for certain special cases.
In Sect.~4, a certain map from a subspace of Jacobi forms of integral weight
of matrix index to a subspace of Jacobi forms of half-integral weight of
integer index is defined.
Moreover, the compatibility of this map with certain index-shift maps is studied.
In Sect.~5, we calculate the action of index-shift maps on the Jacobi-Eisenstein series.
We express these functions as summations of certain exponential functions with generalized Gauss sums.
In Sect.~6, a certain commutativity between index-shift maps on Jacobi forms
and Siegel $\Phi$-operators is derived.
In Sect.~7,  a generalized Maass relation for generalized Cohen-Eisenstein
series (Theorem~\ref{thm:maass_e_half})
will be proved,
while we will give a generalized Maass relation
for Siegel cusp forms of half-integral weight
and the proof of the main result (Theorem~\ref{th:main})
in Sect.~8.
We shall explain some numerical examples of the non-vanishing of
the lift in Sect.~9.

\ \\

\noindent
Acknowledgement.
The construction of the lift in this article has been suggested by Professor Tamotsu Ikeda
to the author at the Hakuba Autumn Workshop 2001.
The author wishes to express his hearty gratitude to Professor Ikeda for the suggestion.
The author also would like to express his sincere gratitude to
Professor Tomoyoshi Ibukiyama for continuous encouragement.
This work was supported by JSPS KAKENHI Grant Number 23740018 and 80597766.

\section{Notation and definitions}

$\R^+$ : the set of all positive real numbers
 
$R^{(n,m)}$ : the set of $n\times m$ matrices with entries in a commutative ring $R$

$\mbox{Sym}_n^*$ : the set of all half-integral symmetric matrices of size $n$

$\mbox{Sym}_n^+$ : all positive definite matrices in $\mbox{Sym}_n^*$

${^t B}$ : the transpose of a matrix $B$

$A[B]$ $:=$ ${^t B} A B $ for two matrices $A \in R^{(n,n)}$ and $B \in R^{(n,m)}$

$1_n$ (resp. $0_n$) : identity matrix (resp. zero matrix) of size $n$

$\mbox{tr}(S)$ : the trace of a square matrix $S$

$e(S) := e^{2 \pi \sqrt{-1}\, \mbox{tr}(S)}$ for a square matrix $S$

$\mbox{rank}_p(x)$ : the rank of matrix $x \in \Z^{(n,m)}$ over the finite field $\Z/p\Z$

$\mbox{diag}(a_1,...,a_n)$ : the diagonal matrix $\left(\begin{smallmatrix} a_1 & & \\ &\ddots & \\ & & a_n\end{smallmatrix}\right)$
for square matrices $a_1$, ..., $a_n$

$\left(\frac{*}{p}\right)$ : the Legendre symbol for odd prime $p$

$\left(\frac{*}{2}\right)$ $:=$ $0,1,-1$ accordingly as $a$ is even,
                             $a \equiv \pm 1$ mod $8$ or $a \equiv \pm 3$ mod $8$

$M_{k-\frac12}(\Gamma_0^{(n)}(4))$ : the space of Siegel modular forms of weight $k-\frac12$ of degree $n$

$M_{k-\frac12}^{+}(\Gamma_0^{(n)}(4))$ : the plus-space of $M_{k-\frac12}(\Gamma_0^{(n)}(4))$ (cf. \cite{Ib}).

$\mathfrak{H}_n$ : the Siegel upper half space of degree $n$

$\delta(\mathcal{S}) := 1 $ or $0$ accordingly as the statement $\mathcal{S}$ is true or false. 

For any function $F$ and operators $T_1$, $T_2$, ... , $T_n$
we put
\begin{eqnarray*}
  F|(T_1,T_2,...,T_n)
  &:=&
  (F|T_1, F|T_2,...,F|T_n).
\end{eqnarray*}

\subsection{Jacobi group}\label{ss:Jacobi_group}
For a positive integer $n$ we define the following groups:
\begin{eqnarray*}
 \mbox{GSp}_n^+(\R)
 &:=&
 \left\{
  g \in \R^{(2n,2n)} \, 
  | \, 
  g \left(\begin{smallmatrix}
     0_n & -1_n \\ 1_n & 0_n 
   \end{smallmatrix}\right)
  {^t g} 
  = n(g) 
  \left(\begin{smallmatrix}
     0_n & -1_n \\ 1_n & 0_n 
   \end{smallmatrix}\right)
  \mbox{ for some } n(g) \in \R^+
 \right\} ,
\\
 \mbox{Sp}_n(\R)
 &:=&
 \left\{
  g \in \mbox{GSp}_n^+(\R) \, | \, 
  n(g) = 1
 \right\} ,
\\
 \Gamma_n
 &:=&
 \mbox{Sp}_n(\R) \cap \Z^{(2n,2n)},
\\
 \Gamma_{\infty}^{(n)}
 &:=&
 \left.
 \left\{
  \begin{pmatrix} A & B \\ C & D \end{pmatrix} \in \Gamma_n
 \, \right| \,
  C = 0_n 
 \right\} ,\\
 \Gamma_0^{(n)}(4)
 &:=&
 \left\{
 \left.
  \begin{pmatrix}
   A & B \\ C & D
  \end{pmatrix}
  \in \Gamma_n
  \, \right| \, 
  C \equiv 0 \mod 4
 \right\} .
\end{eqnarray*}
For a matrix $g \in \mbox{GSp}_n^+(\R)$, the number $n(g)$ in the above definition of $\mbox{GSp}_n^+(\R)$ is called the \textit{similitude} of the matrix $g$.

For positive integers $n$ and $r$,
we define the subgroup $G_{n,r}^J \subset \mbox{GSp}_{n+r}^+(\R)$ by
\begin{eqnarray*}
 G_{n,r}^J 
 &:=&
 \left\{
  \begin{pmatrix}
   A &   & B &  \\
     & U &   &  \\
   C &   & D &  \\
     &   &   & V
\end{pmatrix}\begin{pmatrix}
   1_n &   &   & \mu  \\
   ^t \lambda  & 1_r & ^t \mu  & {^t \lambda} \mu + \kappa  \\
     & & 1_n & - \lambda  \\
     &   &   & 1_r
\end{pmatrix} 
\in \mbox{GSp}_{n+r}^+(\R)
\right\},
\end{eqnarray*}
where
the matrices runs over
$\begin{pmatrix}
  A&B\\C&D
 \end{pmatrix}
 \in \mbox{GSp}_n^+(\R)$,
$\begin{pmatrix}
  U&0\\0&V
 \end{pmatrix}
 \in \mbox{GSp}_r^+(\R)$,
$\lambda, \mu \in \R^{(n,r)}$
and
$\kappa = {^t \kappa}\in \R^{(r,r)}$.

We will abbreviate such an element
 $\left(\begin{smallmatrix}
    A &   & B &  \\
      & U &   &  \\
    C &   & D &  \\
      &   &   & V
 \end{smallmatrix}\right)
 \left(\begin{smallmatrix}
   1_n &   &   & \mu  \\
   ^t \lambda  & 1_r & ^t \mu  & {^t \lambda}\mu + \kappa  \\
     & & 1_n & - \lambda  \\
     &   &   & 1_r
 \end{smallmatrix}\right)$
as
\[
\left(\begin{pmatrix}A&B\\C&D\end{pmatrix}
  \times
  \begin{pmatrix}U&0\\0&V\end{pmatrix},
  [
   (\lambda,\mu),\kappa
  ]
  \right) .
\]
We remark that two matrices $\left(\begin{smallmatrix}A&B\\C&D\end{smallmatrix}\right)$ and
$\left(\begin{smallmatrix}U&0\\0&V\end{smallmatrix}\right)$
in the above notation
have the same similitude.
We will often write
\begin{eqnarray*}
\left(\left(\begin{matrix}A&B\\C&D\end{matrix}\right),
  [
   (\lambda,\mu),\kappa
  ]
  \right)
\end{eqnarray*}
instead of writing
 $\left(\left(\begin{smallmatrix}A&B\\C&D\end{smallmatrix}\right)
  \times
  1_{2r},
  [
   (\lambda,\mu),\kappa
  ]
  \right)$
for simplicity.
We remark that the element
 $\left(\left(\begin{smallmatrix}A&B\\C&D\end{smallmatrix}\right),
  [
   (\lambda,\mu),\kappa
  ]
  \right)$
belongs to $\mbox{Sp}_{n+r}(\R) $.
Similarly, an element
\begin{eqnarray*}
 \left(\begin{smallmatrix}
   1_n &   &   & \mu  \\
   ^t \lambda  & 1_r & ^t \mu  & {^t \lambda}\mu + \kappa  \\
     & & 1_n & - \lambda  \\
     &   &   & 1_r
 \end{smallmatrix}\right)
 \left(\begin{smallmatrix}
    A &   & B &  \\
      & U &   &  \\
    C &   & D &  \\
      &   &   & V
 \end{smallmatrix}\right)
\end{eqnarray*}
will be abbreviated as
\begin{eqnarray*}
 \left(
  [
   (\lambda,\mu),\kappa
  ],
  \left(\begin{matrix}A&B\\C&D\end{matrix}\right)
  \times
  \left(\begin{matrix}U&0\\0&V\end{matrix}\right)
 \right),
\end{eqnarray*}
and we will abbreviate it as
 $
 \left(
  [
   (\lambda,\mu),\kappa
  ],
  \left(\begin{smallmatrix}A&B\\C&D\end{smallmatrix}\right)
 \right)
 $
for the case $U = V = 1_r$ .

We set a subgroup $\Gamma_{n,r}^J$ of $G_{n,r}^J$ by
\begin{eqnarray*}
 \Gamma_{n,r}^J 
  &:=&
 \left\{
  \left(M,
  [
   (\lambda,\mu),\kappa
  ]
  \right) 
   \in G_{n,r}^J
  \, \left| \,
  M \in \Gamma_n ,
  \lambda, \mu \in \Z^{(n,r)}, \kappa \in \Z^{(r,r)}
 \right\} \right. .
\end{eqnarray*}

\subsection{Groups $\widetilde{\mbox{GSp}_n^+(\R)}$
and $\widetilde{G_{n,1}^J}$}\label{ss:double_covering_groups}

We denote by $\widetilde{\mbox{GSp}_n^+(\R)}$
a group
which consists of pairs $(M,\varphi(\tau))$,
where $M$ is a matrix $M = \left(\begin{smallmatrix} A&B\\C&D \end{smallmatrix}\right)\in \mbox{GSp}_n^+(\R)$,
and where $\varphi$ is any holomorphic function on $\mathfrak{H}_n$
such that $|\varphi(\tau)|^2 = \det(M)^{-\frac12} |\det(C\tau + D)|$.
The group operation on $\widetilde{\mbox{GSp}_n^+(\R)}$ is given by
$(M,\varphi(\tau))(M',\varphi'(\tau)) := (M M', \varphi(M'\tau)\varphi'(\tau))$.

We embed $\Gamma_0^{(n)}(4)$ into the group $\widetilde{\mbox{GSp}_n^+(\R)}$
via $M \rightarrow (M,\theta^{(n)}(M\tau)\, \theta^{(n)}(\tau)^{-1})$, where
 $ \theta^{(n)}(\tau) 
  :=
   \displaystyle{\sum_{p \in \Z^{(n,1)}} e(\tau[p])}$
is the theta constant.
We denote by $\Gamma_0^{(n)}(4)^*$ the image of $\Gamma_0^{(n)}(4)$
in $\widetilde{\mbox{GSp}_n^+(\R)}$ by this embedding.

We define the Heisenberg group
\begin{eqnarray*}
 H_{n,1}(\R)
 &:=&
 \left\{(1_{2n},[(\lambda,\mu),\kappa]) \in \mbox{Sp}_{n+1}(\R)
 \, | \,
 \lambda,\mu \in \R^{(n,1)}, \kappa \in \R \right\} .
\end{eqnarray*}
If there is no confusion, we will write $[(\lambda,\mu),\kappa]$
for the element $(1_{2n},[(\lambda,\mu),\kappa])$ for simplicity.

We define a group
\begin{eqnarray*}
 \widetilde{G_{n,1}^J}
 &:=&
 \widetilde{\mbox{GSp}_n^+(\R)} \ltimes H_{n,1}(\R) \\
 &=&
 \left. \left\{(\tilde{M},[(\lambda,\mu),\kappa]) \, \right| \,
   \tilde{M} \in \widetilde{\mbox{GSp}_n^+(\R)},
   [(\lambda,\mu),\kappa] \in H_{n,1}(\R) \right\} .
\end{eqnarray*}
Here the group operation on $\widetilde{G_{n,1}^J}$ is given by
\begin{eqnarray*}
 (\tilde{M_1},[(\lambda_1,\mu_1),\kappa_1])\cdot (\tilde{M_2},[(\lambda_2,\mu_2),\kappa_2])
 &:=&
 (\tilde{M_1}\tilde{M_2},[(\lambda',\mu'),\kappa'])
\end{eqnarray*}
for $(\tilde{M_i},[(\lambda_i,\mu_i),\kappa_i]) \in \widetilde{G_{n,1}^J}$ $(i=1,2)$,
and where $[(\lambda',\mu'),\kappa'] \in H_{n,1}(\R)$ is the matrix determined through the identity
\begin{eqnarray*}
\begin{aligned}
 &
 (M_1\times\left(\begin{smallmatrix}n(M_1)&0\\0&1\end{smallmatrix}\right),[(\lambda_1,\mu_1),\kappa_1])
 (M_2\times\left(\begin{smallmatrix}n(M_2)&0\\0&1\end{smallmatrix}\right),[(\lambda_2,\mu_2),\kappa_2])
\\
 &=
 (M_1M_2 \times \left(\begin{smallmatrix}n(M_1)n(M_2)&0\\0&1\end{smallmatrix}\right),[(\lambda',\mu'),\kappa'])
\end{aligned}
\end{eqnarray*}
in $G_{n,1}^J$.
Here $n(M_i)$ is the similitude of $M_i$.

\subsection{Action of the Jacobi group}
The group $G_{n,r}^J$ acts on $\mathfrak{H}_n \times \C^{(n,r)}$ by
\begin{eqnarray*}
 \gamma \cdot (\tau,z) 
 &:=&
 \left(
  \begin{pmatrix}A&B\\C&D\end{pmatrix} \cdot \tau
 \, ,\,
  ^t(C\tau+D)^{-1}(z + \tau\lambda + \mu)^t U
 \right)
\end{eqnarray*}
for any $\gamma = \left(\left(\begin{smallmatrix}A&B\\C&D\end{smallmatrix}\right)
  \times
  \left(\begin{smallmatrix}U&0\\0&V\end{smallmatrix}\right),
  [
   (\lambda,\mu),\kappa
  ]
  \right) \in G_{n,r}^J$ 
and for any $(\tau,z) \in \mathfrak{H}_n \times \C^{(n,r)}$. 
Here $\begin{pmatrix}A&B\\C&D\end{pmatrix} \cdot \tau := (A\tau+B)(C\tau+D)^{-1}$ 
is the usual transformation.

The group $\widetilde{G_{n,1}^J}$ acts on $\mathfrak{H}_n\times \C^{(n,1)}$
through the projection $\widetilde{G_{n,1}^J} \rightarrow G_{n,1}^J$.
It means
\begin{eqnarray*}
 \tilde{\gamma}\cdot(\tau,z) 
  &:=& 
  (M\times\left(\begin{smallmatrix}n(M)&0\\0&1\end{smallmatrix}\right),[(\lambda,\mu),\kappa])\cdot(\tau,z)
\end{eqnarray*}
for $\tilde{\gamma} = ((M,\varphi),[(\lambda,\mu),\kappa]) \in \widetilde{G_{n,1}^J}$
and for $(\tau,z) \in \mathfrak{H}_n\times \C^{(n,1)}$.
Here $n(M)$ is the similitude of $M \in \mbox{GSp}_n^+(\R)$.

\subsection{Factors of automorphy}\label{ss:factors_automorphy}
Let $k$ be an integer and let $\mathcal{M} \in \mbox{Sym}_r^+$.
For
 $\gamma = \left(\left(\begin{smallmatrix}A&B\\C&D\end{smallmatrix}\right)
  \times
  \left(\begin{smallmatrix}U&0\\0&V\end{smallmatrix}\right),
  [
   (\lambda,\mu),\kappa
  ]
  \right) \in G_{n,r}^J$
we define a factor of automorphy
\begin{eqnarray*}
 J_{k,\mathcal{M}}\left( 
   \gamma, (\tau,z) \right)
 &:=& 
  \det(V)^k \det(C\tau+D)^k\, e(V^{-1}\mathcal{M}U (((C\tau+D)^{-1}C)[z + \tau \lambda + \mu]) ) \\
 && \times
  e(- V^{-1} \mathcal{M} U ({^t \lambda} \tau \lambda
     + {^t z} \lambda + {^t \lambda} z + {^t \mu} \lambda + {^t \lambda}\mu + \kappa)).
\end{eqnarray*}
We define a slash operator $|_{k,\mathcal{M}}$ by
\begin{eqnarray*}
 (\phi|_{k,\mathcal{M}}\gamma)(\tau,z) 
 &:=& 
 J_{k,\mathcal{M}}(\gamma,(\tau,z))^{-1} \phi(\gamma\cdot(\tau,z))
\end{eqnarray*}
for any function $\phi$ on $\mathfrak{H}_n \times \C^{(n,r)}$ 
and for any $\gamma \in G_{n,r}^J$.
We remark that
\begin{eqnarray*}
 J_{k,\mathcal{M}}(\gamma_1 \gamma_2, (\tau,z))
 &=&
 J_{k,\mathcal{M}}(\gamma_1, \gamma_2 \cdot (\tau,z))
 J_{k,V_1^{-1}\mathcal{M}U_1}(\gamma_2, (\tau,z)),
\\
 \phi|_{k,\mathcal{M}}\gamma_1 \gamma_2 
 &=&
 (\phi|_{k,\mathcal{M}}\gamma_1) |_{k,V_1^{-1}\mathcal{M}U_1}\gamma_2 .
\end{eqnarray*}
for any $\gamma_i = \left(M_i
  \times
  \left(\begin{smallmatrix}U_i&0\\0&V_i\end{smallmatrix}\right),
  [
   (\lambda_i,\mu_i),\kappa_i
  ]
  \right) \in G_{n,r}^J$ $(i =1,2)$.

Let $k$ and $m$ be integers.
We define a slash operator $|_{k-\frac12,m}$ for any function $\phi$ on $\mathfrak{H}_n\times \C^{(n,1)}$ by
\begin{eqnarray*}
 \phi|_{k-\frac12,m}\tilde{\gamma}
 &:=&
 J_{k-\frac12,m}(\tilde{\gamma},(\tau,z))^{-1}
 \phi(\tilde{\gamma}\cdot(\tau,z))
\end{eqnarray*}
for any $\tilde{\gamma} = ((M,\varphi),[(\lambda,\mu),\kappa]) \in \widetilde{G_{n,1}^J}$.
Here we define a factor of automorphy
\begin{eqnarray*} 
 J_{k-\frac12,m}(\tilde{\gamma},(\tau,z))
 &:=&
 \varphi(\tau)^{2k-1} e(n(M) m (((C\tau+D)^{-1}C)[z + \tau \lambda + \mu]) ) \\
 && \times
  e(-  n(M) m ({^t \lambda} \tau \lambda + {^t z} \lambda + {^t \lambda} z + {^t \mu} \lambda + {^t \lambda}\mu + \kappa)),
\end{eqnarray*}
where $n(M)$ is the similitude of $M$.
We remark that
\begin{eqnarray*}
 J_{k-\frac12,m}(\tilde{\gamma_1}\tilde{\gamma_2},(\tau,z))
 &=&
 J_{k-\frac12,m}(\tilde{\gamma_1},\tilde{\gamma_2}\cdot(\tau,z))
 J_{k-\frac12,n(M_1) m}(\tilde{\gamma_2},(\tau,z))
\\
 \phi|_{k-\frac12,m}\tilde{\gamma_1}\tilde{\gamma_2}
 &=&
 (\phi|_{k-\frac12,m}\tilde{\gamma_1})|_{k-\frac12,n(M_1)m}\tilde{\gamma_2}
\end{eqnarray*}
for any $\tilde{\gamma_i} = ((M_i,\varphi_i),[(\lambda_i,\mu_i),\kappa_i]) \in \widetilde{G_{n,1}^J}$
$(i=1,2)$.

\subsection{Jacobi forms of matrix index}\label{ss:jacobi_forms_of_matrix_index}
We quote the definition of Jacobi form of matrix index from \cite{Zi}.
\begin{df}
For an integer $k$ and for an matrix $\mathcal{M} \in \mbox{Sym}_r^+$,
a $\C$-valued holomorphic function $\phi$ on $\mathfrak{H}_n \times \C^{(n,r)}$ is called
\textit{a Jacobi form of weight $k$ of
index $\mathcal{M}$ of degree $n$}, if $\phi$ satisfies the following two conditions:
\begin{enumerate}
\item
the transformation formula
$\phi|_{k,\mathcal{M}} \gamma = \phi$ for any $\gamma \in \Gamma_{n,r}^J$,
\item
$\phi$ has the Fourier expansion:
$ \phi(\tau,z)
 = \!\!\!\!\!
 \displaystyle{
   \sum_{\begin{smallmatrix}N \in Sym_n^*,R \in Z^{(n,r)} \\ 4N - R \mathcal{M}^{-1} {^t R} \geq 0 
          \end{smallmatrix}} \!\!\!\!\! c(N,R) e(N\tau) e({^t R} z)
              }$.
\end{enumerate}
\end{df}
We remark that the second condition follows from the Koecher principle (cf.~\cite[Lemma~1.6]{Zi})
if $n > 1$.
In the condition (2),
if $\phi$ satisfies $c(N,R) = 0$ unless $4N - R \mathcal{M}^{-1} {^t R} > 0$,
then $\phi$ is called a \textit{Jacobi cusp form}.

We denote by $J_{k,\mathcal{M}}^{(n)}$ the $\C$-vector space of Jacobi forms of weight $k$ of index $\mathcal{M}$
of degree $n$.

\subsection{Jacobi forms of half-integral weight}\label{ss:def_jacobi_half_weight}

We set the subgroup $\Gamma_{n,1}^{J*}$ of $\widetilde{G_{n,1}^J}$ by
\begin{eqnarray*}
 \Gamma_{n,1}^{J*}
 &:=&
 \left\{
  (M^*,[(\lambda,\mu),\kappa]) \in \widetilde{G_{n,1}^J}
  \, | \,
  M^* \in \Gamma_0^{(n)}(4)^*, \,
  \lambda,\mu \in \Z^{(n,1)}, \kappa \in \Z
 \right\} \\
 &\cong&
  \Gamma_0^{(n)}(4)^* \ltimes H_{n,1}(\Z),
\end{eqnarray*}
where we put $H_{n,1}(\Z) := H_{n,1}(\R) \cap \Z^{(2n+2,2n+2)}$.
Here the group $\Gamma_0^{(n)}(4)^*$ was defined in \S\ref{ss:double_covering_groups}.
\begin{df}
For an integer $k$ and for an integer $m$,
a holomorphic function $\phi$ on $\mathfrak{H}_n \times \C^{(n,1)}$
is called a \textit{Jacobi form of weight $k-\frac12$ of index $m$},
if $\phi$ satisfies the following two conditions:
\begin{enumerate}
\item
the transformation formula
$\phi|_{k-\frac12,m} \gamma^* = \phi$ for any $\gamma^* \in \Gamma_{n,1}^{J*}$,
\item
$\phi^2|_{2k-1,2m}\gamma$ has the Fourier expansion for any $\gamma \in \Gamma_{n,1}^J$:
\begin{eqnarray*}
 \left(\phi^2|_{2k-1,2m}\gamma\right) (\tau,z)
 &=&
 \sum_{\begin{smallmatrix}
        N \in Sym_n^*,R \in \Z^{(n,1)} \\
        4Nm - h R {^t R} \geq 0
       \end{smallmatrix}}
      C(N,R)\, e\!\left(\frac{1}{h}N\tau\right) e\!\left({^t R} z\right).
\end{eqnarray*}
with a certain integer $h > 0$, and
where the slash operator $|_{k-\frac12, m}$ was defined in $\S\ref{ss:factors_automorphy}$.
\end{enumerate}
\end{df}

In the condition (2),
for any $\gamma$ if $\phi$ satisfies $C(N,R) = 0$ unless $4Nm - h R{^t R} > 0$,
then $\phi$ is called a \textit{Jacobi cusp form}.

We denote by $J_{k-\frac12,m}^{(n)}$
the $\C$-vector space of Jacobi forms
of weight $k-\frac12$ of index $m$ of degree $n$.

\subsection{Index-shift maps of Jacobi forms}\label{ss:hecke_operators}
In this subsection we introduce two kinds of maps.
The both maps shift the index of Jacobi forms and these are generalizations of
the $V_l$-map in the sense of Eichler-Zagier \cite{EZ}.
 
We define two groups $\mbox{GSp}_n^+(\Z) := \mbox{GSp}_n^+(\R) \cap \Z^{(2n,2n)}$ and
\begin{eqnarray*}
 \widetilde{\mbox{GSp}_n^+(\Z)}
 &:=&
 \left.
 \left\{
  (M,\varphi) \in \widetilde{\mbox{GSp}_n^+(\R)} \, \right| \, M \in \mbox{GSp}_n^+(\Z)
 \right\}.
\end{eqnarray*}

First we define index-shift maps for Jacobi forms of \textit{integral weight of matrix index}.
Let $\mathcal{M} = \smat{*}{*}{ * }{1} \in \mbox{Sym}_2^+$.
Let $X \in \mbox{GSp}_n^+(\Z)$ be a matrix such that the similitude of $X$ is $n(X)=p^2$ with a prime $p$.
For any $\phi \in J_{k,\mathcal{M}}^{(n)}$ we define the map
\begin{eqnarray*}
 \phi|V(X)
 &:=&
 \sum_{u,v \in (\Z/p\Z)^{(n,1)}}
 \sum_{M \in \Gamma_n \backslash \Gamma_n X \Gamma_n} \!\!\!\!\!
  \phi|_{k,\mathcal{M}} 
   \left(M\times
     \left(\begin{smallmatrix}p^2&0&0&0\\0&p&0&0\\0&0&1&0\\0&0&0&p\end{smallmatrix}\right),
     [((0,u),(0,v)),0_2]\right),
\end{eqnarray*}
where $(0,u),(0,v) \in (\Z/p\Z)^{(n,2)}$ and where $0_2$
is the zero matrix of size $2$.
See \S\ref{ss:Jacobi_group}
for the symbol of  the matrix
$
  \left(M\times
     \left(\begin{smallmatrix}p^2&0&0&0\\0&p&0&0\\0&0&1&0\\0&0&0&p\end{smallmatrix}\right),
     [((0,u),(0,v)),0_2]\right).
$
The above summations are finite sums and do not depend on
the choice of the representatives $u$, $v$ and $M$.
A straightforward calculation shows that
$\phi|V(X)$ belongs to $J_{k,\mathcal{M}[\left(\begin{smallmatrix}p&0\\0&1\end{smallmatrix}\right)]}^{(n)}$.
Namely $V(X)$ is a map:
\begin{eqnarray*}
 V(X) \ : \ J_{k,\mathcal{M}}^{(n)} \rightarrow J_{k,\mathcal{M}[\left(\begin{smallmatrix}p&0\\0&1\end{smallmatrix}\right)]}^{(n)}.
\end{eqnarray*}

For the sake of simplicity we set
\begin{eqnarray*}
 V_{\alpha,n-\alpha}(p^2) 
 &:=&
 V(\mbox{diag}(1_{\alpha},p 1_{n-\alpha}, p^2 1_{\alpha}, p 1_{n-\alpha}))
\end{eqnarray*}
for any prime $p$ and for any $\alpha$ $(0\leq \alpha \leq n)$.

Next we shall define index-shift maps for Jacobi forms of \textit{half-integral weight of integer index}.
We assume that $p$ is \textit{an odd prime}.
Let $m$ be a positive integer.
Let $Y = (X,\varphi) \in \widetilde{\mbox{GSp}_n^+(\Z)}$ with $n(X) = p^{2t}$, where $t$ is a positive integer.
For $\psi \in J_{k-\frac12,m}^{(n)}$ we define
\begin{eqnarray*}
 \psi|\widetilde{V}(Y)
 &:=&
 n(X)^{\frac{n(2k-1)}{4} - \frac{n(n+1)}{2}}\sum_{\tilde{M} \in \Gamma_0^{(n)}(4)^* \backslash \Gamma_0^{(n)}(4)^* Y \Gamma_0^{(n)}(4)^*}
  \psi|_{k-\frac12,m} (\tilde{M},[(0,0),0]) ,
\end{eqnarray*}
where the above summation is a finite sum and
does not depend on the choice of the representatives $\tilde{M}$.
A direct computation shows
that $\psi|\widetilde{V}(Y)$ belongs to $J_{k-\frac12,mp^{2t}}^{(n)}$.

For the sake of simplicity we set
\begin{eqnarray*}
 \tilde{V}_{\alpha,n-\alpha}(p^2)
 &:=&
 \tilde{V}((\mbox{diag}(1_{\alpha},p 1_{n-\alpha}, p^2 1_{\alpha}, p 1_{n-\alpha}),p^{\alpha/2}))
\end{eqnarray*}
for any odd prime $p$ and for any $\alpha$ $(0\leq \alpha \leq n)$.
As for $p=2$,
we will introduce  index-shift maps $\tilde{V}_{\alpha,n-\alpha}(4)$ in \S\ref{ss:hecke_p2},
which are maps from a certain subspace $J_{k-\frac12,m}^{(n)*}$ of
$J_{k-\frac12,m}^{(n)}$ to $J_{k-\frac12,4m}^{(n)}$.

\subsection{Hecke operators for Siegel modular forms of half-integral weight}\label{ss:hecke_op_siegel_half}
The Hecke theory for Siegel modular forms was first introduced by Shimura~\cite{Shi} for degree $n=1$
and by Zhuravlev~\cite{Zhu:hecke,Zhu:euler} for degree $n>1$.
We quote the definition of Hecke operator from~\cite{Zhu:hecke,Zhu:euler}.
Let $Y = (X,\varphi) \in \widetilde{\mbox{GSp}_n^+(\Z)}$.
Let $\phi \in M_{k-\frac12}(\Gamma_0^{(n)}(4))$. We define
\begin{eqnarray*}
   \phi| \tilde{T}(Y)
 &:=&
    n(X)^{\frac{n(2k-1)}{4} - \frac{n(n+1)}{2}}\sum_{\tilde{M} \in \Gamma_0^{(n)}(4)^* \backslash \Gamma_0^{(n)}(4)^* Y \Gamma_0^{(n)}(4)^*}
  \phi|_{k-\frac12} \tilde{M} ,
\end{eqnarray*}
where
$(\phi|_{k-\frac12} \tilde{M})(\tau) := \varphi(\tau)^{-2k+1} \phi(M\cdot \tau)$ for $\tilde{M} = (M,\varphi)$
and $n(X)$ is the similitude of $X$.
For the sake of simplicity we set
\begin{eqnarray*}
 \tilde{T}_{\alpha,n-\alpha}(p^2)
 &:=&
 \tilde{T}((\mbox{diag}(1_{\alpha},p 1_{n-\alpha}, p^2 1_{\alpha}, p 1_{n-\alpha}),p^{\alpha/2}))
\end{eqnarray*}
for any odd prime $p$ and for any $\alpha$ $(0\leq \alpha \leq n)$.

\subsection{$L$-function of Siegel modular forms of half-integral weight}
\label{ss:L_function_siegel_half}
In this subsection we review the Hecke theory for Siegel modular forms
of half-integral weight which has been introduced by Zhuravlev~\cite{Zhu:hecke,Zhu:euler}
and quote the definition of $L$-function of a Siegel modular form of half-integral weight.

Let $\tilde{\mathcal{H}}_{p^2}^{(m)}$ be the local Hecke ring generated by
double cosets
\[
K_{\alpha}^{(m)} := \Gamma_0^{(m)}(4)^*(\mbox{diag}(1_{\alpha},p 1_{m-\alpha}, p^2 1_{\alpha}, p 1_{m-\alpha}),p^{\alpha/2})\Gamma_0^{(m)}(4)^* \quad (0 \leq \alpha \leq m)
\]
and
${K_0^{(m)}}^{-1}$
over $\C$.
If $p$ is an odd prime, then it is shown in~\cite{Zhu:hecke,Zhu:euler}
that the local Hecke ring $\tilde{\mathcal{H}}_{p^2}^{(m)}$ is commutative and
there exists the isomorphism map
  \begin{eqnarray*}
    \Psi_m \ : \ \tilde{\mathcal{H}}_{p^2}^{(m)} &\rightarrow& R_m,
  \end{eqnarray*}
where $R_m$ denotes
 $R_m := \C^{W_2}\!\!\left[z_0^{\pm},z_1^{\pm},...,z_{m}^{\pm}\right]$,
and where the subring $\C^{W_2}\!\!\left[z_0^{\pm},z_1^{\pm},...,z_{m}^{\pm}\right]$
of $\C\left[z_0^{\pm},z_1^{\pm},...,z_{m}^{\pm}\right]$ consists of all $W_2$-invariant
polynomials.
Here $W_2$ is the Weyl group of a symplectic group
and the action of $W_2$ on $\C\!\left[z_0^{\pm},...,z_m^{\pm}\right]$ is generated
by all permutations of $\{z_1,...,z_m\}$ and by the maps
\begin{eqnarray*}
  \sigma_i \ : \
  z_0 \rightarrow z_0 z_i,\ z_i \rightarrow z_i^{-1},\ z_j \rightarrow z_j\ (j \neq i)
\end{eqnarray*}
for $i = 1,...,m$.
The isomorphism $\Psi_m$ is defined as follows:
Let
\begin{eqnarray*}
  T &=& \sum_i a_i \Gamma_0^{(m)}(4)^* (X_i, \varphi_i)
\end{eqnarray*}
be a decomposition of $T \in \tilde{\mathcal{H}}_{p^2}^{(m)}$,
where $a_i \in \C$ and $(X_i, \varphi_i) \in \widetilde{\mbox{GSp}_n^+(\Z)}$.
We can assume that $X_i$ is an upper-triangular matrix
$X_i = \smat{p^{\delta_i} {^t D_i}^{-1}}{B_i}{0}{D_i}$ with
\begin{eqnarray*}
  D_i = \begin{pmatrix} d_{i1} & * & * \\ 0 & \ddots & * \\ 0 & 0 & d_{im}\end{pmatrix}
\end{eqnarray*}
and $\varphi_i$ is a constant function.
It is known that $|\varphi_i|^{-1} \varphi_i$ is a forth root of unity.
Then $\Psi_m(T)$ is given by
\begin{eqnarray*}
  \Psi_m(T) &:=&
  \sum_i a_i \left( \frac{\varphi_i}{|\varphi_i|} \right)^{-2k+1} z_0^{\delta_i}
  \prod_{j=1}^m (p^{-j} z_j)^{d_{ij}}
\end{eqnarray*}
with a fixed integer $k$.
For the explicit decomposition of generators $K_{\alpha}^{(m)}$ by left
$\Gamma_0^{(m)}(4)^*$-cosets, see~\cite[Prop.7.1]{Zhu:hecke}.

  We define $\gamma_j \in \C[z_1^{\pm},...,z_m^{\pm}]$ $(j=0,...,2m)$ through the identity
    \begin{eqnarray*}
     \sum_{j=0}^{2m} \gamma_j X^j
     &=&
     \prod_{i=1}^{m} \left\{\left( 1 - z_i X \right) (1 - z_i^{-1} X)\right\}.
  \end{eqnarray*}
  Here $\gamma_j$ $(j=0,...,2m)$ is a $W_2$-invariant.
  There exists $\tilde{\gamma}_{i,p} \in \tilde{\mathcal{H}}_{p^2}^{(m)}$ $(i=0,...,2m)$
  which satisfies $\Psi_m(\tilde{\gamma}_{i,p}) = \gamma_i \in R_{m}$.
  We remark that
  $\tilde{\gamma}_{i,p} = \tilde{\gamma}_{2m-i,p}$ and
  $\tilde{\gamma}_{0,p} = K_0^{(m)}$.
  
  For $p=2$ we will introduce in \S\ref{ss:fj_expansion_half}
  the Hecke operators $\tilde{T}_{\alpha,m-\alpha}(4)$ $(\alpha=0,...,m)$
  through the isomorphism between Siegel modular forms
  of half-integral weight and Jacobi forms of index $1$
  (see (\ref{eq:half_hecke_2}) in \S\ref{ss:fj_expansion_half}).
  We remark that the Hecke operators $\tilde{T}_{\alpha,m-\alpha}(4)$ $(\alpha=0,...,m)$
  are defined for the generalized plus-space,
  which is a certain subspace of $M_{k-\frac12}(\Gamma_0^{(m)}(4))$.
  Through the definition of $\tilde{\gamma}_{i,p}$ for odd prime $p$,
  we define $\tilde{\gamma}_{i,2}$ in the same formula by using 
  $\tilde{T}_{\alpha,m-\alpha}(4)$ $(\alpha=0,...,m)$
  as in the case of odd primes.
  by replacing $p$ by $2$.
  
  Let $F \in M_{k-\frac12}^{+}(\Gamma_0^{(m)}(4))$ be an eigenform for any Hecke operator
  $\tilde{T}_{\alpha,m-\alpha}(p^2)$ $(0 \leq \alpha \leq m)$ and for any prime $p$.
  Here $M_{k-\frac12}^{+}(\Gamma_0^{(m)}(4))$ denotes the generalized plus-space
  which is a certain subspace of $M_{k-\frac12}(\Gamma_0^{(m)}(4))$
  (see~\cite{Ib} or \S\ref{ss:fj_expansion_half} for the definition of
  $M_{k-\frac12}^{+}(\Gamma_0^{(m)}(4))$).
  We define the Euler $p$-factor of $F$ by
  \begin{eqnarray*}
    Q_{F,p}(z) :&=&
    \sum_{j=0}^{2m} \lambda_F(\tilde{\gamma}_{j,p})
     z^j,
  \end{eqnarray*}
  where $\lambda_F(\tilde{\gamma}_{j,p})$ is the eigenvalue of $F$
  with respect to $\tilde{\gamma}_{j,p}$.
  There exists a set of complex numbers
  $\{\mu_{0,p}^2,\mu_{1,p}^{\pm},...\mu_{m,p}^{\pm}\}$
  which satisfies
  \begin{eqnarray*}
    Q_{F,p}(z) &=&
    \prod_{i=1}^m
    \left\{ \left( 1 - \mu_{i,p} z \right)
    \left( 1 - \mu_{i,p}^{-1} z \right)
    \right\}
  \end{eqnarray*}
  and
  \begin{eqnarray*}
  \mu_{0,p}^2\mu_{1,p} \cdots \mu_{m,p}
  &=& p^{m(2k-1)/2-m(m+1)/2},
  \end{eqnarray*}
  since $\gamma_{2m-j} = \gamma_j$ $(j=0,...,m-1)$,
  $Q_{F,p}(z^{-1}) = z^{-2m} Q_{F,p}(z)$ and $Q_{F,p}(0) = 1 \neq 0$.
  Following Zhuravlev~\cite{Zhu:euler} we call the set
  $\{\mu_{0,p}^2,\mu_{1,p}^{\pm},...,\mu_{m,p}^{\pm}\}$
  \textit{the $p$-parameters} of $F$.
  The $L$-function of $F$ is defined by
  \begin{eqnarray*}
    L(s,F) &:=&
        \prod_p Q_{F,p}(p^{-s+k-3/2})^{-1}.
  \end{eqnarray*}

\section{Fourier-Jacobi expansion of  Siegel-Eisenstein series with matrix index}\label{s:fourier_matrix}

In this section we assume that $k$ is an even integer.

Let $r$ be a non-negative integer.
For $\mathcal{M} \in \mbox{Sym}_r^+$ and for an even integer $k$ 
we define the Jacobi-Eisenstein series of weight $k$ of index $\mathcal{M}$ of degree $n$ by
\begin{eqnarray}\label{df:jacobi_eisenstein}
 E_{k,\mathcal{M}}^{(n)}
 &:=& 
  \sum_{M \in \Gamma_{\infty}^{(n)}\backslash \Gamma_n}\sum_{\lambda \in \Z^{(n,r)}}
  1|_{k,\mathcal{M}}([(\lambda,0),0_r], M) .
\end{eqnarray}
The above sum converges for $k > n+r+1$ (cf.~\cite{Zi}).
The Siegel-Eisenstein series $E_k^{(n)}$ of weight $k$ of degree $n$ is defined by
\begin{eqnarray}\label{df:siegel_eisenstein}
 E_k^{(n)}(Z)
 &:=&
 \sum_{\smat{ * }{ * }{C}{D} \in \Gamma_{\infty}^{(n)}\backslash \Gamma_n} \det(C Z + D)^{-k},
\end{eqnarray}
where $Z \in \H_n$.
We denote by $e_{k,\mathcal{M}}^{(n-r)}$ the $\mathcal{M}$-th Fourier-Jacobi coefficient of $E_k^{(n)}$,
it means that
\begin{eqnarray}\label{eq:fourier_jacobi_eisenstein}
 E_k^{(n)}(\left(\begin{smallmatrix}\tau&z\\ {^t z}& \omega \end{smallmatrix}\right)) 
 &=& 
\displaystyle{\sum_{\mathcal{M} \in Sym_r^*}e_{k,\mathcal{M}}^{(n-r)}(\tau,z)\, e(\mathcal{M}\omega)}
\end{eqnarray}
is a Fourier-Jacobi expansion of the Siegel-Eisenstein series $E_k^{(n)}$ of weight $k$ of degree $n$,
where $\tau \in \H_{n-r}$, $\omega \in \H_r$ and $z \in \C^{(n-r,r)}$.
The explicit formula for the Fourier-Jacobi expansion of Siegel-Eisenstein series
is given in \cite[Satz~7]{Bo} for arbitrary degree.

The purpose of this section is to express the Fourier-Jacobi coefficient $e_{k,\mathcal{M}}^{(n-2)}$ for
$\mathcal{M} = 
  \left(\begin{smallmatrix} *&* \\ * & 1 \end{smallmatrix}\right) \in \mbox{Sym}_2^+$
as a summation of Jacobi-Eisenstein series of matrix index (Proposition~\ref{prop:fourier_jacobi}).

We first obtain the following lemma.
\begin{lemma}\label{lemma:eisen_A}
For any $\mathcal{M} \in \mbox{Sym}_r^+$ and for any $A \in \mbox{GL}_r(\Z)$ we have
 \begin{eqnarray*}
  E_{k,\mathcal{M}}^{(n)}(\tau,z)
  &=&
  E_{k,\mathcal{M}[A^{-1}]}^{(n)}(\tau,z {^t A})
\end{eqnarray*}
and
\begin{eqnarray*}
  e_{k,\mathcal{M}}^{(n)}(\tau,z)
  &=&
  e_{k,\mathcal{M}[A^{-1}]}^{(n)}(\tau,z {^t A}) .
 \end{eqnarray*}
\end{lemma}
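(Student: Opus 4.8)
The plan is to trace through the definitions of the Jacobi--Eisenstein series $E_{k,\mathcal{M}}^{(n)}$ and of the Fourier--Jacobi coefficient $e_{k,\mathcal{M}}^{(n)}$ and observe that both identities amount to an elementary change of variables in the defining sums, indexed by a matrix $A \in \mathrm{GL}_r(\Z)$. First I would unwind the first identity. By definition,
\[
 E_{k,\mathcal{M}}^{(n)}(\tau,z)
 = \sum_{M \in \Gamma_{\infty}^{(n)}\backslash \Gamma_n}\sum_{\lambda \in \Z^{(n,r)}}
  1|_{k,\mathcal{M}}\big([(\lambda,0),0_r], M\big)(\tau,z).
\]
The key observation is a transformation property of the slash operator under the right action of $\left(\begin{smallmatrix}U&0\\0&V\end{smallmatrix}\right)$ with $U = V = {}^t A^{-1}$ (so that the similitude is $1$): applying $|_{k,\mathcal{M}}$ with such an element sends a form of index $\mathcal{M}$ to a form of index $V^{-1}\mathcal{M}U = {}^t A\,\mathcal{M}\,{}^t A^{-1}$ evaluated with $z$ replaced appropriately, and conversely relates $E_{k,\mathcal{M}}^{(n)}(\tau,z)$ to $E_{k,\mathcal{M}[A^{-1}]}^{(n)}(\tau, z\,{}^tA)$. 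Concretely, I would substitute $\lambda \mapsto \lambda A$ in the inner sum (a bijection of $\Z^{(n,r)}$ since $A \in \mathrm{GL}_r(\Z)$) and check that $J_{k,\mathcal{M}}([(\lambda A,0),0],M)(\tau,z)$ matches $J_{k,\mathcal{M}[A^{-1}]}([(\lambda,0),0],M)(\tau,z\,{}^tA)$ together with the compatibility $\big(\tfrac{A\tau+B}{\cdots}\big)$-type relation for the $z$-variable in the action of the Jacobi group; this is the routine computation I would not spell out in full. Since $\mathcal{M}[A^{-1}] = {}^t(A^{-1})\mathcal{M}A^{-1}$ is again positive definite and the convergence range $k > n+r+1$ is unchanged, the rearranged series is exactly $E_{k,\mathcal{M}[A^{-1}]}^{(n)}(\tau,z\,{}^tA)$.

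Next I would deduce the second identity from the first by comparing Fourier--Jacobi expansions. Recall that for $Z = \left(\begin{smallmatrix}\tau & z \\ {}^tz & \omega\end{smallmatrix}\right)$ with $\omega \in \H_r$, the Siegel--Eisenstein series expands as $E_k^{(n)}(Z) = \sum_{\mathcal{N} \in \mathrm{Sym}_r^*} e_{k,\mathcal{N}}^{(n-r)}(\tau,z)\,e(\mathcal{N}\omega)$. The point is that $E_k^{(n)}$ is invariant under $Z \mapsto {}^t\!\begin{psmallmatrix}1_{n-r}&0\\0&A\end{psmallmatrix} Z \begin{psmallmatrix}1_{n-r}&0\\0&A\end{psmallmatrix}$ for $A \in \mathrm{GL}_r(\Z) \subset \Gamma_n$ (embedded in the standard way as $\mathrm{diag}(\begin{psmallmatrix}1&0\\0&A\end{psmallmatrix}, \begin{psmallmatrix}1&0\\0&{}^tA^{-1}\end{psmallmatrix})$ inside $\Gamma_n$, which is automorphic for $E_k^{(n)}$). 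This substitution sends $\tau \mapsto \tau$, $z \mapsto z\,{}^tA$, $\omega \mapsto \omega[A]= {}^tA\,\omega\,A$, hence $e(\mathcal{N}\omega[A]) = e(\mathcal{N}[{}^tA]\,\omega) = e((A\mathcal{N}{}^tA)\omega)$. Comparing the two expansions coefficient by coefficient (after the substitution $\mathcal{N} \mapsto \mathcal{M}[A^{-1}] = {}^t(A^{-1})\mathcal{M}A^{-1}$ in the index) yields $e_{k,\mathcal{M}}^{(n-r)}(\tau, z\,{}^tA) = e_{k,\mathcal{M}[A^{-1}]}^{(n-r)}(\tau,z)$, which is the claim with the roles of $\mathcal{M}$ and $\mathcal{M}[A^{-1}]$ swapped (equivalently, replace $A$ by $A^{-1}$). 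Setting $r$ to the relevant value and specializing $n \to n$ gives the stated formula.

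The main obstacle, such as it is, is purely bookkeeping: getting the placement of $A$ versus ${}^tA$, and $A^{-1}$ versus $A$, consistent between (i) the action of $\left(\begin{smallmatrix}U&0\\0&V\end{smallmatrix}\right)$ on $\mathfrak{H}_n \times \C^{(n,r)}$ as defined in \S\ref{ss:Jacobi_group} (where $z \mapsto {}^t(C\tau+D)^{-1}(z+\tau\lambda+\mu)\,{}^tU$), (ii) the index-transformation rule $\mathcal{M} \mapsto V^{-1}\mathcal{M}U$ appearing in the cocycle relations of \S\ref{ss:factors_automorphy}, and (iii) the matrix congruence $e(\mathcal{N}\omega[A]) = e((\mathcal{N}[{}^tA])\omega)$ used under the trace. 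Once a single consistent convention is fixed, both identities follow without any analytic input beyond the already-cited convergence range. I would therefore present the proof as: (1) a one-line reduction of the first identity to the change of variable $\lambda \mapsto \lambda A$ plus the slash-operator compatibility, and (2) a one-line derivation of the second identity from $\mathrm{GL}_r(\Z)$-invariance of $E_k^{(n)}$ together with uniqueness of Fourier--Jacobi coefficients.
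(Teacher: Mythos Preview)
Your proposal is correct and follows essentially the same route as the paper: the first identity is obtained directly from the definition via the change of summation variable $\lambda \mapsto \lambda A$, and the second from the invariance of the Siegel--Eisenstein series $E_k^{(n+r)}$ under $Z \mapsto \smat{1_n}{}{}{A} Z \smat{1_n}{}{}{{}^tA}$ together with uniqueness of Fourier--Jacobi coefficients. The paper's proof is in fact even terser than yours (two sentences), and the only thing to tidy is the transposition bookkeeping you already flagged: with $\omega \mapsto A\omega{}^tA$ one gets $e(\mathcal{N}A\omega{}^tA)=e({}^tA\mathcal{N}A\,\omega)=e(\mathcal{N}[A]\,\omega)$, so matching $\mathcal{N}[A]=\mathcal{M}$ yields exactly $e_{k,\mathcal{M}[A^{-1}]}^{(n)}(\tau,z\,{}^tA)=e_{k,\mathcal{M}}^{(n)}(\tau,z)$ without any further swap of $A$ and $A^{-1}$.
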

\begin{proof}
The first identity follows directly from the definition.
The transformation formula
 $E_k^{(n+r)}\begin{pmatrix}\begin{pmatrix}1_n & \\ & A \end{pmatrix} 
                         \begin{pmatrix}\tau&z\\ {^t z}& \omega \end{pmatrix}
                         \begin{pmatrix}1_n & \\ & {^t A} \end{pmatrix} 
           \end{pmatrix}
  =
  E_k^{(n+r)}\begin{pmatrix}
            \begin{pmatrix}\tau&z\\ {^t z}& \omega \end{pmatrix}
           \end{pmatrix}
 $
gives the second identity.
\end{proof}

Let $m$ be a positive integer. We denote by $D_0$ the discriminant of $\mathbb{Q}(\sqrt{-m})$,
and we put $f := \sqrt{\frac{m}{|D_0|}}$.
We note that $f$ is a positive integer if $-m \equiv 0, 1 \!\!\mod 4$.

We denote by $h_{k-\frac12}(m)$ the $m$-th Fourier coefficient
of the Cohen-Eisenstein series of weight $k-\frac12$ (cf.~Cohen~\cite{Co}).
The following formula is known (cf.~\cite{Co},~\cite{EZ}):
\begin{eqnarray*}
\begin{aligned}
 &
 h_{k-\frac12}(m) \\
&=
 \begin{cases}
  h_{k-\frac12}(|D_0|)\, m^{k-\frac32} \sum_{d|f}  \mu(d) \left(\frac{D_0}{d} \right) d^{1-k} \sigma_{3-2k}\left(\frac{f}{d} \right)
  & \mbox{if } -m \equiv 0, 1 \mod 4,
 \\
  0
  & \mbox{otherwise},
 \end{cases}
 \end{aligned}
\end{eqnarray*}
where
we define
$\sigma_{a}(b) := \displaystyle{\sum_{d|b}d^a}$
and $\mu$ is the M\"obis function.

We assume $-m \equiv 0, 1 \!\! \mod 4$. Let $D_0$ and $f$ be as above.
For the sake of simplicity we define
\[
g_k(m) := \sum_{d|f} \mu(d)\, h_{k-\frac12}\!\left(\frac{m}{d^2}\right).
\]
We will use the following lemma for the proof of Proposition~\ref{prop:e_k_M_V}.
\begin{lemma}\label{lemma:gk}
Let $m$ be a natural number such that $-m \equiv 0$, $1 \mod 4$. 
Then for any prime $p$ we have
\begin{eqnarray*}
 g_k(p^2m) &=& 
  \left(p^{2k-3} - \left(\frac{-m}{p} \right) p^{k-2} \right) g_k(m) . 
\end{eqnarray*}
\end{lemma}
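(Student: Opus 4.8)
The plan is to reduce the identity to an elementary computation with the local (Bell) series of a multiplicative function. Throughout write $m=|D_0|f^2$ as in the statement and set $\chi(d):=\left(\frac{D_0}{d}\right)$. First I would rewrite $g_k$ in closed form. Since $-|D_0|g^2=D_0g^2\equiv0,1\bmod 4$ and $\Qq(\sqrt{-|D_0|g^2})=\Qq(\sqrt{-m})$ has discriminant $D_0$ and conductor $g$, the displayed formula for $h_{k-\frac12}$ gives, for every positive integer $g$,
\[
 h_{k-\frac12}(|D_0|g^2)=C\,g^{2k-3}\beta(g),\qquad
 C:=h_{k-\frac12}(|D_0|)\,|D_0|^{k-\frac32},\qquad
 \beta(g):=\sum_{d\mid g}\mu(d)\,\chi(d)\,d^{1-k}\,\sigma_{3-2k}(g/d).
\]
For $d\mid f$ one has $m/d^2=|D_0|(f/d)^2$, and the conductor of $-p^2m=D_0(pf)^2$ is $pf$; hence
\[
 g_k(m)=C\,G(f),\qquad g_k(p^2m)=C\,G(pf),\qquad
 G(g):=\sum_{d\mid g}\mu(d)\,(g/d)^{2k-3}\,\beta(g/d).
\]
So it suffices to prove $G(pf)=\bigl(p^{2k-3}-\left(\tfrac{-m}{p}\right)p^{k-2}\bigr)G(f)$ for all positive integers $f$.

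Next I would compute the local series of $G$. The function $\beta$ is the Dirichlet convolution of the multiplicative functions $d\mapsto\mu(d)\chi(d)d^{1-k}$ and $\sigma_{3-2k}$, hence is multiplicative, and therefore $G=\mu*(\mathrm{id}^{2k-3}\beta)$ is multiplicative as well. For a prime $p$ I would obtain $G_p(X):=\sum_{j\ge0}G(p^j)X^j$ by multiplying local series: $\mu_p(X)=1-X$, $(\sigma_{3-2k})_p(X)=\{(1-X)(1-p^{3-2k}X)\}^{-1}$, the local series of $d\mapsto\mu(d)\chi(d)d^{1-k}$ is $1-\chi(p)p^{1-k}X$ (only $j=0,1$ contribute), so $\beta_p(X)=\dfrac{1-\chi(p)p^{1-k}X}{(1-X)(1-p^{3-2k}X)}$, and twisting by $\mathrm{id}^{2k-3}$ replaces $X$ by $p^{2k-3}X$. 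Using $p^{1-k}p^{2k-3}=p^{k-2}$ and $p^{3-2k}p^{2k-3}=1$, this yields
\[
 G_p(X)=(1-X)\cdot\frac{1-\chi(p)p^{k-2}X}{(1-p^{2k-3}X)(1-X)}=\frac{1-\chi(p)p^{k-2}X}{1-p^{2k-3}X}.
\]
Reading off coefficients gives $G(1)=1$, $G(p)=p^{2k-3}-\chi(p)p^{k-2}$, and $G(p^{j+1})=p^{2k-3}G(p^j)$ for $j\ge1$.

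Finally I would conclude by a case distinction. If $p\nmid f$, multiplicativity gives $G(pf)=G(p)G(f)$, and since $-m=D_0f^2$ with $p\nmid f$ one has $\left(\tfrac{-m}{p}\right)=\chi(p)$ (for $p$ odd because $\left(\tfrac{f^2}{p}\right)=1$; for $p=2$ because $f$ is odd, so $f^2\equiv1\bmod 8$ and $D_0f^2\equiv D_0\bmod 8$, using the paper's convention for $\left(\tfrac{\cdot}{2}\right)$), whence $G(pf)=\bigl(p^{2k-3}-\left(\tfrac{-m}{p}\right)p^{k-2}\bigr)G(f)$. If $p\mid f$, write $f=p^af'$ with $a\ge1$ and $p\nmid f'$; then $G(pf)=G(p^{a+1})G(f')=p^{2k-3}G(p^a)G(f')=p^{2k-3}G(f)$, while $p\mid f$ forces $p^2\mid m$, hence $\left(\tfrac{-m}{p}\right)=0$ and the right-hand side is again $p^{2k-3}G(f)$. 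In both cases the identity for $G$ holds, and multiplying by $C$ yields the lemma. The only genuinely delicate points are the bookkeeping in the case $p\mid f$ and the Kronecker-symbol identity $\left(\tfrac{-m}{p}\right)=\left(\tfrac{D_0}{p}\right)$ for $p\nmid f$ (in particular for $p=2$ with the paper's non-standard convention); neither presents a real difficulty.
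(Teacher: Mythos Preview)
Your proof is correct and follows essentially the same strategy as the paper: both exploit the multiplicativity of $f\mapsto g_k(|D_0|f^2)/C$ and reduce to comparing the $p$-local factors for $f$ and $pf$. The paper derives the explicit product formula
\[
 g_k(m)=C\prod_{q\mid f}\Bigl(q^{(2k-3)l_q}-\left(\tfrac{D_0}{q}\right)q^{k-2+(2k-3)(l_q-1)}\Bigr)
\]
directly from the Cohen formula and then compares the $p$-factor, whereas you package the same computation via Bell series, arriving at $G_p(X)=\dfrac{1-\chi(p)p^{k-2}X}{1-p^{2k-3}X}$, whose coefficients are exactly those local factors. The case split $p\mid f$ versus $p\nmid f$ and the identification $\left(\tfrac{-m}{p}\right)=\left(\tfrac{D_0}{p}\right)$ for $p\nmid f$ are handled identically.
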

\begin{proof}
Let $D_0$, $f$ be as above.
By using the formula of $h_{k-\frac12}(m)$ we obtain
\begin{eqnarray*}
 h_{k-\frac12}(m)
 &=&
 h_{k-\frac12}(|D_0|) |D_0|^{k-\frac32}
 \prod_{q|f}
  \left\{
   \sigma_{2k-3}(q^{l_q})
   - \left(\frac{D_0}{q}\right)q^{k-2}\sigma_{2k-3}(q^{l_q-1})
  \right\} ,
\end{eqnarray*}
where $q$ runs over all primes which divide $f$,
and where we put $l_q := \mbox{ord}_q(f)$.
In particular, the function $h_{k-\frac12}(m)(h_{k-\frac12}(|D_0|) |D_0|^{k-\frac32})^{-1}$
is multiplicative as function of $f$.
Hence, for any prime $q$, we have
\begin{eqnarray*}
 \begin{aligned}
 & h_{k-\frac12}(|D_0|q^{2l_q}) - h_{k-\frac12}(|D_0|q^{2l_q-2})
\\
 &=
 h_{k-\frac12}(|D_0|) |D_0|^{k-\frac32}
 \left(
   q^{(2k-3)l_q}
   - \left(\frac{D_0}{q}\right)q^{k-2 + (2k-3)(l_q-1)}
 \right),
 \end{aligned}
\end{eqnarray*}
Thus
\begin{eqnarray*}
 g_k(m)
 &=&
 h_{k-\frac12}(|D_0|) |D_0|^{k-\frac32}
 \sum_{d|f} \mu(d) \frac{h_{k-\frac12}\left(\frac{m}{d^2}\right)}{h_{k-\frac12}(|D_0|) |D_0|^{k-\frac32}} \\
 &=&
 h_{k-\frac12}(|D_0|) |D_0|^{k-\frac32}
 \prod_{q|f}\frac{h_{k-\frac12}(|D_0| q^{2l_q}) - h_{k-\frac12}(|D_0| q^{2l_q-2})}{h_{k-\frac12}(|D_0|) |D_0|^{k-\frac32}}\\
 &=&
 h_{k-\frac12}(|D_0|) |D_0|^{k-\frac32}
 \prod_{q|f}\left(q^{(2k-3)l_q}
   - \left(\frac{D_0}{q}\right)q^{k-2 + (2k-3)(l_q-1)}\right).
\end{eqnarray*}
The lemma follows from this identity, since
 $\left(\frac{-m}{p}\right) =  0$ if $p|f$; 
$\left(\frac{-m}{p}\right) =\left(\frac{D_0}{p}\right)$ if $p {\not|} f$.
\end{proof}

By using the function $g_k(m)$, we obtain the following proposition.

\begin{prop}\label{prop:fourier_jacobi}
For $\mathcal{M} = \begin{pmatrix} * & * \\ * & 1 \end{pmatrix} \in \mbox{Sym}_2^+$ 
we put $m = \det(2\mathcal{M})$.
Let $D_0$, $f$ be as above, which depend on the integer $m$.
If $k > n + 1$, then
\begin{eqnarray*}
 e_{k,\mathcal{M}}^{(n-2)}(\tau,z)
 &=&
 \sum_{d|f} g_k\!\left(\frac{m}{d^2}\right) E_{k,\mathcal{M}[{W_d}^{-1}]}^{(n-2)}(\tau,z {^t W_d}) ,
\end{eqnarray*}
where we chose a matrix $W_d \in \mbox{GL}_2(\Qq)\cap \Z^{(2,2)}$ for each $d$ which satisfies the conditions
$\det(W_d) = d$,
${^t W_d}^{-1}\mathcal{M} {W_d}^{-1} \in  \mbox{Sym}_2^+$
and ${^t W_d}^{-1}\mathcal{M} { W_d}^{-1} = \begin{pmatrix} * & * \\ * & 1 \end{pmatrix}$.
Remark that the matrix $W_d$ is not uniquely determined, but the above summation does not depend on the choice of $W_d$.
\end{prop}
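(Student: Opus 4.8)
The plan is to derive the formula as a specialization of B\"ocherer's explicit Fourier--Jacobi expansion of Siegel--Eisenstein series \cite[Satz~7]{Bo}. For a general index size $r$, that result expresses $e_{k,\mathcal{M}}^{(n-r)}$ as an explicit finite $\Qq$-linear combination of Jacobi--Eisenstein series $E_{k,\mathcal{N}}^{(n-r)}$, where $\mathcal{N}$ runs over the half-integral positive definite symmetric matrices whose lattice contains that of $\mathcal{M}$ (equivalently, $\mathcal{M} = \mathcal{N}[G]$ for some integral $G$), and where the coefficient attached to $\mathcal{N}$ is built from the local representation densities of $\mathcal{M}$ by $\mathcal{N}$. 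First I would write down this formula in the case $r = 2$.

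Next I would bring each term into the normalized shape of the statement. If $\mathcal{M} = \mathcal{N}[G]$ with $\det G = d$, then $\mathcal{N} = \mathcal{M}[G^{-1}]$ and hence $\det(2\mathcal{N}) = m/d^2$; writing $m = |D_0| f^2$ and using that $-\det(2\mathcal{N})$ must be $\equiv 0,1 \bmod 4$ forces $d \mid f$, and conversely each $d \mid f$ occurs. Replacing $\mathcal{N}$ by a suitable $\mbox{GL}_2(\Z)$-equivalent matrix $\mathcal{M}[W_d^{-1}]$ with $W_d \in \mbox{GL}_2(\Qq)\cap\Z^{(2,2)}$, $\det W_d = d$ and $\mathcal{M}[W_d^{-1}] = \smat{*}{*}{*}{1}$, and applying Lemma~\ref{lemma:eisen_A} to the corresponding term, that term becomes $E_{k,\mathcal{M}[W_d^{-1}]}^{(n-2)}(\tau, z\,{}^tW_d)$; Lemma~\ref{lemma:eisen_A} also shows that this is independent of the choice of $W_d$ within its class. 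Here one has to check that for each $d \mid f$ exactly one such normalized class contributes, so that B\"ocherer's sum is (after grouping) a sum over $d \mid f$.

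Finally I would match the coefficients. After this normalization the coefficient attached to $d$ is a product over primes of local factors depending only on $m$, $D_0$ and $d$; comparing it with the explicit formula for the Cohen numbers $h_{k-\frac12}$ recalled above and with the definition $g_k(m/d^2) = \sum_{e \mid f/d} \mu(e)\, h_{k-\frac12}\!\bigl(m/(de)^2\bigr)$, I would verify the equality prime by prime, treating separately the primes $p \mid f$ (where the factor $\bigl(\tfrac{D_0}{p}\bigr)p^{k-2}$ enters and the M\"obius sum is genuinely needed) and the primes $p \nmid f$; the multiplicativity already isolated in the proof of Lemma~\ref{lemma:gk} reduces this to a single-prime computation, and the prime $p = 2$ must be handled as well. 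I expect this last comparison --- converting B\"ocherer's local densities into the elementary divisor sum $g_k$ --- to be the main obstacle, the overlattice bookkeeping in the second paragraph being elementary but somewhat delicate.
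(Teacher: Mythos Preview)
Your outline starts from the right place (B\"ocherer's Satz~7) and the overlattice bookkeeping is essentially correct, but you misidentify where the work is and miss the structural reason why $g_k$ appears. In the form of Satz~7 the paper uses, the Fourier--Jacobi coefficient is
\[
 e_{k,\mathcal{M}}^{(n-2)}(\tau,z)
 =
 \sum_{N_1}
 a_2^k(\mathcal{M}[{^tN_1}^{-1}])
 \sum_{\substack{N_3\\ \left(\begin{smallmatrix}N_1\\N_3\end{smallmatrix}\right)\text{ primitive}}}
 f(\mathcal{M},N_1,N_3;\tau,z),
\]
where $a_2^k(\cdot)$ is a Fourier coefficient of the degree-$2$ Siegel Eisenstein series. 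Two things then happen that your plan does not exploit. First, since $\mathcal{M}[{^tN_1}^{-1}]=\smat{*}{*}{*}{1}$ after choosing $N_1={^tW_l}$, one has $a_2^k(\mathcal{M}[{^tN_1}^{-1}])=h_{k-\frac12}(m/l^2)$ immediately; the coefficients \emph{are} Cohen numbers, and no prime-by-prime density computation is needed. Second, the primitivity condition on $N_3$ is removed by M\"obius inversion, $\sum_{N_3\text{ prim.}}=\sum_{a\mid l}\mu(a)\sum_{N_3}$, and the full $N_3$-sum is recognized as $E_{k,\mathcal{M}[W_{l/a}^{-1}]}^{(n-2)}(\tau,z\,{^tW_{l/a}})$. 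Writing $l=ad$ and swapping the order of summation gives, for each $d\mid f$, the inner coefficient $\sum_{a\mid f/d}\mu(a)\,h_{k-\frac12}(m/(ad)^2)=g_k(m/d^2)$ \emph{by definition}.

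So the step you flag as the ``main obstacle'' --- matching B\"ocherer's coefficients to $g_k$ --- is not a separate identity to be verified; it falls out of (i) the equality $a_2^k=h_{k-\frac12}$ for matrices with bottom-right entry $1$, and (ii) the M\"obius unwinding of the primitivity constraint, which your outline omits entirely. Your route via local densities and a prime-by-prime comparison would eventually arrive at the same place, but it re-proves the known evaluation of $a_2^k$ and hides the combinatorial origin of the M\"obius sum inside $g_k$.
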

\begin{proof}
We use the terminology and the Satz 7 in~\cite{Bo} for this proof.
For $\mathcal{M}' \in \mbox{Sym}_n^+$ we denote by $a_2^k(\mathcal{M}')$ the $\mathcal{M}'$-th
Fourier coefficient of Siegel-Eisenstein series of weight $k$ of degree 2.
We put 
\begin{eqnarray*} 
 \mbox{M}_2^n(\Z)^*
 &:=&
 \left\{ N \in \Z^{(2,2)}
  \, | \, 
  \det(N)\neq 0 
  \mbox{ and there exists }
   V = \left(\begin{smallmatrix}N&*\\ *&*\end{smallmatrix}\right) \in \mbox{GL}_n(\Z)
 \right\} .
\end{eqnarray*}
A matrix $N \in \Z^{(n,2)}$ is called \textit{primitive}
if there exists a matrix $V \in \mbox{GL}_n(\Z)$
such that $V = (N\ *)$.
From~\cite[Satz~7]{Bo} we have 
\begin{eqnarray*}
 e_{k,\mathcal{M}}^{(n-2)}(\tau,z)
 &=& 
 \sum_{\begin{smallmatrix} N_1 \in M_2^n(\Z)^*/ GL_2(\Z) \\ 
                           N_1^{-1} \mathcal{M} {^t N_1}^{-1} \in Sym_2^+\end{smallmatrix}}
 a_2^k( \mathcal{M} [{^tN_1}^{-1}] )
 \sum_{\begin{smallmatrix} N_3 \in \Z^{(n-2,2)} \\ \left(\begin{smallmatrix} N_1\\N_3 \end{smallmatrix} \right) : primitive \end{smallmatrix}}
 f(\mathcal{M},N_1,N_3;\tau,z) ,
\end{eqnarray*}
where
\begin{eqnarray*}
 \begin{aligned}
 &
 f(\mathcal{M},N_1,N_3;\tau,z)
\\
 &:=
 \sum_{\left(\begin{smallmatrix}A&B\\C&D \end{smallmatrix}\right) \in \Gamma_{\infty}^{(n-2)} \backslash \Gamma_{n-2}}
 \!\!\!\!\!\!\!\!
 \det(C\tau+D)^{-k} \\
 & \qquad
  \times \, e\!\left( \mathcal{M} 
  \left\{ 
   -{^t z}(C \tau + D)^{-1}C  z + {^t z}(C \tau + D)^{-1} N_3 N_1^{-1} \right. \right. \\
& \qquad
  \left. \left.
   + {^t N_1}^{-1} {^t N_3} {^t(C \tau + D)^{-1}} z
   + {^t N_1}^{-1} {^t N_3} {(A \tau + B) (C \tau + D)^{-1} N_3 N_1^{-1}}
  \right\}\right) .
 \end{aligned}
\end{eqnarray*}

For any positive integer $l$ such that $l^2|m$,
we chose a matrix $W_l \in \Z^{(2,2)}$ which satisfies three conditions
$\det(W_l) = l$,
${^t W_l}^{-1}\mathcal{M} { W_l}^{-1} \in \mbox{Sym}_2^+$
and ${^t W_l}^{-1}\mathcal{M} { W_l}^{-1} = \begin{pmatrix} * & * \\ * & 1 \end{pmatrix}$.
By virtue of these conditions,
$W_l$ has the form
 $W_l = \begin{pmatrix}l&0\\x&1\end{pmatrix}$
with some $x \in \Z$.
The set ${^t W_l}\, \mbox{GL}_2(\Z)$ is uniquely determined for each positive integer $l$
such that $l^2|m$.
If $N_1 = {^t W_l} = \begin{pmatrix}l&x\\0&1\end{pmatrix}$, then
\begin{eqnarray*}
  \sum_{\begin{smallmatrix} N_3 \in \Z^{(n-2,2)} \\ \left(\begin{smallmatrix} N_1\\N_3 \end{smallmatrix} \right) : primitive \end{smallmatrix}}
 f(\mathcal{M},N_1,N_3;\tau,z)
&=&
 \sum_{a|l} \mu(a) \sum_{N_3 \in \Z^{(n-2,2)}}
 f(\mathcal{M},N_1,N_3 \smat{a}{0}{0}{1} ;\tau,z).
\end{eqnarray*}
Thus
\begin{eqnarray*}
 \begin{aligned}
 &
 e_{k,\mathcal{M}}^{(n-2)}(\tau,z)
\\
 &=
 \sum_{\begin{smallmatrix} l \\ l^2 | m \end{smallmatrix}} 
  a_2^k(\mathcal{M} [{W_l}^{-1}])
 \sum_{a | l} \mu(a) \!\!\!
 \sum_{N_3 \in \Z^{(n-2,2)}}
   f(\mathcal{M},{^t W_l},N_3 \left(\begin{smallmatrix}a&0\\0&1\end{smallmatrix} \right);\tau,z)
\\
 &=
 \sum_{\begin{smallmatrix} l \\ l^2 | m \end{smallmatrix}} 
  a_2^k(\mathcal{M} [{W_l}^{-1}])
 \sum_{a | l} \mu(a) \!\!\!
 \sum_{N_3 \in \Z^{(n-2,2)}}
   f(\mathcal{M} [{ W_l}^{-1} \left(\begin{smallmatrix}a&0\\0&1\end{smallmatrix} \right) ],
     1_2,N_3;
     \tau,z {^t W_l} \left(\begin{smallmatrix}a&0\\0&1\end{smallmatrix} \right)^{-1}).
  \end{aligned}
\end{eqnarray*}
Therefore
\begin{eqnarray*}
 \begin{aligned}
 &
 e_{k,\mathcal{M}}^{(n-2)}(\tau,z) \\
 &=
 \sum_{\begin{smallmatrix} l \\ l^2 | m \end{smallmatrix}} 
  a_2^k(\mathcal{M} [{W_l}^{-1}])
 \sum_{a | l} \mu(a)
 \, E_{k,\mathcal{M}[{ W_l^{-1}}\left(\begin{smallmatrix} a& \\ & 1 \end{smallmatrix}\right)]}^{(n-2)}
    (\tau,z {^t W_l} \left(\begin{smallmatrix} a^{-1}& \\ & 1 \end{smallmatrix}\right))
\\
 &=
 \sum_{\begin{smallmatrix} d \\ d^2 | m \end{smallmatrix}}
 E_{k,\mathcal{M}[{ W_d^{-1}}]}^{(n-2)}
    (\tau,z {^t W_d})
 \sum_{\begin{smallmatrix}a \\ a^2 | \frac{m}{d^2}\end{smallmatrix}}
  \mu(a)\, a_2^k(\mathcal{M} [{W_d}^{-1}\left(\begin{smallmatrix} a^{-1}& \\ & 1
    \end{smallmatrix}\right)]).
  \end{aligned}
\end{eqnarray*}
Here we have
 $a_2^k(\mathcal{M}') = h_{k-\frac12}(\det(2\mathcal{M}'))$
for any $\mathcal{M}' = \left(\begin{smallmatrix}*&*\\ *&1\end{smallmatrix}\right)\in \mbox{Sym}_2^+$.
Moreover, if
 $m \not \equiv 0,3 \mod 4$,
then
 $h_{k-\frac12}(m) = 0$.
Hence
\begin{eqnarray*}
 e_{k,\mathcal{M}}^{(n-2)}(\tau,z)
 &=&
 \sum_{\begin{smallmatrix} d \\ d | f \end{smallmatrix}}
 E_{k,\mathcal{M}[{ W_d^{-1}}]}^{(n-2)}
    (\tau,z {^t W_d})
 \sum_{\begin{smallmatrix}a \\ a | \frac{f}{d}\end{smallmatrix}}
  \mu(a)\, h_{k-\frac12}\!\!\left(\frac{m}{a^2d^2}\right).
\end{eqnarray*}
Therefore this proposition follows.
\end{proof}

\section{Relation between Jacobi forms of half-integral weight of integer index and Jacobi forms of integral weight of matrix index}
In this section we fix a positive definite half-integral symmetric matrix
$\mathcal{M} \in \mbox{Sym}_2^+$,
and we assume that $\mathcal{M}$ has the form
$\mathcal{M} = \begin{pmatrix} l & \frac12 r \\ \frac12 r & 1 \end{pmatrix}$
with integers $l$ and $r$.

The purpose of this section is to give a map $\iota_{\mathcal{M}}$
which is a map from certain holomorphic functions on $\mathfrak{H}_n \times \C^{(n,2)}$
to holomorphic functions on $\mathfrak{H}_n \times \C^{(n,1)}$.
A restriction of $\iota_{\mathcal{M}}$ gives a map from a certain subspace
$J_{k,\, \mathcal{M}}^{(n)*}$ of $J_{k,\, \mathcal{M}}^{(n)}$
to a certain subspace
$J_{k-\frac12,\, \det(2\mathcal{M})}^{(n)*}$ of $J_{k-\frac12,\, \det(2\mathcal{M})}^{(n)}$
(cf. Lemma~\ref{lemma:iota}).
Moreover, we shall show a compatibility between the map $\iota_{\mathcal{M}}$
and index-shift maps 
(cf. Proposition~\ref{prop:iota_U} and Proposition~\ref{prop:iota_hecke}).
Furthermore, we define index-shift maps $\tilde{V}_{\alpha,n-\alpha}(p^2)$ for $J_{k-\frac12,\, \det(2\mathcal{M})}^{(n)*}$ at $p=2$
through the map $\iota_{\mathcal{M}}$ (cf. \S\ref{ss:hecke_p2}).

By virtue of the map $\iota_{\mathcal{M}}$
and by the results in this section,
we can translate some relations among
Jacobi forms of \textit{half-integral weight of integer index}
to relations among Jacobi forms of \textit{integral weight of matrix index}.

\subsection{An expansion of Jacobi forms of integer index}\label{ss:fj_expansion}

In this subsection
we consider an expansion of Jacobi forms of integer index
and shall introduce a certain subspace
$J_{k,\mathcal{M}}^{(n)*} \subset J_{k,\mathcal{M}}^{(n)}$.

The symbol $J_{k,1}^{(n+1)}$ denotes
the space of Jacobi forms of weight $k$ of index $1$ of degree $n+1$
(cf. \S\ref{ss:jacobi_forms_of_matrix_index}).

Let $\phi_1(\tau,z) \in J_{k,1}^{(n+1)}$ be a Jacobi form.
We regard $\phi_1(\tau,z)\, e(\omega)$ as a holomorphic function on $\H_{n+2}$,
where $\tau \in \H_{n+1}$, $z \in \C^{(n+1,1)}$ and $\omega \in \H_1$
such that $\smat{\tau}{z}{^t z}{\omega} \in \H_{n+2}$.
We have an expansion
\begin{eqnarray*}
 \phi_1(\tau,z) e(w)
 &=&
 \sum_{\begin{smallmatrix}
        S \in Sym_2^+ \\
        S = \smat{ *}{ *}{ *}{ 1}
       \end{smallmatrix}}
      \phi_{\mathcal{S}}(\tau',z') e(\mathcal{S} \omega'),
\end{eqnarray*}
where $\tau' \in \H_n$, $z' \in \C^{(n,2)}$ and $\omega' \in \H_2$ which satisfy
$\smat{\tau}{z}{^t z}{\omega} = \smat{\tau'}{z'}{^t z'}{\omega'} \in \H_{n+2}$.
Because
the group $\Gamma_{n,2}^J$ (cf. \S~\ref{ss:Jacobi_group}) is a subgroup of $\Gamma_{n+1,1}^J$,
the form $\phi_{\mathcal{S}}$ belongs to $J_{k,\mathcal{S}}^{(n)}$.
We denote this map by $\mbox{FJ}_{1,\mathcal{S}}$, it means that we have a map
\begin{eqnarray*}
 \mbox{FJ}_{1,\mathcal{S}}: J_{k,1}^{(n+1)} \rightarrow J_{k,\mathcal{S}}^{(n)}.
\end{eqnarray*}
By an abuse of language, we call the map $\mbox{FJ}_{1,\mathcal{S}}$
\textit{the Fourier-Jacobi expansion with respect to $S$}.

The $\C$-vector subspace $J_{k,\mathcal{M}}^{(n)*}$ of $J_{k,\mathcal{M}}^{(n)}$
denotes the image of $J_{k,1}^{(n+1)}$
by $\mbox{FJ}_{1,\mathcal{M}}$,
where $\mathcal{M}$ is a half-integral symmetric matrix of size $2$.

\subsection{Fourier-Jacobi expansion of Siegel modular forms of half-integral weight}
\label{ss:fourier_jacobi_expansion_half}
The purpose of this subsection is to show the following lemma.
\begin{lemma} \label{lemma:fj_half}
Let $F\left(\smat{\tau}{z}{^t z}{\omega} \right)= \sum_{m \in \Z}\phi_m(\tau,z) e(m\omega)$
be a Fourier-Jacobi expansion of $F \in M_{k-\frac12}(\Gamma_0^{(n+1)}(4))$,
where $\tau \in \H_n$, $\omega \in \H_1$ and $z \in \C^{(n,1)}$.
Then $\phi_m \in J_{k-\frac12,m}^{(n)}$ for any natural number $m$.
\end{lemma}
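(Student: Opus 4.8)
The plan is to unwind the definitions. We are given $F\in M_{k-\frac12}(\Gamma_0^{(n+1)}(4))$ with Fourier--Jacobi expansion $F\left(\smat{\tau}{z}{^t z}{\omega}\right)=\sum_{m}\phi_m(\tau,z)e(m\omega)$, and we must verify that each $\phi_m$ satisfies the two conditions in the definition of $J_{k-\frac12,m}^{(n)}$ from \S\ref{ss:def_jacobi_half_weight}: the transformation law $\phi_m|_{k-\frac12,m}\gamma^*=\phi_m$ for all $\gamma^*\in\Gamma_{n,1}^{J*}$, and the Fourier-expansion condition on $\phi_m^2|_{2k-1,2m}\gamma$ for all $\gamma\in\Gamma_{n,1}^J$. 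The source of everything is the single automorphy relation $F|_{k-\frac12}M^*=F$ for $M^*\in\Gamma_0^{(n+1)}(4)^*$, together with the holomorphy of $F$.

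First I would prove the transformation law. The key observation is that $\Gamma_{n,1}^{J*}$ embeds into $\Gamma_0^{(n+1)}(4)^*$: an element $(M^*,[(\lambda,\mu),\kappa])$ with $M^*\in\Gamma_0^{(n)}(4)^*$ and $\lambda,\mu\in\Z^{(n,1)},\kappa\in\Z$ corresponds, via the block-embedding used to define $G_{n,1}^J\subset\mathrm{GSp}_{n+1}^+(\R)$ in \S\ref{ss:Jacobi_group}, to a matrix in $\Gamma_{n+1}$ whose lower-left block is still $\equiv0\bmod 4$ (the Heisenberg part and the $U=V=1$ block contribute nothing to the $C$-block), so it lies in $\Gamma_0^{(n+1)}(4)$; and one checks the theta-multiplier $\theta^{(n+1)}$ on this embedded element agrees with the $\widetilde{G_{n,1}^J}$-factor built from $\theta^{(n)}$ on $M^*$ — this is essentially because the extra theta variable is inert under the Jacobi part. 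Applying $F|_{k-\frac12}(\text{embedded }\gamma^*)=F$ and matching the $e(m\omega)$-Fourier coefficient on both sides, using the definition of $J_{k-\frac12,m}$ in \S\ref{ss:factors_automorphy} and the cocycle relation for $J_{k-\frac12,m}$ recorded there, yields exactly $\phi_m|_{k-\frac12,m}\gamma^*=\phi_m$. The holomorphy of $\phi_m$ in $(\tau,z)$ is immediate since it is a Fourier coefficient of the holomorphic $F$.

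Next, for condition (2), I would pass to the squared form. For $\gamma\in\Gamma_{n,1}^J$, consider $F^2$; from $F|_{k-\frac12}M^*=F$ one gets that $F^2$ transforms with the honest integral-weight automorphy factor $\det(C\tau+D)^{2k-1}$ under $\Gamma_0^{(n+1)}(4)$, i.e. $F^2\in M_{2k-1}(\Gamma_0^{(n+1)}(4))$ (a classical scalar modular form of level $4$, no multiplier). Its Fourier--Jacobi expansion is $F^2=\sum_{m'}\psi_{m'}e(m'\omega)$ with $\psi_{m'}=\sum_{a+b=m'}\phi_a\phi_b$, in particular $\psi_{2m}=\phi_m^2+(\text{cross terms})$; more to the point, $\phi_m^2$ is, up to adding lower Fourier--Jacobi pieces, controlled by $F^2$. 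The standard argument (the integral-weight analogue of Lemma~\ref{lemma:fj_half}, which is classical and may be invoked) shows each Fourier--Jacobi coefficient $\psi_{m'}$ of a level-$4$ integral-weight Siegel form of degree $n+1$ lies in $J_{2k-1,m'}^{(n)}(\Gamma_0^{(n)}(4))$, hence has a Fourier expansion of the required shape $\sum C(N,R)e(\tfrac1h N\tau)e({}^tR z)$ with $4Nm'-hR{}^tR\geq0$ (with $h=4$, coming from the level). One then transfers this to $\phi_m^2|_{2k-1,2m}\gamma$ for arbitrary $\gamma\in\Gamma_{n,1}^J$ by conjugating the expansion through $\gamma$ — equivalently, by applying the same reasoning to $F^2|_{2k-1}(\text{embedded }\gamma)$, which is again holomorphic of the right weight — and picking out the relevant coefficient. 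The positivity condition $4Nm-hR\,{}^tR\geq0$ is exactly the Koecher-type positivity inherited from holomorphy and boundedness of $F^2$ at the cusps of $\Gamma_0^{(n+1)}(4)$.

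The main obstacle, I expect, is the bookkeeping in the first step: correctly identifying the embedding $\Gamma_{n,1}^{J*}\hookrightarrow\Gamma_0^{(n+1)}(4)^*$ at the level of the \emph{metaplectic} (theta-multiplier) covers, so that the slash operator $|_{k-\frac12}$ on $M_{k-\frac12}(\Gamma_0^{(n+1)}(4))$ restricts precisely to the Jacobi slash operator $|_{k-\frac12,m}$ defined via the factor of automorphy $J_{k-\frac12,m}$ in \S\ref{ss:factors_automorphy}. Concretely one must verify that $\theta^{(n+1)}$ restricted to the Jacobi subgroup factors as $\varphi(\tau)$ times an exponential in the Jacobi/index variables matching the $e(\cdots)$ terms in $J_{k-\frac12,m}$; this is a direct but slightly fiddly theta-transformation computation. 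Everything else is a routine comparison of Fourier coefficients and an appeal to the well-known integral-weight statement.
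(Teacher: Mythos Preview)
Your plan for condition~(1) is exactly the paper's approach: reduce everything to the theta-multiplier identity
\[
 \theta^{(n+1)}(\gamma\cdot Z)\,\theta^{(n+1)}(Z)^{-1}
 \;=\;
 \theta^{(n)}(M\cdot\tau)\,\theta^{(n)}(\tau)^{-1}
\]
for $\gamma=(M,[(\lambda,\mu),\kappa])$ embedded in $\Gamma_0^{(n+1)}(4)$. The ``slightly fiddly theta-transformation computation'' you anticipate is handled very cleanly in the paper: square the ratio, use the known formula $\bigl(\theta^{(n+1)}(M'Z)/\theta^{(n+1)}(Z)\bigr)^2=\det(C'Z+D')\bigl(\tfrac{-4}{\det D'}\bigr)$ to see that for Jacobi-embedded $\gamma$ the square equals $\det(C\tau+D)\bigl(\tfrac{-4}{\det D}\bigr)$ and hence is independent of $z$ and $\omega$; then simply set $z=0$ and observe $\theta^{(n+1)}\!\left(\smat{\tau}{0}{0}{\omega}\right)=\theta^{(n)}(\tau)\theta^{(1)}(\omega)$ to read off the identity. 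No explicit cocycle bookkeeping is needed.

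One caution on your condition~(2) sketch: the Fourier--Jacobi coefficient $\psi_{2m}$ of $F^2$ is $\sum_{a+b=2m}\phi_a\phi_b$, and the cross terms do not drop out, so ``picking out $\phi_m^2$'' from $F^2$ is not literally possible. The right way to run your idea is to note that $(\phi_m\,e(m\omega))|_{k-\frac12}\tilde\gamma$ is, up to the index-$m$ exponential, exactly $\phi_m|_{k-\frac12,m}\gamma$ by the factor-of-automorphy definitions in \S\ref{ss:factors_automorphy}; squaring and using that $F^2$ (hence each $\phi_m e(m\omega)$ as a summand with disjoint $\omega$-frequencies) is bounded in fundamental domains then gives the required Fourier support for $\phi_m^2|_{2k-1,2m}\gamma$. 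The paper in fact treats the theta identity as the only nontrivial point and does not spell out~(2).
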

\begin{proof}
Due to the definition of $J_{k-\frac12,m}^{(n)}$,
we only need to show the identity
\begin{eqnarray*}
 \theta^{(n+1)}(\gamma \cdot \smat{\tau}{z}{^t z}{\omega})\, \theta^{(n+1)}(\smat{\tau}{z}{^t z}{\omega})^{-1}
 &=&
 \theta^{(n)}(\smat{A}{B}{C}{D} \cdot \tau)\, \theta^{(n)}(\tau)^{-1}
\end{eqnarray*}
for any $\gamma = (\smat{A}{B}{C}{D}, [(\lambda,\mu),\kappa]) \in \Gamma_{n,1}^J$
and for any $\smat{\tau}{z}{^t z}{\omega} \in \H_{n+1}$
such that $\tau \in \H_n$, $\omega \in \H_1$.
Here $\theta^{(n+1)}$ and $\theta^{(n)}$ are the theta constants (cf. \S\ref{ss:double_covering_groups}).

For any $M = \smat{A'}{B'}{C'}{D'} \in \Gamma_0^{(n+1)}(4)$
it is known that 
\begin{eqnarray*}
 \left(\theta^{(n+1)}(M \cdot Z)\, \theta^{(n+1)}(Z)^{-1}\right)^2
 &=&
 \det(C' Z + D') \left(\frac{-4}{\det D'}\right),
\end{eqnarray*}
where $Z \in \H_{n+1}$.
Here $\left(\frac{-4}{\det D'}\right)$ is the quadratic symbol
and it is known the identity $\left(\frac{-4}{\det D'}\right) = (-1)^{\frac{\det D' - 1}{2}}$.
Hence, for any $\gamma = (\smat{A}{B}{C}{D}, [(\lambda,\mu),\kappa]) \in \Gamma_{n,1}^J$,
we obtain
\begin{eqnarray*}
 \left(\theta^{(n+1)}(\gamma \cdot Z)\, \theta^{(n+1)}(Z)^{-1}\right)^2
 &=&
 \det(C \tau + D) \left(\frac{-4}{\det D}\right),
\end{eqnarray*}
where $Z = \smat{\tau}{z}{^t z}{\omega} \in \H_{n+1}$ with $\tau \in \H_n$.
In particular, the holomorphic function $\frac{\theta^{(n+1)}(\gamma\cdot Z)}{ \theta^{(n+1)}(Z)}$
does not depend on the choice of
$z \in \C^{(n,1)}$ and of $\omega \in \H_1$.
We substitute $z = 0$ into $\frac{\theta^{(n+1)}(\gamma\cdot Z)}{ \theta^{(n+1)}(Z)}$ and a straightforward calculation gives
\begin{eqnarray*}
 \frac{\theta^{(n+1)}(\gamma \cdot \smat{\tau}{0}{0}{\omega})}{\theta^{(n+1)}(\smat{\tau}{0}{0}{\omega})}
 &=&
 \frac{\theta^{(n)}(\smat{A}{B}{C}{D}\cdot \tau)}{\theta^{(n)}(\tau)}.
\end{eqnarray*}
Hence we conclude this lemma.
\end{proof}

\subsection{The map $\sigma$ and
the Hecke operator $\tilde{T}_{\alpha,n-\alpha}(p^2)$}\label{ss:fj_expansion_half}
In this subsection we review the isomorphism between the space of
Jacobi forms of index $1$ and a certain subspace of Siegel modular forms
of half-integral weight, which has been shown
by Eichler-Zagier\cite{EZ} for degree one and by Ibukiyama\cite{Ib} for general degree.

Let $M_{k-\frac12}^+(\Gamma_0^{(n)}(4))$ be the generalized plus-space
introduced in~\cite[page 112]{Ib},
which is a generalization of the Kohnen plus-space for higher degrees:
\begin{eqnarray*}
 M_{k-\frac12}^+(\Gamma_0^{(n)}(4)) 
 &:=& \left\{ F \in M_{k-\frac12}(\Gamma_0^{(n)}(4))  \left| 
 \begin{matrix} \mbox{the coefficients }
 A(N) = 0 \mbox{ unless } \\
 N+ (-1)^k R {^t R} \in 4\, \mbox{Sym}_{n}^* \\
 \mbox{ for some } R \in \Z^{(n,1)}
 \end{matrix}
 \right\}
 \right.
 .
\end{eqnarray*}

A form $F \in M_{k-\frac12}(\Gamma_0^{(n)}(4))$ is called a Siegel cusp form
if $F^2$ is a Siegel cusp form of weight $2k-1$.
We denote by $S_{k-\frac12}^+(\Gamma_0^{(n)}(4))$ the space of
all Siegel cusp forms in $M_{k-\frac12}^+(\Gamma_0^{(n)}(4))$.

For any even integer $k$,
the isomorphism between $J_{k,1}^{(n)}$
(the space of Jacobi forms of weight $k$ of index $1$ of degree $n$)
and $M_{k-\frac12}^+(\Gamma_0^{(n)}(4))$ is shown in~\cite[Theorem 5.4]{EZ} 
for $n=1$ and in~\cite[Theorem 1]{Ib} for $n > 1$.
We call this isomorphism
\textit{the Eichler-Zagier-Ibukiyama correspondence} and denote
this linear map by $\sigma$ which is a bijection
from $J_{k,1}^{(n)}$ to $M_{k-\frac12}^+(\Gamma_0^{(n)}(4))$
as modules over the ring of Hecke operators.
By the map $\sigma$ the space $S_{k-\frac12}^+(\Gamma_0^{(n)}(4))$
is isomorphic to the space of Jacobi cusp forms $J_{k,1}^{(n)\, cusp}$.
The map
\begin{eqnarray*}
 \sigma : J_{k,1}^{(n)} \rightarrow M_{k-\frac12}^+(\Gamma_0^{(n)}(4))
\end{eqnarray*}
is given as follows:
if
\begin{eqnarray*}
\phi(\tau,z) =   \sum_{\begin{smallmatrix} 
        N \in Sym_{n}^*, \, R \in \Z^{(n,1)}\\
        4N - R{^t R} \geq 0
       \end{smallmatrix}}
       C(N,R)\, e(N\tau + R{^t z})
\end{eqnarray*}
is a Jacobi form which belongs to $J_{k,1}^{(n)}$,
then $\sigma(\phi) \in M_{k-\frac12}^+(\Gamma_0^{(n)}(4))$ is defined by
\begin{eqnarray*}
 \sigma(\phi)(\tau) &:=&
 \sum_{\begin{smallmatrix}
         R\!\!\! \mod  (2\Z)^{(n,1)} \\
         R \in \Z^{(n,1)}
       \end{smallmatrix}}
 \sum_{\begin{smallmatrix}
         N \in Sym_{n}^* \\
         4N - R{^t R} \geq 0
       \end{smallmatrix}}
    C(N,R)\, e( (4N - R {^t R}) \tau).
\end{eqnarray*}

For the double coset
$\Gamma_{n} \mbox{diag}(1_{\alpha}, p 1_{n-\alpha},p^2 1_{\alpha}, p 1_{n-\alpha})
\Gamma_{n}$ and for $\phi \in J_{k,1}^{(n)}$, the Hecke
operator $T_{\alpha,n-\alpha}^J(p^2)$ is defined by
\begin{eqnarray*}
  \phi|T_{\alpha,n-\alpha}^J(p^2)
  &:=&
  \sum_{\lambda,\mu \in (\Z/p\Z)^{n}}
  \sum_{M}
  \phi|_{k,1} \left(M\times\smat{p}{0}{0}{p} ,[(\lambda,\mu),0]\right).
\end{eqnarray*}
Here, in the second summation of the RHS, the matrix $M$ runs over all representatives
of
 $\Gamma_{n}\backslash
   \Gamma_{n}\, 
   \mbox{diag}(1_{\alpha}, p 1_{n-\alpha},p^2 1_{\alpha}, p 1_{n-\alpha})
   \Gamma_{n}$.
Let $\tilde{T}_{\alpha,n-\alpha}(p^2)$ be the Hecke operator introduced
in \S\ref{ss:hecke_op_siegel_half} for odd prime $p$,
which acts on the space $M_{k-\frac12}(\Gamma_0^{(n)}(4))$.
For any odd prime $p$ the identity
\begin{eqnarray} \label{eq:sigma_isom_hecke}
  \sigma(\phi)|\tilde{T}_{\alpha,n-\alpha}(p^2)
  &=&
    p^{\alpha/2 + k(2n+1) - (2n+7)n/2} \sigma(\phi|T_{\alpha,n-\alpha}^J(p^2)).
\end{eqnarray}
has been obtained in~\cite{Ib}.

Through the identity (\ref{eq:sigma_isom_hecke}) we define the Hecke operator
$\tilde{T}_{\alpha,n-\alpha}(4)$ for $M_{k-\frac12}^{+}(\Gamma_0^{(n)}(4))$,
it means
\begin{eqnarray}\label{eq:half_hecke_2}
 \sigma(\phi)|\tilde{T}_{\alpha,n-\alpha}(4)
  &:=&
    2^{\alpha/2 + k(2n+1) - (2n+7)n/2} \sigma(\phi|T_{\alpha,n-\alpha}^J(4)).
\end{eqnarray}

\subsection{A generalization of Cohen-Eisenstein series and
the subspace $J_{k-1/2}^{(n)*}$}\label{ss:CE_J*}
In this subsection
we will introduce a subspace
$J_{k-\frac12,m}^{(n)*} \subset J_{k-\frac12,m}^{(n)}$
for any integer $n$.
Moreover, we will introduce a generalized Cohen-Eisenstein series
$\mathcal{H}_{k-\frac12}^{(n+1)}$
and will consider the Fourier-Jacobi expansion of $\mathcal{H}_{k-\frac12}^{(n+1)}$
for any integer $n$.

Let $e_{k,1}^{(n+1)}$ be the first Fourier-Jacobi coefficient of Siegel-Eisenstein series $E_k^{(n+2)}$
(see (\ref{eq:fourier_jacobi_eisenstein}) in \S\ref{s:fourier_matrix} for the definition of $e_{k,1}^{(n+1)}$).
It is known that $e_{k,1}^{(n+1)}$ coincides with the Jacobi-Eisenstein series
$E_{k,1}^{(n+1)}$ of weight $k$ of index $1$ of degree $n+1$
(cf. \cite[Satz 7]{Bo}. See (\ref{df:jacobi_eisenstein}) in \S\ref{s:fourier_matrix}
for the definition of $E_{k,1}^{(n+1)}$).

We define the \textit{generalized Cohen-Eisenstein series} $\mathcal{H}_{k-\frac12}^{(n+1)}$ of weight $k-\frac12$ of degree $n+1$ by
\begin{eqnarray*}
 \mathcal{H}_{k-\frac12}^{(n+1)}
 &:=&
 \sigma(E_{k,1}^{(n+1)}).
\end{eqnarray*}
Because $E_{k,1}^{(n+1)} \in J_{k,1}^{(n+1)}$,
we have $\mathcal{H}_{k-\frac12}^{(n+1)} \in M_{k-\frac12}^+(\Gamma_0^{(n+1)}(4))$
for any integer $n$.

For any integer $m$ we denote by $\widetilde{\mbox{FJ}}_m$ the linear map from 
$M_{k-\frac12}(\Gamma_0^{(n+1)}(4))$ to $J_{k-\frac12,m}^{(n)}$
obtained by the Fourier-Jacobi expansion with respect to the index $m$.
It means that if $G \in M_{k-\frac12}(\Gamma_0^{(n+1)}(4))$, then $G$ has the expansion
\begin{eqnarray*}
 G\!\left(\begin{pmatrix} \tau & z \\ ^t z & \omega \end{pmatrix}\right)
 &=&
 \sum_{m \in \Z} \phi_m(\tau,z) e(m\omega)
\end{eqnarray*}
and we define $\widetilde{\mbox{FJ}}_m(G) := \phi_m$.
We remark $\phi_m \in J_{k-\frac12,m}^{(n)}$ due to Lemma~\ref{lemma:fj_half}.

We denote by $J_{k-\frac12,m}^{(n)*}$ the image of
$M_{k-\frac12}^+(\Gamma_0^{(n+1)}(4))$
by the map $\widetilde{\mbox{FJ}}_m$.

We denote by $e_{k-\frac12,m}^{(n)}$ the $m$-th Fourier-Jacobi coefficient of
the generalized Cohen-Eisenstein series $\mathcal{H}_{k-\frac12}^{(n+1)}$
(see (\ref{eq:df_fourier_jacobi_cohen_eisenstein}) in \S\ref{s:introduction}
for the definition of $e_{k-\frac12,m}^{(n)}$).
We remark $e_{k-\frac12,m}^{(n)} \in J_{k-\frac12,m}^{(n)*}$
for any integer $n$.

\subsection{The map $\iota_{\mathcal{M}}$ }\label{ss:iota}
We recall
$\mathcal{M} = \left(\begin{smallmatrix} l & r/2 \\ r/2 & 1 \end{smallmatrix}\right) \in \mbox{Sym}_2^+$.
In this subsection we shall introduce a map
\begin{eqnarray*}
\iota_{\mathcal{M}} : H_{\mathcal{M}}^{(n)} \rightarrow \mbox{Hol}(\H_n\times \C^{(n,1)} \rightarrow \C),
\end{eqnarray*}
where $H_{\mathcal{M}}^{(n)}$ is a certain subspace of holomorphic functions on $\mathfrak{H}_n\times \C^{(n,2)}$,
which will be defined below,
and where $\mbox{Hol}(\H_n\times \C^{(n,1)} \rightarrow \C)$ denotes the space of all holomorphic functions
on $\H_n\times \C^{(n,1)}$.
We will show that
the restriction of $\iota_{\mathcal{M}}$ gives a linear isomorphism
between $J_{k,\mathcal{M}}^{(n)*}$ and  $J_{k-\frac12,m}^{(n)*}$ (cf. Lemma~\ref{lemma:iota}).

Let $\psi$ be a holomorphic function on $\H_n \times \C^{(n,2)}$.
We assume that $\psi$ has a Fourier expansion
\begin{eqnarray*}
 \psi(\tau,z) 
 &=& 
 \sum_{\begin{smallmatrix} N \in Sym_n^* , R \in \Z^{(n,1)}\\
                          4 N - R \mathcal{M}^{-1} {^tR} \geq 0 \end{smallmatrix}}A(N,R)\, e(N\tau + ^t R z)
\end{eqnarray*}
for $(\tau,z) \in \H_n \times \C^{(n,2)}$,
and assume that 
$\psi$ satisfies the following condition on the Fourier coefficients:
if 
\begin{eqnarray*}
 \begin{pmatrix}
  N & \frac12 R \\
  \frac12 ^t R & \mathcal{M}
 \end{pmatrix}
 &=&
 \begin{pmatrix}
  N' & \frac12 R' \\
  \frac12 ^t R' & \mathcal{M}
 \end{pmatrix}
 \left[\begin{pmatrix}
  1_n &  \\
  ^tT   & 1_2
 \end{pmatrix}\right]
\end{eqnarray*}
with some $T = \begin{pmatrix} 0 , \lambda \end{pmatrix} \in \Z^{(n,2)}$
and some $\lambda \in \Z^{(n,1)}$,
then $A(N,R) = A(N',R')$.

The symbol $H_{\mathcal{M}}^{(n)}$ denotes the $\C$-vector space consists of
all holomorphic functions which satisfy the above condition.

We remark $J_{k,\mathcal{M}}^{(n)*} \subset J_{k,\mathcal{M}}^{(n)} \subset H_{\mathcal{M}}^{(n)}$ for any even integer $k$.

Now we shall define a map $\iota_{\mathcal{M}}$.
For $\psi(\tau',z') = \sum A(N,R) e(N\tau' + R {^t z'}) \in H_{\mathcal{M}}^{(n)}$
we define a holomorphic function $\iota_{\mathcal{M}}(\psi)$
on $\H_n \times \C^{(n,1)}$ by
\begin{eqnarray*}
 \iota_{\mathcal{M}}(\psi)(\tau,z)
 &:=&
 \sum_{\begin{smallmatrix}
        M \in Sym_n^*,\ S \in \Z^{(n,1)} \\
        4 M m - S {^t S} \geq 0 
       \end{smallmatrix}}
       C(M,S) e(M\tau + S {^t z})
\end{eqnarray*}
for $(\tau,z)\in \H_n \times \C^{(n,1)}$,
where we define $C(M,S) := A(N,R)$ if there exist matrices $N \in \mbox{Sym}_2^*$ and $R = (R_1,R_2) \in \Z^{(n,2)}$
$(R_1,R_2 \in \Z^{(n,1)})$
which satisfy 
\begin{eqnarray*}
 \begin{pmatrix}
  M & \frac12 S \\ \frac12 ^t S & \det(2\mathcal{M})
 \end{pmatrix}
 &=&
 4 \begin{pmatrix}
    N & \frac12 R_1 \\ \frac12 ^t R_1 & l
   \end{pmatrix}
 - \begin{pmatrix}
    R_2 \\ r 
   \end{pmatrix}
   \begin{pmatrix}
     ^t R_2 , r
    \end{pmatrix} ,
\end{eqnarray*}
$C(M,S) := 0 $ otherwise.
We remark that the identity
\begin{eqnarray*}
   4 \begin{pmatrix}
    N & \frac12 R_1 \\ \frac12 ^t R_1 & l
   \end{pmatrix}
 - \begin{pmatrix}
    R_2 \\ r 
   \end{pmatrix}
   \begin{pmatrix}
     ^t R_2 , r
    \end{pmatrix}
    &=&
    4
    \begin{pmatrix}
     N & \frac12 R \\
     \frac12 {^t R} & \mathcal{M}
    \end{pmatrix}
    \left[
      \begin{pmatrix}
       1_{n} & \begin{matrix} 0 \\ \vdots \\ 0 \end{matrix} \\
        0 \cdots 0  & 1 \\
        - \frac12 {^t R_2} & -\frac12 r
      \end{pmatrix}
    \right]
\end{eqnarray*}
holds and remark that the coefficient $C(M,S)$ does not depend on the choice
of the matrices $N$ and $R$.
The proof of these facts are as follows.
The first  fact of the identity follows from a straightforward calculation.
As for the second fact,
if
\begin{eqnarray*}
 4 \begin{pmatrix}
    N & \frac12 R_1 \\ \frac12 ^t R_1 & l
   \end{pmatrix}
 - \begin{pmatrix}
    R_2 \\ r 
   \end{pmatrix}
   \begin{pmatrix}
     ^t R_2 , r
    \end{pmatrix} 
&=&
 4 \begin{pmatrix}
    N' & \frac12 R'_1 \\ \frac12 ^t R'_1 & l
   \end{pmatrix}
 - \begin{pmatrix}
    R'_2 \\ r 
   \end{pmatrix}
   \begin{pmatrix}
     ^t R'_2 , r
    \end{pmatrix} ,
\end{eqnarray*}
then $4N - R_2 {^t R_2} = 4N' - {R'_2} ^t {R'_2}$. Hence $R_2 {^t R_2} \equiv  {R'_2} ^t {R'_2} \!\! \mod 4$.
Thus there exists a matrix $\lambda \in \Z^{(n,1)}$ such that ${R'_2} = R_2 + 2 \lambda$.
Therefore, by straightforward calculation we have
\begin{eqnarray*}
 \begin{pmatrix}
  N & \frac12 R \\
  \frac12 ^t R & \mathcal{M}
 \end{pmatrix}
 &=&
 \begin{pmatrix}
  N' & \frac12 R' \\
  \frac12 ^t R' & \mathcal{M}
 \end{pmatrix}
 \left[
 \begin{pmatrix}
 1_n & 0 \\
  ^t T & 1_2 
 \end{pmatrix}
 \right]
\end{eqnarray*}
with $T = (0, \lambda)$, $R = (R_1, R_2)$ and $R' = ({R'}_1, {R'}_2)$.
Because $\psi$ belongs to $H_{\mathcal{M}}^{(n)}$,
we have $A(N,R) = A(N',R')$.
Hence the above definition of $C(M,S)$ is well-defined.

\begin{lemma}\label{lemma:iota}
Let $k$ be an even integer. We put $m = \det(2\mathcal{M})$.
Then we have the commutative diagram:
$$
\begin{CD}
 J_{k,1}^{(n+1)} @>\sigma >> M_{k-\frac12}^+(\Gamma_0^{(n+1)}(4)) \\
 @V{\mbox{FJ}_{1,\mathcal{M}}}VV
 @VV{\widetilde{\mbox{FJ}}_m
 }V \\
 J_{k,\mathcal{M}}^{(n)*} @>\iota_{\mathcal{M}}
 >> J_{k-\frac12,m}^{(n)*} ,
\end{CD}
$$
where two maps $\mbox{FJ}_{1,\mathcal{M}}$ and $\widetilde{\mbox{FJ}}_m$ have been introduced
in \S\ref{ss:fj_expansion} and \S\ref{ss:CE_J*}.
Moreover, the restriction of the linear map $\iota_{\mathcal{M}}$ on $J_{k,\mathcal{M}}^{(n)*}$
gives the bijection between $J_{k,\mathcal{M}}^{(n)*}$ and $J_{k-\frac12,m}^{(n)*}$.
\end{lemma}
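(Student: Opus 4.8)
The plan is to verify the commutativity of the square by comparing Fourier coefficients, and then to deduce the bijectivity formally from it. Since all four maps are $\C$-linear, it suffices to fix $\phi_1 \in J_{k,1}^{(n+1)}$, with Fourier expansion $\phi_1(\tau,z) = \sum C(\tilde N,\tilde R)\,e(\tilde N\tau + {}^t\tilde R z)$, and to compare the $(M,S)$-th Fourier coefficients of $\iota_{\mathcal{M}}(\mbox{FJ}_{1,\mathcal{M}}(\phi_1))$ and of $\widetilde{\mbox{FJ}}_m(\sigma(\phi_1))$, where $m = \det(2\mathcal{M}) = 4l-r^2$.

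First I would unwind the left-hand path. Writing $\left(\begin{smallmatrix}\tau&z\\ {}^tz&\omega\end{smallmatrix}\right) = \left(\begin{smallmatrix}\tau'&z'\\ {}^tz'&\omega'\end{smallmatrix}\right)\in\H_{n+2}$ with $\tau'\in\H_n$, $z'\in\C^{(n,2)}$, $\omega'\in\H_2$ and extracting the coefficient of $e(\mathcal{M}\omega')$ from $\phi_1(\tau,z)\,e(\omega)$, one finds that the $(N,R)$-th Fourier coefficient of $\mbox{FJ}_{1,\mathcal{M}}(\phi_1)$, with $R = (R_1,R_2)$, equals $C\!\left(\left(\begin{smallmatrix}N&\frac12 R_1\\ \frac12{}^tR_1&l\end{smallmatrix}\right),\left(\begin{smallmatrix}R_2\\ r\end{smallmatrix}\right)\right)$. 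Feeding this into the defining recipe of $\iota_{\mathcal{M}}$, the $(M,S)$-th coefficient of $\iota_{\mathcal{M}}(\mbox{FJ}_{1,\mathcal{M}}(\phi_1))$ is $C\!\left(\left(\begin{smallmatrix}N&\frac12 R_1\\ \frac12{}^tR_1&l\end{smallmatrix}\right),\left(\begin{smallmatrix}R_2\\ r\end{smallmatrix}\right)\right)$ for any triple with $M = 4N - R_2\,{}^tR_2$ and $S = 4R_1 - 2rR_2$ (and $0$ if no admissible triple occurs); independence of the choice of triple is precisely the well-definedness of $\iota_{\mathcal{M}}$ already established in the text.

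For the right-hand path I would use the explicit formula for $\sigma$, together with Lemma~\ref{lemma:fj_half} (which ensures $\widetilde{\mbox{FJ}}_m(\sigma(\phi_1))\in J_{k-\frac12,m}^{(n)}$): its $(M,S)$-th Fourier coefficient is the $T$-th Fourier coefficient of $\sigma(\phi_1)$ with $T = \left(\begin{smallmatrix}M&\frac12 S\\ \frac12{}^tS&m\end{smallmatrix}\right)$, that is, $\sum_{\tilde R \bmod 2\Z^{(n+1,1)}} C\!\left(\tfrac14(T+\tilde R\,{}^t\tilde R),\tilde R\right)$, the terms with $\tfrac14(T+\tilde R\,{}^t\tilde R)\notin\mbox{Sym}_{n+1}^*$ dropped. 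The hard part will be to show that this sum collapses to a single term matching the left-hand side. The mechanism is that, because the lower-right entry of $\mathcal{M}$ is $1$ — equivalently $m = 4l-r^2$ — the integrality condition on $\tfrac14(T+\tilde R\,{}^t\tilde R)$ forces the last coordinate $b$ of $\tilde R$ to satisfy $b\equiv r\bmod 2$, and the remaining mod-$4$ congruences among the entries of $T$ then pin down $\tilde R\bmod 2$ uniquely, or else rule out any admissible $\tilde R$; carrying these congruences through accurately is the delicate bookkeeping, and it is here that the special shape of $\mathcal{M}$ enters. Having isolated one term, I would invoke the translation invariance of the Fourier coefficients of index-$1$ Jacobi forms — equivalently, that $C(\tilde N,\tilde R)$ depends only on $\tilde R\bmod 2\Z^{(n+1,1)}$ and on $4\tilde N - \tilde R\,{}^t\tilde R$, which is exactly what makes the formula for $\sigma$ well-posed — to change the last coordinate of $\tilde R$ from $b$ to exactly $r$; since then $m + r^2 = 4l$, the surviving term becomes $C\!\left(\left(\begin{smallmatrix}N&\frac12 R_1\\ \frac12{}^tR_1&l\end{smallmatrix}\right),\left(\begin{smallmatrix}R_2\\ r\end{smallmatrix}\right)\right)$ with $M = 4N - R_2\,{}^tR_2$, $S = 4R_1 - 2rR_2$, which is the left-hand side. (When no admissible $\tilde R$ occurs, the matrix identity displayed just before the lemma shows there is no admissible triple on the left either, so both sides are $0$.) This proves the commutativity.

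The bijectivity then follows formally. For surjectivity of $\iota_{\mathcal{M}}$ on $J_{k,\mathcal{M}}^{(n)*}$: given $\xi\in J_{k-\frac12,m}^{(n)*}$, write $\xi = \widetilde{\mbox{FJ}}_m(F)$ with $F\in M_{k-\frac12}^+(\Gamma_0^{(n+1)}(4))$ (possible since $J_{k-\frac12,m}^{(n)*}$ is by definition this image), put $\phi_1 = \sigma^{-1}(F)$, and then $\xi = \widetilde{\mbox{FJ}}_m(\sigma(\phi_1)) = \iota_{\mathcal{M}}(\mbox{FJ}_{1,\mathcal{M}}(\phi_1))$ by the commutativity, with $\mbox{FJ}_{1,\mathcal{M}}(\phi_1)\in J_{k,\mathcal{M}}^{(n)*}$. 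Injectivity is immediate and even holds on all of $H_{\mathcal{M}}^{(n)}$: for $\psi\in H_{\mathcal{M}}^{(n)}$ and any index $(N,(R_1,R_2))$ occurring in $\psi$, the matrix identity displayed just before the lemma shows that $\left(\begin{smallmatrix}M&\frac12 S\\ \frac12{}^tS&m\end{smallmatrix}\right)$ with $M = 4N - R_2\,{}^tR_2$, $S = 4R_1 - 2rR_2$ is positive semidefinite, so $(M,S)$ is an admissible index for $\iota_{\mathcal{M}}(\psi)$ and the $(M,S)$-th coefficient of $\iota_{\mathcal{M}}(\psi)$ equals the $(N,(R_1,R_2))$-th coefficient of $\psi$; hence $\iota_{\mathcal{M}}(\psi) = 0$ forces $\psi = 0$. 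Together these give the asserted bijection $J_{k,\mathcal{M}}^{(n)*} \to J_{k-\frac12,m}^{(n)*}$.
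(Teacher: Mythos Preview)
Your proof is correct and follows the same strategy as the paper's: verify commutativity by unwinding Fourier coefficients on both paths, then deduce surjectivity from the commutative square plus the fact that $\sigma$ is an isomorphism and $\widetilde{\mbox{FJ}}_m$ is surjective onto $J_{k-\frac12,m}^{(n)*}$ by definition, and read injectivity directly from the definition of $\iota_{\mathcal{M}}$. The paper compresses the commutativity check into the phrase ``it is not difficult to check''; your write-up supplies exactly those details, including the key observation that the lower-right entry $1$ of $\mathcal{M}$ forces $b\equiv r\pmod 2$ and hence makes the sum over $\tilde R\bmod 2$ collapse to a single term.
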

\begin{proof}
Let $\psi \in J_{k,1}^{(n+1)}$ be a Jacobi form.
Due to the definition of $\sigma$ (cf. \S\ref{ss:fj_expansion_half}) and
$\iota_{\mathcal{M}}$,
it is not difficult to check the identity
$\iota_{M}(FJ_{1,\mathcal{M}}(\psi)) = \widetilde{\mbox{FJ}}_m (\sigma(\psi))$.
Namely, we have the above commutative diagram.

Since the restriction of the map
$\widetilde{\mbox{FJ}}_m$ on ${M_{k-\frac12}^+(\Gamma_0^{(n+1)}(4))}$
is surjective,
and since $\sigma$ is an isomorphism and since
$\iota_{M}(FJ_{1,\mathcal{M}}(\psi)) = \widetilde{\mbox{FJ}}_m (\sigma(\psi))$,
the restricted map
$\iota_{\mathcal{M}}|_{J_{k,\mathcal{M}}^{(n)*}} :
  J_{k,\mathcal{M}}^{(n)*} \rightarrow J_{k-\frac12,m}^{(n)*}$ is surjective.
The injectivity of the restricted map
$\iota_{\mathcal{M}}|_{J_{k,\mathcal{M}}^{(n)*}}$
follows directly from the definition of the map $\iota_{\mathcal{M}}$.
\end{proof}

\subsection{Compatibility between index-shift maps and $\iota_{\mathcal{M}}$}\label{ss:compati}
In this subsection we shall show a compatibility between the map $\iota_{\mathcal{M}}$ and some index-shift maps.

For function $\psi$ on $\H_n \times \C^{(n,2)}$ and for $L \in \Z^{(2,2)}$
we define the function $\psi|U_L$ on $\H_n\times\C^{(n,2)}$ by
\begin{eqnarray*}
  (\psi|U_L)(\tau,z) &:=& \psi(\tau, z {^t L}) .
\end{eqnarray*}
It is not difficult to check that if $\psi$ belongs to $J_{k,\mathcal{M}}^{(n)}$,
then $\psi|U_L$ belongs to $J_{k,\mathcal{M}\left[L\right]}^{(n)}$.

For function $\phi$ on $\H_n \times \C^{(n,1)}$ and for integer $a$
we define the function $\phi|U_a$ on $\H_n \times \C^{(n,1)}$ by
\begin{eqnarray*}
   (\phi|U_a)(\tau,z) &:=& \phi(\tau,a z).
\end{eqnarray*}
We have $\phi|U_a \in J_{k-\frac12,ma^2}^{(n)}$ if $\phi \in J_{k-\frac12,m}^{(n)}$.

\begin{prop}\label{prop:iota_U}
For any $\psi \in J_{k,\mathcal{M}}^{(n)*}$ and for any $L = \smat{a}{ }{b}{1} \in \Z^{(2,2)}$ we obtain
\begin{eqnarray*}
 \iota_{\mathcal{M}[L]}(\psi|U_L)
 &=&
 \iota_{\mathcal{M}} (\psi)|U_a.
\end{eqnarray*}
In particular, for any prime $p$ we have
$\iota_{\mathcal{M}[\smat{p}{ }{ }{1}]} \left( \psi\left| U_{\smat{p}{ }{ }{1}}\right) \right. = 
 \iota_{\mathcal{M}} (\psi)|U_p$.
\end{prop}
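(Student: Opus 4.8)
The plan is to verify the identity by comparing Fourier coefficients, since both $\iota_{\mathcal{M}[L]}(\psi|U_L)$ and $\iota_{\mathcal{M}}(\psi)|U_a$ are holomorphic on $\mathfrak{H}_n\times\C^{(n,1)}$. Write $\psi(\tau,z)=\sum_{N,R}A(N,R)\,e(N\tau)\,e({}^tRz)$ with $N\in\mbox{Sym}_n^*$, $R\in\Z^{(n,2)}$. The first step is to record the effect of $U_L$: from $(\psi|U_L)(\tau,z)=\psi(\tau,z\,{}^tL)$ and $\mbox{tr}({}^tR\,z\,{}^tL)=\mbox{tr}({}^t(RL)\,z)$, the Fourier coefficient of $\psi|U_L$ at $(N,R')$ equals $A(N,R'L^{-1})$ if $R'L^{-1}\in\Z^{(n,2)}$ and $0$ otherwise; since $L=\smat{a}{0}{b}{1}$, for $R'=(R'_1,R'_2)$ one has $R'L^{-1}=\bigl(\tfrac{1}{a}(R'_1-bR'_2),\,R'_2\bigr)$, so this coefficient is nonzero only when $a\mid R'_1-bR'_2$.

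Next, fix $(M,S)\in\mbox{Sym}_n^*\times\Z^{(n,1)}$. By the definition of $U_a$, the $(M,S)$-th coefficient of $\iota_{\mathcal{M}}(\psi)|U_a$ is the $(M,\tfrac{S}{a})$-th coefficient of $\iota_{\mathcal{M}}(\psi)$ if $a\mid S$ and is $0$ otherwise; by the definition of $\iota_{\mathcal{M}}$ the former equals $A(N,R)$ for any $N$ and $R=(R_1,R_2)$ with
\begin{equation*}
\begin{pmatrix}M&\tfrac{1}{2a}S\\\tfrac{1}{2a}\,{}^tS&m\end{pmatrix}
=4\begin{pmatrix}N&\tfrac12R_1\\\tfrac12\,{}^tR_1&l\end{pmatrix}-\begin{pmatrix}R_2\\r\end{pmatrix}({}^tR_2,r)
\tag{$\star$}
\end{equation*}
and $0$ when no such $(N,R)$ exists, where $m=\det(2\mathcal{M})$. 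Likewise, writing $\mathcal{M}[L]=\smat{l'}{r'/2}{r'/2}{1}$ with $l'=la^2+rab+b^2$, $r'=ra+2b$ and $\det(2\mathcal{M}[L])=a^2m$, the $(M,S)$-th coefficient of $\iota_{\mathcal{M}[L]}(\psi|U_L)$ equals the $(N',R')$-th Fourier coefficient of $\psi|U_L$ for any $N'$ and $R'=(R'_1,R'_2)$ with
\begin{equation*}
\begin{pmatrix}M&\tfrac12S\\\tfrac12\,{}^tS&a^2m\end{pmatrix}
=4\begin{pmatrix}N'&\tfrac12R'_1\\\tfrac12\,{}^tR'_1&l'\end{pmatrix}-\begin{pmatrix}R'_2\\r'\end{pmatrix}({}^tR'_2,r')
\tag{$\star\star$}
\end{equation*}
and $0$ when no such $(N',R')$ exists.

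The heart of the argument is then the elementary observation that conjugating $(\star)$ by $\mbox{diag}(1_n,a)$ and using $a^2l+abr+b^2=l'$ together with $(ra+2b)^2-r^2a^2=4abr+4b^2$ turns $(\star)$ into $(\star\star)$ with $N'=N$, $R'_2=R_2$, $R'_1=aR_1+bR_2$. Thus $(N,R_1,R_2)\mapsto(N,\,aR_1+bR_2,\,R_2)$ is a bijection from the solutions of $(\star)$ onto the solutions of $(\star\star)$ for which $R'L^{-1}$ is integral — indeed $R'L^{-1}=(R_1,R_2)$ under this map — and hence the $(N',R')$-th coefficient of $\psi|U_L$, namely $A(N',R'L^{-1})$, equals $A(N,R)$. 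I would then line up the supports: the off-diagonal block of $(\star)$ forces $S=2a(2R_1-rR_2)$, so $(\star)$ is solvable only if $a\mid S$, and conversely, if $(\star\star)$ has a solution with $R'L^{-1}$ integral then the off-diagonal block of $(\star\star)$ again forces $a\mid S$. Combining: if $a\mid S$ and $(\star)$ is solvable, both coefficients equal $A(N,R)$; if $a\mid S$ but $(\star)$ is unsolvable, then no solution of $(\star\star)$ has $R'L^{-1}$ integral, so the $\psi|U_L$-coefficient vanishes and both sides are $0$; if $a\nmid S$, then $(\star)$ is unsolvable and every solution of $(\star\star)$ has $R'L^{-1}$ non-integral, so again both sides are $0$. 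Throughout, $\psi\in H_{\mathcal{M}}^{(n)}$ and $\psi|U_L\in J_{k,\mathcal{M}[L]}^{(n)}\subset H_{\mathcal{M}[L]}^{(n)}$ guarantee that the values ``$A(N,R)$ for a solution of $(\star)$'' and ``the $\psi|U_L$-coefficient for a solution of $(\star\star)$'' are independent of the chosen solution, which is exactly what lets one pass through the bijection. The final assertion is the case $a=p$, $b=0$.

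The only mildly delicate point — the ``hard part'', such as it is — is this bookkeeping of supports: one must make sure that the divisibility $a\mid S$ produced by $(\star)$, the integrality condition $a\mid R'_1-bR'_2$ governing nonvanishing of the coefficients of $\psi|U_L$, and the solvability of $(\star\star)$ are mutually consistent, so that the two sides have nonzero Fourier coefficients at exactly the same indices. The algebraic core is a single matrix conjugation, and nothing beyond the definitions of $\iota_{\mathcal{M}}$, $U_a$, $U_L$ and of the space $H_{\mathcal{M}}^{(n)}$ enters.
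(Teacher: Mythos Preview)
Your argument is correct and follows essentially the same route as the paper: both proofs compare Fourier coefficients, identify the $(M,S)$-th coefficient on each side with a value $A(N,R)$ via the defining relation of $\iota$, and then verify that the two relations match under the substitution $R'=RL$ (equivalently, your conjugation by $\mbox{diag}(1_n,a)$). The paper carries out the same matrix identity using the $(n+2)\times(n+2)$ block form $\smat{N}{\frac12 R}{\frac12{}^tR}{\mathcal{M}}$ rather than your $(n+1)\times(n+1)$ form, and is less explicit about the support/divisibility bookkeeping, but the content is the same.
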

\begin{proof}
We put $m = \det(2 \mathcal{M})$.
Let $\psi(\tau,z') = \displaystyle{
  \!\!\!\!\!\!\!\!\!\!
  \sum_{\begin{smallmatrix}
          N \in Sym_n^*,\  R\in \Z^{(n,2)} \\
          4 N - R \mathcal{M}^{-1} {^t R} \geq 0 
  \end{smallmatrix}}
  \!\!\!\!\!\!\!\!\!\!
  A(N,R) e(N\tau + R { ^t z'})
 }$ be a Fourier expansion of $\psi$.
Let
\begin{eqnarray*}
 \iota_{\mathcal{M}}(\psi)(\tau,z) 
 &=&
 \!\!\!\!\!\!\!\!\!\!
 \sum_{\begin{smallmatrix}
            M \in Sym_n^*,\ S\in \Z^{(n,1)} 
         \\ 4 M m - S {^t S} \geq 0 
       \end{smallmatrix}}
 \!\!\!\!\!\!\!\!\!\!
 C(M,S)\, e(M\tau + S{^t z}),
\\
 \iota_{\mathcal{M}[L]}(\psi|U_L)(\tau,z)
 &=&
 \!\!\!\!\!\!\!\!\!\!
 \sum_{\begin{smallmatrix}
            M \in Sym_n^*,\ S\in \Z^{(n,1)} 
         \\ 4 M m a^2 - S {^t S} \geq 0 
       \end{smallmatrix}}
 \!\!\!\!\!\!\!\!\!\!
 C_1(M,S)\, e(M\tau + S{^t z}) 
\end{eqnarray*}
and
\begin{eqnarray*}
 (\iota_{\mathcal{M}} (\psi)|U_a)(\tau,z)
 &=&
 \!\!\!\!\!\!\!\!\!\!
 \sum_{\begin{smallmatrix}
            M \in Sym_n^*,\ S\in \Z^{(n,1)} 
         \\ 4 M m a^2 - S {^t S} \geq 0 
       \end{smallmatrix}}
 \!\!\!\!\!\!\!\!\!\!
 C_2(M,S)\, e(M\tau + S{^t z})
\end{eqnarray*}
be Fourier expansions.
It is enough to show
$C_1(M,S) = C_2(M,S)$.

We have $C_2(M,S) = C(M,a^{-1}S)$.
Moreover, we obtain
$C_1(M,S) = A(N,R L^{-1})$ with $N \in \mbox{Sym}_n^*$ and $R \in \Z^{(n,2)}$
which satisfy
\begin{eqnarray*}
 \begin{pmatrix}
  M & \frac12 S \\
 \frac12 {^t S} & m a^2
 \end{pmatrix}
 &=&
 4 \begin{pmatrix}
  N & \frac12 R\\
 \frac12 {^t R} & \mathcal{M}[L]
 \end{pmatrix}
 \left[
 \begin{pmatrix}
  1_n & \begin{matrix} 0 \\ \vdots \\ 0 \end{matrix}\\
  \begin{matrix} 0 \cdots 0 \end{matrix} & 1 \\
  -\frac12 {^t (R \left(\begin{smallmatrix}0\\ 1 \end{smallmatrix}\right))} & -\frac12 r a - b
 \end{pmatrix}
 \right] .
\end{eqnarray*}
For the above matrices $N$, $R$, $M$ and $S$ we have
\begin{eqnarray*}
 \begin{pmatrix}
  M & \frac{1}{2} a^{-1} S \\
 \frac{1}{2}a^{-1}{^t S} & m
 \end{pmatrix}
 &=&
 4 \begin{pmatrix}
  N & \frac12 R\\
 \frac12 {^t R} & \mathcal{M}[L]
 \end{pmatrix}
 \left[
 \begin{pmatrix}
  1_n & \begin{matrix} 0 \\ \vdots \\ 0 \end{matrix} \\
  \begin{matrix} 0 \cdots 0 \end{matrix} & 1 \\
  -\frac12 {^t (R \left(\begin{smallmatrix}0\\ 1 \end{smallmatrix}\right))} & -\frac12 r a - b
  \end{pmatrix}
 \begin{pmatrix}
  1_n & \begin{matrix} 0 \\ \vdots \\ 0 \end{matrix} \\
  \begin{matrix} 0  \cdots  0 \end{matrix} & a^{-1}
 \end{pmatrix}
 \right]
\\
 &=&
 4 \begin{pmatrix}
  N & \frac12 R  L^{-1}\\
 \frac12 {^t (R L^{-1})} & \mathcal{M}
 \end{pmatrix}
 \left[
 \begin{pmatrix}
  1_{n} & \begin{matrix} 0 \\ \vdots \\ 0 \end{matrix} \\
   0 \cdots 0 & 1 \\
  -\frac12 {^t (R \left(\begin{smallmatrix}0\\ 1 \end{smallmatrix}\right))} & -\frac12 r 
  \end{pmatrix}
 \right]\\
 &=&
 4 \begin{pmatrix}
  N & \frac12 R  L^{-1}\\
 \frac12 {^t (R L^{-1})} & \mathcal{M}
 \end{pmatrix}
 \left[
 \begin{pmatrix}
  1_{n} & \begin{matrix} 0 \\ \vdots \\ 0 \end{matrix} \\
  0 \cdots 0 & 1 \\
  -\frac12 {^t (R L^{-1}\left(\begin{smallmatrix}0\\ 1 \end{smallmatrix}\right))} & -\frac12 r 
  \end{pmatrix}
 \right] .
\end{eqnarray*}
Thus $C_2(M,S) = C(M,a^{-1}S) = A(N,R L^{-1}) = C_1(M,S)$.
\end{proof}

\begin{prop} \label{prop:iota_hecke}
For odd prime $p$ and for $0\leq \alpha \leq n$,
let $\tilde{V}_{\alpha,n-\alpha}(p^2)$ and $V_{\alpha,n-\alpha}(p^2)$ be index-shift maps
defined in \S\ref{ss:hecke_operators}.
Then, for any $\psi \in J_{k,\mathcal{M}}^{(n)*}$ we have
\begin{eqnarray}\label{id:iota_hecke}
 \iota_{\mathcal{M}}(\psi)| \tilde{V}_{\alpha,n-\alpha}(p^2)
 &=&
 p^{k(2n+1) - n (n+\frac72) + \frac12 \alpha }\
 \iota_{\mathcal{M}[\smat{p}{ }{ }{1}]}(\psi| V_{\alpha,n-\alpha}(p^2)).
\end{eqnarray}
\end{prop}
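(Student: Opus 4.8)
The plan is to prove identity (\ref{id:iota_hecke}) by comparing Fourier coefficients on $\H_n\times\C^{(n,1)}$. Put $m=\det(2\mathcal{M})$, so that $\det\bigl(2\mathcal{M}[\smat{p}{}{}{1}]\bigr)=mp^2$; the left-hand side lies in $J_{k-\frac12,mp^2}^{(n)}$ by the mapping properties of $\iota_{\mathcal{M}}$ (Lemma~\ref{lemma:iota}) and of $\tilde{V}_{\alpha,n-\alpha}(p^2)$, while the right-hand side is a holomorphic function on $\H_n\times\C^{(n,1)}$ admitting a Fourier expansion, so it is enough to match Fourier coefficients, and the whole argument is an explicit unwinding of the two group sums defining $\tilde{V}_{\alpha,n-\alpha}(p^2)$ and $V_{\alpha,n-\alpha}(p^2)$. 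It is reassuring that the exponent $k(2n+1)-n(n+\tfrac72)+\tfrac12\alpha$ is precisely the one occurring in the Eichler--Zagier--Ibukiyama relation (\ref{eq:sigma_isom_hecke}) between $\sigma$ and the Hecke operators: in view of the commutative diagram of Lemma~\ref{lemma:iota}, (\ref{id:iota_hecke}) should be read as the Fourier--Jacobi-layer analogue of (\ref{eq:sigma_isom_hecke}), and I expect its proof to run parallel to the proof of (\ref{eq:sigma_isom_hecke}) given in \cite{Ib}.

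Concretely, I would first fix compatible systems of coset representatives. For an odd prime $p$ the matrix double coset $\Gamma_n X\Gamma_n$ with $X=\mbox{diag}(1_\alpha,p1_{n-\alpha},p^2 1_\alpha,p1_{n-\alpha})$ has upper-triangular representatives $X_i=\smat{p^2\,{}^tD_i^{-1}}{B_i}{0}{D_i}$, and, since $p$ is prime to the level $4$, the same matrices decorated with their theta-multipliers $\varphi_i$ (a fourth root of unity times $p^{\alpha/2}$) represent the double coset $\Gamma_0^{(n)}(4)^*Y\Gamma_0^{(n)}(4)^*$ occurring in $\tilde{V}_{\alpha,n-\alpha}(p^2)$, by \cite[Prop.~7.1]{Zhu:hecke}. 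I would then expand
\[
\bigl(\iota_{\mathcal{M}}(\psi)\,\big|\,\tilde{V}_{\alpha,n-\alpha}(p^2)\bigr)(\tau,z)
= p^{\frac{n(2k-1)}{2}-n(n+1)}\sum_i\bigl(\iota_{\mathcal{M}}(\psi)\bigr)\big|_{k-\frac12,m}(X_i,\varphi_i)
\]
into a Fourier series, reading off the coefficients of $\iota_{\mathcal{M}}(\psi)$ from those of $\psi$ through the defining coefficient identity of $\iota_{\mathcal{M}}$, and carry out the analogous computation for $\iota_{\mathcal{M}[\smat{p}{}{}{1}]}(\psi|V_{\alpha,n-\alpha}(p^2))$, in which the coset sum over the same $X_i$ is accompanied by the Heisenberg sum over $u,v\in(\Z/p\Z)^{(n,1)}$ and by the rescaling $z'\mapsto z'\,{}^t\smat{p}{}{}{1}$; the last two are handled by Proposition~\ref{prop:iota_U}, which strips off the diagonal block $\smat{p}{}{}{1}$ implicit in $\iota_{\mathcal{M}[\smat{p}{}{}{1}]}$.

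The core of the proof is then a term-by-term identification: after substituting the coefficient identities, both sums should be indexed by the same data --- a representative $X_i$ together with a residue parameter --- and the Heisenberg translations $[((0,u),(0,v)),0_2]$ on the matrix-index side should reorganize, under $\iota_{\mathcal{M}}$, into exactly the coset sum with theta-multipliers defining $\tilde{V}_{\alpha,n-\alpha}(p^2)$, the extra sum over $(\Z/p\Z)^{2n}$ accounting for the difference between the degree-$n$ matrix-index slash action and the half-integral integer-index one, consistently with Lemma~\ref{lemma:iota}. Collecting the resulting powers of $p$ --- from the normalizing constant $p^{\frac{n(2k-1)}{2}-n(n+1)}$ built into $\tilde{V}$, from the comparison of $\varphi_i^{2k-1}$ with the factor $\det(V)^k\det(C\tau+D)^k$ on the matrix-index side, and from the application of Proposition~\ref{prop:iota_U} --- should produce $p^{k(2n+1)-n(n+\frac72)+\frac12\alpha}$. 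The main obstacle I anticipate is precisely this bookkeeping: reconciling the two normalization conventions (the constant built into $\tilde{V}(Y)$ versus the extra $(\Z/p\Z)^{2n}$-sum built into $V(X)$) and tracking the exact power of $p$ through the coset representatives $X_i$ and the theta-multipliers $\varphi_i$. Once the representative systems and the residue sums are correctly matched, the remainder is routine verification.
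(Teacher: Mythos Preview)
Your plan is correct and coincides with the paper's own proof: both sides are expanded in Fourier coefficients, using Zhuravlev's explicit upper-triangular representatives $\smat{p^2\,{}^tD^{-1}}{B}{0}{D}$ for $\Gamma_n\backslash\Gamma_n X\Gamma_n$ and the lifted representatives $(g,\varepsilon(g)p^{(n-i-j)/2})$ for the half-integral side, and the identity is verified term by term. Two small remarks: the paper does not route through Proposition~\ref{prop:iota_U} but works directly with Fourier coefficients throughout, and the crucial step you flag --- the Heisenberg sum producing the theta-multipliers --- is made precise as follows: the $\mu_2$-sum contributes a trivial factor $p^n$, while the remaining $\lambda_2$-sum, constrained by $u(\hat R_2-2\lambda_2)\in\smat{p1_i}{0}{0}{1_{n-i}}\Z^{(n,1)}$, becomes a Gauss sum over the block $a_1$ in $B$ and evaluates to $p^{\,n+(n-i-j-\alpha)/2}\varepsilon(g)$, exactly the fourth-root-of-unity factor $\varepsilon(g)$ in Zhuravlev's representatives.
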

\begin{proof}
The proof is similar to the case of
Jacobi forms of index $1$ (cf. ~\cite[Theorem~2]{Ib}).
However,
we remark that
the maps $\tilde{V}_{\alpha,n-\alpha}(p^2)$ and $V_{\alpha,n-\alpha}(p^2)$
in the present article change the indices of Jacobi forms.

To prove this proposition,
we compare the Fourier coefficients of the both sides of (\ref{id:iota_hecke}).
Let 
\begin{eqnarray*}
 \psi(\tau,z') 
 &=& 
 \sum_{N,R}A_1(N,R) e(N\tau + R{^t z'}),
\\
 (\psi|V_{\alpha,n-\alpha}(p^2))(\tau,z')
 &=&
 \sum_{N,R}A_2(N,R) e(N\tau + R{^t z'}),
\\
 (\iota_{\mathcal{M}}(\psi))(\tau,z)
 &=&
 \sum_{M,S}C_1(M,S) e(M\tau + S{^t z})
\end{eqnarray*}
and
\begin{eqnarray*}
 (\iota_{\mathcal{M}}(\psi)|\tilde{V}_{\alpha,n-\alpha}(p^2))(\tau,z)
 &=&
 \sum_{M,S}C_2(M,S) e(M\tau + S{^t z})
\end{eqnarray*}
be Fourier expansions,
where $\tau \in \H_n$, $z' \in \C^{(n,2)}$ and $z \in \C^{(n,1)}$.
For the sake of simplicity
we put $U = \smat{p^2}{ }{ }{p}$.
Then
\begin{eqnarray*}
 \begin{aligned}
 &
 \psi| V_{\alpha,n-\alpha}(p^2) \\
 &=
 \sum_{\smat{p^2 {^t D}^{-1}}{B}{0_n}{D}}
 \sum_{\lambda_2,\mu_2 \in (\Z/p\Z)^{(n,1)}} 
\\
 & \qquad
 \times \psi|_{k,\mathcal{M}}
  \left(\smat{p^2 {^t D}^{-1}}{B}{0_n}{D} \times \smat{U}{ }{ }{p^2 U^{-1}}, [((0,\lambda_2),(0,\mu_2)),0_2]\right)
\\
 &=
 \sum_{\smat{p^2 {^t D}^{-1}}{B}{0_n}{D}}
 \sum_{\lambda_2,\mu_2 \in (\Z/p\Z)^{(n,1)}} 
 \sum_{N,R}
  A(N,R)
\\
 &\qquad
 \times e(N\tau + {R ^t z})|_{k,\mathcal{M}}
  \left(\smat{p^2 {^t D}^{-1}}{B}{0_n}{D} \times \smat{U}{ }{ }{p^2 U^{-1}}, [((0,\lambda_2),(0,\mu_2)),0_2]\right),
 \end{aligned}
\end{eqnarray*}
where
$\smat{p^2 {^t D}^{-1}}{B}{0_n}{D}$ runs over a set of all representatives
of
\[
 \Gamma_n \backslash \Gamma_n \mbox{diag}(1_{\alpha},p 1_{n-\alpha},p^2 1_{\alpha}, p 1_{n-\alpha})\Gamma_n,
\]
and where the slash operator $|_{k,\mathcal{M}}$ is defined in \S\ref{ss:factors_automorphy}.

We put $\lambda = (0,\lambda_2)$, $\mu = (0, \mu_2)$ $\in \Z^{(n,2)}$, then
we obtain
\begin{eqnarray*}
 \begin{aligned}
 &
 e(N\tau + R{^t z})
 |_{k,\mathcal{M}} \left(\smat{p^2 {^t D}^{-1}}{B}{0_n}{D}  \times \smat{U}{ }{ }{p^2 U^{-1}},
                   [(\lambda,\mu),0_2] \right) 
\\
 &=
 p^{-k}\det(D)^{-k} e(\hat{N} \tau + \hat{R} {^t z} + NBD^{-1} + R U {^t \mu} D^{-1}),
 \end{aligned}
\end{eqnarray*}
where
\begin{eqnarray*}
 \hat{N} &=& p^2 D^{-1}N{^t D}^{-1} + D^{-1}R U {^t \lambda} +  \frac{1}{p^2}\lambda U \mathcal{M} U {^t \lambda}
\end{eqnarray*}
and
\begin{eqnarray*}
 \hat{R} &=& D^{-1} R U + \frac{2}{p^2} \lambda U \mathcal{M} U .
\end{eqnarray*}
Thus
\begin{eqnarray*}
 N
 &=& 
 \frac{1}{p^2} D\left(\left(\hat{N} - \frac{1}{4}\hat{R}_2 {^t \hat{R}_2}\right) 
   + \frac{1}{4}(\hat{R}_2 - 2 \lambda_2) {^t(\hat{R}_2 - 2 \lambda_2)}\right){^t D}
\end{eqnarray*}
and
\begin{eqnarray*}
 R &=& D\left(\hat{R} - \frac{2}{p^2} \lambda U \mathcal{M} U \right) U^{-1},
\end{eqnarray*}
where $\hat{R}_2 = \hat{R}\left(\begin{smallmatrix}0 \\ 1 \end{smallmatrix}\right)$.
Hence, for any $\hat{N} \in \mbox{Sym}_n^*$ and for any $\hat{R} \in \Z^{(n,2)}$,
we have
\begin{eqnarray*}
 \begin{aligned}
&
 A_2(\hat{N},\hat{R})
\\
 &=
 p^{-k} \!\!\!\!\!\!\! \sum_{\smat{p^2 {^t D}^{-1}}{B}{0_n}{D}}
   \!\!\!\!\!\!\!\!
   \det(D)^{-k}
   \!\!\!\!\!\!\!\!
   \sum_{\lambda_2 \in (\Z/p\Z)^{(n,1)}}\sum_{\mu_2 \in (\Z/p\Z)^{(n,1)}}
   \!\!\!\!\!\!\!
    A_1(N,R)\, e(NBD^{-1} + RU{^t(0, \mu_2)}D^{-1})
\\
 &=
 p^{-k+n} \sum_{\smat{p^2 {^t D}^{-1}}{B}{0_n}{D}} 
   \det(D)^{-k} \sum_{\lambda_2 \in (\Z/p\Z)^{(n,1)}}
   A_1(N,R)\, e(NBD^{-1}),
 \end{aligned}
\end{eqnarray*}
where $N$ and $R$ are the same symbols as above, which are determined by $\hat{N}$, $\hat{R}$ and $\lambda_2$,
and where
$\smat{p^2 {^t D}^{-1}}{B}{0_n}{D}$ runs over a complete set of representatives 
of
\[
  \Gamma_n \backslash \Gamma_n
  \mbox{diag}(1_{\alpha},p 1_{n-\alpha},p^2 1_{\alpha}, p 1_{n-\alpha}) \Gamma_n.
\]
On the RHS of the above first identity
the matrix $D^{-1} R U$ belongs to $\Z^{(n,2)}$, since $\hat{R} \in \Z^{(n,2)}$.
We remark that $A_1(N,R) = 0$ unless $N \in \mbox{Sym}_n^*$ and $R \in \Z^{(n,2)}$.

Due to the definition of $\iota_{\mathcal{M}}$,
for $N \in \mbox{Sym}_n^*$ and $R \in \Z^{(n,2)}$ 
we have the identity 
\begin{eqnarray*}
 A_1(N,R) 
 &=&
 C_1(4N-R\left(\begin{smallmatrix}0\\1\end{smallmatrix}\right)^t(R\left(\begin{smallmatrix}0\\1\end{smallmatrix}\right)),
    4 R\left(\begin{smallmatrix}1\\0\end{smallmatrix}\right) - 2 r R\left(\begin{smallmatrix}0\\1\end{smallmatrix}\right)).
\end{eqnarray*}
Here
\begin{eqnarray*}
 4N - R\left(\begin{smallmatrix}0\\1\end{smallmatrix}\right) {^t(R\left(\begin{smallmatrix}0\\1\end{smallmatrix}\right)) }
 &=&
 \frac{1}{p^2} D \left( 4\hat{N} - \hat{R}_2 {^t \hat{R}_2} \right) {^t D}
\end{eqnarray*}
and
\begin{eqnarray*}
 4 R\left(\begin{smallmatrix}1\\0\end{smallmatrix}\right) - 2 r R\left(\begin{smallmatrix}0\\1\end{smallmatrix}\right)
 &=&
 \frac{1}{p^2}D(4 \hat{R}\left(\begin{smallmatrix}1\\0\end{smallmatrix}\right) - 2 r p \hat{R}_2).
\end{eqnarray*}
Hence we have
\begin{eqnarray}\label{id:A2}
&&
 A_2(\hat{N},\hat{R})
\\
 &=&
 p^{-k+n} \!\!\! \sum_{\smat{p^2 {^t D}^{-1}}{B}{0_n}{D}} \!\!\! \det(D)^{-k} 
 C_1\!\left(\frac{1}{p^2} D \left( 4\hat{N} - \hat{R}_2 {^t \hat{R}_2} \right) {^t D},
     \frac{1}{p^2}D(4 \hat{R}\left(\begin{smallmatrix}1\\0\end{smallmatrix}\right) - 2 r p \hat{R}_2)\right)
\notag \\
 && \times
  e\!\left(\frac{1}{p^2} \left(\hat{N} - \frac{1}{4}\hat{R}_2 {^t \hat{R}_2}\right){^t D}B\right)
 \sum_{\lambda_2}
  e\!\left(\frac{1}{4 p^2} 
    (\hat{R}_2 - 2 \lambda_2) {^t(\hat{R}_2 - 2 \lambda_2)}{^t D}B\right), \notag
\end{eqnarray}
where $\lambda_2$ runs over a complete set of representatives of $(\Z/p\Z)^{(n,1)}$ such that
\begin{eqnarray*}
 D \left(\hat{R} - \frac{2}{p^2}(0, \lambda_2) U \mathcal{M} U\right) U^{-1} \in \Z^{(n,2)}.
\end{eqnarray*}

Let $\mathfrak{S}_{\alpha}$ be a complete set of representative of
$\Gamma_n \backslash
  \Gamma_n \left(\begin{smallmatrix}
            1_{\alpha}&&&\\
            &p 1_{n-\alpha}&&\\
            &&p^2 1_{\alpha}&\\
            &&&p 1_{n-\alpha}
           \end{smallmatrix}\right) \Gamma_n$.
Now we quote a complete set of representatives $\mathfrak{S}_{\alpha}$
from \cite{Zhu:euler}.
We put
\[
 \delta_{i,j} := \mbox{diag}(1_i,p 1_{j-i},p^2 1_{n-j})
\]
for $0 \leq i \leq j \leq n$.
We set
\begin{eqnarray*}
 \mathfrak{S}_{\alpha}
 &:=&
 \left.
 \left\{
  \begin{pmatrix}p^2{\delta_{i,j}}^{-1}&b_0\\0_n & \delta_{i,j} \end{pmatrix}
  \begin{pmatrix}{^t u}^{-1} & 0_n \\ 0_n & u \end{pmatrix}
  \, \right| \,
  i,j , b_0, u
 \right\},
\end{eqnarray*}
where $i$ and $j$ run over all non-negative integers such that $j-i-n+\alpha \geq 0$,
and where $u$ runs over a complete set of representatives of
$(\delta_{i,j}^{-1}\mbox{GL}_n(\Z) \delta_{i,j} \cap \mbox{GL}_n(\Z)) \backslash \mbox{GL}_n(\Z)$,
and $b_0$ runs over all matrices in the set
\begin{eqnarray*}
 \mathfrak{T}
 &:=&
 \left. \left\{
  \begin{pmatrix}
   0_i & 0 & 0 \\
   0 & a_1 & p b_1 \\
   0 & ^t b_1 & b_2 
  \end{pmatrix}
  \right|
  \begin{matrix}
  b_1 \in (\Z/p\Z)^{(j-i,n-j)},
  b_2 = {^t b_2} \in (\Z/p^2 \Z)^{(n-j,n-j)},\\
  a_1 = {^t a_1} \in (\Z/p\Z)^{(j-i,j-i)},
  \mbox{rank}_p(a_1) = j-i-n+\alpha
  \end{matrix}
    \right\} .
\end{eqnarray*}

For a matrix
 $g = \begin{pmatrix}p^2 {^t D}^{-1}&B \\ 0_n& D \end{pmatrix}
  =
  \begin{pmatrix}p^2{\delta_{i,j}}^{-1}&b_0\\0_n & \delta_{i,j} \end{pmatrix}
  \begin{pmatrix}{^t u}^{-1} & 0_n \\ 0_n & u \end{pmatrix}$
  $\in$ $\mathfrak{S}_{\alpha}$ 
with a matrix
$b_0 =  \begin{pmatrix}
   0_i & 0 & 0 \\
   0 & a_1 & p b_1 \\
   0 & ^t b_1 & b_2 
  \end{pmatrix} \in \mathfrak{T}$,
we define $\varepsilon(g) := \left(\frac{-4}{p}\right)^{rank_p(a_1)/2}\left(\frac{\det a_1'}{p}\right)$,
where $a_1' \in \mbox{GL}_{j-i-n+\alpha}(\Z/p\Z)$ is a matrix
such that
$a_1 \equiv \smat{a_1'}{0}{0}{0_{n-\alpha}}[v] \mod p$ with some $v \in \mbox{GL}_{j-i}(\Z)$.
Under the assumption
\begin{eqnarray*}
\frac{1}{p^2}D(4 \hat{R}\left(\begin{smallmatrix}1\\0\end{smallmatrix}\right) - 2 r p \hat{R}_2) \in \Z^{(n,2)},
\end{eqnarray*}
the condition $D (\hat{R} - p^{-2}(0, \lambda_2) U \mathcal{M} U) U^{-1} \in \Z^{(n,2)}$
is equivalent to the condition
\begin{eqnarray*}
 u(\hat{R}_2 - 2 \lambda_2) \in \smat{p 1_i}{0}{0}{1_{n-i}} \Z^{(n,1)}.
\end{eqnarray*}
Hence the last summation in~(\ref{id:A2}) is
\begin{eqnarray*}
 && \!\!\!\!\!\!\!\!\!\!
 \sum_{\lambda_2}
  e\!\left(\frac{1}{4 p^2} 
    (\hat{R}_2 - 2 \lambda_2) {^t(\hat{R}_2 - 2 \lambda_2)}{^t D}B\right)
\\
 &=&
 p^{n-j} \sum_{\lambda' \in (\Z/p\Z)^{(j-i,1)}} e\!\left(\frac{1}{p} {^t \lambda'} a_1 \lambda'\right)
\\
 &=&
 p^{n - i - rank_p(a_1)} \left(\left(\frac{-4}{p}\right) p\right)^{rank_p(a_1)/2} \left(\frac{\det a_1'}{p}\right)
\\
 &=&
 p^{n - i -\frac{rank_p(a_1)}{2}} \varepsilon(g)
\\
 &=&
 p^{n + (n-i-j-\alpha)/2} \varepsilon(g).
\end{eqnarray*}
Thus (\ref{id:A2}) is
\begin{eqnarray*}
 A_2(\hat{N},\hat{R})
 &=&
 p^{-k+2n} \sum_{g } p^{-k(2n-i-j) + (n-i-j-\alpha)/2} \varepsilon(g)\,
 e\! \left(p^{-2} \left(4 \hat{N} - \hat{R}_2 {^t \hat{R}_2}\right){^t D}B\right)
\\
 && \times
  C_1\!\left(p^{-2} D ( 4\hat{N} - \hat{R}_2 {^t \hat{R}_2} ) {^t D},\
     p^{-2}D(4 \hat{R}\left(\begin{smallmatrix}1\\0\end{smallmatrix}\right) - 2 r p \hat{R}_2)\right),
\end{eqnarray*}
where 
 $g = \begin{pmatrix}p^2 {^t D}^{-1}&B \\ 0_n& D \end{pmatrix} =
  \begin{pmatrix}p^2{\delta_{i,j}}^{-1}&b_0\\0_n & \delta_{i,j} \end{pmatrix}
  \begin{pmatrix}{^t u}^{-1} & 0_n \\ 0_n & u \end{pmatrix} $
  runs over all elements in the set $\mathfrak{S}_{\alpha}$.

Now we shall express $C_2(M,S)$ as a linear combination of Fourier coefficients $C_1(M,S)$ of $\iota_{M}(\psi)$.
For $Y = (\mbox{diag}(1_{\alpha},p 1_{n-\alpha},p^2 1_{\alpha}, p 1_{n-\alpha}),p^{\alpha/2}) \in \widetilde{\mbox{GSp}_n^+(\Z)}$
a complete set of representatives of
$\Gamma_0^{(n)}(4)^* \backslash \Gamma_0^{(n)}(4)^* Y \Gamma_0^{(n)}(4)^*$ is given by elements
\begin{eqnarray*}
 \widetilde{g}
 &=&
 (g,\varepsilon(g) p^{(n-i-j)/2}) \in \widetilde{\mbox{GSp}_n^+(\Z)},
\end{eqnarray*}
where $g$ runs over all elements in the set $\mathfrak{S}_{\alpha}$,
and $\varepsilon(g)$ is defined as above (cf. \cite[Lemma 3.2]{Zhu:euler}).
Hence
\begin{eqnarray*}
 \begin{aligned}
 &
 (\iota_{\mathcal{M}}(\psi)|\tilde{V}_{\alpha,n-\alpha}(p^2))(\tau,z)
\\
 &=
 p^{n(2k-1)/2 - n(n+1)}\sum_{M,S} \sum_{\widetilde{g}} p^{(-k+1/2)(n-i-j)} \varepsilon(g)\,
  C_1(M,S) \\
 & \qquad \qquad \qquad \qquad \qquad \qquad
  \times e(M(p^2 {^t D}^{-1}\tau + B) D^{-1} + p^2 S {^t z} D^{-1})
\\
 &=
 p^{n(2k-1)/2 - n(n+1)}\sum_{\hat{M},\hat{S}} \sum_{g \in \mathfrak{S}_{\alpha}} p^{(-k+1/2)(n-i-j)} \varepsilon(g)\,
  C_1(p^{-2} D \hat{M}{^t D}, p^{-2} D \hat{S})
\\
 & \qquad \qquad \qquad \qquad \qquad \qquad
  \times e(\hat{M} \tau + \hat{S} {^t z} + p^{-2} \hat{M} {^t D} B).
 \end{aligned}
\end{eqnarray*}
Thus 
\begin{eqnarray*}
 C_2(\hat{M},\hat{S})
 &=&
  \sum_{g} p^{-n(n+1)+(k-1/2)(i+j)} \varepsilon(g)\,
  C_1(p^{-2} D \hat{M}{^t D}, p^{-2} D \hat{S})\, e(p^{-2} \hat{M} {^t D} B).
\end{eqnarray*}

Now we put
$\hat{M} = 4 \hat{N} - \hat{R}_2 {^t \hat{R}_2 }$ and
$\hat{S} = 4 \hat{R} \left(\begin{smallmatrix}1\\0 \end{smallmatrix}\right) - 2 r p \hat{R}_2$,
then
\begin{eqnarray*}
 C_2(4 \hat{N} - \hat{R}_2 {^t \hat{R}_2 },4 \hat{R} \left(\begin{smallmatrix}1\\0 \end{smallmatrix}\right) - 2 r p \hat{R}_2)
 &=&
 p^{2nk+k - n^2 -\frac{7}{2}n  + \frac12 \alpha} A_2(\hat{N},\hat{R}).
\end{eqnarray*}
The proposition follows from this identity.
\end{proof}

\subsection{Index-shift maps at $p=2$}\label{ss:hecke_p2}
For $p=2$ we define the map
\begin{eqnarray*}
  \tilde{V}_{\alpha,n-\alpha}(4) \ :\
     J_{k-\frac12,m}^{(n)*} \rightarrow 
     \mbox{Hol}(\H_n \times \C^{(n,1)} \rightarrow \C)
\end{eqnarray*}
through an analogue of the identity~(\ref{id:iota_hecke}), it means that we define
\begin{eqnarray*}
 \phi|\tilde{V}_{\alpha,n-\alpha}(4)
 &:=&
 2^{k(2n+1) - n (n+\frac72) + \frac12 \alpha }\
 \iota_{\mathcal{M}[\smat{2}{ }{ }{1}]}(\psi| V_{\alpha,n-\alpha}(4))
\end{eqnarray*}
for any $\phi \in J_{k-\frac12,m}^{(n)*}$, and where $\psi \in J_{k,\mathcal{M}}^{(n)*}$
is the Jacobi form which satisfies $\iota_{\mathcal{M}}(\psi) = \phi$.
Here the map $V_{\alpha,n-\alpha}(4)$ is defined in \S\ref{ss:hecke_operators}
and the map  $\iota_{\mathcal{M}}$ is defined in \S\ref{ss:iota}.

\section{Action of index-shift maps on Jacobi-Eisenstein series}\label{s:action_of_ism}
In this section we fix a positive definite half-integral symmetric matrix
$\mathcal{M}  \in \mbox{Sym}_2^+$
and we assume that the right-lower part of $\mathcal{M}$ is $1$, 
it means $\mathcal{M} = \left( \begin{matrix} * & * \\ * & 1 \end{matrix} \right)$.

The purpose of this section is to show that the form
$E_{k,\mathcal{M}}^{(n)}|V_{\alpha,n-\alpha}(p^2)$ is a linear combination
of three forms 
$E_{k,\mathcal{M}\left[ \left(\begin{smallmatrix} p&0\\0&1 \end{smallmatrix} \right)\right]}^{(n)}$,
$E_{k,\mathcal{M}}^{(n)}|U_{\left(\begin{smallmatrix} p & 0 \\ 0 & 1\end{smallmatrix} \right)}$
and
$E_{k,\mathcal{M}\left[ X^{-1} \left( \begin{smallmatrix} p & 0 \\ 0 & 1\end{smallmatrix} \right)^{-1} \right]}^{(n)}|U_{\left( \begin{smallmatrix} p & 0 \\ 0 & 1\end{smallmatrix}\right) X \left( \begin{smallmatrix} p & 0 \\ 0 & 1\end{smallmatrix}\right)}$,
where $E_{k,\mathcal{M}}^{(n)}$ is the Jacobi-Eisenstein series of
index $\mathcal{M}$ (cf. \S\ref{s:fourier_matrix}),
and where $V_{\alpha,n-\alpha}(p^2)$
and $U_{\left( \begin{smallmatrix} p & 0 \\ 0 & 1\end{smallmatrix}\right)}$
are index-shift maps (cf. \S\ref{ss:hecke_operators} and \S\ref{ss:compati}).
Here $X = \smat{1}{0}{x}{1}$ is a certain matrix.

First
we will calculate a certain function $K_{i,j}^{\beta}$ (cf. Lemma~\ref{lemma:kij}) which 
appear in an expression of
$E_{k,\mathcal{M}}^{(n)}|V_{\alpha,n-\alpha}(p^2)$,
and after that, we will express $E_{k,\mathcal{M}}^{(n)}|V_{\alpha,n-\alpha}(p^2)$ as a summation of
certain functions $\tilde{K}_{i,j}^{\beta}$ (cf. Proposition~\ref{prop:tilde_kij}).

The calculation in this section is an analogue to the one given in \cite{Ya2}
for the case of index $\mathcal{M} = 1$.
However, we need to modify his calculation
for Jacobi-Eisenstein series $E_{k,1}^{(n)}$ of index $1$ to our case
for $E_{k,\mathcal{M}}^{(n)}$
with $\mathcal{M} = \smat{*}{*}{ *}{1} \in \mbox{Sym}_2^+$.
This calculation is not obvious, since we need to calculate the action of
the matrices of type $[((0,u_2),(0,v_2)),0_2]$.

\subsection{The function $K_{i,j}^{\beta}$}\label{ss:kij}
The purpose of this subsection is to introduce a function $K_{i,j}^{\beta}$ and to express
$E_{k,\mathcal{M}}^{(n)}|V_{\alpha,n-\alpha}(p^2)$ as a summation over $K_{i,j}^{\beta}$.
Moreover, we shall calculate $K_{i,j}^{\beta}$ explicitly (cf. Lemma~\ref{lemma:kij}).

We put $\delta_{i,j} := \mbox{diag}(1_i,p 1_{j-i},p^2 1_{n-j})$.
For $x = \mbox{diag}(0_i,x',0_{n-i-j})$ with $x' = {^t x'} \in \Z^{(j-i,j-i)}$,
we set $\delta_{i,j}(x) := \left(\begin{matrix}p^2 \delta_{i,j}^{-1} & x \\ 0 & \delta_{i,j} \end{matrix}\right)$
and 
$\Gamma(\delta_{i,j}(x)) := \Gamma_n \cap \delta_{i,j}(x)^{-1}\Gamma_{\infty}^{(n)} \delta_{i,j}(x)$.

For $x = \mbox{diag}(0_i,x',0_{n-i-j})$ and for $y = \mbox{diag}(0_i,y',0_{n-i-j})$
with $x'={^t x'}, y' = {^t y'} \in \Z^{(j-i,j-i)}$,
following~\cite{Ya2} we say that $x$ and $y$ are \textit{equivalent},
if there exists a matrix
$u \in \mbox{GL}_n(\Z) \cap \delta_{i,j}\mbox{GL}_n(\Z)\delta_{i,j}^{-1}$
which has a form
$u = 
 \left(\begin{smallmatrix}u_1& * & * \\
                          * &u_2& * \\ 
                          * & * &u_3 \end{smallmatrix}\right)$
satisfying
$x' \equiv u_2\, y'\, {^t u_2} \mod p$,
where $u_2 \in \Z^{(j-i,j-i)}$, $u_1 \in \Z^{(i,i)}$ and $u_3\in \Z^{(n-j,n-j)}$.

We denote by $[x]$ the equivalence class of $x$.
We quote the following lemma from \cite{Ya2}.
\begin{lemma}\label{lemma:hecke_decom}
The double coset $\Gamma_n \mbox{diag}(1_{\alpha},p 1_{n-\alpha}, p^2 1_{\alpha}, p 1_{n-\alpha})\Gamma_n$ is 
written as a disjoint union 
 \begin{eqnarray*}
  \Gamma_n \left( \begin{smallmatrix}1_{\alpha} &&&\\
                                &p 1_{n-\alpha}&&\\
                                &&p^2 1_{\alpha}&\\
                                &&&p 1_{n-\alpha}  \end{smallmatrix}\right)\Gamma_n
  &=&
  \bigcup_{\begin{smallmatrix} i,j \\ 0\leq i \leq j \leq n \end{smallmatrix}}
  \bigcup_{[x]}
   \Gamma_{\infty}^{(n)} \delta_{i,j}(x) \Gamma_n ,
 \end{eqnarray*}
where $[x]$ runs over all equivalence classes which satisfy $\mbox{rank}_p(x) = j - i - n + \alpha \geq 0$.
\end{lemma}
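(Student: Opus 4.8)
The plan is to treat Lemma~\ref{lemma:hecke_decom} as a purely classical statement about $\Gamma_n=\mbox{Sp}_n(\Z)$ and the Hecke double coset $\Gamma_n g\Gamma_n$ of similitude $p^2$, where $g=\mbox{diag}(1_\alpha,p1_{n-\alpha},p^2 1_\alpha,p1_{n-\alpha})$; no half-integral or Jacobi structure enters, and the argument is essentially the one given by Yamazaki~\cite{Ya2}. I would establish three things: (i) every $(\Gamma_{\infty}^{(n)},\Gamma_n)$-double coset contained in $\Gamma_n g\Gamma_n$ contains a representative of the form $\delta_{i,j}(x)$; (ii) such a representative lies in $\Gamma_n g\Gamma_n$ precisely when $\mbox{rank}_p(x)=j-i-n+\alpha\ (\ge 0)$; and (iii) representatives with distinct data $(i,j,[x])$ give distinct double cosets, so that the union in the statement is disjoint.

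For (i), I would start from an arbitrary $M=\smat{A}{B}{C}{D}\in\Gamma_n g\Gamma_n$, which has integral entries and similitude $p^2$; in particular ${^t C}D$ is symmetric, and it is classical that $M$ is right-$\Gamma_n$-equivalent to a block upper-triangular matrix $\smat{p^2{^t D_1}^{-1}}{B_1}{0}{D_1}$. Left multiplication by $\smat{{^t U}^{-1}}{0}{0}{U}\in\Gamma_{\infty}^{(n)}$ and right multiplication by $\smat{U'}{0}{0}{{^t U'}^{-1}}\in\Gamma_n$ replace $D_1$ by $UD_1{^t U'}^{-1}$, and since $n(M)=p^2$ the elementary divisors of $D_1$ lie in $\{1,p,p^2\}$; this brings the lower-right block to $\delta_{i,j}=\mbox{diag}(1_i,p1_{j-i},p^2 1_{n-j})$ for some $0\le i\le j\le n$, while the block upper-triangular shape (with upper-left block $p^2$ times the inverse transpose of the lower-right block) is preserved automatically. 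Finally, unipotent translations $\smat{1_n}{S}{0}{1_n}$, taken from $\Gamma_{\infty}^{(n)}$ on the left and from $\Gamma_n$ on the right, absorb $B_1$ modulo an explicit lattice attached to $\delta_{i,j}$; bookkeeping shows that $B_1$ reduces to a matrix supported in the block indexed by the $p$-part of $\delta_{i,j}$ and determined only modulo $p$, and a further conjugation by a suitable element of $\mbox{GL}_n(\Z)\cap\delta_{i,j}\mbox{GL}_n(\Z)\delta_{i,j}^{-1}$ puts $M$ into the shape $\delta_{i,j}(x)$.

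For (ii), I would compute the (symplectic) elementary divisors of $\delta_{i,j}(x)$ directly. When $x=0$ they are $i$ pairs $(p^2,1)$, $j-i$ pairs $(p,p)$ and $n-j$ pairs $(1,p^2)$, i.e.\ $i+(n-j)$ pairs of type $(1,p^2)$ and $j-i$ pairs of type $(p,p)$. A symmetric block $x'$ of rank $\rho$ over $\Z/p\Z$ converts exactly $\rho$ of the $(p,p)$ pairs into $(1,p^2)$ pairs; this is already visible in the $1\times1$ case, since $\smat{p}{x}{0}{p}$ has Smith normal form $\mbox{diag}(1,p^2)$ whenever $p\nmid x$. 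Hence $\delta_{i,j}(x)$ has $i+(n-j)+\mbox{rank}_p(x)$ pairs of type $(1,p^2)$ and $j-i-\mbox{rank}_p(x)$ of type $(p,p)$, whereas $g$ has $\alpha$ pairs of type $(1,p^2)$ and $n-\alpha$ of type $(p,p)$. Equality of elementary divisors, i.e.\ membership of $\delta_{i,j}(x)$ in $\Gamma_n g\Gamma_n$, is therefore equivalent to $i+(n-j)+\mbox{rank}_p(x)=\alpha$, that is, $\mbox{rank}_p(x)=j-i-n+\alpha$, which in particular forces this quantity to be non-negative.

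For (iii), the Smith normal form of the lower-right block is invariant under left multiplication by $\Gamma_{\infty}^{(n)}$ and right multiplication by $\Gamma_n$, so the pair $(i,j)$ is a well-defined invariant of the double coset of $\delta_{i,j}(x)$; with $(i,j)$ fixed, running through the left and right moves that stabilise the normal form $\delta_{i,j}$ shows that the remaining freedom in $x$ is exactly $x'\mapsto u_2 x'{^t u_2}\bmod p$ for the middle block $u_2$ of an element $u\in\mbox{GL}_n(\Z)\cap\delta_{i,j}\mbox{GL}_n(\Z)\delta_{i,j}^{-1}$, which is precisely the equivalence relation defining $[x]$. Hence the double cosets are in bijection with the triples $(i,j,[x])$ satisfying $\mbox{rank}_p(x)=j-i-n+\alpha\ge 0$, which is the asserted disjoint decomposition. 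The one genuinely delicate point in this program is the reduction of the $B$-part in step (i): tracking exactly which entries survive the successive unipotent reductions and conjugations, and confirming that the residual freedom coincides with the stated equivalence, is somewhat intricate; but it is linear algebra over $\Z$ and $\Z/p\Z$, is carried out in full in~\cite{Ya2}, and is unaffected by the present context, so I would simply invoke it.
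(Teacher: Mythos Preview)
Your proposal is correct and aligns with the paper's approach: the paper's proof consists solely of the citation ``The reader is referred to~\cite[Corollary~2.2]{Ya2},'' and your sketch is precisely an outline of Yamazaki's argument followed by the same citation. Your additional detail on the elementary-divisor computation in (ii) and the reduction in (i) is accurate and more informative than the paper's bare reference, but the underlying route is identical.
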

\begin{proof}
The reader is referred to~\cite[Corollary~2.2]{Ya2}.
\end{proof}

We put $U := \begin{pmatrix} p^2&0\\0&p \end{pmatrix}$.
By the definition of index-shift map $V_{\alpha,n-\alpha}(p^2)$
and of the Jacobi-Eisenstein series $E_{k,\mathcal{M}}^{(n)}$, we have
\begin{eqnarray*}
 \begin{aligned}
 &
 E_{k,\mathcal{M}}^{(n)}|V_{\alpha,n-\alpha}(p^2) \\
 &=
 \sum_{u,v \in \Z^{(n,1)}}
 \sum_{M' \in \Gamma_n \backslash \Gamma_n diag(1_{\alpha},p1_{n-\alpha},p^2 1_{\alpha}, p 1_{n-\alpha})\Gamma_n}
 \sum_{M \in \Gamma_{\infty}^{(n)}\backslash \Gamma_n}
 \sum_{\lambda \in \Z^{(n,2)}}
\\
 & \qquad
 \times 1 |_{k,\mathcal{M}} ([(\lambda,0),0_2],MM'\times \smat{U}{0}{0}{p^2 U^{-1}})
    |_{k,\mathcal{M}[\smat{p}{0}{0}{1}]}[((0, u),(0, v)),0_2]
\\
 &=
 \sum_{u,v \in \Z^{(n,1)}}
 \sum_{M \in \Gamma_{\infty}^{(n)} \backslash \Gamma_n
              diag(1_{\alpha},p1_{n-\alpha},p^2 1_{\alpha}, p 1_{n-\alpha})
             \Gamma_n}
 \sum_{\lambda \in \Z^{(n,2)}}
\\
 & \qquad
  \times 1 |_{k,\mathcal{M}} ([(\lambda,0),0_2],M\times \smat{U}{0}{0}{p^2 U^{-1}})
    |_{k,\mathcal{M}[\smat{p}{0}{0}{1}]}[((0, u),(0, v)),0_2] .
 \end{aligned}
\end{eqnarray*}
Hence, due to Lemma~\ref{lemma:hecke_decom}, we have
\begin{eqnarray*}
 \begin{aligned}
 &
 E_{k,\mathcal{M}}^{(n)}|V_{\alpha,n-\alpha}(p^2)
\\
 &=
 \sum_{u,v \in \Z^{(n,1)}}
 \sum_{\begin{smallmatrix} i,j \\ 0\leq i \leq j \leq n \end{smallmatrix}}
 \sum_{\begin{smallmatrix} [x] \\ rank_p(x) = j-i-n+\alpha\end{smallmatrix}}
 \sum_{M \in \Gamma_{\infty}^{(n)}\backslash \delta_{i,j}(x)\Gamma_n}
 \sum_{\lambda \in \Z^{(n,2)}}
\\
 & \qquad
 \times 1 |_{k,\mathcal{M}} ([(\lambda,0),0_2],M\times\smat{U}{0}{0}{p^2 U^{-1}})
    |_{k,\mathcal{M}[\smat{p}{0}{0}{1}]}[((0, u),(0, v)),0_2]
\\
 &=
 \sum_{u,v \in \Z^{(n,1)}}
 \sum_{\begin{smallmatrix} i,j \\ 0\leq i \leq j \leq n \end{smallmatrix}}
 \sum_{\begin{smallmatrix} [x] \\ rank_p(x) = j-i-n+\alpha \end{smallmatrix}}
 \sum_{M \in \Gamma(\delta_{i,j}(x))\backslash \Gamma_n}
 \sum_{\lambda \in \Z^{(n,2)}}
\\
 & \qquad
 \times 1 |_{k,\mathcal{M}} ([(\lambda,0),0_2],\delta_{i,j}(x)M\times\smat{U}{0}{0}{p^2 U^{-1}})
    |_{k,\mathcal{M}[\smat{p}{0}{0}{1}]}[((0, u),(0, v)),0_2] .
 \end{aligned}
\end{eqnarray*}
For $\beta \leq j-i $ we define a function
\begin{eqnarray*}
 \begin{aligned}
 &
 K_{i,j}^{\beta}(\tau,z)
\\
 &:=
 K_{i,j,\mathcal{M},p}^{\beta}(\tau,z)
\\
 &= \!\!
 \sum_{\begin{smallmatrix}[x] \\ rank_p(x) = \beta \end{smallmatrix}}
 \sum_{M \in \Gamma(\delta_{i,j}(x))\backslash \Gamma_n}\sum_{\lambda \in \Z^{(n,2)}}
 \left\{
  1|_{k,\mathcal{M}}(
   [(\lambda,0),0_2],
   \delta_{i,j}(x) M 
    \times\smat{U}{0}{0}{p^2 U^{-1}}
   )
 \right\}
  (\tau,z) .
 \end{aligned}
\end{eqnarray*}
Then we obtain
\begin{eqnarray*}
 E_{k,\mathcal{M}}^{(n)}|V_{\alpha,n-\alpha}(p^2)
 &=&
 \sum_{\begin{smallmatrix} i,j \\  0\leq i \leq j \leq n \end{smallmatrix}}
 \sum_{u,v \in \Z^{(n,1)}}
  K_{i,j}^{\alpha-i-n+j}
  |_{k,\mathcal{M}[\smat{p}{0}{0}{1}]}[((0, u),(0, v)),0_2] .
\end{eqnarray*}

We define
\begin{eqnarray*}
 L_{i,j}
 &:=&
 L_{i,j,\mathcal{M},p}
 =
 \left.
 \left\{ \left(\begin{smallmatrix}\lambda_1\\ \lambda_2\\ \lambda_3 \end{smallmatrix}\right)
   \ \right| \
 \begin{matrix}
  \lambda_1 \in (p\Z)^{(i,2)}\, , \, \lambda_2 \in \Z^{(j-i,2)}\, , \, \lambda_3 \in (p^{-1}\Z)^{(n-j,2)}
 \\
  2 \lambda_2 \mathcal{M} {^t \lambda_3} \in \Z^{(j-i,n-j)} \, , \, 
  \lambda_3 \mathcal{M} {^t \lambda_3} \in \Z^{(n-j,n-j)}
 \end{matrix}
 \right\}.
\end{eqnarray*}
Moreover, we define a subgroup $\Gamma(\delta_{i,j})$ of $\Gamma_{\infty}^{(n)}$ by
\begin{eqnarray*}
 \Gamma(\delta_{i,j})
 &:=&
 \left.
 \left\{
  \begin{pmatrix}
   A & B \\ 0_n & {^t A}^{-1}
  \end{pmatrix}
  \in
  \Gamma_{\infty}^{(n)}
 \, \right| \,
   A \in \delta_{i,j}\mbox{GL}_n(\Z) \delta_{i,j}^{-1}
 \right\} .
\end{eqnarray*}

\begin{lemma} \label{lemma:kij}
Let $K_{i,j}^{\beta}$ be as above.
We obtain
\begin{eqnarray*}
 K_{i,j}^{\beta}(\tau,z)
 &=&
 p^{-k(2n-i-j+1)+(n-j)(n-i+1)}
 \sum_{M \in \Gamma(\delta_{i,j})\backslash \Gamma_n}
\\
 && \times \sum_{\lambda \in L_{i,j}}
 1|_{k,\mathcal{M}}([(\lambda,0),0_2],M)(\tau,z\smat{p}{0}{0}{1}) \!\!\!
 \sum_{\begin{smallmatrix} x = ^t x \in (\Z/p\Z)^{(n,n)} \\
                           x = diag(0_i,x',0_{n-j}) \\
                           rank_p(x') = \beta \end{smallmatrix}} \!\!\!
 e\left(\frac{1}{p}\mathcal{M}{^t\lambda} x \lambda\right),
\end{eqnarray*}
where
$x$ runs over
a complete set of representatives of $(\Z/p\Z)^{(n,n)}$ such that $x = {^t x}$,
$\mbox{rank}_p(x) = \beta$ and
$x = \mbox{diag}(0_i,x',0_{n-j})$ with some $x' \in (\Z/p\Z)^{(j-i,j-i)}$.
\end{lemma}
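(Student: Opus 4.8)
The plan is to transcribe, with the modifications forced by the matrix index $\mathcal M$, the computation of Yamazaki~\cite{Ya2} in the index-$1$ case; the whole argument is an unwinding of the definitions of $V_{\alpha,n-\alpha}(p^2)$ (\S\ref{ss:hecke_operators}), of the Jacobi--Eisenstein series $E_{k,\mathcal M}^{(n)}$ (\S\ref{s:fourier_matrix}), and of the cocycle relation for $J_{k,\mathcal M}$ (\S\ref{ss:factors_automorphy}).

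First I would decompose $\delta_{i,j}(x)=\smat{1_n}{x\delta_{i,j}^{-1}}{0}{1_n}\,d_{i,j}$, where $d_{i,j}:=\smat{p^2\delta_{i,j}^{-1}}{0}{0}{\delta_{i,j}}$ and the upper unipotent factor lies in $\Gamma_{\infty}^{(n)}$ (the matrix $x\delta_{i,j}^{-1}$ is symmetric since $x=\mbox{diag}(0_i,x',0_{n-j})$). Inside $G_{n,2}^J$ this gives
\[
 \delta_{i,j}(x)M\times\smat{U}{0}{0}{p^2U^{-1}}
 =\left(\smat{1_n}{x\delta_{i,j}^{-1}}{0}{1_n}\times 1_4\right)\left(d_{i,j}M\times\smat{U}{0}{0}{p^2U^{-1}}\right),
\]
so each summand of $K_{i,j}^{\beta}$ becomes $1|_{k,\mathcal M}$ applied to the product of the Heisenberg element $[(\lambda,0),0_2]$, the unipotent factor, and $d_{i,j}M\times\smat{U}{0}{0}{p^2U^{-1}}$. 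Peeling these off from the right by the cocycle relation, the factor of automorphy of the rightmost block contributes the determinant $\det(p^2U^{-1})^k\det(\delta_{i,j})^k=p^{k}\,p^{k(2n-i-j)}$, which after inversion is the constant $p^{-k(2n-i-j+1)}$; its $U$-block together with the $d_{i,j}$-action rescales the elliptic variable, and after dividing out the scalar part one is left with the argument $(\tau,z\smat{p}{0}{0}{1})$ and with the shifted index $(p^2U^{-1})^{-1}\mathcal M U=\mathcal M[\smat{p}{0}{0}{1}]$, i.e.\ with index-$\mathcal M$ slash operators evaluated at $z\smat{p}{0}{0}{1}$, exactly as in the statement.

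Next I would commute the Heisenberg translation $[(\lambda,0),0_2]$ past the unipotent factor $\smat{1_n}{x\delta_{i,j}^{-1}}{0}{1_n}\times1_4$: this conjugation changes the $\mu$-coordinate of the translation and modifies its central coordinate by (an integral matrix plus) $-\,{}^t\lambda\,x\,\delta_{i,j}^{-1}\lambda$. Through the term $e(-\mathcal M\kappa)$ of $J_{k,\mathcal M}$, and after the change of variables on $\lambda$ accompanying the conjugation by $d_{i,j}$ — which rescales the row-blocks of $\lambda\in\Z^{(n,2)}$ by $p\,\delta_{i,j}^{-1}=\mbox{diag}(p1_i,1_{j-i},p^{-1}1_{n-j})$ and forces precisely the integrality conditions $2\lambda_2\mathcal M\,{}^t\lambda_3\in\Z^{(j-i,n-j)}$ and $\lambda_3\mathcal M\,{}^t\lambda_3\in\Z^{(n-j,n-j)}$ defining $L_{i,j}$ — this produces the Gauss-sum phase $e(\tfrac1p\mathcal M\,{}^t\lambda\,x\,\lambda)$, in which only the central block $x'$ of $x$ survives because the first and last row-blocks of $\lambda$ have been moved into $p\Z$ and $p^{-1}\Z$ respectively.

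Finally I would reorganize the outer double sum. Once the $x$-dependence has been concentrated in the phase and $\lambda$ ranges over $L_{i,j}$, the remaining summand is invariant under the larger group $\Gamma(\delta_{i,j})$, while $\Gamma(\delta_{i,j}(x))$ is exactly the subgroup of $\Gamma(\delta_{i,j})$ obtained by imposing $p^2\delta_{i,j}^{-1}B\delta_{i,j}^{-1}\in\Z^{(n,n)}$ on the $B$-block — a condition that restricts the $(2,3)$-block of $B$ modulo $p$ and the $(3,3)$-block modulo $p^2$, hence of index $p^{(j-i)(n-j)+(n-j)(n-j+1)}=p^{(n-j)(n-i+1)}$, independently of $x$. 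Passing from $\sum_{M\in\Gamma(\delta_{i,j}(x))\backslash\Gamma_n}$ to $\sum_{M\in\Gamma(\delta_{i,j})\backslash\Gamma_n}$ therefore produces this power of $p$, and by the reduction of \cite[\S2]{Ya2} the sum over the equivalence classes $[x]$ of rank $\beta$ becomes the free sum over symmetric $x=\mbox{diag}(0_i,x',0_{n-j})\in(\Z/p\Z)^{(n,n)}$ of rank $\beta$; collecting the three contributions gives the asserted identity. I expect the main obstacle to be the last two points — computing the Gauss-sum phase with coefficient exactly $1/p$ and with index $x$ rather than $x\,\delta_{i,j}^{-1}$, and verifying that the $\lambda$-summation emerging from the $d_{i,j}$-substitution is genuinely over $L_{i,j}$ with no further constraints — since these are precisely where the matrix-index situation departs from the index-$1$ computation of \cite{Ya2}; the determinant bookkeeping and the coset reindexing are then routine.
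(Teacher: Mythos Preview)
Your overall architecture matches the paper's (and Yamazaki's): factor $\delta_{i,j}(x)$, pull out the determinant $p^{-k(2n-i-j+1)}$, substitute $\lambda\mapsto p\delta_{i,j}^{-1}\lambda$, and then reorganize the $[x]$- and $M$-sums. But two of the three ``final'' steps are mis-attributed, and as written the argument does not close.

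\textbf{The $L_{i,j}$ conditions do not come from the change of variables.} Replacing $\lambda\in\Z^{(n,2)}$ by $p\,\delta_{i,j}^{-1}\lambda$ yields exactly the set $\{\lambda:\lambda_1\in(p\Z)^{(i,2)},\ \lambda_2\in\Z^{(j-i,2)},\ \lambda_3\in(p^{-1}\Z)^{(n-j,2)}\}$, with \emph{no} further constraints; the $\mathcal M$-dependent conditions $2\lambda_2\mathcal M\,{}^t\lambda_3\in\Z^{(j-i,n-j)}$ and $\lambda_3\mathcal M\,{}^t\lambda_3\in\Z^{(n-j,n-j)}$ are simply absent. In the paper these conditions and the factor $p^{(n-j)(n-i+1)}$ arise \emph{together} from a single device: one inserts the finite sum over
\[
\mathcal V=\Bigl\{\smat{1_n}{s}{0}{1_n}:\ s=\left(\begin{smallmatrix}0&0&0\\0&0&s_2\\0&{}^ts_2&s_3\end{smallmatrix}\right)\Bigr\},
\]
a set of representatives of $\Gamma(\delta_{i,j}(x))\backslash\Gamma(\delta_{i,j}(x))\,\mathcal U$, and evaluates the resulting character sum $\sum_{s}e\!\bigl(p^2\mathcal M\,{}^t\lambda\,\delta_{i,j}^{-1}s\,\delta_{i,j}^{-1}\lambda\bigr)$. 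This sum equals $|\mathcal V|=p^{(n-j)(n-i+1)}$ precisely when the two missing conditions hold, and $0$ otherwise. That is where both the power of $p$ and the cutoff to $L_{i,j}$ come from; you cannot get the cutoff from the substitution and the power from an index computation separately.

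\textbf{The coset reorganization is also off.} Your description of $\Gamma(\delta_{i,j}(x))$ as the subgroup of $\Gamma(\delta_{i,j})$ defined by $p^2\delta_{i,j}^{-1}B\delta_{i,j}^{-1}\in\Z^{(n,n)}$ is only correct for $x=0$; for $x\ne0$ the integrality condition on the upper-right block of $\delta_{i,j}(x)\,\gamma\,\delta_{i,j}(x)^{-1}$ involves $-\delta_{i,j}^{-1}Ax+x\,{}^tA^{-1}\delta_{i,j}^{-1}$ as well, so the index is not given by your $B$-count alone. More importantly, the summand (with the phase $e(\tfrac1p\mathcal M\,{}^t\lambda x\lambda)$) is \emph{not} $\Gamma(\delta_{i,j})$-invariant for fixed $x$: under $\gamma=\smat{A}{B}{0}{{}^tA^{-1}}\in\Gamma(\delta_{i,j})$ one has $\lambda\mapsto{}^tA\lambda$ and hence $x\mapsto A^{-1}x\,{}^tA^{-1}$. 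This is precisely what converts the sum over classes $[x]$ into the free sum over $x$: in the paper one passes from $\sum_{[x]}\sum_{M\in\Gamma(\delta_{i,j}(x))\mathcal U\backslash\Gamma_n}$ to $\sum_{x}\sum_{M\in\Gamma(\delta_{i,j})\backslash\Gamma_n}$ in a single step, using that the cosets $\Gamma(\delta_{i,j}(x))\mathcal U\backslash\Gamma(\delta_{i,j})$ sweep out $[x]$. Your version performs this unfolding \emph{and} multiplies by the index, which double-counts.

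In short, the missing ingredient is the $\mathcal V$-sum (the ``fattening'' from $\Gamma(\delta_{i,j}(x))$ to $\Gamma(\delta_{i,j}(x))\,\mathcal U$); once you insert it, the $L_{i,j}$ cutoff, the power $p^{(n-j)(n-i+1)}$, and the $[x]\to x$ unfolding all fall out at once, exactly as in \cite[Proposition~3.2]{Ya2} and in the paper.
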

\begin{proof}
We proceed as in~\cite[Proposition~3.2]{Ya2}.
The inside of the last summation of the definition of $K_{i,j}^{\beta}(\tau,z)$ is
\begin{eqnarray*}
 \begin{aligned}
 &
 \left(1|_{k,\mathcal{M}}(
   [(\lambda,0),0_2],
   \delta_{i,j}(x) M 
    \times\smat{U}{0}{0}{p^2 U^{-1}})\right)(\tau,z)
\\
 &=
 \det(p^2U^{-1})^{-k}\det(\delta_{i,j})^{-k}
\\
 & \qquad
 \times
 \left(e(\mathcal{M}(^t\lambda(p^2 \delta_{ij}^{-1}\tau+ x)\delta_{ij}^{-1}\lambda 
 + 2 ^t \lambda \delta_{ij}^{-1} z \begin{pmatrix} p^2 & \\ & p \end{pmatrix}))
 |_{k,\mathcal{M}[\left(\begin{smallmatrix} p&0\\0&1\end{smallmatrix} \right)]} M\right)(\tau,z)
\\
 &=
 p^{-k(2n-i-j+1)} 
\\
 & \qquad \times
 \left(\left((1|_{k,\mathcal{M}}([(p\delta_{i,j}^{-1}\lambda,0),0_2],\begin{pmatrix}1&p^{-1}x\\0&1\end{pmatrix}))
  (\tau,z\smat{p}{0}{0}{1})\right)|_{k,\mathcal{M}[\left(\begin{smallmatrix} p&0\\0&1\end{smallmatrix} \right)]} M\right)(\tau,z)
\\
 &=
 p^{-k(2n-i-j+1)}
 \left(1|_{k,\mathcal{M}}([(p\delta_{i,j}^{-1}\lambda,0),0_2],\begin{pmatrix}1&p^{-1}x\\0&1\end{pmatrix}M)\right)
  (\tau,z\smat{p}{0}{0}{1}) .
 \end{aligned}
\end{eqnarray*}
Here we used the identity $\delta_{i,j} x = \delta_{i,j} \mbox{diag}(0_i,x',0_{n-j}) = p x$.
Thus
\begin{eqnarray*}
 K_{i,j}^{\beta}(\tau,z)
 &=&
 p^{-k(2n-i-j+1)}
 \sum_{\begin{smallmatrix}[x] \\ rank_p(x) = \beta \end{smallmatrix}}
 \sum_{M \in \Gamma(\delta_{i,j}(x))\backslash \Gamma_n}
\\
 &&
 \times \sum_{\lambda \in \Z^n}
  1|_{k,\mathcal{M}}\left([(p\delta_{i,j}^{-1}\lambda,0),0_2],\begin{pmatrix}1&p^{-1}x\\0&1\end{pmatrix}M\right)
   (\tau,z\smat{p}{0}{0}{1}) .
\end{eqnarray*}
We put
\begin{eqnarray*}
 \mathcal{U}
 &:=&
 \left.
 \left\{\begin{pmatrix}1_n& s \\ 0_n & 1_n \end{pmatrix} \, \right| \, 
  s = {^t s} \in \Z^{(n,n)}
 \right\} .
\end{eqnarray*}
Then the set
\begin{eqnarray*}
 \mathcal{V}
 &:=&
 \left.
 \left\{
  \begin{pmatrix}1_n& s \\ 0_n & 1_n \end{pmatrix}
 \, \right| \,
  s = \left(\begin{smallmatrix}0&0&0\\0&0&s_2\\0&^t s_2&s_3\end{smallmatrix}\right),
  s_2 \in (\Z/p\Z)^{(j-i,n-j)},
  s_3 = {^t s_3} \in (\Z/p \Z)^{(n-j,n-j)}
 \right\}
\end{eqnarray*}
is a complete set of representatives of
$\Gamma(\delta_{i,j}(x))\backslash \Gamma(\delta_{i,j}(x))\mathcal{U}$.
Therefore
\begin{eqnarray*}
 \begin{aligned}
 &
 K_{i,j}^{\beta}(\tau,z)
\\
 &=
 p^{-k(2n-i-j+1)}
 \sum_{\begin{smallmatrix}[x] \\ rank_p(x) = \beta \end{smallmatrix}}
 \sum_{M \in (\Gamma(\delta_{i,j}(x))\mathcal{U})\backslash \Gamma_n}
 \sum_{\lambda \in \Z^{(n,2)}}
 \sum_{\smat{1_n}{s}{0}{1_n} \in \mathcal{V}}
\\
 & \qquad
 \times\quad 1|_{k,\mathcal{M}}([(p\delta_{i,j}^{-1}\lambda,0),0_2],\begin{pmatrix}1_n&p^{-1}x\\0&1_n\end{pmatrix}
                            \begin{pmatrix}1_n&s\\0&1_n\end{pmatrix}M)(\tau,z\smat{p}{0}{0}{1})
 \end{aligned}
\end{eqnarray*}
Hence
\begin{eqnarray*}
 K_{i,j}^{\beta}(\tau,z)
 &=&
 p^{-k(2n-i-j+1)}
 \sum_{\begin{smallmatrix}[x] \\ rank_p(x) = \beta \end{smallmatrix}}
 \sum_{M \in (\Gamma(\delta_{i,j}(x))\mathcal{U})\backslash \Gamma_n}
 \sum_{\lambda \in \Z^{(n,2)}} 
\\
 &&
 \times\quad
 1|_{k,\mathcal{M}}
  ([(p\delta_{i,j}^{-1}\lambda,0),0_2],\begin{pmatrix}1_n&p^{-1}x\\0&1_n\end{pmatrix}M)(\tau,z\smat{p}{0}{0}{1})
\\
 &&
 \times
 \sum_{\smat{1_n}{s}{0}{1_n} \in \mathcal{V}}
 e\left(p^2 \mathcal{M} {^t\lambda} \delta_{i,j}^{-1} s \delta_{i,j}^{-1} \lambda \right) .
\end{eqnarray*}
The last summation of the RHS of the above identity is
\begin{eqnarray*}
 \begin{aligned}
&
 \sum_{\smat{1_n}{s}{0}{1_n} \in \mathcal{V}} e\left(p^2 \mathcal{M} {^t\lambda} \delta_{i,j}^{-1} s \delta_{i,j}^{-1} \lambda \right)
\\
&=
 \begin{cases}
   p^{(n-j)(n-i+1)} & \mbox{ if } \lambda_3 \mathcal{M} {^t\lambda_3} \equiv 0 \!\! \mod p^2  
                                  \mbox{ and } 
                                  2 \lambda_3 \mathcal{M} {^t\lambda_2} \equiv 0 \!\! \mod p ,
                                  \\
   0 & \mbox{ otherwise, }
 \end{cases}
 \end{aligned}
\end{eqnarray*}
where 
$\lambda = \left(\begin{smallmatrix} \lambda_1 \\ \lambda_2 \\ \lambda_3 \end{smallmatrix} \right) \in \Z^{(n,2)}$
with $\lambda_1 \in \Z^{(i,2)}$, $\lambda_2 \in \Z^{(j-i,2)}$ and $\lambda_3 \in \Z^{(n-j,2)}$.

Thus
\begin{eqnarray*}
 K_{i,j}^{\beta}(\tau,z)
 &=&
 p^{-k(2n-i-j+1)+(n-j)(n-i+1)}\sum_{\begin{smallmatrix}[x] \\ rank_p(x) = \beta \end{smallmatrix}}
 \sum_{M \in (\Gamma(\delta_{i,j}(x))\mathcal{U})\backslash \Gamma_n}
\\
 && \times \sum_{\lambda \in L_{i,j}}
  1|_{k,\mathcal{M}}
  ([(\lambda,0),0_2],\smat{1_n}{p^{-1}x}{0}{1_n}M)(\tau,z\smat{p}{0}{0}{1}) .
\end{eqnarray*}
Now
$\Gamma(\delta_{i,j}(x))\mathcal{U}$ is a subgroup of $\Gamma(\delta_{i,j})$.
For any $\smat{A}{B}{0_n}{{^t A}^{-1}} \in \Gamma(\delta_{i,j})$ we have
\begin{eqnarray*}
 \begin{aligned}
 &
 1|_{k,\mathcal{M}}([(\lambda,0),0_2],\smat{1_n}{p^{-1}x}{0_n}{1_n} \smat{A}{B}{0_n}{^t A^{-1}} M)
 \\
 &=
 1|_{k,\mathcal{M}}([(\lambda,0),0_2],\smat{A}{B}{0_n}{{^t A}^{-1}}\smat{1_n}{p^{-1}A^{-1}x{{^t A}^{-1}}}{0_n}{1_n}M)
 \\
 &=
 1|_{k,\mathcal{M}}([({^t A}\lambda,{^t B}\lambda),0_2],\smat{1_n}{p^{-1}A^{-1}x{{^t A}^{-1}}}{0_n}{1_n}M)
 \\
 &=
 1|_{k,\mathcal{M}}([({^t A}\lambda,0),0_2],\smat{1_n}{p^{-1}A^{-1}x{{^t A}^{-1}}}{0_n}{1_n}M),
 \end{aligned}
\end{eqnarray*}
and ${^tA}L_{i,j} = L_{i,j}$.
Moreover, when $\smat{A}{B}{0_n}{{^t A}^{-1}}$ runs over all elements in a complete set of representatives
of $\Gamma(\delta_{i,j}(x))\mathcal{U} \backslash \Gamma(\delta_{i,j})$, then  $A^{-1}x {^t A}^{-1}$ runs over all elements in the equivalence class $[x]$ (cf.~\cite[proof of Proposition~3.2]{Ya2}).
Therefore we have
\begin{eqnarray*}
 \begin{aligned}
 &
 K_{i,j}^{\beta}(\tau,z) \\
 &=
 p^{-k(2n-i-j+1)+(n-j)(n-i+1)}
 \sum_{\begin{smallmatrix} x = ^t x \in (\Z/p\Z)^{(n,n)} \\ 
                           x = diag(0_i,x',0_{n-j}) \\
                           rank_p(x') = \beta 
       \end{smallmatrix}}
 \sum_{M \in \Gamma(\delta_{i,j})\backslash \Gamma_n}
\\
 & \qquad \times \sum_{\lambda \in L_{i,j}}
 1|_{k,\mathcal{M}}([(\lambda,0),0_2],\smat{1_n}{p^{-1}x}{0}{1_n}M)(\tau,z\smat{p}{0}{0}{1})
\\
 &=
 p^{-k(2n-i-j+1)+(n-j)(n-i+1)}
 \sum_{M \in \Gamma(\delta_{i,j})\backslash \Gamma_n}
\\
 & \qquad \times \sum_{\lambda \in L_{i,j}}
 1|_{k,\mathcal{M}}([(\lambda,0),0_2],M)(\tau,z\smat{p}{0}{0}{1})
 \sum_{\begin{smallmatrix} x = ^t x \in (\Z/p\Z)^{(n,n)} \\
                           x = diag(0_i,x',0_{n-j}) \\
                           rank_p(x') = \beta \end{smallmatrix}}
 e\left(\frac{1}{p}\mathcal{M}{^t\lambda} x \lambda\right).
 \end{aligned}
\end{eqnarray*}
\end{proof}

\subsection{The function $\tilde{K}_{i,j}^{\beta}$}\label{ss:tilde_kij}
The purpose of this subsection is to introduce a function $\tilde{K}_{i,j}^{\beta}$ and to express
$E_{k,\mathcal{M}}^{(n)}|V_{\alpha,n-\alpha}(p^2)$ as a summation of $\tilde{K}_{i,j}^{\beta}$.
Moreover, we shall show that $\tilde{K}_{i,j}^{\beta}$ is a summation of certain exponential functions
with generalized Gauss sums (cf. Proposition~\ref{prop:tilde_kij}).

We define
\begin{eqnarray*}
 L_{i,j}^*
 &:=&
 L_{i,j,\mathcal{M},p}^* \\
 &=&
 \left\{ \left. \left(\begin{smallmatrix} \lambda_1 \\ \lambda_2 \\ \lambda_3 \end{smallmatrix}\right)
  \in L_{i,j}
  \, \right| \,   2 \lambda_3 \mathcal{M}
   \left(\begin{smallmatrix}0\\1 \end{smallmatrix}\right) \in \Z^{(n-j,1)} \right\}
 \\ 
 &=&
 \left\{
  \left( \begin{matrix} \lambda_1\\ \lambda_2\\ \lambda_3 \end{matrix} \right)
  \in (p^{-1}\Z)^{(n,2)}
 \, \left| \,
  \begin{matrix}
  \lambda_1\smat{p}{0}{0}{1}^{-1} \in \Z^{(i,2)}, \
  \lambda_2 \in \Z^{(j-i,2)} \\
  \lambda_3 \in (p^{-1}\Z)^{(n-j,2)},\
  2 \lambda_2 \mathcal{M} {^t \lambda_3} \in \Z^{(j-i,n-j)} \\
  \lambda_3 \mathcal{M} {^t\lambda_3} \in \Z^{(n-j,n-j)}, \
  2 \lambda_3 \mathcal{M} \left(\begin{smallmatrix}0\\1 \end{smallmatrix}\right) \in \Z^{(n-j,1)}
  \end{matrix}
 \right\}
 \right.
\end{eqnarray*}
and define a generalized Gauss sum
\begin{eqnarray*}
 G_{\mathcal{M}}^{j-i,l}(\lambda_2) 
 &:=& 
 \sum_{\begin{smallmatrix} x' = ^t x' \in (\Z/p\Z)^{(j-i,j-i)} \\
                           rank_p(x') = j-i-l
       \end{smallmatrix}}
  e\left(\frac{1}{p}\mathcal{M} {^t \lambda_2 } x'  \lambda_2\right)
\end{eqnarray*}
for $\lambda_2 \in \Z^{(j-i,2)}$.
We define
\begin{eqnarray*}
 \tilde{K}_{i,j}^{\beta}(\tau,z)
 &:=&
 \tilde{K}_{i,j,\mathcal{M},p}^{\beta}(\tau,z)
 = \!\!\!\!\!\!\!\!
 \sum_{u,v \in (\Z/p\Z)^{(n,1)}}\left(K_{i,j}^{\beta}
  |_{k,\mathcal{M}[\smat{p}{0}{0}{1}]}
    [\left((0,u),(0,v)\right), 0_2 ] \right)(\tau,z).
\end{eqnarray*}

\begin{prop}\label{prop:tilde_kij}
Let the notation be as above. Then we obtain
\begin{eqnarray*} 
 \left(E_{k,\mathcal{M}}^{(n)}|V_{\alpha,n-\alpha}(p^2)\right)(\tau,z)
 &=&
 \sum_{\begin{smallmatrix} i,j \\ 0\leq i \leq j \leq n \\ j-i \geq n-\alpha \end{smallmatrix}}
  \tilde{K}_{i,j}^{\alpha-i-n+j}(\tau,z) ,
\end{eqnarray*}
where
\begin{eqnarray*}
 \tilde{K}_{i,j}^{\alpha-i-n+j}(\tau,z)
 &=&
 p^{-k(2n-i-j+1)+(n-j)(n-i+1)+2n-j }
\\
 &&
 \times
 \sum_{M \in \Gamma(\delta_{i,j})\backslash \Gamma_n}
 \sum_{ \lambda = \left(\begin{smallmatrix}\lambda_1\\ \lambda_2\\ \lambda_3 \end{smallmatrix}\right)\in L_{i,j}^*}
  \left\{1|_{k,\mathcal{M}}([(\lambda,0),0_2],M)\right\}(\tau,z\smat{p}{0}{0}{1})
\\
 &&
 \times
  \sum_{u_2 \in (\Z/p\Z)^{(j-i,1)}}G_{\mathcal{M}}^{j-i,n-\alpha}(\lambda_2 + (0,u_2)) .
\end{eqnarray*}
\end{prop}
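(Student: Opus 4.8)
\medskip
\noindent\textbf{Plan of proof.}
The first assertion --- the identity $E_{k,\mathcal{M}}^{(n)}|V_{\alpha,n-\alpha}(p^2)=\sum_{0\le i\le j\le n,\ j-i\ge n-\alpha}\tilde K_{i,j}^{\alpha-i-n+j}$ --- requires essentially no new computation: the computation carried out in \S\ref{ss:kij} already expresses $E_{k,\mathcal{M}}^{(n)}|V_{\alpha,n-\alpha}(p^2)$ as $\sum_{i,j}\sum_{u,v\in(\Z/p\Z)^{(n,1)}}K_{i,j}^{\alpha-i-n+j}|_{k,\mathcal{M}[\smat{p}{0}{0}{1}]}[((0,u),(0,v)),0_2]$, and by the definition of $\tilde K_{i,j}^{\beta}$ the inner sum over $u,v$ is precisely $\tilde K_{i,j}^{\alpha-i-n+j}$. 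When $\alpha-i-n+j<0$ there is no symmetric matrix $x'$ over $\Z/p\Z$ of that (negative) rank, so the defining sum of $K_{i,j}^{\alpha-i-n+j}$ in Lemma~\ref{lemma:kij} is empty and that term drops out; this accounts for the range $j-i\ge n-\alpha$.

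The substance is the closed formula for $\tilde K_{i,j}^{\alpha-i-n+j}$, and the plan is to substitute Lemma~\ref{lemma:kij} and then evaluate the remaining $u,v$-summation. First I would rewrite the inner Gauss sum of Lemma~\ref{lemma:kij}: splitting $\lambda=\left(\begin{smallmatrix}\lambda_1\\ \lambda_2\\ \lambda_3\end{smallmatrix}\right)$ along the row-blocks of sizes $i$, $j-i$, $n-j$ and using $x=\mbox{diag}(0_i,x',0_{n-j})$, one has ${}^t\lambda\, x\,\lambda={}^t\lambda_2\,x'\,\lambda_2$, whence $\sum_{x}e(\tfrac1p\mathcal{M}\,{}^t\lambda\,x\,\lambda)=G_{\mathcal{M}}^{j-i,n-\alpha}(\lambda_2)$ (the rank condition $\mbox{rank}_p(x')=\alpha-i-n+j$ becomes the superscript $l=n-\alpha$). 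So $K_{i,j}^{\alpha-i-n+j}$ equals $p^{-k(2n-i-j+1)+(n-j)(n-i+1)}$ times $\sum_{M\in\Gamma(\delta_{i,j})\backslash\Gamma_n}\sum_{\lambda\in L_{i,j}}\{1|_{k,\mathcal{M}}([(\lambda,0),0_2],M)\}(\tau,z\smat{p}{0}{0}{1})\,G_{\mathcal{M}}^{j-i,n-\alpha}(\lambda_2)$.

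Next I would push $|_{k,\mathcal{M}[\smat{p}{0}{0}{1}]}[((0,u),(0,v)),0_2]$ through and sum over $u,v\in(\Z/p\Z)^{(n,1)}$. Using the cocycle identity for the slash operator together with the multiplication law of $G_{n,2}^{J}$, the element $([(\lambda,0),0_2],M)\cdot([((0,u),(0,v)),0_2],1_{2n})$ is rewritten as $([(\tilde\lambda,\tilde\mu),\tilde\kappa],M)$; commuting $M$ past the integral Heisenberg part keeps it integral, and the substitution $z\mapsto z\smat{p}{0}{0}{1}$ causes no trouble since $(0,u)\smat{p}{0}{0}{1}=(0,u)$ and likewise for $v$. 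Re-indexing the $\lambda$-summation to absorb $\tilde\lambda,\tilde\mu$ then decomposes the $u,v$-sums according to the three row-blocks: the $i$-row block of $u$ merely fattens the lattice $L_{i,j}$ in the relevant direction --- this is exactly the relaxation of the $\lambda_1$-condition in passing from $L_{i,j}$ to $L_{i,j}^{*}$; the $(n-j)$-row block of $u$ contributes a clean factor $p^{\,n-j}$ from the $p^{-1}$-integral versus integral structure of $\lambda_3$; the $v$-sum is a complete character sum over $(\Z/p\Z)^{(n,1)}$ evaluating to $p^{\,n}$ times a congruence, this congruence being precisely the extra condition $2\lambda_3\mathcal{M}\left(\begin{smallmatrix}0\\ 1\end{smallmatrix}\right)\in\Z^{(n-j,1)}$ that defines $L_{i,j}^{*}$; and the $(j-i)$-row block $u_2$, which enters only through the shift $\lambda_2\mapsto\lambda_2+(0,u_2)$ inside the Gauss sum, cannot be re-indexed away (the Gauss sum is not translation invariant) and therefore survives as $\sum_{u_2\in(\Z/p\Z)^{(j-i,1)}}G_{\mathcal{M}}^{j-i,n-\alpha}(\lambda_2+(0,u_2))$. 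Multiplying the prefactor of Lemma~\ref{lemma:kij} by $p^{\,n}\cdot p^{\,n-j}$ yields the exponent $-k(2n-i-j+1)+(n-j)(n-i+1)+2n-j$, which is the asserted formula, and summing over the admissible $(i,j)$ recovers the stated expression for $E_{k,\mathcal{M}}^{(n)}|V_{\alpha,n-\alpha}(p^2)$.

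The hard part will be precisely this last bookkeeping step: one has to keep track simultaneously of the group law in the Heisenberg group of the $2\times2$ matrix index $\mathcal{M}$ --- in particular of how the translations of type $[((0,u_2),(0,v_2)),0_2]$ in the ``$\mathcal{M}$-direction'' interact with the index, the feature absent from Yamazaki's index-$1$ computation in \cite{Ya2} --- and of the effect of the substitution $z\mapsto z\smat{p}{0}{0}{1}$ on the relevant lattice, so that after re-indexing all the $\tau$- and $z$-dependent phases produced by the cocycle cancel and leave genuine complete character sums over $(\Z/p\Z)^{(n,1)}$ whose evaluation produces exactly $L_{i,j}^{*}$ and the surviving Gauss sum over $u_2$.
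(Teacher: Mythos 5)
Your proposal follows essentially the same route as the paper's proof: substitute Lemma~\ref{lemma:kij}, identify the $x$-sum with $G_{\mathcal{M}}^{j-i,n-\alpha}(\lambda_2)$, commute the translation $[((0,u),(0,v)),0_2]$ past $M$, evaluate the $v$-sum as a complete character sum producing the condition $2\lambda_3\mathcal{M}\left(\begin{smallmatrix}0\\1\end{smallmatrix}\right)\in\Z^{(n-j,1)}$, and decompose the $u$-sum by row-blocks so that the $\lambda_1$-block relaxes $L_{i,j}$ to $L_{i,j}^{*}$, the $\lambda_3$-block yields $p^{\,n-j}$, and the $u_2$-block survives in the Gauss sum, giving the exponent $+2n-j$. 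This is correct and matches the paper's argument step for step.
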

\begin{proof}
From the definition of $\tilde{K}_{i,j}^{\beta}$ and Lemma~\ref{lemma:kij} we obtain
\begin{eqnarray}\label{eq:k_tilde_1}
 && \\
\notag
\begin{aligned}
 &
 \tilde{K}_{i,j}^{\alpha-i-n+j}(\tau,z)
\\
 &=
 p^{-k(2n-i-j+1)+(n-j)(n-i+1)}
 \sum_{M \in \Gamma(\delta_{i,j})\backslash \Gamma_n}
 \sum_{\lambda = \left(\begin{smallmatrix}\lambda_1\\ \lambda_2\\ \lambda_3 \end{smallmatrix}\right)\in L_{i,j}}
 G_{\mathcal{M}}^{j-i,n-\alpha}(\lambda_2)
\\
 &
 \times \!\!\!
 \sum_{u,v \in (\Z/p\Z)^{(n,1)}}
 \left(1|_{k,\mathcal{M}}([(\lambda,0),0_2],M)(\tau,z\smat{p}{0}{0}{1})\right)
  |_{k,\mathcal{M}[\smat{p}{0}{0}{1}]}[\left((0,u),(0,v)\right), 0_2],
\end{aligned}
\end{eqnarray}
where
$\lambda_1 \in \Z^{(i,2)}$, $\lambda_2 \in \Z^{(j-i,2)}$ and $\lambda_3 \in \Z^{(n-j,2)}$
satisfy $\left(\begin{smallmatrix}\lambda_1\\ \lambda_2\\ \lambda_3 \end{smallmatrix}\right) \in L_{i,j}$,
and where the $n \times 2$ matrix $\lambda = \left(\begin{smallmatrix}\lambda_1\\ \lambda_2\\ \lambda_3 \end{smallmatrix}\right)$
runs over the set $L_{i,j}$.

By a straightforward calculation we have
\begin{eqnarray*}
 \left(1|_{k,\mathcal{M}}([(\lambda,0),0_2],M)\right)\! (\tau,z\smat{p}{0}{0}{1})
 &=&
 \left(1|_{k,\mathcal{M}[\smat{p}{0}{0}{1}]}([(\lambda\smat{p}{0}{0}{1}^{-1},0),0_2],M)\right)\!(\tau,z).
\end{eqnarray*}
Thus the last summation of (\ref{eq:k_tilde_1}) is
\begin{eqnarray*}
 && \!\!\!\!\!
 \sum_{u,v \in (\Z/p\Z)^{(n,1)}}
 \!\!\!
 \left\{
  1|_{k,\mathcal{M}}([(\lambda,0),0_2],M)(\tau,z\smat{p}{0}{0}{1})
   |_{k,\mathcal{M}[\smat{p}{0}{0}{1}]}[\left((0,u),(0,v)\right), 0_2]
 \! \right\}\! (\tau,z)
\\
 &=&
 \sum_{u,v \in (\Z/p\Z)^{(n,1)}}
 \!\!\!
 \left\{
  1|_{k,\mathcal{M}[\smat{p}{0}{0}{1}]}([(\lambda\smat{p}{0}{0}{1}^{-1},0),0_2],M)
   |_{k,\mathcal{M}[\smat{p}{0}{0}{1}]}[\left((0,u),(0,v)\right), 0_2]
 \! \right\}\! (\tau,z)
\\
 &=&
 \sum_{u',v' \in (\Z/p\Z)^{(n,1)}}
 \left\{1|_{k,\mathcal{M}[\smat{p}{0}{0}{1}]}([(\lambda\smat{p}{0}{0}{1}^{-1}+(0,u'),(0,v')),0_2],M)\right\}
 (\tau,z)
\\
 &=&
 \sum_{u',v' \in (\Z/p\Z)^{(n,1)}}
 \left\{1|_{k,\mathcal{M}}([(\lambda+(0,u'),(0,v')),0_2],M)\right\}
 (\tau,z\smat{p}{0}{0}{1})
\\
 &=&
 \sum_{u' \in (\Z/p\Z)^{(n,1)}} \!\!\!
 \left\{1|_{k,\mathcal{M}}([(\lambda+(0,u'),0),0_2],M)\right\}
 (\tau,z\smat{p}{0}{0}{1}) \!\!\!\!\!
 \sum_{v' \in (\Z/p\Z)^{(n,1)}} \!\!\!\!\!\! e(2 \mathcal{M} {^t\lambda}(0, v')),
\end{eqnarray*}
where, in the second identity, we used
\begin{eqnarray*}
 (M,[((0,u),(0,v)),0_2])
 &=&
 ([((0,u'),(0,v')),0_2],M)
\end{eqnarray*}
with
$\left(\begin{smallmatrix} u' \\ v'\end{smallmatrix}\right)
  =
 \smat{D}{-C}{-B}{A}
 \left(\begin{smallmatrix} u \\ v \end{smallmatrix} \right)$
for $M = \smat{A}{B}{C}{D} \in \Gamma_n$.
For $\lambda = \left(\begin{smallmatrix}\lambda_1\\ \lambda_2 \\ \lambda_3 \end{smallmatrix}\right) \in L_{i,j}$
we now have
\begin{eqnarray*}
 \sum_{v' \in (\Z/p\Z)^{(n,1)}} e(2 \mathcal{M} {^t\lambda} (0,v'))
 &=&
 \begin{cases}
  p^n & \mbox{if } 2 \lambda_3 \mathcal{M}\left(\begin{smallmatrix}0 \\ 1 \end{smallmatrix}\right) \in \Z^{(n-j,1)},
 \\
  0   & \mbox{otherwise}.
 \end{cases}
\end{eqnarray*}
Therefore
\begin{eqnarray*}
 \begin{aligned}
 &
 \tilde{K}_{i,j}^{\alpha-i-n+j}(\tau,z)
\\
 &=
 p^{-k(2n-i-j+1)+(n-j)(n-i+1)+n}
 \sum_{M \in \Gamma(\delta_{i,j})\backslash \Gamma_n}
 \sum_{\begin{smallmatrix}
        \lambda = \left(\begin{smallmatrix}\lambda_1\\ \lambda_2\\ \lambda_3 \end{smallmatrix}\right)\in L_{i,j} \\
        2 \lambda_3 \mathcal{M} \left(\begin{smallmatrix} 0\\1 \end{smallmatrix}\right) \in \Z^{(n-j,1)}
       \end{smallmatrix}}
 G_{\mathcal{M}}^{j-i,n-\alpha}(\lambda_2)
\\
 & \qquad
 \times
 \sum_{u \in (\Z/p\Z)^{(n,1)}}
  \left\{1|_{k,\mathcal{M}}([(\lambda+(0,u),0),0_2],M)\right\}(\tau,z\smat{p}{0}{0}{1}).
 \end{aligned}
\end{eqnarray*}
Thus
\begin{eqnarray*}
 \begin{aligned}
 &
 \tilde{K}_{i,j}^{\alpha-i-n+j}(\tau,z)
\\
 &=
 p^{-k(2n-i-j+1)+(n-j)(n-i+1)+n }
 \sum_{M \in \Gamma(\delta_{i,j})\backslash \Gamma_n}
 \sum_{ \lambda = \left(\begin{smallmatrix}\lambda_1\\ \lambda_2\\ \lambda_3 \end{smallmatrix}\right)\in L_{i,j}^*}
\\
 & \qquad
 \times
  \left\{1|_{k,\mathcal{M}}([(\lambda,0),0_2],M)\right\}(\tau,z\smat{p}{0}{0}{1})
\\
 & \qquad
 \times
  p^{n-j}\sum_{u_2 \in (\Z/p\Z)^{(j-i,1)}}G_{\mathcal{M}}^{j-i,n-\alpha}(\lambda_2 + (0,u_2)),
 \end{aligned}
\end{eqnarray*}
where $L_{i,j}^*$ is defined as before.
\end{proof}

We put
\begin{eqnarray*}
  g_p(n,\alpha)
  &:=&
  \prod_{j=1}^{\alpha} \left\{ (p^{n-j+1}-1) (p^j -1)^{-1}\right\}
\end{eqnarray*}
It is not difficult to see $g_p(n,n-\alpha) = g_p(n,\alpha)$.

\begin{lemma}\label{lem:sum_gs}
For any $\lambda = (\lambda_1, \lambda_2) \in \Z^{(n,2)}$ and for any prime $p$, we have
\begin{eqnarray*}
 && \!\!\!\!\!\!\!\!\!\!
 \sum_{u_2 \in (\Z/p\Z)^{(n,1)}}G_{\mathcal{M}}^{n,\alpha}(\lambda + (0,u_2)) \\
 &=&
 \begin{cases}
     p^{\frac14 (n-\alpha-1)^2 + \frac12(n-\alpha-1)+\alpha+n} \left( \frac{-m}{p} \right)
     g_p(n-1,\alpha)
     \displaystyle{\prod_{\begin{smallmatrix} j = 1 \\ j \, : \, odd \end{smallmatrix}}^{n-\alpha-2}
     (p^j - 1) }
   &
     \begin{matrix}  \mbox{if } n-\alpha \equiv 1 \mod 2 \\
                                                \mbox{and } \lambda_1 \not \equiv 0 \mod p,
                       \end{matrix}
   \\
     0
   &
     \begin{matrix}  \mbox{if } n-\alpha \equiv 1 \mod 2 \\
                                                \mbox{and } \lambda_1 \equiv 0 \mod p,
                       \end{matrix}
   \\
     p^{\frac14(n-\alpha)^2 + \frac12 (n-\alpha) + \alpha} g_p(n,\alpha)
     \displaystyle{\prod_{\begin{smallmatrix} j = 1 \\ j \, : \, odd \end{smallmatrix}}^{n-\alpha-1}
     (p^j - 1) }
   &
     \mbox{if } n-\alpha \equiv 0 \mod 2.
 \end{cases}
\end{eqnarray*}
Here $m = \det(2\mathcal{M})$ and we regard the product
$\displaystyle{\prod_{\begin{smallmatrix} j = 1 \\ j \, : \, odd \end{smallmatrix}}^{c}(p^j - 1) }$
as $1$, if $c$ is less than $1$.
\end{lemma}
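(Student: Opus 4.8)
The plan is to reduce the double sum to a classical Gauss sum over symmetric matrices of a fixed rank over $\F_p$, and then to evaluate the latter by the standard recursive stratification of such matrices. Write $\mathcal{M}=\smat{l}{r/2}{r/2}{1}$ and $\lambda=(\lambda_1,\lambda_2)$ with $\lambda_1,\lambda_2\in\Z^{(n,1)}$. For a symmetric $x'\in(\Z/p\Z)^{(n,n)}$ one has
\[
 \tr\!\left(\mathcal{M}\,{}^t(\lambda+(0,u_2))\,x'\,(\lambda+(0,u_2))\right)= l\,{}^t\lambda_1x'\lambda_1+r\,{}^t\lambda_1x'(\lambda_2+u_2)+{}^t(\lambda_2+u_2)x'(\lambda_2+u_2),
\]
which is an integer because the off-diagonal entry $r/2$ of $\mathcal{M}$ is counted twice. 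As $u_2$ runs over $(\Z/p\Z)^{(n,1)}$ the vector $w:=\lambda_2+u_2$ runs over all of $(\Z/p\Z)^{(n,1)}$, so after interchanging the order of summation the quantity in question becomes
\[
 \sum_{\substack{x'={}^tx'\\ \mbox{rank}_p x'=n-\alpha}} e\!\left(\tfrac{l}{p}\,{}^t\lambda_1x'\lambda_1\right)\sum_{w\in(\Z/p\Z)^{(n,1)}} e\!\left(\tfrac1p\big({}^twx'w+r\,{}^t\lambda_1x'w\big)\right).
\]

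For odd $p$ I would complete the square in $w$: since $2$ is invertible mod $p$, with $c\equiv\overline{2}$ one has ${}^twx'w+r\,{}^t\lambda_1x'w={}^t(w+cr\lambda_1)x'(w+cr\lambda_1)-c^2r^2\,{}^t\lambda_1x'\lambda_1$, so the inner sum equals $e(-\tfrac{c^2r^2}{p}{}^t\lambda_1x'\lambda_1)\,\gamma(x')$ with $\gamma(x'):=\sum_w e(\tfrac1p{}^twx'w)$. The classical evaluation $\gamma(x')=\varepsilon_p^{\,n-\alpha}\left(\tfrac{\Delta(x')}{p}\right)p^{\alpha+(n-\alpha)/2}$ (where $\Delta(x')$ is the determinant of a non-degenerate part of $x'$, well defined up to squares, and $\varepsilon_p=1$ or $\sqrt{-1}$ according as $p\equiv1$ or $3\bmod4$), together with $l-c^2r^2\equiv m\overline{4}\pmod p$ (recall $m=\det(2\mathcal{M})=4l-r^2$), turns the sum into
\[
 \varepsilon_p^{\,n-\alpha}\,p^{\alpha+(n-\alpha)/2}\sum_{\substack{x'={}^tx'\\ \mbox{rank}_p x'=n-\alpha}}\left(\tfrac{\Delta(x')}{p}\right)e\!\left(\tfrac{\bar m}{p}\,{}^t\lambda_1x'\lambda_1\right),\qquad \bar m\equiv m\overline{4}\ (\mathrm{mod}\ p).
\]
Using the action $x'\mapsto{}^tgx'g$ of $\mbox{GL}_n(\F_p)$, which preserves $\mbox{rank}_p x'$ and $\left(\tfrac{\Delta(x')}{p}\right)$ and sends ${}^t\lambda_1x'\lambda_1$ to ${}^t(g\lambda_1)x'(g\lambda_1)$, I would normalise $\lambda_1$ to $0$ when $\lambda_1\equiv0$, and to ${}^t(1,0,\dots,0)$ when $\lambda_1\not\equiv0$, so that the exponent becomes $\tfrac{\bar m}{p}x'_{11}$ or $0$.

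It then remains to evaluate $S_\rho(c):=\sum_{x'={}^tx',\ \mbox{rank}_px'=\rho}\left(\tfrac{\Delta(x')}{p}\right)e(\tfrac cp x'_{11})$ for $\rho=n-\alpha$, with $c=0$ in the case $\lambda_1\equiv0$ and $c=\bar m$ otherwise. When $\rho$ is odd and $c\equiv0\pmod p$ (which covers both $\lambda_1\equiv0$ and $p\mid m$), the substitution $x'\mapsto ux'$ by a non-square unit $u$ flips $\left(\tfrac{\Delta(x')}{p}\right)$ while preserving the rank and the summand, so $S_\rho(c)=0$; this accounts for the vanishing in the odd/$\lambda_1\equiv0$ case and for the factor $\left(\tfrac{-m}{p}\right)$ (which is $0$ when $p\mid m$) in the odd/$\lambda_1\not\equiv0$ case. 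In the remaining cases I would compute $S_\rho(c)$ recursively by stratifying the symmetric matrices of rank $\rho$ according to the value of $x'_{11}$: when $x'_{11}\ne0$ one clears the first row and column to reduce to $\mbox{diag}(x'_{11},x'')$ with $x''$ symmetric of rank $\rho-1$ and $\Delta(x')=x'_{11}\Delta(x'')$, and when $x'_{11}=0$ the first row is treated separately; summing the resulting elementary Gauss and geometric sums and invoking the known counts of symmetric matrices of given rank over $\F_p$ produces the factors $g_p(\cdot,\cdot)$ and $\prod_{j\ \mathrm{odd}}(p^j-1)$, while the powers of $\varepsilon_p$ recombine with the Legendre symbols into $\left(\tfrac{-m}{p}\right)$ when $n-\alpha$ is odd and cancel when $n-\alpha$ is even. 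For $p=2$ the same strategy applies, but the completion of the square and the Gauss sum formulas must be replaced by their analogues for symmetric bilinear forms in characteristic $2$, with the symbol $\left(\tfrac{-m}{2}\right)$ entering through the classification of binary forms. Carrying out this variant, and above all matching the accumulated powers of $p$ and the products $\prod_{j\ \mathrm{odd}}(p^j-1)$ in every case, is the most delicate part and is the main obstacle of the proof.
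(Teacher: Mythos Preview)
Your approach is correct and complete in outline. It differs from the paper's route mainly in the order of operations: you sum over $u_2$ first by completing the square, which produces the quadratic twist $\bigl(\tfrac{\Delta(x')}{p}\bigr)$ from the $n$-variable Gauss sum $\gamma(x')$, and then you are left with the twisted exponential sum $S_\rho(c)$ over symmetric matrices of fixed rank, which you propose to evaluate by the standard $x'_{11}$-stratification. The paper instead diagonalises $x'$ first, decomposing the set of rank-$(n-\alpha)$ symmetric matrices into the two $\mathrm{GL}_n(\F_p)$-orbits represented by $x_0=\smat{1_{n-\alpha}}{0}{0}{0}$ and $x_1=\smat{\mathrm{diag}(1_{n-\alpha-1},\gamma)}{0}{0}{0}$ with $\gamma$ a non-residue; replacing the orbit sum by a sum over $\eta=g\lambda_1$ (with the orbit-stabiliser count $p^{n-1}|\mathrm{GL}_{n-1}(\F_p)|\,|O(x_i)|^{-1}$), and then diagonalising $\mathcal{M}\equiv{}^tX\smat{m}{0}{0}{1}X\pmod p$, the problem collapses to a product of two decoupled one-dimensional Gauss sums in $\eta$ and $u_2$, at which point Yamazaki's computation in \cite[Lemma~3.1]{Ya2} applies verbatim.

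Both routes are valid. The paper's has the advantage that it lands directly on Yamazaki's lemma and avoids the recursive evaluation of $S_\rho(c)$; yours has the advantage of being self-contained and of making the mechanism for the factor $\bigl(\tfrac{-m}{p}\bigr)$ transparent (it arises as $\varepsilon_p^{\,\rho}\cdot(\tfrac{\bar m}{p})$ from the Gauss sum together with the scaling $S_\rho(\bar m)=(\tfrac{\bar m}{p})S_\rho(1)$ when $\rho$ is odd). Your observation that $S_\rho(c)$ is actually independent of $c$ when $\rho$ is even (via $x'\mapsto c^{-1}x'$, which preserves the discriminant class) neatly explains why the even case of the lemma is insensitive to both $m$ and $\lambda_1$. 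The one place to be careful is the $p=2$ variant: there the paper's diagonalisation of $\mathcal{M}$ splits into the two cases $\mathcal{M}={}^tX\smat{m}{0}{0}{1}X$ and $\mathcal{M}={}^tX\smat{m'}{1/2}{1/2}{1}X$, and in your framework the completion of the square is unavailable, so the inner sum over $w$ must be handled through the classification of binary forms over $\F_2$ as you indicate.
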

\begin{proof}
This calculation is similar to the calculation of
$\displaystyle{\sum_{\begin{smallmatrix} x = ^t x \in (\Z/p\Z)^{n} \\ rank_p x = n-\alpha \end{smallmatrix}} e\left(\frac{1}{p} m {^t \lambda_1} x  \lambda_1 \right)}$
for $\lambda_1 \in \Z^{(n,1)}$ and for $m \in \Z$ which is
in \cite[Lemma 3.1]{Ya2}.

If $p$ is an odd prime and if $\lambda_1 \not \equiv 0 \mod p$, then
\begin{eqnarray*}
 \sum_{u_2 \in (\Z/p\Z)^{(n,1)}} G_{\mathcal{M}}^{n,\alpha}(\lambda + (0,u_2))
 &=& \!\!\!
 \sum_{u_2 \in (\Z/p\Z)^{(n,1)}} 
  \sum_{\begin{smallmatrix} x' = ^t x' \in (\Z/p\Z)^{(n,n)} \\
                           rank_p(x') = n-\alpha
       \end{smallmatrix}}
  e\left(\frac{1}{p}\mathcal{M}\, {^t (\lambda_1,u_2)}  x'  (\lambda_1,u_2) \right) \\
\end{eqnarray*}
By diagonalizing the matrices $x'$ we have
\begin{eqnarray*}
 \sum_{u_2 \in (\Z/p\Z)^{(n,1)}} G_{\mathcal{M}}^{n,\alpha}(\lambda + (0,u_2))
  &=&
 \sum_{i=0,1}
 p^{n-1} \left| \mbox{GL}_{n-1}(\Z/p\Z) \right|  \left| O(x_i) \right|^{-1}\\
 &&
 \times
 \sum_{u_2 \in (\Z/p\Z)^{(n,1)}} 
 \sum_{\begin{smallmatrix} \eta \in (\Z/p\Z)^{(n,1)}\\ \eta \not \equiv 0 \mod p \end{smallmatrix}}
   e\left(\frac{1}{p}\mathcal{M}\, {^t (\eta,u_2)}  x_i  (\eta,u_2)\right),
\end{eqnarray*}
where $x_i = \left( \begin{smallmatrix} y_i & 0 \\ 0 & 0 \end{smallmatrix} \right) \in \Z^{(n,n)}$,
$y_0 = 1_{n-\alpha}$,
$y_1 = \left( \begin{smallmatrix} 1_{n-\alpha-1} & 0 \\ 0 & \gamma \end{smallmatrix} \right)
\in \Z^{(n-\alpha,n-\alpha)}$
and $\gamma$ is an integer such that $\left( \frac{\gamma}{p} \right) = -1$.
Here $O(x_i)$  is the orthogonal group of $x_i$:
\begin{eqnarray*}
  O(x_i) &:=& \left\{ g  \in \mbox{GL}_n(\Z/p\Z) \, | \, g x_i {^t g} = x_i \right\}.
\end{eqnarray*}
If we diagonalize the matrix $\mathcal{M}$ as
$\mathcal{M} \equiv {^t X} \left( \begin{smallmatrix} m & 0 \\ 0 & 1 \end{smallmatrix} \right) X \mod p$
with $X = \left( \begin{smallmatrix} 1 & 0 \\ x & 1 \end{smallmatrix} \right)$,
then
\begin{eqnarray*}
  \sum_{u_2 \in (\Z/p\Z)^{(n,1)}} G_{\mathcal{M}}^{n,\alpha}(\lambda + (0,u_2))
  &=&
   \sum_{i=0,1}
 p^{n-1} \left| \mbox{GL}_{n-1}(\Z/p\Z) \right|  \left| O(x_i) \right|^{-1}\\
 &&
 \times
 \sum_{u_2 \in (\Z/p\Z)^{(n,1)}} 
 \sum_{\begin{smallmatrix} \eta \in (\Z/p\Z)^{(n,1)}\\ \eta \not \equiv 0 \mod p \end{smallmatrix}}
   e\left(\frac{1}{p} \left(m \eta {^t \eta} + u_2 {^t u_2}\right) x_i\right).
\end{eqnarray*}
The rest of the calculation is an analogue to \cite[Lemma 3.1]{Ya2}.
For the case of $p=2$ or $\lambda_1 \equiv 0 \mod p$,
the calculation is similar.
If $p=2$, we need to calculate the case that
$\mathcal{M} = {^t X}\left( \begin{smallmatrix} m' & \frac12 \\ \frac12 & 1 \end{smallmatrix} \right) X$,
but it is not difficult.
We leave the detail to the reader.
\end{proof}

We set
\begin{eqnarray*}
 S_{\mathcal{M}}^{n,\alpha}(0) := \sum_{u_2 \in (\Z/p\Z)^{(n,1)}}G_{\mathcal{M}}^{n,\alpha}((0,u_2)) \\
\end{eqnarray*}
and
\begin{eqnarray*}
 S_{\mathcal{M}}^{n,\alpha}(1) :=
 \sum_{u_2 \in (\Z/p\Z)^{(n,1)}}G_{\mathcal{M}}^{n,\alpha}\left(\left( \left(\begin{smallmatrix} 1 \\ 0 \\ \vdots \\ 0 \end{smallmatrix} \right),u_2\right)\right).
\end{eqnarray*}
Due to Lemma~\ref{lem:sum_gs},
we have that $\displaystyle{\sum_{u_2 \in (\Z/p\Z)^{(n,1)}} G_{\mathcal{M}}^{n,\alpha}(\lambda + (0,u_2))}$
equals  $S_{\mathcal{M}}^{n,\alpha}(0)$ or $S_{\mathcal{M}}^{n,\alpha}(1)$,
according as $\lambda \in \Z^{(n,2)} \begin{pmatrix} p & 0 \\ 0 & 1 \end{pmatrix}$
or $\lambda \not \in \Z^{(n,2)} \begin{pmatrix} p & 0 \\ 0 & 1 \end{pmatrix}$.

\begin{prop}\label{prop:E_linear}
The form $ E_{k,\mathcal{M}}^{(n)}|V_{\alpha,n-\alpha}(p^2) $
is a linear combination of three forms
$E_{k,\mathcal{M}\left[ \left(\begin{smallmatrix} p&0\\0&1 \end{smallmatrix} \right)\right]}^{(n)}$,
$E_{k,\mathcal{M}}^{(n)}|U_{\left(\begin{smallmatrix} p & 0 \\ 0 & 1\end{smallmatrix} \right)}$
and
$E_{k,\mathcal{M}\left[ X^{-1} \left( \begin{smallmatrix} p & 0 \\ 0 & 1\end{smallmatrix} \right)^{-1} \right]}^{(n)}|U_{\left( \begin{smallmatrix} p & 0 \\ 0 & 1\end{smallmatrix}\right) X \left( \begin{smallmatrix} p & 0 \\ 0 & 1\end{smallmatrix}\right)}$.
Here the index-shift map $U_L$ is defined in \S\ref{ss:compati},
and
$X = \left( \begin{smallmatrix} 1 & 0 \\ x & 1  \end{smallmatrix}\right)$ is a matrix in $\Z^{(2,2)}$ such that
$\mathcal{M} = {^t X} \left( \begin{smallmatrix} m+1 & 1 \\ 1 & 1 \end{smallmatrix} \right) X$ 
if $p=2$ and $\frac{\det(2\mathcal{M})}{4} \equiv 3 \mod 4$,
or
$\mathcal{M} \equiv {^t X} \left( \begin{smallmatrix} m & 0 \\ 0 & 1 \end{smallmatrix} \right) X
  \mod p$ otherwise, 
and where $m = \det(2\mathcal{M})$.
\end{prop}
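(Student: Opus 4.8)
The plan is to start from Proposition~\ref{prop:tilde_kij}, which already writes $E_{k,\mathcal{M}}^{(n)}|V_{\alpha,n-\alpha}(p^2)$ as the sum over all $0\le i\le j\le n$ with $j-i\ge n-\alpha$ of the functions $\tilde K_{i,j}^{\alpha-i-n+j}$, each of which is an explicit power of $p$ times $\sum_{M\in\Gamma(\delta_{i,j})\backslash\Gamma_n}\sum_{\lambda\in L_{i,j}^*}\{1|_{k,\mathcal{M}}([(\lambda,0),0_2],M)\}(\tau,z\smat{p}{0}{0}{1})$ weighted by the Gauss-sum factor $\sum_{u_2\in(\Z/p\Z)^{(j-i,1)}}G_{\mathcal{M}}^{j-i,\,n-\alpha}(\lambda_2+(0,u_2))$, where $\lambda_2$ is the middle block of $\lambda$. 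First I would feed Lemma~\ref{lem:sum_gs} (with its ``$n$'' replaced by $j-i$) into that factor: it shows $\sum_{u_2}G_{\mathcal{M}}^{j-i,\,n-\alpha}(\lambda_2+(0,u_2))$ depends only on whether the first column of $\lambda_2$ is $\equiv 0\bmod p$, equalling $S_{\mathcal{M}}^{j-i,\,n-\alpha}(0)$ in that case and $S_{\mathcal{M}}^{j-i,\,n-\alpha}(1)$ otherwise; the first is an explicit power of $p$ (or $0$, when $j-i-(n-\alpha)$ is odd), and the second an explicit power of $p$ carrying the factor $\left(\frac{-m}{p}\right)$. At $p=2$ one uses the same lemma together with the explicit reduction of $\mathcal{M}$ modulo $p$, respectively the quadratic-form identity $\mathcal{M}={^t X}\smat{m+1}{1}{1}{1}X$ when $\det(2\mathcal{M})/4\equiv 3\bmod 4$, that is recorded in the statement; this is precisely how the matrix $X$ enters.

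The next step is to split each set $L_{i,j}^*$ into the two subsets on which the first column of $\lambda_2$ is, respectively is not, divisible by $p$, so that $E_{k,\mathcal{M}}^{(n)}|V_{\alpha,n-\alpha}(p^2)$ becomes a sum of two families of terms, each now carrying a constant weight. The heart of the argument is then a coset manipulation, modelled on Yamazaki's treatment of the index-$1$ case in \cite{Ya2}: using that $1|_{k,\mathcal{M}}([(\lambda,0),0_2],M)$ is essentially invariant under $M\mapsto\smat{A}{B}{0_n}{{^t A}^{-1}}M$, $\lambda\mapsto{^t A}\lambda$ for $\smat{A}{B}{0_n}{{^t A}^{-1}}\in\Gamma_\infty^{(n)}$, one trades the finer quotient $\Gamma(\delta_{i,j})\backslash\Gamma_n$ together with the enlarged set $L_{i,j}^*$ for the quotient $\Gamma_\infty^{(n)}\backslash\Gamma_n$ together with an ordinary lattice, after which the sum over the admissible pairs $(i,j)$ telescopes. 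Undoing the substitution $z\mapsto z\smat{p}{0}{0}{1}$ by the identity $\{1|_{k,\mathcal{M}}([(\lambda,0),0_2],M)\}(\tau,z\smat{p}{0}{0}{1})=\{1|_{k,\mathcal{M}[\smat{p}{0}{0}{1}]}([(\lambda\smat{p}{0}{0}{1}^{-1},0),0_2],M)\}(\tau,z)$ already used in the proof of Proposition~\ref{prop:tilde_kij}, the family coming from ``first column of $\lambda_2$ divisible by $p$'' collapses to a linear combination of $E_{k,\mathcal{M}[\smat{p}{0}{0}{1}]}^{(n)}$ and $E_{k,\mathcal{M}}^{(n)}|U_{\smat{p}{0}{0}{1}}$, while the family from the non-divisible case, whose weight involves $\left(\frac{-m}{p}\right)$ and so forces the conjugation of $\mathcal{M}$ by the diagonalising $X=\smat{1}{0}{x}{1}$, collapses to a multiple of $E_{k,\mathcal{M}[X^{-1}\smat{p}{0}{0}{1}^{-1}]}^{(n)}|U_{\smat{p}{0}{0}{1}X\smat{p}{0}{0}{1}}$. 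Since $\mathcal{M}[X^{-1}\smat{p}{0}{0}{1}^{-1}][\smat{p}{0}{0}{1}X\smat{p}{0}{0}{1}]=\mathcal{M}[\smat{p}{0}{0}{1}]$, all three forms lie in $J_{k,\mathcal{M}[\smat{p}{0}{0}{1}]}^{(n)}$, the codomain of $V_{\alpha,n-\alpha}(p^2)$, and the proposition follows.

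The main obstacle will be this reassembly step: one must keep precise track, as $(i,j)$ ranges over the admissible set, of which of the three translated indices each surviving term contributes to, and check that the powers of $p$ produced by Lemma~\ref{lem:sum_gs}, the normalising constant of $\tilde K_{i,j}^{\alpha-i-n+j}$, and the geometric series coming from the telescoping over $(i,j)$ combine into honest multiples of the three Eisenstein series rather than into some larger span. The prototype \cite{Ya2} handles $\mathcal{M}=1$; the new ingredient here is the presence of the Heisenberg translations $[((0,u_2),(0,v_2)),0_2]$ acting through the second coordinate of the matrix index --- this is exactly what produces the $U_{\smat{p}{0}{0}{1}}$-term and the $X$-conjugated term, which have no counterpart when $\mathcal{M}=1$ --- together with the separate Gauss-sum analysis at $p=2$. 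Only the three index types, not the precise coefficients, are needed for the present statement, so I would record the coefficients without insisting on simplifying them here; they are, however, exactly the input required for Proposition~\ref{prop:e_k_M_V} and are read off from the same computation.
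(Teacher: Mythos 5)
Your proposal follows essentially the same route as the paper's proof: Proposition~\ref{prop:tilde_kij} supplies the decomposition into the $\tilde K_{i,j}^{\alpha-i-n+j}$, Lemma~\ref{lem:sum_gs} reduces the Gauss-sum weight to the two constants $S_{\mathcal M}^{j-i,n-\alpha}(0)$ and $S_{\mathcal M}^{j-i,n-\alpha}(1)$ according to whether the first column of $\lambda_2$ is divisible by $p$, and a Yamazaki-style coset manipulation (the paper's $H_{i,j}$-argument, with the coefficients $a_0,a_1,a_2$ quoted from \cite[Lemma~3.7]{MaassRe}) collapses each piece onto the three translated Eisenstein series. One correction to your narrative, though: the $X$-conjugated form $E^{(n)}_{k,\mathcal M[X^{-1}\smat{p}{0}{0}{1}^{-1}]}|U_{\smat{p}{0}{0}{1}X\smat{p}{0}{0}{1}}$ is \emph{not} produced by the $\left(\frac{-m}{p}\right)$-weighted (non-divisible) family of $\lambda_2$'s; it originates in the $\lambda_3$-block of $L_{i,j}^*$, whose entries lie in $p^{-1}\Z$ precisely when $p\mid f$, so that right-multiplying $\lambda$ by ${}^tX\smat{p}{0}{0}{1}$ yields the finest integral lattice $\Z^{(n,2)}$ and hence the $a_0$-component of the orbit decomposition --- compare $\tilde K_{0,0}^{0}$ in Lemma~\ref{lem:Eisen_eins}, which gives exactly this form when $p\mid f$, against $\tilde K_{0,1}^{1}$, which carries the Legendre symbol but contributes only to $E^{(n)}_{k,\mathcal M[\smat{p}{0}{0}{1}]}$ and $E^{(n)}_{k,\mathcal M}|U_{\smat{p}{0}{0}{1}}$; moreover both of your two sub-families generically contribute to all three forms, not to two and one respectively. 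This mis-attribution does not invalidate the plan for the present statement, which only asserts membership in the span of the three forms, but it would matter when the same computation is reused to extract the explicit coefficients for Proposition~\ref{prop:e_k_M_V}.
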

\begin{proof}
By virtue of Proposition~\ref{prop:tilde_kij} we only need to show
that the form $\tilde{K}_{i,j}^{\alpha-i-n+j}(\tau,z)$ is a linear combination of the above three forms.

Because of the conditions $\lambda_3 \mathcal{M} {^t \lambda_3} \in \Z^{(n-j,n-j)}$ and
$2 \lambda_3 \mathcal{M} \left(\begin{smallmatrix} 0\\1 \end{smallmatrix} \right) \in \Z^{(n-j,1)}$ in the definition of $L_{i,j}^*$, we obtain
\begin{eqnarray} \label{eq:Lijs1}
 L_{i,j}^* &=&
   \left\{ \left(\begin{smallmatrix} \lambda_1 \\ \lambda_2 \\ \lambda_3 \end{smallmatrix}\right)  \in \left( \frac{1}{p} \Z \right)^{(n,2)} \, \left| \,
   \begin{matrix}
                    \lambda_1 \in \Z^{(i,2)} \left( \begin{smallmatrix} p & 0 \\ 0 & 1 \end{smallmatrix} \right),
                    \lambda_2 \in \Z^{(j-i,2)}, \\
       \lambda_3 {^t X} \in \Z^{(n-j,2)} \left(\begin{smallmatrix} p^{-1} & 0 \\ 0 & 1 \end{smallmatrix} \right)
       \end{matrix} \right\} \right.
\end{eqnarray}
for the case $p | f$, and
\begin{eqnarray}\label{eq:Lijs2}
 L_{i,j}^* &=&
   \left\{ \left(\begin{smallmatrix} \lambda_1 \\ \lambda_2 \\ \lambda_3 \end{smallmatrix}\right)  \in \Z^{(n,2)} \, | \,
                    \lambda_1 \in \Z^{(i,2)} \left( \begin{smallmatrix} p & 0 \\ 0 & 1 \end{smallmatrix} \right),
                    \lambda_2 \in \Z^{(j-i,2)}, \lambda_3 \in \Z^{(n-j,2)} \right\}
\end{eqnarray}
for the case $p {\not |} f$.
Here $f$ is a natural number such that $D_0 f^2 = -\det(2\mathcal{M}) $ and $D_0$ is
a  fundamental discriminant, and where the matrix $X $ is stated in this proposition.

We now assume $p | f$.
If $p$ is an odd prime, then the matrix $X = \smat{1}{0}{x}{1} \in \Z^{(2,2)}$ satisfies
$\mathcal{M} \equiv {^t X} \smat{m}{0}{0}{1} X \mod p$
and $p^2 | m$.
If $p=2$, then
the matrix $X = \smat{1}{0}{x}{1} \in \Z^{(2,2)}$ satisfies
$\mathcal{M} = {^t X} \smat{m}{0}{0}{1} X$ with $4|m$,
or $\mathcal{M} = {^t X} \smat{m'}{1}{1}{1} X$ with $4|m'$.
We remark that
$\mathcal{M}\left[ X^{-1} \smat{p}{0}{0}{1}^{-1} \right]$ is a half-integral symmetric matrix.

We put
\begin{eqnarray*}
  L_0
  &:=&
  \left\{ \left( \begin{smallmatrix} \lambda_1 \\ \lambda_2 \\ \lambda_3 \end{smallmatrix} \right)
           \in L_{i,j}^* \, | \, \lambda_2 \in \Z^{(j-i,2)} \left( \begin{smallmatrix} p & 0 \\ 0 & 1 \end{smallmatrix} \right) \right\}, \\
  L_1
  &:=&
  \left\{ \left( \begin{smallmatrix} \lambda_1 \\ \lambda_2 \\ \lambda_3 \end{smallmatrix} \right)
           \in L_{i,j}^* \, | \, \lambda_2 \not \in \Z^{(j-i,2)} \left( \begin{smallmatrix} p & 0 \\ 0 & 1 \end{smallmatrix} \right) \right\}.
\end{eqnarray*}
and set
\begin{eqnarray*}
  L'_{i,j} &:=&
  \left. \left\{
    \left( \begin{smallmatrix} \lambda_1 \\ \lambda_2 \\ \lambda_3 \end{smallmatrix}\right)
    \, \right| \,
    \lambda_1 \in \Z^{(i,2)} \smat{p^2}{0}{0}{1}, \,
    \lambda_2 \in \Z^{(j-i,2)} \smat{p}{0}{0}{1}, \,
    \lambda_3 \in \Z^{(n-j,2)}
  \right\}.
\end{eqnarray*}

By using the identity
\begin{eqnarray*}
 \begin{aligned}
  &
  \left\{1|_{k,\mathcal{M}}([(\lambda,0),0_2],M)\right\}(\tau,z\smat{p}{0}{0}{1}) \\
  &=
  \left\{1 \left|_{k,\mathcal{M}\left[X^{-1} \right]}([(\lambda {^t X},0),0_2],M)\right\}(\tau,z\smat{p}{0}{0}{1} {^t X}) \right. \\
  &=
  \left\{1 \left|_{k,\mathcal{M}\left[X^{-1} \smat{p}{0}{0}{1}^{-1}\right]}([(\lambda {^t X} \smat{p}{0}{0}{1} ,0),0_2],M)\right\}(\tau,z\smat{p}{0}{0}{1} {^t X} \smat{p}{0}{0}{1} ) \right. ,
 \end{aligned}
\end{eqnarray*}
we have
\begin{eqnarray*}
 && \!\!\!\!\!\!\!\!\!\!
 \tilde{K}_{i,j}^{\alpha-i-n+j}(\tau,z) \\
 &=&
 p^{-k(2n-i-j+1)+(n-j)(n-i+1)+2n-j }
\\
 &&
 \times
 \sum_{M \in \Gamma(\delta_{i,j})\backslash \Gamma_n}
  \left\{  S_{\mathcal{M}}^{j-i,n-\alpha}(0)
 \sum_{\lambda \in L_0}
  \left\{1|_{k,\mathcal{M}}([(\lambda,0),0_2],M)\right\}(\tau,z\smat{p}{0}{0}{1})
  \right.
\\
 &&
  + 
  \left.
  S_{\mathcal{M}}^{j-i,n-\alpha}(1)
 \sum_{\lambda \in L_1}
  \left\{1|_{k,\mathcal{M}}([(\lambda,0),0_2],M)\right\}(\tau,z\smat{p}{0}{0}{1})
  \right\}\\
 &=&
 p^{-k(2n-i-j+1)+(n-j)(n-i+1)+2n-j }
 \sum_{M \in \Gamma(\delta_{i,j})\backslash \Gamma_n}
  \Bigg\{  \left(S_{\mathcal{M}}^{j-i,n-\alpha}(0) - S_{\mathcal{M}}^{j-i,n-\alpha}(1) \right) \\
  &&
  \times
 \sum_{\lambda \in L_0}
  \left\{1|_{k,\mathcal{M}\left[ X^{-1} \smat{p}{0}{0}{1}^{-1} \right]}([(\lambda {^t X} \smat{p}{0}{0}{1} ,0),0_2],M)\right\}(\tau,z\smat{p}{0}{0}{1} {^t X} \smat{p}{0}{0}{1})
\\
 &&
  + 
  S_{\mathcal{M}}^{j-i,n-\alpha}(1)\\
 &&
 \times
 \sum_{\lambda \in L_{i,j}^*}
  \left\{1|_{k,\mathcal{M}\left[ X^{-1} \smat{p}{0}{0}{1}^{-1} \right]}([(\lambda {^t X} \smat{p}{0}{0}{1},0),0_2],M)\right\}(\tau,z\smat{p}{0}{0}{1} {^t X} \smat{p}{0}{0}{1})
  \Bigg\} \\
 &=&
 p^{-k(2n-i-j+1)+(n-j)(n-i+1)+2n-j }
 \sum_{M \in \Gamma(\delta_{i,j})\backslash \Gamma_n}
  \Bigg\{  \left(S_{\mathcal{M}}^{j-i,n-\alpha}(0) - S_{\mathcal{M}}^{j-i,n-\alpha}(1) \right) \\
  &&
  \times
 \sum_{\lambda \in L'_{j,j}}
  \left\{1|_{k,\mathcal{M}\left[ X^{-1} \smat{p}{0}{0}{1}^{-1} \right]}([(\lambda,0),0_2],M)\right\}(\tau,z\smat{p}{0}{0}{1} {^t X} \smat{p}{0}{0}{1})
\\
 &&
  + 
  S_{\mathcal{M}}^{j-i,n-\alpha}(1)
 \sum_{\lambda \in L'_{i,j}}
  \left\{1|_{k,\mathcal{M}\left[ X^{-1} \smat{p}{0}{0}{1}^{-1} \right]}([(\lambda ,0),0_2],M)\right\}(\tau,z\smat{p}{0}{0}{1} {^t X} \smat{p}{0}{0}{1})
  \Bigg\} .
\end{eqnarray*}

We now calculate the sum
\begin{eqnarray*}
  \sum_{M \in \Gamma(\delta_{i,j})\backslash \Gamma_n}
  \sum_{\lambda \in L'_{j,j}}
  \left\{1|_{k,\mathcal{M}\left[ X^{-1} \smat{p}{0}{0}{1}^{-1} \right]}([(\lambda,0),0_2],M)\right\}(\tau,z\smat{p}{0}{0}{1} {^t X} \smat{p}{0}{0}{1}).
\end{eqnarray*}

We set
\begin{eqnarray*}
 H_{i,j} &:= \delta_{i,j} \, \mbox{GL}_n(\Z)\, \delta_{i,j}^{-1} \cap \mbox{GL}_n(\Z).
\end{eqnarray*}

If $\{A_l\}_l$ is a complete set of representatives of $H_{i,j}\backslash \mbox{GL}_n(\Z)$,
then one can say that
the set $\left\{ \begin{pmatrix} A_l & 0 \\ 0 & {^t A_l}^{-1} \end{pmatrix} \right\}_l$
is a complete set of representatives of $\Gamma(\delta_{i,j})\backslash \Gamma_{\infty}^{(n)}$.
Thus
\begin{eqnarray*}
 \begin{aligned}
  &
 \sum_{M \in \Gamma(\delta_{i,j})\backslash \Gamma_n}
 \sum_{\lambda \in L'_{j,j}}
  \left\{1|_{k,\mathcal{M}}([(\lambda,0),0_2],M)\right\}(\tau,z)\\
  &=
 \sum_{M \in \Gamma_{\infty}^{(n)}\backslash \Gamma_n}
 \sum_{A \in H_{i,j}\backslash GL_n(\Z)}
 \sum_{\lambda \in L'_{j,j}}
   \left\{1|_{k,\mathcal{M}}([(\lambda,0),0_2],\smat{A}{0}{0}{{^t A^{-1}}} M)\right\}(\tau,z)\\
 &=
 \sum_{M \in \Gamma_{\infty}^{(n)}\backslash \Gamma_n}
 \sum_{A \in H_{i,j}\backslash GL_n(\Z)}
 \sum_{\lambda \in L'_{j,j}}
   \left\{1|_{k,\mathcal{M}}([({^t A}\lambda,0),0_2],M)\right\}(\tau,z).
 \end{aligned}
\end{eqnarray*}

If $B(\lambda)$ is a function on $\lambda \in \Z^{(n,2)}$.
Then
\begin{eqnarray*}
 \begin{aligned}
  &
  \sum_{A \in H_{i,j}\backslash GL_n(\Z)}
  \sum_{\lambda \in L'_{j,j}}
  B({^t A} \lambda) \\
  &=
  \left[H_{j,j} : H_{i,j} \right]
  \sum_{A \in H_{j,j}\backslash GL_n(\Z)}
  \sum_{\lambda \in L'_{j,j}}
  B({^t A} \lambda) \\  
  &=
  \left[H_{j,j} : H_{i,j} \right]
  \left(
  a_0 \sum_{\lambda \in \Z^{(n,2)}} B(\lambda )
  +
  a_1 \sum_{\lambda \in \Z^{(n,2)}} B(\lambda \smat{p}{0}{0}{1} )
  +
  a_2 \sum_{\lambda \in \Z^{(n,2)}} B(\lambda \smat{p^2}{0}{0}{1} )
  \right)
 \end{aligned}
\end{eqnarray*}
with certain numbers $a_0$, $a_1$ and $a_2$
under the assumption that the summations converges absolutely.
The values $a_0$, $a_1$ and $a_2$ are independent of the choice of the function $B$.
For the exact values of $a_0$, of $a_1$ and of $a_2$ the reader is referred to \cite[Lemma 3.7]{MaassRe}.

Hence we have
\begin{eqnarray*}
  && \!\!\!\!\!\!\!\!\!\!
  \sum_{M \in \Gamma(\delta_{i,j})\backslash \Gamma_n}
  \sum_{\lambda \in L'_{j,j}}
  \left\{1|_{k,\mathcal{M}\left[ X^{-1} \smat{p}{0}{0}{1}^{-1} \right]}([(\lambda,0),0_2],M)\right\}(\tau,z\smat{p}{0}{0}{1} {^t X} \smat{p}{0}{0}{1})\\
  &=&
  \left[H_{j,j} : H_{i,j} \right] 
  \sum_{M \in \Gamma_{\infty}^{(n)}\backslash \Gamma_n}
  \\
  &&
  \times \Bigg(
    a_0 \sum_{\lambda \in \Z^{(n,2)} }\left\{1|_{k,\mathcal{M}\left[ X^{-1} \smat{p}{0}{0}{1}^{-1} \right]}([(\lambda,0),0_2],M)\right\}(\tau,z\smat{p}{0}{0}{1} {^t X} \smat{p}{0}{0}{1})\\
  &&
  \qquad + a_1 \sum_{\lambda \in \Z^{(n,2)} }\left\{1|_{k,\mathcal{M}\left[ X^{-1} \smat{p}{0}{0}{1}^{-1} \right]}([(\lambda \smat{p}{0}{0}{1} ,0),0_2],M)\right\}(\tau,z\smat{p}{0}{0}{1} {^t X} \smat{p}{0}{0}{1}) \\
  &&
  \qquad + a_2 \sum_{\lambda \in \Z^{(n,2)} }\left\{1|_{k,\mathcal{M}\left[ X^{-1} \smat{p}{0}{0}{1}^{-1} \right]}([(\lambda \smat{p^2}{0}{0}{1} ,0),0_2],M)\right\}(\tau,z\smat{p}{0}{0}{1} {^t X} \smat{p}{0}{0}{1})
  \Bigg)\\
  &=&
  \left[H_{j,j} : H_{i,j} \right]
  \sum_{M \in \Gamma_{\infty}^{(n)}\backslash \Gamma_n} \\
  &&
  \times \Bigg(
    a_0 \sum_{\lambda \in \Z^{(n,2)} }\left\{1|_{k,\mathcal{M}\left[ X^{-1} \smat{p}{0}{0}{1}^{-1} \right]}([(\lambda,0),0_2],M)\right\}(\tau,z\smat{p}{0}{0}{1} {^t X} \smat{p}{0}{0}{1})\\
  &&
  \qquad + a_1 \sum_{\lambda \in \Z^{(n,2)} }\left\{1|_{k,\mathcal{M}}([(\lambda,0),0_2],M)\right\}(\tau,z\smat{p}{0}{0}{1}) \\
  &&
  \qquad + a_2 \sum_{\lambda \in \Z^{(n,2)} }\left\{1|_{k,\mathcal{M}\left[\smat{p}{0}{0}{1} \right]}([(\lambda,0),0_2],M)\right\}(\tau,z)
  \Bigg)\\
  &=&
  \left[H_{j,j} : H_{i,j} \right] \Bigg(
   a_0 E_{k,\mathcal{M}\left[ X^{-1} \smat{p}{0}{0}{1}^{-1} \right]}^{(n)}(\tau,z\smat{p}{0}{0}{1} {^t X} \smat{p}{0}{0}{1})
  \\
  &&
  \qquad + a_1 E_{k,\mathcal{M}}^{(n)}(\tau,z\smat{p}{0}{0}{1})
  + a_2 E_{k,\mathcal{M}\left[\smat{p}{0}{0}{1} \right]}^{(n)}(\tau,z)
  \Bigg).
\end{eqnarray*}

Similarly, the summation
\begin{eqnarray*}
  \sum_{M \in \Gamma(\delta_{i,j})\backslash \Gamma_n}
  \sum_{\lambda \in L'_{i,j}}
  \left\{1|_{k,\mathcal{M}\left[ X^{-1} \smat{p}{0}{0}{1}^{-1} \right]}([(\lambda,0),0_2],M)\right\}(\tau,z\smat{p}{0}{0}{1} {^t X} \smat{p}{0}{0}{1})
\end{eqnarray*}
is a linear combination of
$E_{k,\mathcal{M}\left[ X^{-1} \smat{p}{0}{0}{1}^{-1} \right]}^{(n)}(\tau,z\smat{p}{0}{0}{1} {^t X} \smat{p}{0}{0}{1})$,
$E_{k,\mathcal{M}}^{(n)}(\tau,z\smat{p}{0}{0}{1})$ and
$E_{k,\mathcal{M}\left[\smat{p}{0}{0}{1} \right]}^{(n)}(\tau,z)$.

Therefore, if $p|f$, then the form $\tilde{K}_{i,j}^{\alpha-i-n+j}(\tau,z)$
is a linear combination of the above three forms.

The proof for the case $p {\not | } f$ is similar to the case $p |f$.
If $p {\not |} f$,
then
$\tilde{K}_{i,j}^{\alpha-i-n+j}(\tau,z)$
is a linear combination of two forms
$E_{k,\mathcal{M}}^{(n)}(\tau,z\smat{p}{0}{0}{1})$ and
$E_{k,\mathcal{M}\left[\smat{p}{0}{0}{1} \right]}^{(n)}(\tau,z)$.
We omit the detail of the calculation here.
\end{proof}

\section{Commutativity with the Siegel operators}\label{s:siegelop}
In~\cite{Kr} an explicit commutative relation between the generators of Hecke operators
for Siegel modular forms and Siegel $\Phi$-operator has been given.
In this section we shall give a similar relation in the frameworks of
Jacobi forms of certain matrix index
and of Jacobi forms of half-integral weight.

Let $\mathcal{M} = \begin{pmatrix} l & \frac{r}{2} \\ \frac{r}{2} & 1 \end{pmatrix}$
be a $2\times2$ matrix and put $m = \det(2\mathcal{M})$ as before.

For any Jacobi form $\phi \in J_{k,\mathcal{M}}^{(n)}$,
or $\phi \in J_{k-\frac12,m}^{(n)}$ we define the Siegel $\Phi$-operator
\begin{eqnarray*}
  \Phi(\phi)(\tau', z')
  &:=&
  \lim_{t \rightarrow + \infty}
    \phi\left(\begin{pmatrix}
               \tau' & 0 \\ 0 & \sqrt{-1} t \end{pmatrix},
                  \begin{pmatrix} z' \\ 0 \end{pmatrix} \right)
\end{eqnarray*}
for $(\tau',z') \in \H_{n-1} \times \C^{(n-1,2)}$,
or for $(\tau',z') \in \H_{n-1} \times \C^{(n-1,1)}$. 
This Siegel $\Phi$-operator is a map
from $J_{k, \mathcal{M}}^{(n)}$ to $J_{k, \mathcal{M}}^{(n-1)}$,
or from $J_{k-\frac12, m}^{(n)}$ to $J_{k-\frac12, m}^{(n-1)}$,
respectively.

\begin{prop}\label{prop:gesetz_int}
For any Jacobi form $\phi \in J_{k,\mathcal{M}}^{(n)}$ and for any prime $p$,
we have
\begin{eqnarray*}
  \Phi(\phi |V_{\alpha,n-\alpha}(p^2))
  &=&
  \Phi(\phi) | V_{\alpha,n-\alpha}(p^2)^*,
\end{eqnarray*}
where $V_{\alpha,n-\alpha}(p^2)^*$ is a map
$V_{\alpha,n-\alpha}(p^2)^*$ $:$ $J_{k,\mathcal{M}}^{(n-1)} \rightarrow
                     J_{k,\mathcal{M}\left[\smat{p}{0}{0}{1}\right]}^{(n-1)}$ given by
\begin{eqnarray*}
 V_{\alpha,n-\alpha}(p^2)^*
 &=&
 p^{\alpha+2-k} V_{\alpha,n-\alpha-1}(p^2)\\
 &&
 + p ( 1 + p^{2n+1-2k}) V_{\alpha-1,n-\alpha}(p^2)\\
 &&
 + (p^{2n -2\alpha +2}-1) p^{\alpha-k} V_{\alpha-2,n-\alpha+1}(p^2).
\end{eqnarray*}
\end{prop}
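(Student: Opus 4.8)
The plan is to prove the identity by a direct manipulation of coset representatives, following the strategy used by Krieg~\cite{Kr} for Hecke operators on Siegel modular forms. By Lemma~\ref{lemma:hecke_decom} (equivalently, by choosing explicit upper‑triangular representatives $\smat{p^2\,{}^tD^{-1}}{B}{0_n}{D}$), together with the Heisenberg parameters $u,v$ running over $(\Z/p\Z)^{(n,1)}$, one writes $\phi|V_{\alpha,n-\alpha}(p^2)$ as a finite sum of terms $\phi|_{k,\mathcal{M}}\!\big(M\times\smat{U}{0}{0}{p^2U^{-1}},[((0,u),(0,v)),0_2]\big)$, with $U=\smat{p^2}{0}{0}{p}$ and $M$ running over representatives of $\Gamma_n\backslash\Gamma_n\,\delta\,\Gamma_n$, where $\delta=\mathrm{diag}(1_\alpha,p1_{n-\alpha},p^21_\alpha,p1_{n-\alpha})$.

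The next step is to evaluate $\Phi$ of this sum term by term. Setting $\tau=\mathrm{diag}(\tau',\sqrt{-1}\,t)$ and $z=\binom{z'}{0}$ and inserting the Fourier expansion $\phi=\sum_{N,R}c(N,R)\,e(N\tau+{}^tRz)$, each summand becomes a $t$‑dependent series; since $4N-R\mathcal{M}^{-1}{}^tR\ge 0$, letting $t\to+\infty$ kills every Fourier term whose $N$ has nonzero last row or column, and kills the contribution of those $M$ for which $M\cdot\tau$ is unbounded in its last diagonal entry. The surviving $M$ and the surviving Fourier terms reorganize into the left‑coset decompositions of the three degree‑$(n-1)$ double cosets attached to $\mathrm{diag}(1_\alpha,p1_{n-1-\alpha},\dots)$, $\mathrm{diag}(1_{\alpha-1},p1_{n-\alpha},\dots)$ and $\mathrm{diag}(1_{\alpha-2},p1_{n-\alpha+1},\dots)$, i.e.\ into $\Phi(\phi)|V_{\alpha,n-1-\alpha}(p^2)$, $\Phi(\phi)|V_{\alpha-1,n-\alpha}(p^2)$ and $\Phi(\phi)|V_{\alpha-2,n-\alpha+1}(p^2)$. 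For each of the three classes one reads off the accompanying scalar from (i) the automorphy factor, in which $\det\smat{1}{0}{0}{p}^{-k}\det(D)^{-k}$ is evaluated on the (possibly degenerate) representative, and (ii) the number of entries of the last row of $B$ and of $(u,v)$ that get summed freely to $0$ in the limit. The matrix index $\mathcal{M}$ does not interfere with this count: the first columns of $\lambda,\mu,u,v$ are zero, so the $\mathcal{M}$‑dependent part of $J_{k,\mathcal{M}}$ (see \S\ref{ss:factors_automorphy}) enters only through the companion index change $\mathcal{M}\mapsto\mathcal{M}[\smat{p}{0}{0}{1}]$, consistently on both sides.

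Finally one collects the three contributions and checks that the scalars are $p^{\alpha+2-k}$, $p(1+p^{2n+1-2k})$ and $(p^{2n-2\alpha+2}-1)p^{\alpha-k}$ respectively. This last verification is precisely the combinatorial count of Krieg~\cite{Kr} — counting symmetric $B$‑blocks of prescribed rank modulo $p$, the indices $[H_{j,j}:H_{i,j}]$, and the cardinalities of the Heisenberg sums — and it carries over verbatim once the reduction above is in place, so it may either be quoted or reproduced in a few lines.

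The main obstacle is the second step: identifying exactly which coset representatives $M$ survive the limit $t\to+\infty$ and with which power of $p$. The harmless representatives, for which $M$ becomes block diagonal, account only for $V_{\alpha,n-1-\alpha}(p^2)$ and $V_{\alpha-1,n-\alpha}(p^2)$; the third term $V_{\alpha-2,n-\alpha+1}(p^2)$, together with the non‑obvious factors $p^{2n+1-2k}$ and $p^{2n-2\alpha+2}-1$, arises entirely from the ``degenerate'' representatives in which the coordinate split off by $\Phi$ interacts with a lower diagonal block of $\delta$, and tracking these collapses is the delicate part. A minor technical point is the legitimacy of interchanging $\lim_{t\to+\infty}$ with the Fourier sum, which is routine given the polynomial growth of the coefficients $c(N,R)$ and the resulting uniform convergence for $\mathrm{Im}(\tau')$ in a compact set and $t$ large.
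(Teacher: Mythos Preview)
Your approach is essentially the same as the paper's: choose upper-triangular representatives, compute the Fourier coefficients of $\phi|V_{\alpha,n-\alpha}(p^2)$, specialize to $\hat{N}=\smat{\hat{N}^*}{0}{0}{0}$ and $\hat{R}=\binom{\hat{R}^*}{0}$, and then defer the final coefficient count to Krieg~\cite{Kr}. The paper does exactly this, using the explicit formula for $A_2(\hat{N},\hat{R})$ already obtained in the proof of Proposition~\ref{prop:iota_hecke}.

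There is one point where your proposal is too quick. You write that ``the matrix index $\mathcal{M}$ does not interfere with this count'' because the first columns of $u,v$ are zero. This is not the reason the reduction to $\Phi(\phi)$ works. What is actually needed is the standard invariance of Fourier coefficients of Jacobi forms: $A_1(N,R)$ depends only on $4N-R\mathcal{M}^{-1}{}^tR$ and on $R$ modulo $2\Z^{(n,2)}\mathcal{M}$. The paper uses this to replace $(N,R)$ by an $(N',R')$ whose last row is zero, so that $A_1(N,R)=A_1(N',R')$ is a Fourier coefficient of $\Phi(\phi)$. The construction of $R'$ explicitly involves $\mathcal{M}$ (one adds $\frac{2}{p}\smat{0}{0}{0}{d_j\lambda_3}\mathcal{M}$), and then one must check that the adjusted Heisenberg parameter $\lambda^*-D_j^{*-1}\mathfrak{d}\,\lambda_3$ is integral. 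This last integrality fails a priori when $d_j=p$; the paper shows that the accompanying sum $\sum_l e(NB_{(j,l)}D_j^{-1})$ vanishes unless $\lambda_3\equiv 0\bmod p$, which rescues integrality. So the $\mathcal{M}$-dependence does enter the argument, even though the final coefficients are independent of $\mathcal{M}$; your sketch should make this step explicit rather than asserting it away.
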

\begin{proof}
We shall first show that there exists a linear combination of index-shift map
$V_{\alpha,n-\alpha}(p^2)^*$
which satisfies
$  \Phi(\phi |V_{\alpha,n-\alpha}(p^2))
  =
  \Phi(\phi) | V_{\alpha,n-\alpha}(p^2)^*
$.

We set $U = \begin{pmatrix} p^2 & 0 \\ 0 & p \end{pmatrix}$.
Let
\begin{eqnarray*}
 \phi(\tau,z) 
 &=&
 \sum_{N,R}A_1(N,R) e(N\tau + R{^t z}),
\\
 (\phi|V_{\alpha,n-\alpha}(p^2))(\tau,z)
 &=&
 \sum_{\hat{N},\hat{R}}A_2(\hat{N},\hat{R}) e(\hat{N}\tau + \hat{R}{^t z})
\end{eqnarray*}
be the Fourier expansions.
Let
 $\left\{ \begin{pmatrix} p^2 {^t D_j}^{-1} & B_{(j,l)} \\ 0_n & D_j \end{pmatrix}\right\}_{(j,l)}$
be a complete set of representatives of
 $\Gamma_n \backslash 
   \Gamma_n
     \mbox{diag}(1_{\alpha},p 1_{n-\alpha},p^2 1_{\alpha}, p 1_{n-\alpha})
    \Gamma_n$.
Then
the Fourier coefficients $A_2(\hat{N},\hat{R})$ have been calculated in the proof of
Proposition~\ref{prop:iota_hecke}:
\begin{eqnarray}\label{eq:a2_fourier}
 A_2(\hat{N},\hat{R})
 &=&
 p^{-k+n} \sum_{j} 
   \det(D_j)^{-k} \sum_{\lambda_2 \in (\Z/p\Z)^{(n,1)}}
   A_1(N,R)\, \sum_{l} e(NB_{(j,l)} D_j^{-1}).
\end{eqnarray}
Here $N$ and $R$ are determined by
\begin{eqnarray}
 N
 &=& 
 \frac{1}{p^2} D_j \left(\left(\hat{N} - \frac{1}{4}\hat{R}_2 {^t \hat{R}_2}\right) 
   + \frac{1}{4}(\hat{R}_2 - 2 \lambda_2) {^t(\hat{R}_2 - 2 \lambda_2)}\right){^t D_j},
 \label{eq:n_a2_fourier}
 \\
 R &=& D_j \left(\hat{R} - \frac{2}{p^2} \lambda U \mathcal{M} U \right) U^{-1},
 \notag
\end{eqnarray}
where we put
 $\hat{R}_2 = \hat{R}\left(\begin{smallmatrix}0 \\ 1 \end{smallmatrix}\right)$
and $\lambda = (0\ \lambda_2) \in \Z^{(n,2)}$.

By the definition of $V_{\alpha,n-\alpha}(p^2)$ there exists $\{\gamma_i\}_i$ such that
$\phi|V_{\alpha,n-\alpha}(p^2) = \sum_i \phi|_{k,\mathcal{M}} \gamma_i$.
We can take $\gamma_i$ as a form
\begin{eqnarray*}
  \gamma_i
  &=&
  \gamma_{(j,l,\lambda_2,\mu_2)}
  \ =\
  \left(\begin{pmatrix} p^2 {^t D_j}^{-1} & B_{(j,l)} \\ 0_n & D_j \end{pmatrix} \times
     \begin{pmatrix} U & 0_2 \\ 0_2 & p^2 U^{-1}\end{pmatrix},
     [((0\ \lambda_2),(0\ \mu_2)), 0_2]
 \right),
\end{eqnarray*}
where $B_{(j,l)} = \smat{B_{(j,l)}^*}{ b_1 }{ ^t b_3 }{ b_2 }$,
 $D_j = \smat{D_j^*}{ \mathfrak{d} }{0}{d_j}$,
 $\lambda_2 = \left(\begin{smallmatrix} \lambda^* \\ \lambda_3 \end{smallmatrix} \right)$,
 $\mu_2 = \left(\begin{smallmatrix} \mu^* \\ \mu_3 \end{smallmatrix} \right)$
 with
 $\smat{p^2 {^t D_j^*}^{-1}}{B_{(j,l)}^*}{0_{n-1}}{D_j^*} \in \mbox{GSp}_{n-1}^+(\Z)$,
 $\lambda^*$, $\mu^*$ $\in \Z^{(n-1,1)}$, and $d_j$, $\lambda_3$, $\mu_3$ $\in \Z$.
We set
\begin{eqnarray*}
  \gamma_i^*
  &:=&
  \gamma_{(j,l,\lambda^*,\mu^*)}^*
  \ =\
  \left(\begin{pmatrix} p^2 {^t D_j^*}^{-1} & B_{(j,l)}^* \\ 0_{n-1} & D_j^*
  \end{pmatrix} \times
     \begin{pmatrix} U & 0_2 \\ 0_2 & p^2 U^{-1}\end{pmatrix},
     [((0\ \lambda^*),(0\ \mu^*)), 0_2] \right).
\end{eqnarray*}

By the definition of Siegel $\Phi$-operator we have
\begin{eqnarray*}
\Phi\!\left(\sum_i \phi|_{k,\mathcal{M}} \gamma_i\right)\!(\tau^*,z^*)
&=&
 \Phi(\phi|V_{\alpha,n-\alpha}(p^2))(\tau^*,z^*)
 \, =\,
 \sum_{\hat{N},\hat{R}}A_2(\hat{N},\hat{R}) e(\hat{N}^*\tau^* + \hat{R}^*{^t z^*}),
\end{eqnarray*}
where $\tau^* \in \H_{n-1}$, $z^* \in \C^{(n-1,2)}$,
 $\hat{N} = \smat{\hat{N}^*}{0}{0}{0} \in \mbox{Sym}_n^*$,
 $\hat{N}^* \in \mbox{Sym}_{n-1}^*$,
 $\hat{R} 
    = \left(\begin{smallmatrix} \hat{R}^* \\ 0 \end{smallmatrix}\right) \in \Z^{(n,2)}$
 and
 $\hat{R}^* \in \Z^{(n-1,2)}$.
 
 Hence we need to calculate $A_2(\hat{N},\hat{R})$ for
 $\hat{N} = \smat{\hat{N}^*}{0}{0}{0}$ and $\hat{R} 
    = \left(\begin{smallmatrix} \hat{R}^* \\ 0 \end{smallmatrix}\right) \in \Z^{(n,2)}$.
 From the identity (\ref{eq:a2_fourier}) we need to calculate 
 \begin{eqnarray}\label{eq:nbd}
 \sum_{j} 
   \det(D_j)^{-k} \sum_{\lambda_2 \in (\Z/p\Z)^{(n,1)}}
   A_1(N,R)\, \sum_{l} e(NB_{(j,l)} D_j^{-1}).
 \end{eqnarray}
 We remark that the value $A_1(N,R)$ depends on the choice of
 $\hat{N}$, $\hat{R}$, $D_j$
 and $\lambda_3$.
 Under the conditions $\hat{N} \in \mbox{Sym}_n^*$ and $\hat{R} \in \Z^{(n,2)}$
 and by the identity (\ref{eq:n_a2_fourier})
 we can assume $d \lambda_3 \in p \Z$, since $A_1(N,R) = 0$ 
 unless $N \in \mbox{Sym}_n^*$.
 It is known that the value $A_1(N,R)$ depends only on
 $4N - R \mathcal{M}^{-1} {^t R}$ and on $R \mod 2 \mathcal{M}$.
 We now have
 \begin{eqnarray*}
   4N - R \mathcal{M}^{-1} {^t R}
   &=&
   \frac{1}{p^2}D_j \left( 4 \hat{N} - p^2 \hat{R} U^{-1} \mathcal{M}^{-1} U^{-1} {^t \hat{R}}  \right) {^t D_j}.
 \end{eqnarray*}
  We set
  \begin{eqnarray*}
    R' &=& D_j \left(\hat{R} - \frac{2}{p} (0\ \lambda_2) \mathcal{M} U \right) U^{-1} + \frac{2}{p} \begin{pmatrix} 0 & 0 \\ 0 & d_j \lambda_3\end{pmatrix}\mathcal{M}
  \end{eqnarray*}
  and
  \begin{eqnarray*}
   N' &=& \frac{1}{4 p^2}D_j \left( 4 \hat{N} - p^2 \hat{R} U^{-1} \mathcal{M}^{-1} U^{-1} {^t \hat{R}}  \right) {^t D_j}
   + \frac14 R' \mathcal{M}^{-1} {^t R'}.
  \end{eqnarray*}
  We remark that the last row of $R'$ is zero, and the last row and the last column of $N'$ are also zero.
  Because $4N - R\mathcal{M}^{-1} {^t R} = 4N' - R'\mathcal{M}^{-1} {^t R'}$
  and because $R - R' \in 2 \Z^{(n-1,2)} \mathcal{M}$, we have $A_1(N,R) = A_1(N',R')$.
 We write $N' = \smat{N'^{*}}{ }{ }{0}$ with $N'^{*} \in \mbox{Sym}_{n-1}^*$.

We have
\begin{eqnarray*}
 R' &=&
D_j \left(\hat{R} - \frac{2}{p} \left(0\ \lambda_2' \right) \mathcal{M} U \right) U^{-1},
\end{eqnarray*}
where $\lambda_2' = \left(\begin{smallmatrix}\lambda^* - D_j^{* -1} \mathfrak{d} \lambda_3 \\ 0 \end{smallmatrix}\right) \in \Qq^{(n,1)}$.
Now we will show $\lambda^* - D_j^{* -1} \mathfrak{d} \lambda_3 \in \Z^{(n-1,1)}$
if $\sum_{l} e(NB_{(j,l)} D_j^{-1}) \neq 0$ in the sum (\ref{eq:nbd}).

  We remark $d_j = 1$, $p$ or $p^2$.
  Because $p^2 D_j^{-1} \in \Z^{(n,n)}$ we have $p^2 {D_j^*}^{-1} \mathfrak{d} d_j^{-1} \in \Z^{(n-1,1)}$.
  If $d_j = 1$, then we can take $\mathfrak{d} = 0 \in \Z^{(n-1,1)}$ as a representative.
If $d_j = p^2$, then ${D_j^*}^{-1} \mathfrak{d} \in \Z^{(n-1,1)}$.
We now assume $d_j = p$. Then $p {D_j^*}^{-1} \mathfrak{d} \in \Z^{(n-1,1)}$.
By using the identity ${^t B_{(j,l)}} D_j = {^t D_j} B_{(j,l)}$ we have
  \begin{eqnarray*}
   && \!\!\!\!\!\!\!\!\!\!
   e(NB_{(j,l)} D_j^{-1}) \\
   &=&
   e(N' B_{(j,l)} D_j^{-1} - \frac{d_j \lambda_3}{2p} R \left(\begin{smallmatrix} 0 & \cdots & 0 & 0 \\ 0 & \cdots & 0 & 1 \end{smallmatrix}\right) B_{(j,l)} D_j^{-1}
    - \frac{d_j \lambda_3}{2p} \left(\begin{smallmatrix} 0 & 0 \\ \vdots & \vdots \\ 0 & 0 \\ 0 & 1 \end{smallmatrix}\right) {^t R} B_{(j,l)} D_j^{-1} \\
   &&
   - \frac{d_j^2 \lambda_3^2}{p^2} \left(\begin{smallmatrix} 0_{n-1} &  \\  & 1 \end{smallmatrix} \right) B_{(j,l)} D_j^{-1})\\
  &=& e(N'^{*} B_{(j,l)}^{*} {D_j^{*}}^{-1})
  \, e\!\left( - \frac{d_j \lambda_3}{p^2} (\hat{R}_2^*-2\lambda^* -   
  {D_j^*}^{-1} \mathfrak{d_j} \lambda_3) {^t b_3}\right)
  \, e\!\left(\frac{d_j \lambda_3^2}{p^2} b_2\right).
  \end{eqnarray*}
Hence, if $d_j = p$,
then $\displaystyle{\sum_{b_2 \mod p} e\!\left(\frac{d_j \lambda_3^2}{p^2} b_2\right)}$
is zero unless $\lambda_3 \equiv 0 \mod p$.
Thus, for any $d_j \in \{1, p, p^2\}$, we conclude
$\sum_{l} e(NB_{(j,l)} D_j^{-1}) = 0$ in the sum (\ref{eq:nbd})
unless ${D_j^*}^{-1} \mathfrak{d} \lambda_3 \in \Z^{(n-1,1)}$.
Hence $\lambda^* - D_j^{* -1} \mathfrak{d} \lambda_3 \in \Z^{(n-1,1)}$ and $\lambda_2' \in \Z^{(n,1)}$,
if $\sum_{l} e(NB_{(j,l)} D_j^{-1}) \neq 0$.

Therefore there exists a set of complex numbers $\{ C_{\gamma_i}\}_i := \{C_{\gamma_i,k,\mathcal{M}}\}_i$ which satisfies
\begin{eqnarray*}
 \Phi\left(\sum_i \phi|_{k,\mathcal{M}} \gamma_i\right) =
 \sum_i C_{\gamma_i^*} \Phi(\phi)|_{k,\mathcal{M}} \gamma_i^*.
\end{eqnarray*}
By a well-known argument we have
$\sum_i C_{\gamma_i^*} \gamma_i^* \gamma = \sum_i C_{\gamma_i^*} \gamma_i^*$ for any $\gamma \in \Gamma_{n-1,2}^J$.
Hence there exists an index-shift map
$V_{\alpha,n-\alpha}(p^2)^*$
which satisfies the identity
$  \Phi(\phi |V_{\alpha,n-\alpha}(p^2))
  =
  \Phi(\phi) | V_{\alpha,n-\alpha}(p^2)^*
$.

For a fixed $\alpha$ $(0\leq \alpha \leq n)$ the index-shift map $V_{\alpha,n-\alpha}(p^2)^*$ is a linear combination
of $V_{\beta,n-1-\beta}(p^2)$ $(\beta=0,...,n-1)$.
We need to determine these coefficients of the linear combination.
This calculation is similar to the case of Siegel modular forms
\cite[page 325]{Kr}.
We leave the details to the reader.

\end{proof}

Now for integers $l$ $(2\leq l)$, $\beta$ $(0 \leq \beta \leq l-1)$
and $\alpha$ $(0\leq \alpha \leq l)$, we put
\begin{eqnarray*}
b_{\beta,\alpha} &:=&
b_{\beta,\alpha,l,p}(X)
=
\begin{cases}
  (p^{l +1 -\alpha} - p^{-l-1+\alpha})  p^{\frac12} & \mbox{if } \beta = \alpha-2 \\
  (X + X^{-1}) p & \mbox{if } \beta = \alpha-1 \\
  p^{-l+\alpha + \frac32}  & \mbox{if } \beta = \alpha \\
  0 & \mbox{otherwise}
\end{cases}
\end{eqnarray*}
and set a matrix
\begin{eqnarray*}
  B_{l,l+1}(X) &:=&
  (b_{\beta,\alpha})_{\begin{smallmatrix} \beta = 0,...,l-1 \\ \alpha = 0,...,l \end{smallmatrix}}
  =
  \begin{pmatrix}
    b_{0,0} & \cdots & b_{0,l} \\
    \vdots & \ddots & \vdots \\
    b_{l-1,0} & \cdots & b_{l-1,l}
  \end{pmatrix}
\end{eqnarray*}
with entries in $\C[X + X^{-1}]$.
For any $\phi \in J_{k,\mathcal{M}}^{(l)}$, due to
Proposition~\ref{prop:gesetz_int}, we obtain
\begin{equation}
\begin{split}
  &
 \Phi(\phi)|(V_{0,l}(p^2)^*,\cdots, V_{l,0}(p^2)^*) \\
 \label{eq:VB}
 &=
 p^{-k+l+\frac12} \, \left(\Phi(\phi)|(V_{0,l-1}(p^2), \cdots, V_{l-1,0}(p^2))\right)
 B_{l,l+1}(p^{k-l-\frac12}).
\end{split}
\end{equation}
Here $\Phi(\phi)|(V_{0,l}(p^2)^*,\cdots, V_{l,0}(p^2)^*)$ denotes the row vector
\begin{eqnarray*}
  \Phi(\phi)|(V_{0,l}(p^2)^*,\cdots, V_{l,0}(p^2)^*)
  &:=&
  \left(\Phi(\phi)|V_{0,l}(p^2)^*, ..., \Phi(\phi)|V_{l,0}(p^2)^* \right).
\end{eqnarray*}

Let $J_{k-\frac12,m}^{(n) * }$ be the subspace of $J_{k-\frac12,m}^{(n)}$
introduced in \S\ref{ss:CE_J*}.
\begin{cor}\label{cor:gesetz_half}
For any Jacobi form $\phi \in J_{k-\frac12,m}^{(n) * }$
and for any prime $p$, we have
\begin{eqnarray*}
  \Phi(\phi | \tilde{V}_{\alpha,n-\alpha}(p^2))
  &=&
  \Phi(\phi)| \tilde{V}_{\alpha,n-\alpha}(p^2)^*,
\end{eqnarray*}
where $\tilde{V}_{\alpha,n-\alpha}(p^2)^*$ is a map
$\tilde{V}_{\alpha,n-\alpha}(p^2)^*$ $:$ $J_{k-\frac12,m}^{(n-1) *} \rightarrow
                     J_{k-\frac12,mp^2}^{(n-1)}$ given by
\begin{eqnarray*}
  \widetilde{V}_{\alpha,n-\alpha}(p^2)^*
  &=&
  p^{k-n-\frac12} \Bigg\{
  p^{-n + \alpha} \widetilde{V}_{\alpha,n-\alpha-1}(p^2)\\
  &&
  + ( p^{-k+n+\frac12} + p^{k - n -\frac12}) \widetilde{V}_{\alpha-1,n-\alpha}(p^2)\\
  &&
  + (p^{n+1-\alpha} - p^{-n-1+\alpha}) \widetilde{V}_{\alpha-2,n-\alpha+1}(p^2) \Bigg\}.
\end{eqnarray*}
\end{cor}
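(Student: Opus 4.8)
The plan is to transport the integral-weight commutation relation of Proposition~\ref{prop:gesetz_int} across the isomorphism $\iota_{\mathcal{M}}$ of Lemma~\ref{lemma:iota}. Throughout I fix $\mathcal{M}=\smat{l}{r/2}{r/2}{1}\in\mbox{Sym}_2^+$ with $m=\det(2\mathcal{M})$, and I note that $\mathcal{M}[\smat{p}{ }{ }{1}]$ is again of the shape $\smat{ *}{ *}{ *}{1}$ and has determinant $mp^2$, so that $\iota_{\mathcal{M}[\smat{p}{ }{ }{1}]}$ is defined. The first and only delicate point is to show that the Siegel $\Phi$-operator commutes with $\iota_{\mathcal{M}}$, that is, $\Phi(\iota_{\mathcal{M}}(\psi))=\iota_{\mathcal{M}}(\Phi(\psi))$ for every $\psi\in H_{\mathcal{M}}^{(n)}$, and that $\Phi$ carries $J_{k-\frac12,m}^{(n)*}$ into $J_{k-\frac12,m}^{(n-1)*}$ and $J_{k,\mathcal{M}}^{(n)*}$ into $J_{k,\mathcal{M}}^{(n-1)*}$. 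This is a Fourier-coefficient computation: $\Phi$ retains exactly those coefficients whose index matrices $N\in\mbox{Sym}_n^*$, $R\in\Z^{(n,r)}$ have vanishing last row; and in the defining relation of $\iota_{\mathcal{M}}$ one may, using the $H_{\mathcal{M}}^{(n)}$-invariance, normalise the chosen representative so that the last entry of $R_2$ vanishes, whereupon the relations $M=4N-R_2\,{}^t R_2$ and $S=4R_1-2rR_2$ force $N$, $R_1$, $R_2$ to have vanishing last row as well, and the surviving coefficient is precisely the corresponding coefficient of $\iota_{\mathcal{M}}(\Phi(\psi))$. The statements about the starred subspaces follow from the same bookkeeping together with the compatibility of the ordinary Siegel operator on Siegel modular forms with $\sigma$ and $\widetilde{\mbox{FJ}}_m$, since the Siegel operator preserves the generalized plus-space.

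With this in place, given $\phi\in J_{k-\frac12,m}^{(n)*}$ I let $\psi\in J_{k,\mathcal{M}}^{(n)*}$ be its unique $\iota_{\mathcal{M}}$-preimage. For odd $p$, Proposition~\ref{prop:iota_hecke} gives $\phi|\tilde{V}_{\alpha,n-\alpha}(p^2)=p^{\,k(2n+1)-n(n+\frac72)+\frac{\alpha}{2}}\,\iota_{\mathcal{M}[\smat{p}{ }{ }{1}]}(\psi|V_{\alpha,n-\alpha}(p^2))$, and for $p=2$ this identity holds by the very definition of $\tilde{V}_{\alpha,n-\alpha}(4)$ in \S\ref{ss:hecke_p2}. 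Applying $\Phi$, using the commutativity just established, then Proposition~\ref{prop:gesetz_int} to replace $\Phi(\psi|V_{\alpha,n-\alpha}(p^2))$ by $\Phi(\psi)|V_{\alpha,n-\alpha}(p^2)^*$ --- that is, by the explicit $\C$-linear combination of the $V_{\beta,(n-1)-\beta}(p^2)$ with $\beta\in\{\alpha,\alpha-1,\alpha-2\}$ --- and finally running Proposition~\ref{prop:iota_hecke} in degree $n-1$ in reverse together with $\iota_{\mathcal{M}}(\Phi(\psi))=\Phi(\phi)$, one turns each $\iota_{\mathcal{M}[\smat{p}{ }{ }{1}]}(\Phi(\psi)|V_{\beta,(n-1)-\beta}(p^2))$ into a scalar multiple of $\Phi(\phi)|\tilde{V}_{\beta,(n-1)-\beta}(p^2)$.

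It then remains to collect the powers of $p$. The net exponent attached to the $\beta$-term is the degree-$n$ exponent of Proposition~\ref{prop:iota_hecke} minus the degree-$(n-1)$ one, which simplifies to $2k-2n-\frac52+\frac12(\alpha-\beta)$; multiplying this by the three coefficients $p^{\alpha+2-k}$, $p(1+p^{2n+1-2k})$ and $(p^{2n-2\alpha+2}-1)p^{\alpha-k}$ occurring in $V_{\alpha,n-\alpha}(p^2)^*$ reproduces, after elementary simplification, exactly $p^{k-n-\frac12}$ times $p^{-n+\alpha}\tilde{V}_{\alpha,n-\alpha-1}(p^2)$, $(p^{-k+n+\frac12}+p^{k-n-\frac12})\tilde{V}_{\alpha-1,n-\alpha}(p^2)$ and $(p^{n+1-\alpha}-p^{-n-1+\alpha})\tilde{V}_{\alpha-2,n-\alpha+1}(p^2)$, which is the asserted formula. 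I expect the main obstacle to be the commutativity $\Phi\circ\iota_{\mathcal{M}}=\iota_{\mathcal{M}}\circ\Phi$ --- above all the normalisation of the Fourier index that makes the extra last row of $R_2$ disappear, and checking that the $p=2$ case is genuinely covered (it is, since there $\tilde{V}_{\alpha,n-\alpha}(4)$ is defined so that Proposition~\ref{prop:iota_hecke} holds by construction); once these are in hand, the remaining coefficient bookkeeping is routine, in the spirit of the integral-weight computation in~\cite{Kr}.
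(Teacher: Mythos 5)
Your proposal is correct and follows essentially the same route as the paper: establish that $\Phi$ commutes with $\iota_{\mathcal{M}}$ (the paper dismisses this as a straightforward calculation, and your normalisation of the Fourier index killing the last row of $R_2$ is exactly the right way to see it), then combine Proposition~\ref{prop:gesetz_int} with Proposition~\ref{prop:iota_hecke} in degrees $n$ and $n-1$. Your exponent bookkeeping $2k-2n-\tfrac52+\tfrac12(\alpha-\beta)$ and the resulting three coefficients check out against the stated formula, and your handling of $p=2$ via the defining identity of $\tilde{V}_{\alpha,n-\alpha}(4)$ is the intended one.
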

\begin{proof}
By a straightforward calculation we get the fact that
$\iota_{\mathcal{M}}$ and $\Phi$ is commutative.
The rest of the proof of this corollary follows from Proposition~\ref{prop:gesetz_int}
and Proposition~\ref{prop:iota_hecke}.
\end{proof}

  Let $\tilde{\mathcal{H}}_{p^2}^{(m)}$ be the local Hecke ring
  and let $R_m$ be the subring of a polynomial ring both defined in \S\ref{ss:L_function_siegel_half}.
  The isomorphism
   $ \Psi_m \, : \, \tilde{\mathcal{H}}_{p^2}^{(m)} \cong R_m$
  has been obtained in~\cite{Zhu:hecke,Zhu:euler} (see ~\S\ref{ss:L_function_siegel_half}).
  \begin{prop}\label{prop:imageofpsim}
  Let $p$ be an odd prime. 
  For any $m \geq 2$,
  the image of generators $K_{\alpha}^{(m)}$ of $\tilde{\mathcal{H}}_{p^2}^{(m)}$ by $\Psi_m$ are
  expressed as a vector
  \begin{eqnarray} \label{eq:isom_p_half}
  \begin{aligned}
  &
  \left(\Psi_m(K_0^{(m)}), \Psi_m(K_1^{(m)}), \cdots, \Psi_m(K_m^{(m)}) \right) \\
  &=
  p^{-\frac32 (m -1) }\ z_0^2 z_1 \cdots z_m (p^{-1},\ z_1+z_1^{-1})
  \begin{pmatrix}
    1 & 0 \\
    0 & p^\frac12
  \end{pmatrix}^{-1} \\
  & \qquad \times
  \left\{\prod_{l=2}^m B_{l,l+1}(z_l) \right\}
  \ \mbox{diag}(1,p^{\frac12},...,p^{\frac{m}{2}}).
  \end{aligned}
  \end{eqnarray}
  Here $B_{l,l+1}(X)$ is the $l\times (l+1)$-matrix introduced in above,
  and where
  \[
    \prod_{l=2}^m B_{l,l+1}(z_l) = B_{2,3}(z_2)B_{3,4}(z_3)\cdots B_{m,m+1}(z_m)
  \]
  is a
  $2 \times (m+1)$ matrix
  with entries in $\C[z_2^{\pm},\cdots, z_m^{\pm}]$.
  We remark that
  \[
   \Psi_m(K_0^{(m)}) = p^{-\frac{m(m+1)}{2}} z_0^2 z_1 \cdots z_m.
  \]
  \end{prop}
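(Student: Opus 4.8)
\emph{Sketch of the argument.} The identity is the half-integral-weight counterpart of Krieg's factorisation \cite{Kr} of the Satake image of the generators of the Siegel Hecke ring, and the engine of the proof is the commutation of the index-shift maps with the Siegel $\Phi$-operator from Proposition~\ref{prop:gesetz_int} and Corollary~\ref{cor:gesetz_half}, which is exactly what manufactures the matrices $B_{l,l+1}$. I would argue by induction on $m\ge 2$. For the base case $m=2$ one computes both sides by hand: inserting Zhuravlev's explicit decomposition of each generator $K_\alpha^{(2)}$ into left $\Gamma_0^{(2)}(4)^*$-cosets \cite[Prop.~7.1]{Zhu:hecke} into the defining formula for $\Psi_2$ from \S\ref{ss:L_function_siegel_half}, and comparing the resulting $W_2$-invariant Laurent polynomials with the right-hand side of (\ref{eq:isom_p_half}) for $m=2$, where $\prod_{l=2}^{2}B_{l,l+1}(z_l)=B_{2,3}(z_2)$. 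The equality $\Psi_m(K_0^{(m)})=p^{-m(m+1)/2}z_0^{2}z_1\cdots z_m$ recorded in the statement is immediate for every $m$ (since $K_0^{(m)}$ is the class of the scalar matrix $p\cdot 1_{2m}$ with trivial theta multiplier), and it is used at the very end to fix the global normalisation.

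For the inductive step I would transport the question to Jacobi forms. The Eichler--Zagier--Ibukiyama correspondence $\sigma$ together with (\ref{eq:sigma_isom_hecke}) identifies $\tilde T_{\alpha,m-\alpha}(p^2)$ with the Jacobi Hecke operator $T^{J}_{\alpha,m-\alpha}(p^2)$ up to an explicit power of $p$, and passing to Fourier--Jacobi coefficients and applying the isomorphism $\iota_{\mathcal M}$ of \S\ref{ss:iota} (Proposition~\ref{prop:iota_hecke}) turns these into the index-shift maps $\tilde V_{\alpha,m-\alpha}(p^2)$ to which Corollary~\ref{cor:gesetz_half} applies; thus $(\Psi_m(K_0^{(m)}),\dots,\Psi_m(K_m^{(m)}))$ becomes, up to a normalising scalar and the diagonal factor, the Satake image of the operator vector $(\tilde V_{0,m}(p^2),\dots,\tilde V_{m,0}(p^2))$. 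Corollary~\ref{cor:gesetz_half}, packaged as in (\ref{eq:VB}), says that applying $\Phi$ to this vector returns the degree-$(m-1)$ vector $(\tilde V_{0,m-1}(p^2),\dots,\tilde V_{m-1,0}(p^2))$, post-composed with $\Phi$ and right-multiplied by $B_{m,m+1}$ together with an explicit scalar; iterating (\ref{eq:VB}) from degree $m$ down to degree $1$ collapses the string of $\Phi$'s and yields the telescoped product $B_{2,3}B_{3,4}\cdots B_{m,m+1}$ applied to the degree-$1$ vector. That degree-$1$ vector accounts for the factor $z_0^{2}z_1(p^{-1},z_1+z_1^{-1})\,\mathrm{diag}(1,p^{1/2})^{-1}$ in (\ref{eq:isom_p_half}), and the inductive hypothesis for $m-1$ identifies the lower part of the telescope with $\prod_{l=2}^{m-1}B_{l,l+1}(z_l)$, producing the full product $\prod_{l=2}^{m}B_{l,l+1}(z_l)$.

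One point has to be handled with care. In (\ref{eq:VB}) the matrix $B_{l,l+1}$ is evaluated at $X=p^{k-l-\frac12}$ --- the value of the Satake variable $z_l$ forced by $\Phi$, namely the one consistent with the normalisation $\mu_{0,p}^{2}\mu_{1,p}\cdots\mu_{m,p}=p^{m(2k-1)/2-m(m+1)/2}$ of the $p$-parameters --- whereas (\ref{eq:isom_p_half}) asserts the identity with $z_m$ free. To recover the free-variable factor $B_{m,m+1}(z_m)$ I would compute the $z_m$-dependence of $\Psi_m(K_\alpha^{(m)})$ directly: the coset representatives of $K_\alpha^{(m)}$ have diagonal entries in $\{1,p,p^2\}$, so in view of the $W_2$-invariant structure the variable $z_m$ enters each $\Psi_m(K_\alpha^{(m)})$ only through a short, explicit range of monomials; grouping the representatives according to whether their last diagonal block equals $1$, $p$ or $p^2$ reduces the computation of this grading to a degree-$(m-1)$ count combined with a generalised Gauss-sum evaluation, in the spirit of the computation of $\tilde{K}_{i,j}^{\beta}$ and of Lemma~\ref{lem:sum_gs} in \S\ref{s:action_of_ism}. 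Matching this expansion with the specialised identity coming from (\ref{eq:VB}) and with $W_2$-invariance pins down every coefficient, so the polynomial identity follows; the overall scalar $p^{-\frac32(m-1)}$ and the diagonal $\mathrm{diag}(1,p^{1/2},\dots,p^{m/2})$ are then fixed by comparison with $\Psi_m(K_0^{(m)})$, and the induction is complete. This last step --- extracting the free-variable $B_{m,m+1}(z_m)$ rather than merely its value at $p^{k-m-\frac12}$, by the coset-and-Gauss-sum combinatorics while keeping every normalising power of $p$ consistent across $\sigma$ in (\ref{eq:sigma_isom_hecke}), $\iota_{\mathcal M}$ in Proposition~\ref{prop:iota_hecke}, and the definitions of $\tilde V_{\alpha,m-\alpha}(p^2)$ and $\tilde T_{\alpha,m-\alpha}(p^2)$ --- is the one I expect to be the main obstacle: routine in principle, being the half-integral analogue of Krieg's integral-weight calculation, but with constants that leave no margin for error.
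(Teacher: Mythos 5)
Your overall architecture — induction on the degree, driven by the commutation of the index-shift maps with the Siegel $\Phi$-operator (Proposition~\ref{prop:gesetz_int}, Corollary~\ref{cor:gesetz_half}, packaged as (\ref{eq:VB})), telescoping into the product $B_{2,3}\cdots B_{m,m+1}$ — is exactly the paper's. But the step you single out as "the main obstacle," namely upgrading the identity with $B_{m,m+1}$ evaluated at the forced value $X=p^{k-m-\frac12}$ to the identity with $z_m$ free, is precisely where your proposal stops being a proof. The coset-and-Gauss-sum computation of the $z_m$-grading of $\Psi_m(K_\alpha^{(m)})$ that you sketch is not carried out, and it is a genuinely heavy calculation (it would amount to redoing Zhuravlev's structure theory of the generators by hand); you give no argument that it closes, only the hope that it is "routine in principle."

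The paper avoids that computation entirely with two ingredients you do not use. First, the theorem of Oh--Koo--Kim (\ref{eq:okk}) states that for any $T\in\tilde{\mathcal H}_{p^2}^{(m)}$ with $f_T=\Psi_m(T)$ one has $\Phi^{S}(F|T)=\Phi^{S}(F)|\Psi_{m-1}^{-1}\bigl(f_T(z_0,\dots,z_{m-1},p^{k-m-\frac12})\bigr)$; combined with the Witt operator and Corollary~\ref{cor:gesetz_half} this identifies the \emph{specialisation} $f_T(z_0,\dots,z_{m-1},p^{k-m-\frac12})$ for $T=\tilde T_{\alpha,m-\alpha}(p^2)$ as an explicit $\C$-combination of $\Psi_{m-1}(K_{\alpha}^{(m-1)})$, $\Psi_{m-1}(K_{\alpha-1}^{(m-1)})$, $\Psi_{m-1}(K_{\alpha-2}^{(m-1)})$. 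Second — and this is the idea your proposal is missing — both sides of that specialised identity are Laurent polynomials in the specialised value of $z_m$, and the identity holds for \emph{infinitely many} even $k$ (hence infinitely many values $p^{k-m-\frac12}$), so it holds identically in the free variable $z_m$. That one-line observation replaces your entire proposed combinatorial analysis and immediately yields the recursion $\Psi_m(K_\alpha^{(m)})=z_m\{p^{-m+\alpha}\Psi_{m-1}(K_\alpha^{(m-1)})+(z_m+z_m^{-1})\Psi_{m-1}(K_{\alpha-1}^{(m-1)})+(p^{m+1-\alpha}-p^{-m-1+\alpha})\Psi_{m-1}(K_{\alpha-2}^{(m-1)})\}$. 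A minor further difference: the paper's recursion bottoms out at $m=1$ with the explicit values $\Psi_1(K_0^{(1)})=p^{-1}z_0^2z_1$ and $\Psi_1(K_1^{(1)})=z_0^2(1+z_1^2)$ read off from the definition of $\Psi_1$, rather than at a hand computation of the $m=2$ case from Zhuravlev's coset decomposition as you propose.
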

  \begin{proof}
  Let $k$ be an even integer and let $F \in M_{k-\frac12}(\Gamma_0^{(m)}(4))$
  be a Siegel modular form such that $\Phi^S(F) \not \equiv 0$.
  Here $\Phi^S$ denotes the Siegel $\Phi$-operator for Siegel modular forms.
  Let $T \in \tilde{H}_{p^2}^{(m)}$ and let $f_T(z_0,...,z_m) := \Psi_m(T) \in R_m$.
  Then $f_T(z_0,...z_{m-1},p^{k-m-\frac12}) \in R_{m-1}$ and
  $\Psi_{m-1}^{-1}(f_T(z_0,...,z_{m-1},p^{k-m-\frac12})) \in \tilde{H}_{p^2}^{(m-1)}$.
  It is known by Oh-Koo-Kim~\cite[Theorem~5.1]{OKK} that
  \begin{eqnarray}\label{eq:okk}
    \Phi^{S}(F|T) &=&
    \Phi^{S}(F)| \Psi_{m-1}^{-1}(f_T(z_0,...,z_{m-1},p^{k-m-\frac12})).
  \end{eqnarray}
  Let $\phi \in J_{k-\frac12,a}^{(m)}$ be a Jacobi form with index $a \in \Z$ such that
  $\Phi(\phi) \not \equiv 0$. Here $\Phi$ is the Siegel $\Phi$-operator.
  If $k$ is large enough,
  then there exists such $\phi$.
  Due to Corollary~\ref{cor:gesetz_half} we have
  \begin{eqnarray*}
    \Phi(\phi|\tilde{V}_{\alpha,m-\alpha}(p^2))
    &=&
    \Phi(\phi)|\tilde{V}_{\alpha,m-\alpha}(p^2)^*.
  \end{eqnarray*}
  Let $\mathbb{W} : J_{k-\frac12,a}^{(m)} \rightarrow M_{k-\frac12}^+(\Gamma_0^{(m)}(4))$
  be the Witt operator which is defined by
  \begin{eqnarray*}
    \mathbb{W}(\phi)(\tau) := \phi(\tau,0)
  \end{eqnarray*}
  for any $\phi(\tau,z) \in J_{k-\frac12,a}^{(m)}$.
  By a straightforward calculation, for any $\phi \in J_{k-\frac12,a}^{(m)}$ we have
  \begin{eqnarray*}
    \mathbb{W}(\phi|\tilde{V}_{\alpha,m-\alpha}(p^2))
    &=&
    \mathbb{W}(\phi)|\tilde{T}_{\alpha,m-\alpha}(p^2)
  \end{eqnarray*}
  and
  \begin{eqnarray*}
    \mathbb{W}(\Phi(\phi))
    &=&
    \Phi^S(\mathbb{W}(\phi)).
  \end{eqnarray*}
  
  We set
  \begin{eqnarray*}
      \widetilde{T}_{\alpha,m-\alpha}(p^2)^*
  &=&
  p^{k-m-\frac12} \Bigg\{
  p^{-m + \alpha} \widetilde{T}_{\alpha,m-\alpha-1}(p^2)\\
  &&
  + ( p^{-k+m+\frac12} + p^{k - m -\frac12}) \widetilde{T}_{\alpha-1,m-\alpha}(p^2)\\
  &&
  + (p^{m+1-\alpha} - p^{-m-1+\alpha}) \widetilde{T}_{\alpha-2,m-\alpha+1}(p^2) \Bigg\}.
  \end{eqnarray*}  
  If we put $F = \mathbb{W}(\phi)$, then
  \begin{eqnarray*}
    \Phi^S(F|\tilde{T}_{\alpha,m-\alpha}(p^2))
    &=&
      \Phi^S(\mathbb{W}(\phi|\tilde{V}_{\alpha,m-\alpha}(p^2)))\\
    &=&
      \mathbb{W}(\Phi(\phi|\tilde{V}_{\alpha,m-\alpha}(p^2)))\\
    &=&
      \mathbb{W}(\Phi(\phi)|\tilde{V}_{\alpha,m-\alpha}(p^2)^*)\\
    &=&
      \Phi^S(F)|\tilde{T}_{\alpha,m-\alpha}(p^2)^*.
  \end{eqnarray*}
  Hence if we put $T = \tilde{T}_{\alpha,m-\alpha}(p^2)$ in (\ref{eq:okk})
  we have
  \begin{eqnarray*}
    f_T(z_0,...,z_{m-1},p^{k-m-\frac12})
   &=&
      p^{k-m-\frac12} \Bigg\{
  p^{-m + \alpha} \Psi_{m-1}(K_{\alpha}^{(m-1)})\\
  &&
  + ( p^{-k+m+\frac12} + p^{k - m -\frac12}) \Psi_{m-1}(K_{\alpha-1}^{(m-1)})\\
  &&
  + (p^{m+1-\alpha} - p^{-m-1+\alpha}) \Psi_{m-1}(K_{\alpha-2}^{(m-1)}) \Bigg\}.
  \end{eqnarray*}
  Since this identity is true for infinitely many $k$, we have
  \begin{eqnarray*}
       \Psi_m(K_{\alpha}^{(m)})
  &=&
    f_T(z_0,...,z_{m-1},z_m) \\
  &=&
      z_m \Bigg\{
  p^{-m + \alpha} \Psi_{m-1}(K_{\alpha}^{(m-1)})\\
  &&
  + ( z_m + z_m^{-1}) \Psi_{m-1}(K_{\alpha-1}^{(m-1)})\\
  &&
  + (p^{m+1-\alpha} - p^{-m-1+\alpha}) \Psi_{m-1}(K_{\alpha-2}^{(m-1)}) \Bigg\}.  
  \end{eqnarray*}
  Hence
  \begin{eqnarray*}
  \begin{aligned}
  &
  \left(\Psi_m(K_0^{(m)}), \Psi_m(K_1^{(m)}), \cdots, \Psi_m(K_m^{(m)}) \right) \\
  &=
  p^{-3/2 }\ z_m\
  \left(\Psi_{m-1}(K_0^{(m-1)}), \Psi_{m-1}(K_1^{(m-1)}), \cdots, \Psi_{m-1}(K_{m-1}^{(m-1)}) \right) \\
  & \quad \times
  \mbox{diag}(1,p^{\frac12},...,p^{\frac{m-1}{2}})^{-1}
  B_{m,m+1}(z_m)
  \ \mbox{diag}(1,p^{\frac12},...,p^{\frac{m}{2}}).
  \end{aligned}
  \end{eqnarray*}
  Moreover, we have
  \begin{eqnarray*}
   \Psi_1\!\left(K_{0}^{(1)}\right)
   &=& p^{-1} z_0^2 z_1,\\
   \Psi_1\!\left(K_{1}^{(1)}\right)
   &=& z_0^2 (1 + z_1^2)
  \end{eqnarray*}
  by the definition of $\Psi_1$.
  This proposition follows from these identities and the recursion with respect to $m$.
  \end{proof}

\section{Maass relation for generalized Cohen-Eisenstein series}\label{s:maass_relation_cohen_eisen}

We put a $2\times(n+1)$-matrix
\begin{eqnarray*}
  A_{2,n+1}^{p}(X)
  &:=&
  \prod_{l=2}^{n} B_{l,l+1}(p^{\frac{n+2}{2}-l} X) \\
  &=&
  B_{2,3}(p^{\frac{n+2}{2}-2} X) B_{3,4}(p^{\frac{n+2}{2}-3}X) \cdots B_{n,n+1}(p^{\frac{n+2}{2}-n}X),
\end{eqnarray*}
where $B_{l,l+1}(X)$ is the $l \times (l+1)$-matrix introduced in \S\ref{s:siegelop}.

\begin{lemma}\label{lem:axinvariant}
 All components of the matrix $A_{2,2n-1}^p(X)$ belong to $\C[X+X^{-1}]$.
\end{lemma}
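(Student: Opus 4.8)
The plan is to reduce the lemma to one structural fact --- that for every $m$ the $2\times(m+1)$ matrix $\prod_{l=2}^{m}B_{l,l+1}(z_l)$ is invariant under all permutations of $z_2,\dots,z_m$ --- and then to read off the conclusion from the very special shape of the exponents in $A_{2,2n-1}^p$. Unwinding the definition above, $A_{2,2n-1}^p(X)=\prod_{l=2}^{2n-2}B_{l,l+1}(p^{n-l}X)$; set $c_l:=n-l$ for $2\le l\le 2n-2$. First I would record the arithmetic identity $c_l+c_{2n-l}=0$, so that $l\mapsto 2n-l$ is an involution of $\{2,\dots,2n-2\}$ and $p^{c_l}X^{-1}+p^{-c_l}X=p^{c_{2n-l}}X+p^{-c_{2n-l}}X^{-1}$. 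Since every entry of $B_{l,l+1}(Y)$ lies in $\C[Y+Y^{-1}]$ by the definition in \S\ref{s:siegelop}, every entry of $\prod_{l=2}^{2n-2}B_{l,l+1}(z_l)$ is a polynomial $F(z_2+z_2^{-1},\dots,z_{2n-2}+z_{2n-2}^{-1})$, so the corresponding entry of $A_{2,2n-1}^p(X)$ is $F(\zeta_2,\dots,\zeta_{2n-2})$ with $\zeta_l:=p^{c_l}X+p^{-c_l}X^{-1}$, a genuine Laurent polynomial in $X$ since $p^{c_l}$ is a fixed nonzero scalar. By the displayed identity, $X\mapsto X^{-1}$ carries $\zeta_l$ to $\zeta_{2n-l}$, hence permutes $(\zeta_2,\dots,\zeta_{2n-2})$ by that involution; therefore, once $F$ is known to be symmetric, each entry of $A_{2,2n-1}^p(X)$ is a Laurent polynomial in $X$ fixed by $X\mapsto X^{-1}$ and so lies in $\C[X+X^{-1}]$.

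It thus remains to prove the symmetry of $\prod_{l=2}^{m}B_{l,l+1}(z_l)$ in $z_2,\dots,z_m$ (with $m=2n-2$), and for odd $p$ I would deduce this from Proposition~\ref{prop:imageofpsim}. In the identity stated there, the left-hand vector $\bigl(\Psi_m(K_0^{(m)}),\dots,\Psi_m(K_m^{(m)})\bigr)$ lies in $R_m=\C^{W_2}[z_0^{\pm},\dots,z_m^{\pm}]$ and is in particular invariant under every permutation $\pi$ of $z_2,\dots,z_m$; on the right-hand side the scalar $p^{-\frac32(m-1)}z_0^2z_1\cdots z_m$, the row $(p^{-1},z_1+z_1^{-1})$, the fixed matrix $\smat{1}{0}{0}{p^{1/2}}^{-1}$ and $\mbox{diag}(1,p^{1/2},\dots,p^{m/2})$ are all fixed by $\pi$. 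Applying $\pi$ and cancelling these common factors leaves $\bigl(p^{-1},\,p^{-1/2}(z_1+z_1^{-1})\bigr)\bigl(\prod_{l=2}^{m}B_{l,l+1}(z_{\pi(l)})-\prod_{l=2}^{m}B_{l,l+1}(z_l)\bigr)=0$; since the matrix in the large parentheses does not involve $z_1$ while $1$ and $z_1+z_1^{-1}$ are linearly independent over $\C[z_2^{\pm},\dots,z_m^{\pm}]$, comparing coefficients forces that matrix to be $0$, which is precisely the asserted symmetry. As Proposition~\ref{prop:imageofpsim} is stated for odd $p$ only, for $p=2$ I would instead verify the adjacent relations $B_{l-1,l}(z)B_{l,l+1}(w)=B_{l-1,l}(w)B_{l,l+1}(z)$ directly from the explicit entries $b_{\beta,\alpha}$ of \S\ref{s:siegelop} --- a short computation on essentially tridiagonal matrices that is insensitive to the parity of $p$, and which in any case follows for $p=2$ from the odd-prime case since the relation is a rational identity in $p^{1/2}$ --- these transpositions generating the full symmetric group on $z_2,\dots,z_m$.

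Combining the two steps completes the proof, the remaining verifications (that the entries are genuine Laurent polynomials, and the degenerate small cases such as $A_{2,3}^p(X)=B_{2,3}(X)$, which is already of the required form) being immediate. I expect the one genuinely delicate step to be the passage from the $W_2$-invariance furnished by Proposition~\ref{prop:imageofpsim} to the bare permutation-symmetry of the matrix product $\prod_{l=2}^{m}B_{l,l+1}(z_l)$: the essential point there is the cancellation of the $z_1$-dependent prefactor via linear independence (equivalently, the direct braid-type commutation relations among the $B_{l,l+1}$).
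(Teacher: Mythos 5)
Your proof is correct and follows essentially the same route as the paper's: both extract the permutation-invariance of $\prod_{l}B_{l,l+1}(z_l)$ from the $W_2$-invariance furnished by Proposition~\ref{prop:imageofpsim} (you spell out the cancellation of the $z_1$-dependent prefactor via linear independence, which the paper leaves implicit), combine it with the antisymmetry $c_l+c_{2n-l}=0$ of the exponents to obtain $A_{2,2n-1}^p(X)=A_{2,2n-1}^p(X^{-1})$, and pass to $p=2$ by regarding the identity as a Laurent-polynomial identity in $p^{1/2}$. The only cosmetic differences are that you establish invariance under the full symmetric group where the paper records just the single reversal identity, and that you observe $B_{l,l+1}(z)=B_{l,l+1}(z^{-1})$ directly from the definition of the entries $b_{\beta,\alpha}$ rather than deducing it from Proposition~\ref{prop:imageofpsim}.
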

\begin{proof}
  We assume $p$ is an odd prime.
  Let $R_m$ be the symbol introduced in \S\ref{ss:L_function_siegel_half}.
  Because $\Psi_{2n-2}(K_{\alpha}^{(2n-2)})$ belongs to $R_{2n-2}$
  and because of Proposition~\ref{prop:imageofpsim}
  we have relations
 $B_{l,l+1}(z_l) = B_{l,l+1}(z_l^{-1})$ $(l=2,...,2n-2)$ and
 \begin{eqnarray*}
   B_{2,3}(z_2) B_{3,4}(z_3) \cdots B_{2n-2,2n-1}(z_{2n-2})
   &=&
   B_{2,3}(z_{2n-2}) B_{3,4}(z_{2n-3}) \cdots B_{2n-2,2n-1}(z_2).
 \end{eqnarray*}
 Hence  
  \begin{eqnarray*}
    A_{2,2n-1}^p(X)
    &=&
    B_{2,3}(p^{n-2} X) B_{3,4}(p^{n-3}X) \cdots B_{2n-2,2n-1}(p^{-n+2}X) \\
    &=&
    B_{2,3}(p^{-n+2}X^{-1}) B_{3,4}(p^{-n+3}X^{-1}) \cdots B_{2n-2,2n-1}(p^{n-2}X^{-1}) \\
    &=&
    B_{2,3}(p^{n-2}X^{-1}) B_{3,4}(p^{n-3}X^{-1}) \cdots B_{2n-2,2n-1}(p^{-n+2}X^{-1}) \\
    &=& A_{2,2n-1}^p(X^{-1}).
  \end{eqnarray*}

  The relation $A_{2,2n-1}^p(X) = A_{2,2n-1}^p(X^{-1})$
  holds for infinitely many $p$. 
  Hence if we regard that the components of the matrix $A_{2,2n-1}^p(X)$ are
  Laurent-polynomials of variables $X$ and $p^{1/2}$,
  then we obtain $A_{2,2n-1}^p(X) = A_{2,2n-1}^p(X^{-1})$.
  Hence we have also $A_{2,2n-1}^p(X) = A_{2,2n-1}^p(X^{-1})$ for $p=2$.
\end{proof}

Let $\mathcal{M}$, $m$, $D_0$ and $f$ be the symbols
used in the previous sections,
it means that $\mathcal{M} = \begin{pmatrix} * & * \\ * & 1 \end{pmatrix}$
is a $2\times 2$ half-integral symmetric-matrix, $m = \det(2\mathcal{M})$, $D_0$ is 
the discriminant of $\Qq(\sqrt{-m})$
and $f$ is a non-negative integer which satisfies
$m = D_0 f^2$.

For any prime $p$ we set
\begin{eqnarray*}
  \begin{pmatrix}
    a_{0,m,p,k} \\
    a_{1,m,p,k} \\
    a_{2,m,p,k}
  \end{pmatrix}
  &:=&
  \begin{cases}
    \begin{pmatrix}
      p^{-3k+4} \\
      0 \\
      p^{-k+1}
    \end{pmatrix}
    &
    \mbox{if } p|f, \\
    \begin{pmatrix}
      0 \\
      p^{-3k+4} + p^{-2k+2} \left( \frac{-m}{p} \right) \\
      p^{-k+1} - p^{-2k+2} \left( \frac{-m}{p} \right)
    \end{pmatrix}
    &
    \mbox{if } p {\not |} f.
  \end{cases}
\end{eqnarray*}

\begin{lemma}\label{lem:Eisen_eins}
 For the Jacobi-Eisenstein series $E_{k,\mathcal{M}}^{(1)}$ of weight $k$
of index $\mathcal{M}$ of degree $1$,
 we have the identity
 \begin{eqnarray*}
   &&
   E_{k,\mathcal{M}}^{(1)} | (V_{0,1}(p^2), V_{1,0}(p^2)) \\
   &=&
   \left( E_{k,\mathcal{M}\left[ X^{-1} \smat{p}{0}{0}{1}^{-1} \right]}^{(1)}|
              U_{\smat{p}{0}{0}{1}  X \smat{p}{0}{0}{1} },
            E_{k,\mathcal{M}}^{(1)} |U_{\smat{p}{0}{0}{1}},
            E_{k,\mathcal{M}\left[\smat{p}{0}{0}{1} \right]}^{(1)}\right)
   \begin{pmatrix}
     0 & a_{0,m,p,k} \\
     p^{-2k+2} & a_{1,m,p,k} \\
     0 & a_{2,m,p,k}
   \end{pmatrix},
 \end{eqnarray*}
where $X = \smat{1}{0}{x}{1} \in \Z^{(2,2)}$ is a matrix such that
$\mathcal{M}[X^{-1} \smat{p}{0}{0}{1}^{-1} ] \in Sym_2^{+}$.
Here, if $p {\not|} f$, there does not exist such matrix $X$
and we regard $E_{k,\mathcal{M}\left[ X^{-1} \smat{p}{0}{0}{1}^{-1} \right]}^{(1)}$ as zero.
\end{lemma}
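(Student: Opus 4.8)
The plan is to invoke Proposition~\ref{prop:E_linear} in degree $n=1$, which already shows that each of $E_{k,\mathcal{M}}^{(1)}|V_{0,1}(p^2)$ and $E_{k,\mathcal{M}}^{(1)}|V_{1,0}(p^2)$ is a $\C$-linear combination of the three forms $E_{k,\mathcal{M}\left[X^{-1}\smat{p}{0}{0}{1}^{-1}\right]}^{(1)}|U_{\smat{p}{0}{0}{1}X\smat{p}{0}{0}{1}}$, $E_{k,\mathcal{M}}^{(1)}|U_{\smat{p}{0}{0}{1}}$ and $E_{k,\mathcal{M}\left[\smat{p}{0}{0}{1}\right]}^{(1)}$ appearing in the statement. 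So the entire content of the lemma is the evaluation of the six coefficients forming the displayed $3\times2$ matrix, and I would treat the two columns separately. Throughout I take $k$ large enough for the Eisenstein series involved to converge, as elsewhere in \S\ref{s:fourier_matrix} and \S\ref{s:action_of_ism}, the general case following by analytic continuation.

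For the first column, i.e.\ the map $V_{0,1}(p^2)$ with $\alpha=0$, the decomposition of Lemma~\ref{lemma:hecke_decom} collapses: the only pair $(i,j)$ with $0\le i\le j\le 1$ and $\mbox{rank}_p(x)=j-i-1\ge0$ is $(i,j)=(0,1)$ with $x$ the $1\times1$ zero matrix, and $\delta_{0,1}=(p)$ satisfies $\delta_{0,1}\mbox{GL}_1(\Z)\delta_{0,1}^{-1}=\mbox{GL}_1(\Z)$, whence $\Gamma(\delta_{0,1})=\Gamma_{\infty}^{(1)}$. Pushing the computation of Lemma~\ref{lemma:kij} and Proposition~\ref{prop:tilde_kij} through this single surviving term, and keeping track of the determinant factors $\det(p^2U^{-1})^{-k}\det(\delta_{0,1})^{-k}$ and of the elementary sums over $(\Z/p\Z)^{(1,1)}$, one obtains
\[
 E_{k,\mathcal{M}}^{(1)}|V_{0,1}(p^2)=p^{-2k+2}\,E_{k,\mathcal{M}}^{(1)}|U_{\smat{p}{0}{0}{1}},
\]
which is exactly the first column; in particular the coefficients of the other two basis forms there vanish.

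For the second column, i.e.\ the map $V_{1,0}(p^2)$ with $\alpha=1$, one runs the full machinery of \S\ref{ss:kij}--\S\ref{ss:tilde_kij}. The contributing index pairs are now $(i,j)\in\{(0,0),(0,1),(1,1)\}$, all satisfying $j-i\ge n-\alpha=0$, so by Proposition~\ref{prop:tilde_kij}
\[
 E_{k,\mathcal{M}}^{(1)}|V_{1,0}(p^2)=\tilde{K}_{0,0}^{0}+\tilde{K}_{0,1}^{1}+\tilde{K}_{1,1}^{0},
\]
with respective scalar prefactors $p^{-3k+4}$, $p^{-2k+1}$ and $p^{-k+1}$ coming from the exponent $-k(2n-i-j+1)+(n-j)(n-i+1)+2n-j$. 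The middle term carries the generalized Gauss sum $G_{\mathcal{M}}^{1,n-\alpha}$, and Lemma~\ref{lem:sum_gs} specialised to $n=1$ evaluates $\sum_{u_2}G_{\mathcal{M}}^{1,0}(\lambda_2+(0,u_2))$ as $p\left(\frac{-m}{p}\right)$ when $\lambda_2\notin\Z^{(1,2)}\smat{p}{0}{0}{1}$ and as $0$ otherwise; this is the source of the Legendre symbol in the statement, and it vanishes precisely when $p\mid f$. Expanding $\tilde{K}_{0,0}^{0}$ and $\tilde{K}_{1,1}^{0}$ via the $L_{i,j}^{*}$-bookkeeping in the proof of Proposition~\ref{prop:E_linear}, where the dichotomy $p\mid f$ versus $p\nmid f$ is exactly that of (\ref{eq:Lijs1}) versus (\ref{eq:Lijs2}) --- i.e.\ whether a matrix $X=\smat{1}{0}{x}{1}$ with $\mathcal{M}\left[X^{-1}\smat{p}{0}{0}{1}^{-1}\right]\in\mbox{Sym}_2^{+}$ exists --- and collecting the powers of $p$, one arrives at the column $(a_{0,m,p,k},a_{1,m,p,k},a_{2,m,p,k})$, with $a_{0,m,p,k}=0$ when $p\nmid f$, in agreement with the convention stated in the lemma.

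A useful independent check, which I would also carry out, is a direct comparison of a few Fourier coefficients of the two sides: the Fourier coefficients of $E_{k,\mathcal{M}}^{(1)}$ admit closed-form expressions in terms of Cohen numbers attached to the discriminant associated to the Fourier index $\bigl(\begin{smallmatrix}N & \frac12 R\\ \frac12\,{}^t R & \mathcal{M}\end{smallmatrix}\bigr)$ (cf.\ \S\ref{s:fourier_matrix}), and Lemma~\ref{lemma:gk} governs precisely their behaviour under replacing that discriminant by a $p^2$-multiple, so evaluation at the index with $N$ primitive, at an index with $N\equiv0$, and at one intermediate index pins down $a_{0},a_{1},a_{2}$ uniquely. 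The main obstacle is purely the bookkeeping of exponents of $p$ --- arising from the determinant factors, from the normalizing constant $n(X)^{n(2k-1)/4-n(n+1)/2}$ built into the index-shift maps, and from the Gauss sums of Lemma~\ref{lem:sum_gs}; there is no structural difficulty, Lemma~\ref{lem:Eisen_eins} being the degree-one base case whose sole role is to initialise, via Corollary~\ref{cor:gesetz_half} and Proposition~\ref{prop:imageofpsim}, the passage to arbitrary degree in \S\ref{s:maass_relation_cohen_eisen}.
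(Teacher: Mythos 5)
Your proposal is correct and follows essentially the same route as the paper's own proof: the paper likewise computes $E_{k,\mathcal{M}}^{(1)}|V_{0,1}(p^2)=\tilde{K}_{0,1}^{0}=p^{-2k+2}E_{k,\mathcal{M}}^{(1)}|U_{\smat{p}{0}{0}{1}}$ and $E_{k,\mathcal{M}}^{(1)}|V_{1,0}(p^2)=\tilde{K}_{1,1}^{0}+\tilde{K}_{0,1}^{1}+\tilde{K}_{0,0}^{0}$ directly from Proposition~\ref{prop:tilde_kij}, evaluating the Gauss sums by Lemma~\ref{lem:sum_gs} and using the description of $L_{i,j}^{*}$ to separate the cases $p\mid f$ and $p\nmid f$, with the same prefactors $p^{-3k+4}$, $p^{-2k+1}$, $p^{-k+1}$ you identify. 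The only cosmetic differences are that the paper does not route through Proposition~\ref{prop:E_linear} first (it obtains the linear combination by direct computation) and does not need your Fourier-coefficient cross-check; also note that $\left(\frac{-m}{p}\right)$ vanishes whenever $p\mid m$ (so also when $p\mid D_0$ with $p\nmid f$), not only when $p\mid f$, though this does not affect the argument.
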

\begin{proof}
From Proposition~\ref{prop:tilde_kij} and due to~(\ref{eq:Lijs1}), (\ref{eq:Lijs2})
in the proof of Proposition~\ref{prop:E_linear},
we have
\begin{eqnarray*}
 E_{k,\mathcal{M}}^{(1)}|V_{0,1}(p^2)
 &=& \tilde{K}_{0,1}^{0} \\
 &=&
 p^{-2k+1 }
 \sum_{M \in \Gamma(\delta_{0,1})\backslash \Gamma_1}
 \sum_{ \lambda_2 \in L_{0,1}^*}
  \left\{1|_{k,\mathcal{M}}([(\lambda_2,0),0],M)\right\}(\tau,z\smat{p}{0}{0}{1})
\\
 &&
 \times
  \sum_{u_2 \in \Z/p\Z} G_{\mathcal{M}}^{1,1}(\lambda_2 + (0,u_2)) \\
  &=&
  p^{-2k+2}
   \sum_{M \in \Gamma_{\infty}^{(1)} \backslash \Gamma_1}
 \sum_{ \lambda_2 \in \Z^{(1,2)}}
  \left\{1|_{k,\mathcal{M}}([(\lambda_2,0),0],M)\right\}(\tau,z\smat{p}{0}{0}{1})\\
  &=&
  p^{-2k+2}
  E_{k,\mathcal{M}}^{(1)}|U_{\smat{p}{0}{0}{1}}.
\end{eqnarray*}
From Proposition \ref{prop:tilde_kij} we also have
\begin{eqnarray*}
 E_{k,\mathcal{M}}^{(1)}|V_{1,0}(p^2)
 &=&
 \tilde{K}_{1,1}^{0}
 +
 \tilde{K}_{0,1}^{1}
 +
 \tilde{K}_{0,0}^{0}.
\end{eqnarray*}
Here
\begin{eqnarray*}
 \tilde{K}_{1,1}^{0} &=&
 p^{-k+1}\sum_{M \in \Gamma(\delta_{1,1})\backslash \Gamma_1}
 \sum_{ \lambda_1 \in L_{1,1}^*}
 \left\{1|_{k,\mathcal{M}}([(\lambda_1,0),0],M)\right\}(\tau,z\smat{p}{0}{0}{1})\\
 &=&
 p^{-k+1}\sum_{M \in \Gamma_{\infty}^{(1)}\backslash \Gamma_1}
 \sum_{ \lambda_1 \in p\Z \times \Z}
 \left\{1|_{k,\mathcal{M}}([(\lambda_1,0),0],M)\right\}(\tau,z\smat{p}{0}{0}{1})\\
 &=&
 p^{-k+1}\sum_{M \in \Gamma_{\infty}^{(1)}\backslash \Gamma_1}
 \sum_{ \lambda \in \Z^{(1,2)}}
 \left\{1|_{k,\mathcal{M}}([(\lambda \smat{p}{ }{ }{1} ,0),0],M)\right\}(\tau,z\smat{p}{0}{0}{1})\\
 &=&
 p^{-k+1}\sum_{M \in \Gamma_{\infty}^{(1)}\backslash \Gamma_1}
 \sum_{ \lambda \in \Z^{(1,2)}}
 \left\{1|_{k,\mathcal{M}[\smat{p}{ }{ }{1} ]}([(\lambda,0),0],M)\right\}(\tau,z)\\
 &=&
 p^{-k+1} E_{k,\mathcal{M}[\smat{p}{ }{ }{1} ]}^{(1)}(\tau,z).
\end{eqnarray*}
Now we shall calculate $\tilde{K}_{0,1}^{1}$. First, due to Lemma~\ref{lem:sum_gs}
we have
\begin{eqnarray*}
 \sum_{u_2 \in \Z/p\Z} G_{\mathcal{M}}^{1,0}(\lambda + (0,u_2))
 &=&
 \begin{cases}
  0 & \mbox{ if } \lambda \in \Z^{(1,2)} \smat{p}{0}{0}{1} ,\\
  \left(\frac{ -m }{p}\right) p & \mbox{ if } \lambda \not\in \Z^{(1,2)} \smat{p}{0}{0}{1}
 \end{cases}
\end{eqnarray*}
for any $\lambda \in \Z^{(1,2)}$.
Thus
\begin{eqnarray*}
 \tilde{K}_{0,1}^{1}
 &=&
 p^{-2k+2}\sum_{M \in \Gamma(\delta_{0,1})\backslash \Gamma_1}
 \sum_{ \lambda_2 \in L_{0,1}^*}
 \left\{1|_{k,\mathcal{M}}([(\lambda_2,0),0],M)\right\}(\tau,z\smat{p}{0}{0}{1})\\
 && \times
 \sum_{u_2 \in \Z/p\Z}G_{\mathcal{M}}^{1,0}(\lambda_2 + (0,u_2)) \\
 &=&
 -p^{-2k+2} \left(\frac{-m}{p} \right)\sum_{M \in \Gamma_{\infty}^{(1)}\backslash \Gamma_1}
 \sum_{ \lambda  \in \Z^{(1,2)}  }
 \left\{1|_{k,\mathcal{M}}([(\lambda \smat{p}{0}{0}{1} ,0),0],M)\right\}(\tau,z\smat{p}{0}{0}{1}) \\
 &&
 +p^{-2k+2} \left(\frac{-m}{p} \right)\sum_{M \in \Gamma_{\infty}^{(1)}\backslash \Gamma_1}
 \sum_{ \lambda \in \Z^{(1,2)} }
 \left\{1|_{k,\mathcal{M}}([(\lambda,0),0],M)\right\}(\tau,z\smat{p}{0}{0}{1}) \\
 &=&
 -p^{-2k+2} \left( \frac{-m}{p} \right) E_{k,\mathcal{M} \left[\smat{p}{0}{0}{1} \right]}^{(1)}(\tau,z)
 +p^{-2k+2} \left( \frac{-m}{p} \right) E_{k,\mathcal{M}}(\tau,z\smat{p}{0}{0}{1})\\
 &=&
 -p^{-2k+2} \left( \frac{-m}{p} \right) E_{k,\mathcal{M} \left[\smat{p}{0}{0}{1} \right]}^{(1)}
 +p^{-2k+2} \left( \frac{-m}{p} \right) E_{k,\mathcal{M}}|U_{\smat{p}{0}{0}{1}}.
\end{eqnarray*}
We shall calculate $\tilde{K}_{0,0}^{0}$.
Due to (\ref{eq:Lijs1}) and due to (\ref{eq:Lijs2}) we have
\begin{eqnarray*}
 L_{0,0}^*
 &=&
 \begin{cases}
 \Z^{(1,2)} \smat{p}{0}{0}{1}^{-1} {^t X}^{-1}
 & \mbox{ if } p|f ,\\
 \Z^{(1,2)}
 & \mbox{ if } p {\not|} f .
 \end{cases}
\end{eqnarray*}
Thus if $p|f$, then
\begin{eqnarray*}
 \tilde{K}_{0,0}^{0}
 &=&
 p^{-3k+4} \sum_{M \in \Gamma(\delta_{0,0})\backslash \Gamma_1}
 \sum_{\lambda_3 \in \Z^{(1,2)} \smat{p}{0}{0}{1}^{-1} {^t X}^{-1}}
 \left\{1|_{k,\mathcal{M}}([(\lambda_3,0),0],M)\right\}(\tau,z\smat{p}{0}{0}{1}) \\
 &=&
 p^{-3k+4} \sum_{M \in \Gamma_{\infty}^{(1)}\backslash \Gamma_1}
 \sum_{\lambda_3 \in \Z^{(1,2)} \smat{p}{0}{0}{1}^{-1} {^t X}^{-1}} \\
 && \times
 \left\{1|_{k,\mathcal{M}[X^{-1}\smat{p}{ }{ }{1}^{-1}]}([(\lambda_3 {^t X} \smat{p}{ }{ }{1},0),0],M)\right\}(\tau,z\smat{p}{ }{ }{1} {^t X} \smat{p}{ }{ }{1}) \\
 &=&
 p^{-3k+4} E_{k,\mathcal{M}[X^{-1}\smat{p}{ }{ }{1}^{-1}]}^{(1)}(\tau,z \smat{p}{ }{ }{1} {^t X} \smat{p}{ }{ }{1}),
\end{eqnarray*}
and if $p {\not|} f$, then
\begin{eqnarray*}
  \tilde{K}_{0,0}^{0}
 &=&
 p^{-3k+4} \sum_{M \in \Gamma(\delta_{0,0})\backslash \Gamma_1}
 \sum_{\lambda_3  \in \Z^{(1,2)} }
 \left\{1|_{k,\mathcal{M}}([(\lambda_3,0),0],M)\right\}(\tau,z\smat{p}{0}{0}{1})\\
 &=&
 p^{-3k+4} E_{k,\mathcal{M}}^{(1)}(\tau,z \smat{p}{ }{ }{1}).
\end{eqnarray*}
Hence we obtain the formula for $\tilde{K}_{0,0}^{0}$.

Because $E_{k,\mathcal{M}}^{(1)}|V_{1,0}(p^2)
 =
 \tilde{K}_{1,1}^{0}
 +
 \tilde{K}_{0,1}^{1}
 +
 \tilde{K}_{0,0}^{0}$, we conclude the lemma.
\end{proof}

\begin{lemma}\label{lem:lin_indep}
  The three Jacobi forms
  $E_{k,\mathcal{M}\left[ X^{-1} \smat{p}{0}{0}{1}^{-1} \right]}^{(1)}|
     U_{\smat{p}{0}{0}{1}  X \smat{p}{0}{0}{1} }$,
   $E_{k,\mathcal{M}}^{(1)} |U_{\smat{p}{0}{0}{1}}$
   and
   $E_{k,\mathcal{M}\left[\smat{p}{0}{0}{1} \right]}^{(1)}$
  are linearly independent.
\end{lemma}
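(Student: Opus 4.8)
The plan is to prove linear independence by comparing Fourier coefficients in the $z$-variable, separating the three forms according to the $p$-divisibility of the first component of the Fourier index. Write $\phi_{1},\phi_{2},\phi_{3}$ for the three forms in the order of the statement, and $c_{\mathcal{N}}(N,R)$ for the Fourier coefficients of $E^{(1)}_{k,\mathcal{N}}$. First I would check that all three lie in the single space $J^{(1)}_{k,\mathcal{M}[\smat{p}{0}{0}{1}]}$: by \S\ref{ss:compati} the operator $U_{L}$ carries a form of index $\mathcal{N}$ to one of index $\mathcal{N}[L]$, and from $\mathcal{N}[A][B]=\mathcal{N}[AB]$ together with
\[
  X^{-1}\smat{p}{0}{0}{1}^{-1}\cdot\smat{p}{0}{0}{1}X\smat{p}{0}{0}{1}=\smat{p}{0}{0}{1}
\]
one sees that $\phi_{1}$ and $\phi_{2}$ have index $\mathcal{M}[\smat{p}{0}{0}{1}]$, which is also the index of $\phi_{3}$ by definition. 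Each $\phi_{i}$ is a nonzero holomorphic function, since the relevant Jacobi--Eisenstein series is nonzero (its constant term is $1$ and $k$ is large enough for convergence) and the substitution $z\mapsto z\,{}^{t}L$ with $\det L\neq 0$ preserves non-vanishing.

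Next I would use that if $\psi(\tau,z)=\sum_{N,R}c(N,R)\,e(N\tau)\,e({}^{t}Rz)$ lies in $J^{(1)}_{k,\mathcal{N}}$, then $(\psi|U_{L})(\tau,z)=\sum_{N,R}c(N,R)\,e(N\tau)\,e({}^{t}(RL)z)$, so the Fourier coefficient of $\psi|U_{L}$ at $(N,R')$ equals $c(N,R'L^{-1})$ when $R'L^{-1}\in\Z^{(1,2)}$ and vanishes otherwise. Applying this to $\phi_{2}$ (with $L=\smat{p}{0}{0}{1}$) and to $\phi_{1}$ (with $L=\smat{p}{0}{0}{1}X\smat{p}{0}{0}{1}=\smat{p^{2}}{0}{px}{1}$) shows that the Fourier coefficients of $\phi_{2}$ are supported on $\{(R_{1}',R_{2}') : p\mid R_{1}'\}$, those of $\phi_{1}$ on $\{(R_{1}',R_{2}') : p\mid R_{1}'\text{ and }R_{1}'/p\equiv xR_{2}'\pmod p\}$, while $\phi_{3}=E^{(1)}_{k,\mathcal{M}[\smat{p}{0}{0}{1}]}$ is subject to no such restriction. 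Given a relation $a_{1}\phi_{1}+a_{2}\phi_{2}+a_{3}\phi_{3}=0$, I would argue in three steps. Evaluating at $R'=(1,0)$, where $p\nmid R_{1}'$ so that $\phi_{1}$ and $\phi_{2}$ contribute nothing, yields $a_{3}\,c_{\mathcal{M}[\smat{p}{0}{0}{1}]}(N,(1,0))=0$ for all $N$, hence $a_{3}=0$. Evaluating the resulting relation $a_{1}\phi_{1}+a_{2}\phi_{2}=0$ at $R'=(p,0)$, where $(p,0)L^{-1}=(1/p,0)\notin\Z^{(1,2)}$ so that $\phi_{1}$ contributes nothing while $\phi_{2}$ contributes $c_{\mathcal{M}}(N,(1,0))$, yields $a_{2}\,c_{\mathcal{M}}(N,(1,0))=0$, hence $a_{2}=0$. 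Finally $a_{1}\phi_{1}=0$ with $\phi_{1}\neq 0$ gives $a_{1}=0$. When $p\nmid f$ one sets $\phi_{1}=0$ and only the first two steps are needed, yielding the independence of $\phi_{2}$ and $\phi_{3}$.

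The one non-formal input, which I expect to be the main obstacle, is the assertion used twice above: the Jacobi--Eisenstein series $E^{(1)}_{k,\mathcal{N}}$ of positive definite $2\times 2$ index $\mathcal{N}$ has a nonzero Fourier coefficient at $(N,(1,0))$ for every integer $N$ with $4N>(\mathcal{N}^{-1})_{11}$, equivalently for $N$ large enough that $\smat{N}{\tfrac12(1,0)}{\tfrac12\,{}^{t}(1,0)}{\mathcal{N}}$ is positive definite. This is a standard property of Eisenstein series: their Fourier coefficients at positive definite arguments are, up to positive archimedean factors, products of local representation densities, which are strictly positive for even $k$ in the convergence range; see \cite{Zi} (or \cite{Bo}). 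Note that the diagonal ($M=I$) terms in the series for $E^{(1)}_{k,\mathcal{N}}$ only contribute in the sublattice $\Z^{(1,2)}(2\mathcal{N})$, so this genuinely uses the full Fourier expansion. Granting this non-vanishing, the three steps go through verbatim and the lemma follows; the remaining points — the index bookkeeping of the first paragraph and the elementary divisibility conditions defining the Fourier supports — are routine and I would not spell them out in detail.
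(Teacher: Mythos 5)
Your proof is correct and follows essentially the same route as the paper: both arguments rest on the non-vanishing of the Fourier coefficients $C_{k,\mathcal{N}}(n,R)$ of $E^{(1)}_{k,\mathcal{N}}$ at positive-definite arguments (which the paper establishes via an explicit Eichler--Zagier-style formula involving the counts $N_a(Q)$, the same local-density fact you invoke) and then separate the three forms by the $p$-divisibility constraints that $U_L$ imposes on the support in $R$. You in fact spell out the three-step elimination for all three forms, which the paper only sketches for two and leaves as ``similar.''
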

\begin{proof}
  We first assume that $\mathcal{M} \in \mbox{Sym}_g^+$ is a positive-definite
  half-integral symmetric matrix of size $g$.
  Let
  \begin{eqnarray*}
    E_{k,\mathcal{M}}^{(1)}(\tau,z)
            &=&
       \sum_{
          \begin{smallmatrix}
              n \in \Z , R \in \Z^{(1,g)} \\
              4n - R \mathcal{M}^{-1} {^t R} \geq 0
           \end{smallmatrix}}
       C_{k, \mathcal{M}}(n,R)\, e(n \tau + R {^tz})
  \end{eqnarray*}
  be the Fourier expansion of the Jacobi-Eisenstein series $E_{k,\mathcal{M}}^{(1)}$.
  For any pair
  $n \in \Z$ and $R \in \Z^{(1,g)}$ which satisfy $4n - R\mathcal{M}^{-1} {^t R} > 0$,
  we now show that $C_{k, \mathcal{M}}(n,R) \neq 0$.
  The Fourier coefficients of Jacobi-Eisenstein series of degree $1$ of integer index
  have been calculated in~\cite[pp.17--22]{EZ}.
  If $4n - R \mathcal{M}^{-1} {^t R} > 0$, by an argument similar to~\cite{EZ} we have
  \begin{eqnarray*}
    C_{k, \mathcal{M}}(n,R)
    &=&
    \frac{(-1)^{\frac{k}{2}}\pi^{k-\frac{g}{2}}}{2^{k-2} \Gamma(k-\frac{g}{2})}
    \frac{(4n-R\mathcal{M}^{-1} {^t R})^{k-\frac{g}{2}-1}}{\det(\mathcal{M})^{\frac12} }
    \zeta(k-g)^{-1}
    \sum_{a=1}^{\infty} \frac{N_a(Q)}{a^{k-1}},
  \end{eqnarray*}
  where $N_a(Q) := \left| \left\{ \lambda \in \left(\Z / a \Z \right)^{(1,g)} \ | \, \lambda \mathcal{M}^{-1} {^t \lambda} + R {^t \lambda} + n \equiv 0 \mod a  \right\} \right|$.
  Hence we conclude $C_{k, \mathcal{M}}(n,R) \neq 0$.

  We now assume $\mathcal{M} = \smat{l}{\frac{r}{2}}{\frac{r}{2}}{1} \in \mbox{Sym}_2^+$.
  The $(n,R)$-th Fourier coefficient of two Jacobi forms
  $E_{k,\mathcal{M}}^{(1)}|U_{\smat{p}{0}{0}{1}}$
  and $E_{k,\mathcal{M}\left[ \smat{p}{0}{0}{1} \right]}^{(1)}$ are
  $C_{k,\mathcal{M}}(n,R \smat{p}{0}{0}{1}^{-1})$
  and $C_{k,\mathcal{M}\left[  \smat{p}{0}{0}{1} \right]}(n,R)$, respectively.
  If $R {\not \in} \Z^{(1,2)} \smat{p}{0}{0}{1}$, then
  $C_{k,\mathcal{M}}(n,R \smat{p}{0}{0}{1}^{-1}) = 0$
  and
  $C_{k,\mathcal{M}\left[  \smat{p}{0}{0}{1} \right]}(n,R) \neq 0$.
  Hence $E_{k,\mathcal{M}}^{(1)}|U_{\smat{p}{0}{0}{1}}$
  and $E_{k,\mathcal{M}\left[ \smat{p}{0}{0}{1} \right]}^{(1)}$ are linearly independent.
  The proof for the linear independence of the three forms of the lemma
  is similar. We omitted the detail here.
\end{proof}

\begin{prop}\label{prop:Eisenstein_Voperator}
We obtain the identity
\begin{eqnarray*}
  &&  \!\!\!\!\!\!\!\!\!\!
  E_{k,\mathcal{M}}^{(n)} | \left( V_{0,n}(p^2), \cdots , V_{n,0}(p^2) \right)\\
  &=&
  \left(
    E_{k,\mathcal{M}\left[ X^{-1} \smat{p}{0}{0}{1}^{-1} \right]}^{(n)}|
              U_{\smat{p}{0}{0}{1}  X \smat{p}{0}{0}{1} },
            E_{k,\mathcal{M}}^{(n)} |U_{\smat{p}{0}{0}{1}},
            E_{k,\mathcal{M}\left[\smat{p}{0}{0}{1} \right]}^{(n)}
  \right) \\
  &&
  \times
   \begin{pmatrix}
     0 & a_{0,m,p,k} \\
     p^{-2k+2} & a_{1,m,p,k} \\
     0 & a_{2,m,p,k}
   \end{pmatrix}
    p^{(-k+\frac12 n + \frac32)(n-1)}
    A_{2,n+1}^{p}(p^{k-\frac{n+2}{2}-\frac12}),
\end{eqnarray*}
where the $2\times (n+1)$-matrix $A_{2,n+1}^{p}(p^{k-\frac{n+2}{2}-\frac12})$ is introduced
in the beginning of this section.
\end{prop}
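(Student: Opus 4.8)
The plan is to argue by induction on $n$, taking the degree-one case from Lemma~\ref{lem:Eisen_eins} and driving the inductive step with the commutation of the Siegel $\Phi$-operator and the index-shift maps (Proposition~\ref{prop:gesetz_int} together with the matrix identity~(\ref{eq:VB})). For the base case $n=1$ the scalar $p^{(-k+\frac12 n+\frac32)(n-1)}$ equals $p^{0}=1$ and the product $A_{2,2}^{p}(X)=\prod_{l=2}^{1}B_{l,l+1}(\cdots)$ is empty, hence understood to be the $2\times 2$ identity matrix; the right-hand side of the proposition then collapses to the right-hand side of Lemma~\ref{lem:Eisen_eins}, so the base case is exactly that lemma. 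As in the surrounding sections I take $k$ in the range of absolute convergence of the matrix-index Jacobi--Eisenstein series, so that the $\Phi$-operator manipulations are legitimate.

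Write $\mathcal{E}_1^{(n)}:=E_{k,\mathcal{M}[X^{-1}\smat{p}{0}{0}{1}^{-1}]}^{(n)}|U_{\smat{p}{0}{0}{1}X\smat{p}{0}{0}{1}}$, $\mathcal{E}_2^{(n)}:=E_{k,\mathcal{M}}^{(n)}|U_{\smat{p}{0}{0}{1}}$ and $\mathcal{E}_3^{(n)}:=E_{k,\mathcal{M}[\smat{p}{0}{0}{1}]}^{(n)}$, with $\mathcal{E}_1^{(n)}$ omitted when $p\nmid f$. By Proposition~\ref{prop:E_linear} the left-hand side of the asserted identity is a row vector whose entries are $\C$-linear combinations of $\mathcal{E}_1^{(n)},\mathcal{E}_2^{(n)},\mathcal{E}_3^{(n)}$, and the right-hand side is such a combination by construction; so the proposition is equivalent to $C=C'$, where the left-hand side is $(\mathcal{E}_1^{(n)},\mathcal{E}_2^{(n)},\mathcal{E}_3^{(n)})\,C$ and the right-hand side is $(\mathcal{E}_1^{(n)},\mathcal{E}_2^{(n)},\mathcal{E}_3^{(n)})\,C'$. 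Now $\mathcal{E}_1^{(n)},\mathcal{E}_2^{(n)},\mathcal{E}_3^{(n)}$ are linearly independent for every $n\ge 1$: for $n=1$ this is Lemma~\ref{lem:lin_indep}, and for $n>1$ one applies $\Phi$ repeatedly, using that $\Phi$ commutes with the operators $U_{L}$ and sends each matrix-index Jacobi--Eisenstein series $E_{k,\mathcal{N}}^{(n)}$ to $E_{k,\mathcal{N}}^{(n-1)}$, so that $\Phi(\mathcal{E}_i^{(n)})=\mathcal{E}_i^{(n-1)}$ and linear independence descends. Hence it suffices to show that $\Phi$ carries the left- and the right-hand side of the degree-$n$ identity to the same vector: from $\Phi(\text{LHS}-\text{RHS})=(\mathcal{E}_1^{(n-1)},\mathcal{E}_2^{(n-1)},\mathcal{E}_3^{(n-1)})(C-C')=0$ and linear independence in degree $n-1$ we then get $C=C'$.

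Applying $\Phi$ to the left-hand side and invoking Proposition~\ref{prop:gesetz_int} with $\Phi(E_{k,\mathcal{M}}^{(n)})=E_{k,\mathcal{M}}^{(n-1)}$ gives $\Phi(E_{k,\mathcal{M}}^{(n)}|V_{\alpha,n-\alpha}(p^2))=E_{k,\mathcal{M}}^{(n-1)}|V_{\alpha,n-\alpha}(p^2)^{*}$, so by~(\ref{eq:VB}) the resulting vector equals $p^{-k+n+\frac12}$ times $\bigl(E_{k,\mathcal{M}}^{(n-1)}|(V_{0,n-1}(p^2),\dots,V_{n-1,0}(p^2))\bigr)\,B_{n,n+1}(p^{k-n-\frac12})$. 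The parenthesized vector is the left-hand side of the degree-$(n-1)$ instance of the proposition, which by the induction hypothesis equals $(\mathcal{E}_1^{(n-1)},\mathcal{E}_2^{(n-1)},\mathcal{E}_3^{(n-1)})$ times the $3\times2$ matrix of Lemma~\ref{lem:Eisen_eins} times $p^{(-k+\frac{n-1}{2}+\frac32)(n-2)}A_{2,n}^{p}(p^{k-\frac{n+1}{2}-\frac12})$. Since that $3\times2$ matrix is the same in every degree, it remains to verify $p^{-k+n+\frac12}\,p^{(-k+\frac{n-1}{2}+\frac32)(n-2)}A_{2,n}^{p}(p^{k-\frac{n+1}{2}-\frac12})\,B_{n,n+1}(p^{k-n-\frac12})=p^{(-k+\frac{n}{2}+\frac32)(n-1)}A_{2,n+1}^{p}(p^{k-\frac{n+2}{2}-\frac12})$. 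The scalar identity $p^{-k+n+\frac12}\,p^{(-k+\frac{n-1}{2}+\frac32)(n-2)}=p^{(-k+\frac{n}{2}+\frac32)(n-1)}$ is elementary, and the factorization $A_{2,n+1}^{p}(X)=A_{2,n}^{p}(p^{1/2}X)\,B_{n,n+1}(p^{-(n-2)/2}X)$ — immediate from $A_{2,m}^{p}(X)=\prod_{l=2}^{m-1}B_{l,l+1}(p^{\frac{m+1}{2}-l}X)$ — specialized at $X=p^{k-\frac{n+2}{2}-\frac12}$, where $p^{1/2}X=p^{k-\frac{n+1}{2}-\frac12}$ and $p^{-(n-2)/2}X=p^{k-n-\frac12}$, gives the matrix identity. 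Thus $\Phi$ of the left-hand side equals $(\mathcal{E}_1^{(n-1)},\mathcal{E}_2^{(n-1)},\mathcal{E}_3^{(n-1)})\,C'$ with $C'$ the degree-$n$ coefficient matrix, and this is also $\Phi$ of the right-hand side; linear independence in degree $n-1$ then promotes this to the degree-$n$ identity, closing the induction.

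Most of the substance is already in Lemma~\ref{lem:Eisen_eins}, Proposition~\ref{prop:gesetz_int}, and Lemma~\ref{lem:lin_indep}; the remaining task is the assembly above. The main obstacle is the bookkeeping of the powers of $p$ and of the rescaled arguments of the matrices $B_{l,l+1}$: one must check that telescoping the $\Phi$-recursion~(\ref{eq:VB}) reproduces precisely the product $A_{2,n+1}^{p}(p^{k-\frac{n+2}{2}-\frac12})$ with the exact scalar $p^{(-k+\frac{n}{2}+\frac32)(n-1)}$, since a single misplaced half-power of $p$ would destroy the match. The secondary, routine points — that $\Phi$ sends $E_{k,\mathcal{N}}^{(n)}$ to $E_{k,\mathcal{N}}^{(n-1)}$ and commutes with the operators $U_{L}$ — follow directly from the definitions.
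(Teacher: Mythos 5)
Your proposal is correct and follows essentially the same route as the paper: reduce to degree one via the Siegel $\Phi$-operator and the recursion~(\ref{eq:VB}), invoke Proposition~\ref{prop:E_linear} to know the left-hand side is a combination of the three Eisenstein series, and pin down the coefficient matrix by the linear independence of Lemma~\ref{lem:lin_indep}. The only difference is packaging — you run an induction with the explicit factorization $A_{2,n+1}^{p}(X)=A_{2,n}^{p}(p^{1/2}X)\,B_{n,n+1}(p^{-(n-2)/2}X)$, whereas the paper telescopes $\Phi^{n-1}$ in one step — and your bookkeeping of the powers of $p$ checks out.
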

\begin{proof}
Let $\Phi$ be the Siegel $\Phi$-operator introduced in \S\ref{s:siegelop}.
From the definition of Jacobi-Eisenstein series, we have
$\Phi(E_{k,\mathcal{M}}^{(l)}) = E_{k,\mathcal{M}}^{(l-1)}$.

From the identity~(\ref{eq:VB}) in \S\ref{s:siegelop} and from Lemma~\ref{lem:Eisen_eins}, we obtain
\begin{eqnarray}
  \notag
  &&  \!\!\!\!\!\!\!\!\!\!
  \Phi^{n-1}(E_{k,\mathcal{M}}^{(n)} | (V_{0,n}(p^2), ..., V_{n,0}(p^2)))\\
  \notag
  &=&
  \left( E_{k,\mathcal{M}}^{(1)} | (V_{0,1}(p^2), V_{1,0}(p^2)) \right) \left(\prod_{l=2}^{n}  p^{-k+l+\frac12} \right) B_{2,3}(p^{k-\frac52}) \cdots B_{n,n+1}(p^{k-n-\frac12}) \\
  \notag
  &=&
  p^{(-k+\frac12)(n-1)+\frac{n(n+1)}{2}-1} \left(E_{k,\mathcal{M}}^{(1)} | (V_{0,1}(p^2), V_{1,0}(p^2)) \right)
  A_{2,n+1}^{p}(p^{k-\frac{n+2}{2}-\frac12}) \\
  \label{eq:rel_BM}
  &=&
   \left(
    E_{k,\mathcal{M}\left[ X^{-1} \smat{p}{0}{0}{1}^{-1} \right]}^{(1)}|
              U_{\smat{p}{0}{0}{1}  X \smat{p}{0}{0}{1} },
            E_{k,\mathcal{M}}^{(1)} |U_{\smat{p}{0}{0}{1}},
            E_{k,\mathcal{M}\left[\smat{p}{0}{0}{1} \right]}^{(1)}
  \right) \\
  \notag
  &&
  \times
    p^{(-k+\frac12 n + \frac32)(n-1)} 
   \begin{pmatrix}
     0 & a_{0,m,p,k} \\
     p^{-2k+2} & a_{1,m,p,k} \\
     0 & a_{2,m,p,k}
   \end{pmatrix} 
   A_{2,n+1}^{p}(p^{k-\frac{n+2}{2}-\frac12}) .
\end{eqnarray}

From Proposition~\ref{prop:E_linear}
there exists a matrix $M \in \C^{(3,n+1)}$ which satisfies
\begin{eqnarray}
  \notag
  &&  \!\!\!\!\!\!\!\!\!\!
  E_{k,\mathcal{M}}^{(n)} | (V_{0,n}(p^2), ..., V_{n,0}(p^2)) \\
  \label{eq:EnV}
  &=&
     \left(
    E_{k,\mathcal{M}\left[ X^{-1} \smat{p}{0}{0}{1}^{-1} \right]}^{(n)}|
              U_{\smat{p}{0}{0}{1}  X \smat{p}{0}{0}{1} },
            E_{k,\mathcal{M}}^{(n)} |U_{\smat{p}{0}{0}{1}},
            E_{k,\mathcal{M}\left[\smat{p}{0}{0}{1} \right]}^{(n)}
  \right)
  M.
\end{eqnarray}
Thus
\begin{eqnarray*}
    &&  \!\!\!\!\!\!\!\!\!\!
  \Phi^{n-1}(E_{k,\mathcal{M}}^{(n)} | (V_{0,n}(p^2), ..., V_{n,0}(p^2)))\\
  &=&
   \left(
      E_{k,\mathcal{M}\left[ X^{-1} \smat{p}{0}{0}{1}^{-1} \right]}^{(1)}|
              U_{\smat{p}{0}{0}{1}  X \smat{p}{0}{0}{1} },
            E_{k,\mathcal{M}}^{(1)} |U_{\smat{p}{0}{0}{1}},
            E_{k,\mathcal{M}\left[\smat{p}{0}{0}{1} \right]}^{(1)}
  \right) 
  M.
\end{eqnarray*}
From Lemma~\ref{lem:lin_indep} the matrix $M$ is uniquely determined.
Therefore, by using the identity~(\ref{eq:rel_BM}), we have
\begin{eqnarray*}
  M &=&
    p^{(-k+\frac12 n + \frac32)(n-1)} 
     \begin{pmatrix}
     0 & a_{0,m,p,k} \\
     p^{-2k+2} & a_{1,m,p,k} \\
     0 & a_{2,m,p,k}
   \end{pmatrix} 
   A_{2,n+1}^{p}(p^{k-\frac{n+2}{2}-\frac12}) .
\end{eqnarray*}
Therefore we conclude that this Proposition follows from the identity~(\ref{eq:EnV}).
\end{proof}

We recall that the form $e_{k,\mathcal{M}}^{(n)}$ is the $\mathcal{M}$-th
Fourier-Jacobi coefficient of Siegel-Eisenstein series $E_k^{(n+2)}$
of weight $k$ of degree $n+2$.

\begin{prop}\label{prop:e_k_M_V}
We obtain the identity
\begin{eqnarray*}
 &&  \!\!\!\!\!\!\!\!\!\!
 e_{k,\mathcal{M}}^{(n)}|(V_{0,n}(p^2), ..., V_{n,0}(p^2))
 \\
 &=&
 p^{(-k+\frac12 n + \frac32)(n-1)}
 \left(
      e_{k,\mathcal{M}\left[ X^{-1} \smat{p}{0}{0}{1}^{-1} \right]}^{(n)}|
              U_{\smat{p}{0}{0}{1}  X \smat{p}{0}{0}{1} },\
      e_{k,\mathcal{M}}^{(n)} |U_{\smat{p}{0}{0}{1}},\
      e_{k,\mathcal{M}\left[\smat{p}{0}{0}{1} \right]}^{(n)}
 \right)\\
 &&
 \times
    \begin{pmatrix}
   0 & p^{-k+1} \\
   p^{-2k+2} & p^{-2k+2} \left( \frac{-m}{p} \right) \\
   0 & p^{-3k+4}
   \end{pmatrix}
   A_{2,n+1}^{p}(p^{k-\frac{n+2}{2}-\frac12}) .
\end{eqnarray*}
\end{prop}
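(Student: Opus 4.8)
The plan is to deduce the identity by feeding the description of the Fourier--Jacobi coefficient $e_{k,\mathcal{M}}^{(n)}$ as a $g_k$-weighted sum of translated Jacobi--Eisenstein series (Proposition~\ref{prop:fourier_jacobi}) into the already computed action of the index-shift maps on Jacobi--Eisenstein series (Proposition~\ref{prop:Eisenstein_Voperator}), and then reassembling the outcome with the help of the multiplicativity relation for $g_k$ in Lemma~\ref{lemma:gk}. The global power of $p$ and the matrix $A_{2,n+1}^{p}(p^{k-\frac{n+2}{2}-\frac12})$ are common to Proposition~\ref{prop:Eisenstein_Voperator} and to the claim, so the whole problem reduces to checking that, after summation, the case-dependent $3\times 2$ matrix of Proposition~\ref{prop:Eisenstein_Voperator} collapses to the single matrix displayed in the statement.

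First, applying Proposition~\ref{prop:fourier_jacobi} with its degree parameter shifted by $2$ (so that $e_{k,\mathcal{M}}^{(n)}$ is the $\mathcal{M}$-th Fourier--Jacobi coefficient of $E_k^{(n+2)}$) gives $e_{k,\mathcal{M}}^{(n)}=\sum_{d\mid f}g_k(\tfrac{m}{d^2})\,E_{k,\mathcal{M}[W_d^{-1}]}^{(n)}|U_{W_d}$, where $U_{W_d}$ denotes the substitution $z\mapsto z\,{}^tW_d$ and $W_d=\smat{d}{0}{x_d}{1}$. I then apply the vector $(V_{0,n}(p^2),\dots,V_{n,0}(p^2))$ to both sides. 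The key point is that each $V_{\alpha,n-\alpha}(p^2)$ is compatible with the substitution $U_{W_d}$: one checks on Fourier expansions (in the spirit of \S\ref{ss:compati}) that $(\psi|U_{W_d})|V_{\alpha,n-\alpha}(p^2)$ equals $(\psi|V_{\alpha,n-\alpha}(p^2))|U_{\widetilde W_d}$, where $\widetilde W_d:=\smat{p}{0}{0}{1}^{-1}W_d\smat{p}{0}{0}{1}$ (the only $\widetilde W_d$ for which the indices match), the Heisenberg translations $[((0,u),(0,v)),0_2]$ occurring in $V_{\alpha,n-\alpha}(p^2)$ being compatible with conjugation by $\smat{W_d^{-1}}{}{}{{}^tW_d}$. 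Granting this, each summand can be evaluated by Proposition~\ref{prop:Eisenstein_Voperator} applied to $E_{k,\mathcal{M}[W_d^{-1}]}^{(n)}$, whose index has determinant $m/d^2$, followed by post-composition with $U_{\widetilde W_d}$.

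It remains to recognize the resulting sum, which is the main step. Since $\smat{p}{0}{0}{1}\widetilde W_d=W_d\smat{p}{0}{0}{1}$, the contribution of the middle column of Proposition~\ref{prop:Eisenstein_Voperator}, summed over $d$ with the weights $g_k(m/d^2)$, collapses to $p^{-2k+2}\,e_{k,\mathcal{M}}^{(n)}|U_{\smat{p}{0}{0}{1}}$, which produces the first column of the asserted matrix. For the second column one separates $p\mid f$ from $p\nmid f$; in either case the three forms $E^{(n)}_{k,\mathcal{M}[W_d^{-1}][\smat{p}{0}{0}{1}]}|U_{\widetilde W_d}$, $E^{(n)}_{k,\mathcal{M}[W_d^{-1}]}|U_{W_d\smat{p}{0}{0}{1}}$ and $E^{(n)}_{k,\mathcal{M}[W_d^{-1}][X_d^{-1}\smat{p}{0}{0}{1}^{-1}]}|U_{\smat{p}{0}{0}{1}X_d\smat{p}{0}{0}{1}\widetilde W_d}$ occurring in the $d$-th summand, weighted by $g_k(m/d^2)\,a_{i,m/d^2,p,k}$ and summed over $d$, reassemble --- now via Proposition~\ref{prop:fourier_jacobi} applied to the indices $\mathcal{M}[\smat{p}{0}{0}{1}]$ and $\mathcal{M}[X^{-1}\smat{p}{0}{0}{1}^{-1}]$ (for which the conductor $f$ is replaced by $pf$, resp.\ $f/p$) --- into the three forms $e^{(n)}_{k,\mathcal{M}[X^{-1}\smat{p}{0}{0}{1}^{-1}]}|U_{\smat{p}{0}{0}{1}X\smat{p}{0}{0}{1}}$, $e^{(n)}_{k,\mathcal{M}}|U_{\smat{p}{0}{0}{1}}$ and $e^{(n)}_{k,\mathcal{M}[\smat{p}{0}{0}{1}]}$. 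The coefficients then match because Lemma~\ref{lemma:gk} gives $g_k(m/d^2)\,a_{2,m/d^2,p,k}=p^{-3k+4}\,g_k(p^2m/d^2)$ for $p\nmid d$, together with the parallel identities for $a_0$ and $a_1$; this is precisely what forces the averaged coefficient matrix to be the uniform one in the statement, with no surviving case distinction, and the prefactor $p^{(-k+\frac12n+\frac32)(n-1)}$ and the matrix $A_{2,n+1}^{p}(p^{k-\frac{n+2}{2}-\frac12})$ are carried over unchanged from Proposition~\ref{prop:Eisenstein_Voperator}.

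Alternatively, one may avoid the commutation step by imitating the proof of Proposition~\ref{prop:Eisenstein_Voperator}: prove the case $n=1$ directly from Lemma~\ref{lem:Eisen_eins}, Proposition~\ref{prop:fourier_jacobi} and Lemma~\ref{lemma:gk}; then, knowing from Proposition~\ref{prop:fourier_jacobi} and Proposition~\ref{prop:E_linear} that $e_{k,\mathcal{M}}^{(n)}|(V_{0,n}(p^2),\dots,V_{n,0}(p^2))$ is a linear combination of the three $e^{(n)}$-forms with an a priori unknown coefficient matrix, apply $\Phi^{n-1}$ (using Proposition~\ref{prop:gesetz_int} and the relation $\Phi(e_{k,\mathcal{N}}^{(l)})=e_{k,\mathcal{N}}^{(l-1)}$) to reduce to degree one, and invoke the linear independence of the degree-one forms (the $e$-analogue of Lemma~\ref{lem:lin_indep}) to pin the matrix down. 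Either way, I expect the genuine difficulty to be purely the arithmetic bookkeeping: organizing the $g_k$-weighted $d$-sums and the split $p\mid f$ versus $p\nmid f$ so that the entries $a_{i,m/d^2,p,k}$ of Proposition~\ref{prop:Eisenstein_Voperator} reorganize into the single matrix of the statement --- with Lemma~\ref{lemma:gk} as the one nontrivial arithmetic input that makes this collapse happen --- together with the routine-but-necessary verification of the compatibility of $V_{\alpha,n-\alpha}(p^2)$ with the substitutions $U_{W_d}$.
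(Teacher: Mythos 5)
Your proposal is correct and follows essentially the same route as the paper: the paper's proof likewise starts from the decomposition $e_{k,\mathcal{M}}^{(n)}=\sum_{d|f}g_k(m/d^2)\,E_{k,\mathcal{M}[W_d^{-1}]}^{(n)}|U_{W_d}$ of Proposition~\ref{prop:fourier_jacobi}, uses exactly your commutation identity $(\phi|U_L)|V_{\alpha,n-\alpha}(p^2)=(\phi|V_{\alpha,n-\alpha}(p^2))|U_{\smat{p}{0}{0}{1}^{-1}L\smat{p}{0}{0}{1}}$ together with the normalization $W_{pd}=\smat{p}{0}{0}{1}W_d$, applies Proposition~\ref{prop:Eisenstein_Voperator} termwise, and collapses the $d$-sum via Lemma~\ref{lemma:gk} into the single coefficient matrix. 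The remaining work is the arithmetic bookkeeping you anticipate, which the paper carries out explicitly.
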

\begin{proof}
  For any $\phi \in J_{k,\mathcal{M}}^{(n)}$ and for any $L = \smat{a}{0}{b}{1} \in \Z^{(2,2)}$, a straightforward calculation gives the identity
  \begin{eqnarray}\label{eq:U_L_V}
    \left(\phi|U_L\right) | V_{\alpha,n-\alpha}(p^2)
    &=&
    \left(\phi | V_{\alpha,n-\alpha}(p^2) \right) | U_{\smat{p}{0}{0}{1}^{-1} L \smat{p}{0}{0}{1}}.
  \end{eqnarray}
  
  We recall from Proposition~\ref{prop:fourier_jacobi} the identity
  \begin{eqnarray*}
       e_{k,\mathcal{M}}^{(n)}
     &=&
       \sum_{d|f} g_k\!\left(\frac{m}{d^2}\right)
         E_{k,\mathcal{M}[W_d^{-1}]}^{(n)}(\tau,z {^t W_d})\\
     &=&
       \sum_{d|f} g_k\!\left(\frac{m}{d^2}\right)
         E_{k,\mathcal{M}[W_d^{-1}]}^{(n)}|U_{W_d}
  \end{eqnarray*}
  where $W_d$ is a matrix such that
  $\mathcal{M}\left[W_d^{-1}\right] \in \mbox{Sym}_2^+$.
  We choose the set of matrices $\{W_d\}_d$ which satisfies
  $\mathcal{M}\left[W_d^{-1} \smat{p}{0}{0}{1}^{-1} \right] \in \mbox{Sym}_2^+$,
  if $d|\frac{f}{p}$.
  In particular, we choose $W_{pd} = \smat{p}{0}{0}{1} W_d$ for $d$ such that $pd | f$.
  By virtue of Lemma~\ref{lemma:eisen_A} and of the identity $W_{pd} = \smat{p}{0}{0}{1} W_d$,
  we have
  \begin{eqnarray}\label{eq:Wpd}
    E_{k,\mathcal{M}[W_d^{-1}]}^{(n)}|U_{W_d \smat{p}{0}{0}{1}}
    &=&
    E_{k,\mathcal{M}[\smat{p}{0}{0}{1} W_{pd}^{-1}]}^{(n)}|U_{W_{pd}}.
  \end{eqnarray}
  For the sake of simplicity we write
  \begin{eqnarray}\label{eq:E012}
  \begin{aligned}
    E_0(d) &= E_{k,\mathcal{M}\left[ W_{pd}^{-1} \right] }^{(n)}|U_{W_{pd} \smat{p}{0}{0}{1}},\\
    E_1(d) &= E_{k,\mathcal{M}\left[ W_d^{-1} \right] }^{(n)}|U_{W_d \smat{p}{0}{0}{1}},\\
    E_2(d) &=
    E_{k,\mathcal{M}\left[\smat{p}{0}{0}{1} W_d^{-1} \right] }^{(n)}|U_{W_d}.
  \end{aligned}
  \end{eqnarray}
  We remark $E_0(d) = E_1(pd)$ and $E_1(d) = E_2(pd)$
  due to the identity~(\ref{eq:Wpd}).

  From Proposition~\ref{prop:Eisenstein_Voperator}
  and due to identities (\ref{eq:U_L_V}) and (\ref{eq:E012}) we get
  \begin{eqnarray*}
    &&  \!\!\!\!\!\!\!\!\!\!
    \left(E_{k,\mathcal{M}\left[W_d^{-1}\right]}^{(n)}|U_{W_d}\right)|\left( V_{0,n}(p^2),\cdots,V_{n,0}(p^2)\right)\\
    &=&
    p^{(-k+\frac12 n + \frac32)(n-1)}
   \left(E_0(d),E_1(d),E_2(d) \right)
      \begin{pmatrix}
     0 & a_{0,\frac{m}{d^2},p,k} \\
     p^{-2k+2} & a_{1,\frac{m}{d^2},p,k} \\
     0 & a_{2,\frac{m}{d^2},p,k}
   \end{pmatrix}
    A_{2,n+1}^{p}(p^{k-\frac{n+2}{2}-\frac12}).
  \end{eqnarray*}
  
  Hence from Proposition~\ref{prop:fourier_jacobi} we have
  \begin{eqnarray*}
     &&  \!\!\!\!\!\!\!\!\!\!
     e_{k,\mathcal{M}}^{(n)}|(V_{0,n}(p^2), ..., V_{n,0}(p^2))\\
     &=&
     \sum_{d|f} g_k\!\left(\frac{m}{d^2}\right)
                \left(E_{k,\mathcal{M}\left[W_d^{-1}\right]}^{(n)}|U_{W_d}\right)|(V_{0,n}(p^2), ..., V_{n,0}(p^2)) \\
     &=&
        p^{(-k+\frac12 n + \frac32)(n-1)}
     \sum_{d|f} g_k\!\left(\frac{m}{d^2}\right) \\
    && \qquad \times
       \left(
       E_0(d), E_1(d), E_2(d)
       \right)
      \begin{pmatrix}
        0 & a_{0,\frac{m}{d^2},p,k} \\
        p^{-2k+2} & a_{1,\frac{m}{d^2},p,k} \\
        0 & a_{2,\frac{m}{d^2},p,k}
      \end{pmatrix} 
        A_{2,n+1}^{p}(p^{k-\frac{n+2}{2}-\frac12}).
  \end{eqnarray*}
  On the RHS of the above identity we obtain
  \begin{eqnarray*}
     &&  \!\!\!\!\!\!\!\!\!\!
     \sum_{d|f} g_k\!\left(\frac{m}{d^2}\right)
       \left(
	E_0(d), E_1(d), E_2(d)
       \right)
      \begin{pmatrix}
        0 \\
        p^{-2k+2}  \\
        0 
      \end{pmatrix} \\
      &=&
       p^{-2k+2}\, \sum_{d|f} g_k\!\left(\frac{m}{d^2}\right)
           E_1(d)\\
      &=&
       p^{-2k+2}\, \sum_{d|f} g_k\!\left(\frac{m}{d^2}\right)
            E_{k,\mathcal{M}\left[W_d^{-1} \right]}^{(n)} |U_{W_d \smat{p}{0}{0}{1}}\\
      &=&
       p^{-2k+2} e_{k,\mathcal{M}}^{(n)}|U_{\smat{p}{0}{0}{1}}.
  \end{eqnarray*}
  By using Lemma~\ref{lemma:gk} we now have
\begin{eqnarray*}
     &&  \!\!\!\!\!\!\!\!\!\!
     \sum_{d|f} g_k\!\left(\frac{m}{d^2}\right)
       \left(
            E_0(d),
            E_1(d),
            E_2(d)
       \right)
      \begin{pmatrix}
         a_{0,\frac{m}{d^2},p,k} \\
         a_{1,\frac{m}{d^2},p,k} \\
         a_{2,\frac{m}{d^2},p,k}
      \end{pmatrix}\\
    &=&
      \sum_{d|f} g_k\!\left(\frac{m}{d^2}\right) \Bigg\{
       \delta\!\left(p|\frac{f}{d}\right) p^{-3k+4}
       E_0(d)
      \\
      &&
      + \left(p^{-2k+2} \left(\frac{-m/d^2}{p}\right) + \delta\!\left(p {\not|} \frac{f}{d}\right) p^{-3k+4} \right)
        E_1(d) \\
      &&
      + \left(p^{-k+1} - p^{-2k+2} \left(\frac{-m/d^2}{p}\right)\right)
        E_2(d)
      \Bigg\}\\
    &=&
      p^{-3k+4} \sum_{\begin{smallmatrix} d|f \\ \frac{f}{d} \equiv 0 \ (p)\end{smallmatrix}}
                   g_k\!\left(\frac{m}{d^2}\right) E_0(d)
      + p^{-2k+2} \left(\frac{-D_0}{p}\right) 
      \sum_{\begin{smallmatrix} d|f \\ \frac{f}{d} {\not \equiv} 0 \ (p)\end{smallmatrix}}
                   g_k\!\left(\frac{m}{d^2}\right) E_1(d)\\
    &&
      + p^{-3k+4}  \sum_{\begin{smallmatrix} d|f \\ \frac{f}{d} {\not \equiv} 0 \ (p)\end{smallmatrix}}
                   g_k\!\left(\frac{m}{d^2}\right) E_1(d)\\
    &&
      + p^{-3k+4}  \sum_{d|f} g_k\!\left(\frac{m}{d^2}\right)
                   \left(p^{2k-3} - p^{k-2} \left(\frac{-m/d^2}{p}\right)\right)
                   E_2(d)\\
    &=&
      p^{-3k+4} \delta(p|f) \sum_{d|\frac{f}{p}} \left(p^{2k-3} - p^{k-2} \left(\frac{-m/(dp)^2}{p} \right) \right)
                g_k\!\left(\frac{m}{d^2p^2}\right) E_0(d)\\
    &&
      + p^{-2k+2} \left(\frac{D_0}{p}\right)  \sum_{\begin{smallmatrix} d|f \\ \frac{f}{d} {\not \equiv} 0 \ (p)\end{smallmatrix}}
                   g_k\!\left(\frac{m}{d^2}\right) E_1(d)
      + p^{-3k+4}  \sum_{\begin{smallmatrix} d|f \\ \frac{f}{d} {\not \equiv} 0 \ (p)\end{smallmatrix}}
                   g_k\!\left(\frac{mp^2}{(dp)^2}\right) E_2(pd)\\
    &&
      + p^{-3k+4}  \sum_{d|f} g_k\!\left(\frac{mp^2}{d^2}\right)
                   E_2(d)\\
    &=&
      p^{-k+1} \delta(p|f) \sum_{d|\frac{f}{p}} g_k\!\left(\frac{m}{d^2p^2}\right) E_0(d)
      - p^{-2k+2} \delta(p|f) \left( \frac{D_0}{p} \right)
                   \sum_{\begin{smallmatrix} d|f \\ \frac{f}{d} {\not \equiv} 0 \ (p)\end{smallmatrix}}
                   g_k\!\left(\frac{m}{d^2}\right) E_0\!\left(\frac{d}{p}\right) \\
    &&
      + p^{-2k+2}  \left( \frac{D_0}{p} \right)
                   \sum_{\begin{smallmatrix} d|f \\ \frac{f}{d} {\not \equiv} 0 \ (p)\end{smallmatrix}}
                   g_k\!\left(\frac{m}{d^2}\right) E_1(d)
      + p^{-3k+4}  \sum_{\begin{smallmatrix} d|fp \\ \frac{fp}{d} {\not \equiv} 0 \ (p)\end{smallmatrix}}
                   g_k\!\left(\frac{mp^2}{d^2}\right) E_2(d)\\
    &&
      + p^{-3k+4}  \sum_{\begin{smallmatrix} d|fp \\ \frac{fp}{d} \equiv 0 \ (p)\end{smallmatrix}}
                   g_k\!\left(\frac{mp^2}{d^2}\right) E_2(d)\\
    &=&
      p^{-k+1} \delta(p|f) \sum_{d|\frac{f}{p}} g_k\!\left(\frac{m}{d^2p^2}\right) E_0(d)
      + p^{-2k+2} \left( \frac{D_0 f^2}{p} \right)
                   \sum_{d|f}
                   g_k\!\left(\frac{m}{d^2}\right) E_1(d)\\
    &&
      + p^{-3k+4}  \sum_{d|fp}
                   g_k\!\left(\frac{mp^2}{d^2}\right) E_2(d).
\end{eqnarray*}
Hence
\begin{eqnarray*}
\begin{aligned}
     & \sum_{d|f} g_k\!\left(\frac{m}{d^2}\right)
       \left(
            E_0(d),
            E_1(d),
            E_2(d)
       \right)
      \begin{pmatrix}
         a_{0,\frac{m}{d^2},p,k} \\
         a_{1,\frac{m}{d^2},p,k} \\
         a_{2,\frac{m}{d^2},p,k}
      \end{pmatrix}\\
    &=
      \left(
      \sum_{d|\frac{f}{p}} g_k\!\left(\frac{m}{d^2p^2}\right) E_0(d),\
      \sum_{d|f} g_k\!\left(\frac{m}{d^2}\right) E_1(d),\
      \sum_{d|fp} g_k\!\left(\frac{mp^2}{d^2}\right) E_2(d)
      \right)\\
    &\qquad \times
     \begin{pmatrix} p^{-k+1} \\ p^{-2k+2} \left( \frac{-m}{p}\right) \\ p^{-3k+4} \end{pmatrix} \\
    &=
      \left(e_{k,\mathcal{M}\left[X^{-1} \smat{p}{0}{0}{1}^{-1} \right]}^{(n)}|U_{\smat{p}{0}{0}{1} X\smat{p}{0}{0}{1}},\
      e_{k,\mathcal{M}}^{(n)}|U_{\smat{p}{0}{0}{1}},\
      e_{k,\mathcal{M}\left[ \smat{p}{0}{0}{1} \right]}^{(n)} \right)
     \begin{pmatrix} p^{-k+1} \\ p^{-2k+2} \left( \frac{-m}{p}\right) \\ p^{-3k+4} \end{pmatrix}.
\end{aligned}
\end{eqnarray*}
Therefore
\begin{eqnarray*}
     &&  \!\!\!\!\!\!\!\!\!\!
     e_{k,\mathcal{M}}^{(n)}|(V_{0,n}(p^2), ..., V_{n,0}(p^2))\\
     &=&
        p^{(-k+\frac12 n + \frac32)(n-1)}
     \sum_{d|f} g_k\!\left(\frac{m}{d^2}\right)
       \left(
            E_0(d),\
            E_1(d),\
            E_2(d)
       \right)
      \begin{pmatrix}
        0 & a_{0,\frac{m}{d^2},p,k} \\
        p^{-2k+2} & a_{1,\frac{m}{d^2},p,k} \\
        0 & a_{2,\frac{m}{d^2},p,k}
      \end{pmatrix}\\
     &&
     \times
        A_{2,n+1}^{p}(p^{k-\frac{n+2}{2}-\frac12})\\
     &=&
     p^{(-k+\frac12 n + \frac32)(n-1)}
      \left(e_{k,\mathcal{M}\left[X^{-1} \smat{p}{0}{0}{1}^{-1} \right]}^{(n)}|U_{\smat{p}{0}{0}{1} X\smat{p}{0}{0}{1}},\
      e_{k,\mathcal{M}}^{(n)}|U_{\smat{p}{0}{0}{1}},\
      e_{k,\mathcal{M}\left[ \smat{p}{0}{0}{1} \right]}^{(n)} \right)\\
     &&
     \times
     \begin{pmatrix}
       0 & p^{-k+1} \\
       p^{-2k+2} & p^{-2k+2} \left( \frac{-m}{p}\right) \\
       0 & p^{-3k+4} \end{pmatrix}
     A_{2,n+1}^{p}(p^{k-\frac{n+2}{2}-\frac12}).
\end{eqnarray*}
\end{proof}

Proposition~\ref{prop:e_k_M_V} is a generalized Maass relation
for matrix index of integral-weight.
The generalized Maass relation for integer index of half-integral weight is as follows.

\begin{theorem}\label{thm:maass_e_half}
 Let $e_{k-\frac12,m}^{(n)}$ be the $m$-th Fourier-Jacobi coefficient of generalized Cohen-Eisenstein series
 $H_{k-\frac12}^{(n+1)}$. $($See~$($\ref{eq:df_fourier_jacobi_cohen_eisenstein}$)$$)$.
 Then we obtain
  \begin{eqnarray*}
  &&  \!\!\!\!\!\!\!\!\!\!
  e^{(n)}_{k-\frac12,m}|(\tilde{V}_{0,n}(p^2), \tilde{V}_{1,n-1}(p^2),...,\tilde{V}_{n,0}(p^2))\\
  &=&
  p^{k(n-1)-\frac12(n^2+5n-5)}
  \left(e^{(n)}_{k-\frac12,\frac{m}{p^2}}|U_{p^2},\, e^{(n)}_{k-\frac12,m}|U_p,\, e^{(n)}_{k-\frac12,mp^2} \right)
  \begin{pmatrix}
          0 & p^{2k-3} \\
          p^{k-2} & p^{k-2} \left(\frac{-m}{p}\right) \\
          0 & 1 \end{pmatrix}\\
  &&
  \times
  A_{2,n+1}^{p}\!\left( p^{k-\frac{n+2}{2}-\frac12} \right)
  \mbox{diag}(1,p^{1/2},\cdots, p^{n/2}).
\end{eqnarray*}
Here $A_{2,n+1}^p\!\left( p^{k-\frac{n+2}{2}-\frac12} \right)$
is a $2 \times (n+1)$ matrix which is introduced
in the beginning of \S\ref{s:maass_relation_cohen_eisen}
and the both side of the above identity are vectors of forms.
\end{theorem}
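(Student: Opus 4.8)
The plan is to transport the matrix-index Maass relation of Proposition~\ref{prop:e_k_M_V} through the isomorphism $\iota_{\mathcal{M}}$ of Lemma~\ref{lemma:iota}. Since the Fourier--Jacobi expansion~(\ref{eq:df_fourier_jacobi_cohen_eisenstein}) only involves $m\equiv0,3\bmod 4$, I may assume $m$ is of this kind, and I then fix a half-integral symmetric matrix $\mathcal{M}=\smat{*}{*}{*}{1}\in\mbox{Sym}_2^+$ with $\det(2\mathcal{M})=m$ (such $\mathcal{M}$ exists precisely because $m\equiv 0,3\bmod 4$), writing $\mathcal{M}_p:=\mathcal{M}[\smat{p}{0}{0}{1}]$, so that $\det(2\mathcal{M}_p)=p^2m$. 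The first step is a \emph{generating identity}. From $\mathcal{H}^{(n+1)}_{k-\frac12}=\sigma(E^{(n+1)}_{k,1})$ and $E^{(n+1)}_{k,1}=e^{(n+1)}_{k,1}$ (\S\ref{ss:CE_J*}), Lemma~\ref{lemma:iota} gives $e^{(n)}_{k-\frac12,m}=\iota_{\mathcal{M}}\!\left(\mbox{FJ}_{1,\mathcal{M}}(e^{(n+1)}_{k,1})\right)$, and $\mbox{FJ}_{1,\mathcal{M}}(e^{(n+1)}_{k,1})=e^{(n)}_{k,\mathcal{M}}$ because both sides are read off from the \emph{same} Siegel--Eisenstein series $E^{(n+2)}_k$ via its Fourier--Jacobi block decompositions of types $(n{+}1,1)$ and $(n,2)$ --- a direct comparison of Fourier coefficients. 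Hence $e^{(n)}_{k-\frac12,m}=\iota_{\mathcal{M}}(e^{(n)}_{k,\mathcal{M}})$; the same holds with $\mathcal{M}$ replaced by $\mathcal{M}_p$ (giving index $p^2m$), and by $\mathcal{M}[X^{-1}\smat{p}{0}{0}{1}^{-1}]$, which one checks is again of the shape $\smat{*}{*}{*}{1}$, now of determinant $m/p^2$, hence attached to the index $m/p^2$.

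Second, I apply the index-shift maps. For an odd prime $p$, Proposition~\ref{prop:iota_hecke} intertwines $\tilde{V}_{\alpha,n-\alpha}(p^2)$ (half-integral weight, index $m$) with $V_{\alpha,n-\alpha}(p^2)$ (integral weight, index $\mathcal{M}$), up to the explicit factor $p^{\,k(2n+1)-n(n+\frac72)+\alpha/2}$. Assembling this over $\alpha=0,\dots,n$ and shifting the $\alpha$-dependent power into the diagonal matrix, I obtain
\[
e^{(n)}_{k-\frac12,m}|(\tilde{V}_{0,n}(p^2),\dots,\tilde{V}_{n,0}(p^2))
= p^{\,k(2n+1)-n(n+\frac72)}\,
\iota_{\mathcal{M}_p}\!\left(e^{(n)}_{k,\mathcal{M}}|(V_{0,n}(p^2),\dots,V_{n,0}(p^2))\right)
\mbox{diag}(1,p^{1/2},\dots,p^{n/2}),
\]
with $\iota_{\mathcal{M}_p}$ applied entrywise to the vector of forms. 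The same relation is valid at $p=2$: there $\tilde{V}_{\alpha,n-\alpha}(4)$ is \emph{defined} in \S\ref{ss:hecke_p2} precisely so that the analogue of~(\ref{id:iota_hecke}) holds, and Proposition~\ref{prop:e_k_M_V} --- together with all its inputs, Propositions~\ref{prop:fourier_jacobi}, \ref{prop:E_linear}, \ref{prop:gesetz_int} and the lemmas of \S\ref{s:action_of_ism} --- is stated for an arbitrary prime.

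Third, I insert Proposition~\ref{prop:e_k_M_V}. Since $\iota_{\mathcal{M}_p}$ is $\C$-linear it passes through the numerical matrix and through $A_{2,n+1}^p$, so it has only to be evaluated on the three forms of the vector on the right of that proposition. Using the generating identity of the first step for the indices $p^2m$ and $m/p^2$, and Proposition~\ref{prop:iota_U} --- noting $\smat{p}{0}{0}{1}X\smat{p}{0}{0}{1}=\smat{p^2}{0}{px}{1}$ is of the admissible form $\smat{a}{0}{b}{1}$ --- these three forms become $e^{(n)}_{k-\frac12,p^2m}$, $e^{(n)}_{k-\frac12,m}|U_p$ and $e^{(n)}_{k-\frac12,m/p^2}|U_{p^2}$, respectively. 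Substituting, the right-hand side acquires exactly the shape of the theorem, with $A_{2,n+1}^p(p^{k-\frac{n+2}{2}-\frac12})$ and the diagonal factor intact.

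Finally I reconcile the constants: the $3\times 2$ matrix of Proposition~\ref{prop:e_k_M_V} is $p^{-3k+4}$ times the one in the theorem, since
\[
\begin{pmatrix}0 & p^{-k+1}\\ p^{-2k+2} & p^{-2k+2}\left(\frac{-m}{p}\right)\\ 0 & p^{-3k+4}\end{pmatrix}
= p^{-3k+4}\begin{pmatrix}0 & p^{2k-3}\\ p^{k-2} & p^{k-2}\left(\frac{-m}{p}\right)\\ 0 & 1\end{pmatrix},
\]
and a one-line exponent computation gives
\[
\left(k(2n+1)-n(n+\tfrac72)\right)+\left(-k+\tfrac{n}{2}+\tfrac32\right)(n-1)+(-3k+4)=k(n-1)-\tfrac12(n^2+5n-5),
\]
which is precisely the power of $p$ in the statement. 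I expect the main obstacle to lie not in any single step --- the substantial work is already behind us in \S\ref{s:fourier_matrix}--\S\ref{s:siegelop} --- but in the careful bookkeeping of the first step, namely the identification $\mbox{FJ}_{1,\mathcal{M}}(e^{(n+1)}_{k,1})=e^{(n)}_{k,\mathcal{M}}$ and the verification that the auxiliary index $\mathcal{M}[X^{-1}\smat{p}{0}{0}{1}^{-1}]$ remains of the form $\smat{*}{*}{*}{1}$ (so that $\iota$ applies to it), together with confirming that nothing in the chain degenerates at $p=2$.
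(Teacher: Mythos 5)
Your proposal is correct and follows essentially the same route as the paper: establish $e^{(n)}_{k-\frac12,m}=\iota_{\mathcal{M}}(e^{(n)}_{k,\mathcal{M}})$ via Lemma~\ref{lemma:iota}, transport the index-shift maps with Proposition~\ref{prop:iota_hecke}, identify the three forms with Proposition~\ref{prop:iota_U}, and conclude by linearity of $\iota_{\mathcal{M}[\smat{p}{0}{0}{1}]}$ from Proposition~\ref{prop:e_k_M_V}. The extra bookkeeping you supply (the identification $\mbox{FJ}_{1,\mathcal{M}}(e^{(n+1)}_{k,1})=e^{(n)}_{k,\mathcal{M}}$, the shape of $\mathcal{M}[X^{-1}\smat{p}{0}{0}{1}^{-1}]$, and the explicit exponent reconciliation) is consistent with what the paper leaves implicit.
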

\begin{proof}
 From Lemma~\ref{lemma:iota} and from the definitions of $e_{k,\mathcal{M}}^{(n)}$
 and $e_{k-\frac12,m}^{(n)}$, we have
 \[
  \iota_{\mathcal{M}}(e_{k,\mathcal{M}}^{(n)}) = e_{k-\frac12,m}^{(n)}.
 \]
 By using Proposition~\ref{prop:iota_hecke} we have
  \begin{eqnarray*}
    e_{k-\frac12,m}^{(n)}|\tilde{V}_{\alpha,n-\alpha}(p^2)
    &=&
    \iota_{\mathcal{M}}(e_{k,\mathcal{M}}^{(n)})|\tilde{V}_{\alpha,n-\alpha}(p^2)\\
    &=&
    p^{k(2n+1)-n(n+\frac72)+\frac12\alpha}
    \iota_{\mathcal{M\left[\smat{p}{0}{0}{1}\right]}}(e_{k,\mathcal{M}}^{(n)}|V_{\alpha,n-\alpha}(p^2))
  \end{eqnarray*}
 From Proposition~\ref{prop:iota_U} we also have identities
  \begin{eqnarray*}
      e_{k-\frac12,\frac{m}{p^2}}^{(n)}|U_{p^2}
    &=&
    \iota_{\mathcal{M}\left[\smat{p}{0}{0}{1}\right]}
      \left(
       e_{k,\mathcal{M}\left[X^{-1} \smat{p}{0}{0}{1}^{-1} \right]}^{(n)}|U_{\smat{p}{0}{0}{1} X\smat{p}{0}{0}{1}}
      \right),
    \\
      e_{k-\frac12,m}^{(n)}|U_p
    &=&
    \iota_{\mathcal{M}\left[\smat{p}{0}{0}{1}\right]}
      \left(
       e_{k,\mathcal{M}}^{(n)}|U_{\smat{p}{0}{0}{1}}
      \right),
  \end{eqnarray*}
  and
  \begin{eqnarray*}
       e_{k-\frac12,mp^2}^{(n)}
   &=&
   \iota_{\mathcal{M}\left[\smat{p}{0}{0}{1}\right]}\left(e_{k,\mathcal{M}\left[\smat{p}{0}{0}{1}\right]}^{(n)}\right).
  \end{eqnarray*}
  Because the map $\iota_{\mathcal{M}\left[\smat{p}{0}{0}{1}\right]}$ is a linear map,
  this theorem follows from
  Proposition~\ref{prop:e_k_M_V}
  and from the above identities.
\end{proof}

\section{Maass relation for Siegel cusp forms of half-integral weight
and lifts}\label{s:maass_relation_siegel_cusp}

In this section we shall prove Theorem~\ref{th:main}.

We denote by $S_k(\Gamma_n) \subset M_k(\Gamma_n)$,
$S_{k-\frac12}^{+}(\Gamma_0^{(n)}(4)) \subset M_{k-\frac12}^{+}(\Gamma_0^{(n)}(4))$,
$J_{k,1}^{(n)\, cusp} \subset J_{k,1}^{(n)}$
and
$J_{k-\frac12, m}^{(n)*\, cusp} \subset J_{k-\frac12, m}^{(n)*}$
the spaces of the cusp forms, respectively
(cf \S\ref{ss:fj_expansion_half}, \S\ref{ss:CE_J*},
\S\ref{ss:jacobi_forms_of_matrix_index} and \S\ref{ss:def_jacobi_half_weight}).

Let $k$ be an even integer
and $f \in S_{2(k-n)}(\Gamma_1)$ be an eigenform for all Hecke operators.
Let
\begin{eqnarray*}
  h(\tau) 
  &=&
  \sum_{\begin{smallmatrix} N \in \Z \\ N \equiv 0,3 \!\! \mod 4, \ N > 0
                                                 \end{smallmatrix}} 
                  c(N) \, e(N\tau)
   \in S_{k-n+\frac12}^{+}(\Gamma_0^{(1)}(4))
\end{eqnarray*}
 be a Hecke eigenform
which corresponds to $f$ by the Shimura correspondence.
We assume that the Fourier coefficient of $f$ at $e^{2\pi i z}$ is $1$.

Let
\begin{eqnarray*}
  I_{2n}(h)(\tau) &=& \sum_{T \in Sym_{2n}^+} A(T)\, e(T\tau) \in S_k(\Gamma_{2n})
\end{eqnarray*}
 be the Ikeda lift of $h$.
 For $T \in \mbox{Sym}_{2n}^+$ the $T$-th Fourier coefficient $A(T)$ of $I_{2n}(h)$ is
\begin{eqnarray*}
  A(T) &=&
  c(|D_T|)\, f_T^{k-n-\frac12}\, \prod_{\begin{smallmatrix} q\, : \, prime \\ q|f_T \end{smallmatrix}} \tilde{F}_q(T,\alpha_q),
\end{eqnarray*}
 where $D_T$ is the fundamental discriminant and $f_T$ is the natural number
 which satisfy $\det(2 T) = |D_T|\, f_T^2$,
 and where $\left\{\alpha_q^{\pm}\right\}$ is the set of Satake parameters of $f$
 in the sense of Ikeda~\cite{Ik},
 it means that
 $(\alpha_q + \alpha_q^{-1})q^{k-n-1/2}$ is the $q$-th
 Fourier coefficient of $f$.
 Here $\tilde{F}_q(T,X) \in \C[X+X^{-1}]$ is a certain Laurent polynomial.
 For the detail of the definition of $\tilde{F}_q(T,X)$ the reader is referred to~\cite[page 642]{Ik}.

 Let
 \begin{eqnarray*}
   I_{2n}(h) \left(\begin{pmatrix} \tau & z \\ {^t z} & \omega \end{pmatrix}\right)
   &=&
   \sum_{a=1}^{\infty} \psi_a(\tau,z) \, e(a \omega)
 \end{eqnarray*}
 be the Fourier-Jacobi expansion of $I_{2n}(h)$, where $\tau \in \H_{2n-1}$, $\omega \in \H_1$
 and $z \in \C^{(2n-1,1)}$.
 Note that $\psi_a \in J_{k,a}^{(2n-1)\, cusp}$ is a Jacobi cusp form
 of weight $k$ of index $a$ of
 degree $2n-1$.

By the Eichler-Zagier-Ibukiyama correspondence (see \S\ref{ss:fj_expansion_half}) there exists
a Siegel cusp form $F \in S_{k-\frac12}^{+}(\Gamma_0^{(2n-1)}(4))$
which corresponds to $\psi_1 \in J_{k,1}^{(2n-1)\, cusp}$.

For $g \in S_{k-1/2}^+(\Gamma_0^{(1)}(4))$
we put
  \begin{eqnarray*}
      \mathcal{F}_{h,g}(\tau)
      &:=&
      \frac16
      \int_{\Gamma_1\backslash \H_1}
        F\!\left(
              \begin{pmatrix}
                 \tau & 0\\
                 0 & \omega
              \end{pmatrix}
            \right)
        \overline{g(\omega)}\,
        \mbox{Im}(\omega)^{k-\frac52} \, d\omega
  \end{eqnarray*}
  for $\tau \in \H_{2n-2}$.
It is not difficult to show that the form $\mathcal{F}_{h,g}$
belongs to $S_{k-\frac12}^{+}(\Gamma_0^{(2n-2)}(4))$.
The above construction of $\mathcal{F}_{h,g}$ was suggested by T.Ikeda to the author.

To show properties of $\mathcal{F}_{h,g}$ we consider the Fourier-Jacobi
expansion of $F$.
Let
\begin{eqnarray*}
  F\left( \begin{pmatrix} \tau & z \\ {^t z} & \omega \end{pmatrix}\right)
  &=&
  \sum_{\begin{smallmatrix} m \in \Z \\ m \equiv 0,3 \mod 4\end{smallmatrix}}
  \phi_m(\tau,z)\, e(m \omega)
\end{eqnarray*}
be the Fourier-Jacobi expansion of $F$,
where $\tau \in \H_{2n-2}$, $\omega \in \H_1$ and $z \in \C^{(2n-2,1)}$.
Note that $\phi_m \in J_{k-\frac12,m}^{(2n-2)*\, cusp}$ is a Jacobi cusp form
of weight $k-\frac12$ of index $m$ and of degree $2n-2$.

Let
\begin{eqnarray*}
  \phi_m(\tau,z)
  &=&
  \sum_{\begin{smallmatrix}
                        M \in Sym_{2n-2}^+,\ S\in \Z^{(2n-2,1)} \\
                        4Mm - S{^t S} > 0 \end{smallmatrix}}
  C_m(M,S) \, e(M \tau + S {^t z} )
\end{eqnarray*}
be the Fourier expansion of $\phi_m$,
where $\tau \in \H_{2n-2}$ and $z \in \C^{(2n-2,1)}$.
We have the diagram

\[
  \xymatrix{
	&
      I_{2n}(h) \in S_{k}(\Gamma_{2n})  \ar[d]^{\mbox{1st F-J}}
    &
   \\
	& \qquad
      \psi_1 \in J_{k,1}^{(2n-1)\, cusp}
                                       \ar[r]_{\qquad \mbox{E-Z-I \quad \quad}}
    &
      F \ar[d]^{\mbox{F-J}}  \in S_{k-\frac12}^{+}(\Gamma_0^{(2n-1)}(4))\\
	&
      \quad
    &
      \displaystyle{
       \left\{\phi_m\right\}_{m}
      }
      \in
          \!\!\!\!\!
	  \displaystyle{
          \bigotimes_{m \equiv 0,3\!\!\!\!\! \mod 4}
          \!\!\!\!\!
          J_{k-\frac12,m}^{(2n-2)* \, cusp}}
   \\	
       h \in S_{k-n+\frac12}^+(\Gamma_0^{(1)}(4))   \ar[r] \ar[ruuu]^{\mbox{Ikeda lift}}
    &
       f \in S_{2(k-n)}(\Gamma_1) .  \ar[l]
    &
	}
\]

\begin{lemma}\label{lem:coe_cm}
  The $(M,S)$-th Fourier coefficient $C_m(M,S)$ of $\phi_m$ is
  \begin{eqnarray*}
    C_m(M,S)
    &=&
    c(|D_T|)\, f_T^{k-n-\frac12}\, \prod_{q | f_T} \tilde{F}_q(T,\alpha_q),
  \end{eqnarray*}
  where $T \in \mbox{Sym}_{2n}^+$ is the matrix which satisfies
  \begin{eqnarray*}
    T &=&
    \begin{pmatrix}
      N & \frac12 R \\
      \frac12 {^t R} & 1
    \end{pmatrix}
  \end{eqnarray*}
  and $N \in \mbox{Sym}_{2n-1}^+$ and $R \in \Z^{(2n-1,1)}$ are the matrices
  which satisfy
  \begin{eqnarray*}
    4N - R {^t R} &=&
    \begin{pmatrix}
      M & \frac12 S \\
      \frac12 {^t S} & m
    \end{pmatrix}.
  \end{eqnarray*}
\end{lemma}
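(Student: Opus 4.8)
The plan is to follow a single Fourier coefficient through the three steps that produce $F$ from $I_{2n}(h)$: the first Fourier--Jacobi expansion (which yields $\psi_1$), the Eichler--Zagier--Ibukiyama isomorphism $\sigma$, and the Fourier--Jacobi expansion with respect to the last variable.

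First I would note that, comparing Fourier coefficients in $I_{2n}(h)\smat{\tau}{z}{{}^tz}{\omega}=\sum_{a\ge1}\psi_a(\tau,z)\,e(a\omega)$, the $(N,R)$-th Fourier coefficient of $\psi_1\in J_{k,1}^{(2n-1)\,cusp}$ is simply $A\!\smat{N}{\frac12 R}{\frac12{}^tR}{1}$, the $\smat{N}{\frac12 R}{\frac12{}^tR}{1}$-th Fourier coefficient of $I_{2n}(h)$. Since $F=\sigma(\psi_1)$, the explicit description of $\sigma$ in \S\ref{ss:fj_expansion_half} then gives the $\mathcal S$-th Fourier coefficient of $F$ as
\[
 \sum_{R}A\!\left(\begin{smallmatrix}\frac14(\mathcal S+R\,{}^tR)&\frac12 R\\ \frac12{}^tR&1\end{smallmatrix}\right),
\]
where $R$ runs over $\Z^{(2n-1,1)}$ modulo $2\Z^{(2n-1,1)}$ subject to $\mathcal S+R\,{}^tR\in 4\,\mbox{Sym}_{2n-1}^*$. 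Finally, writing $\mathcal S=\smat{M}{\frac12 S}{\frac12{}^tS}{m}$ and splitting off the last variable in $F$, the summand indexed by $\mathcal S$ contributes $e(M\tau+S\,{}^tz)\,e(m\omega)$, so $C_m(M,S)$ is precisely the displayed sum with $\mathcal S=\smat{M}{\frac12 S}{\frac12{}^tS}{m}$.

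The next step is to check that this sum has exactly one term. For each $i$, requiring the $i$-th diagonal entry of $\mathcal S+R\,{}^tR$ to lie in $4\Z$ forces $r_i^2\equiv-s_{ii}\bmod 4$, which pins down the parity of $r_i$; hence $R$ is uniquely determined modulo $2\Z^{(2n-1,1)}$, and the fact that this residue class is admissible (that a suitable $R$ exists at all) is exactly the generalized plus-space property of $F$. Therefore $C_m(M,S)=A(T)$ with $T=\smat{N}{\frac12 R}{\frac12{}^tR}{1}$, $N=\frac14(\mathcal S+R\,{}^tR)$, for this distinguished $R$, which is the relation in the statement; the Fourier coefficient formula for the Ikeda lift recalled at the beginning of this section then rewrites $A(T)$ as $c(|D_T|)\,f_T^{k-n-\frac12}\prod_{q|f_T}\tilde F_q(T,\alpha_q)$. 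It remains only to observe that the right-hand side is independent of the choice of $R$ with $4N-R\,{}^tR=\mathcal S$: a second choice has the form $R'=R+2\lambda$ with $\lambda\in\Z^{(2n-1,1)}$, and a short computation gives $\smat{N'}{\frac12 R'}{\frac12{}^tR'}{1}=\smat{N}{\frac12 R}{\frac12{}^tR}{1}\!\left[\smat{1_{2n-1}}{0}{{}^t\lambda}{1}\right]$, so $T$ and $T'$ are $\mbox{GL}_{2n}(\Z)$-equivalent and $|D_T|$, $f_T$, $\tilde F_q(T,X)$ and $A(T)$ all agree.

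The only slightly delicate points I anticipate are the block bookkeeping in the second Fourier--Jacobi step --- decomposing $N$ and $R$ compatibly with $2n-1=(2n-2)+1$ and verifying that it is the lower-right entry of $4N-R\,{}^tR$ that selects the Fourier--Jacobi index $m$ --- and the uniqueness of the residue class of $R$, which rests squarely on $F$ lying in the generalized plus-space; everything else is a routine unwinding of the definitions.
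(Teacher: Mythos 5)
Your proposal is correct and follows essentially the same route as the paper: read off the Fourier coefficients of $\psi_1$ from the first Fourier--Jacobi expansion of $I_{2n}(h)$, push them through $\sigma$, and identify $C_m(M,S)$ with $A(T)$, then invoke the Ikeda-lift formula for $A(T)$. The only difference is that you make explicit the points the paper's proof elides --- that the sum over $R \bmod 2$ in the definition of $\sigma$ collapses to a single term and that $A(T)$ is independent of the representative $R$ --- which is a welcome amount of extra care rather than a change of method.
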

\begin{proof}
  The Fourier expansion of $\psi_1$ is
  \begin{eqnarray*}
    \psi_1(\tau,z) &=&
    \sum_{
     \begin{smallmatrix}
       N \in Sym_{2n-1}^+,\, R \in \Z^{(2n-1,1)} \\
       4N - R {^t R} > 0
     \end{smallmatrix}}
     A\left(\begin{pmatrix} N & \frac12 R \\ \frac12 {^t R} & 1 \end{pmatrix} \right)
     \,
     e(N\tau + R {^t z}).
  \end{eqnarray*}
  And the Fourier expansion of $F$ is
  \begin{eqnarray*}
    F(\tau)
    &=&
    \sum_{4N - R {^t R} > 0}
     A\left(\begin{pmatrix} N & \frac12 R \\ \frac12 {^t R} & 1 \end{pmatrix} \right)
     \,
     e((4N - R {^t R}) \tau).     
  \end{eqnarray*}
  Since $\phi_m$ is the $m$-th Fourier-Jacobi coefficient of $F$,
  the $(M,S)$-th Fourier coefficient $C_m(M,S)$ of $\phi_m$ is $A(T)$, where
  $T$ is in the statement of this lemma.
\end{proof}

The following theorem is a generalization of the Maass relation
for Siegel cusp forms of half-integral weight.
\begin{theorem}\label{th:maass_relation_half_cusp}
 Let $\phi_m$ be the $m$-th Fourier-Jacobi coefficient of $F$ as above.
 Then we obtain
  \begin{eqnarray*}
  &&  \!\!\!\!\!\!\!\!\!\!
  \phi_m|(\tilde{V}_{0,2n-2}(p^2), \tilde{V}_{1,2n-3}(p^2),...,\tilde{V}_{2n-2,0}(p^2))\\
  &=&
  p^{k(2n-3)-2 n^2 - n + \frac{11}{2}}
  \left(\phi_{\frac{m}{p^2}}|U_{p^2},\, \phi_m|U_p,\, \phi_{mp^2} \right)
  \begin{pmatrix}
          0 & p^{2k-3} \\
          p^{k-2} & p^{k-2} \left(\frac{-m}{p}\right) \\
          0 & 1 \end{pmatrix}\\
  &&
  \times
  A_{2,2n-1}^{p}\!\left(\alpha_p\right)
  \,
  diag(1,p^{\frac12},p,...,p^{n-1})
\end{eqnarray*}
for any prime $p$, where the $2\times (n+1)$-matrix $A_{2,2n-1}^{p}\!\left(\alpha_p\right)$
is introduced in the beginning of \S\ref{s:maass_relation_cohen_eisen}.
\end{theorem}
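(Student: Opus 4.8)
The plan is to reduce the relation for the cusp form $\phi_m$ to the already-established relation for the generalized Cohen-Eisenstein series (Theorem~\ref{thm:maass_e_half}) by exploiting that both the Fourier coefficients of the Ikeda lift and the Fourier coefficients of the Siegel-Eisenstein series are controlled by the \emph{same} local densities $\tilde F_q(T,X)$ --- the only difference being the choice of the spectral parameter inserted into these Laurent polynomials. Concretely, by Lemma~\ref{lem:coe_cm} the $(M,S)$-th Fourier coefficient of $\phi_m$ is $c(|D_T|)\,f_T^{k-n-\frac12}\prod_{q|f_T}\tilde F_q(T,\alpha_q)$, while the corresponding Fourier coefficient of $e^{(2n-2)}_{k-\frac12,m}$ (the Fourier-Jacobi coefficient of $\mathcal H_{k-\frac12}^{(2n-1)}$, built from the degree-$2n$ Siegel-Eisenstein series) has an analogous product expansion $\prod_{q|f_T}\tilde F_q(T,p^{k-n-\frac12})$ up to an elementary normalising factor. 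So the first step is to make this parallelism precise: both sides of the desired identity are, coefficient by coefficient, linear combinations of the quantities $\prod_{q|f_T}\tilde F_q(T,X_q)$ with the index shifts $\tilde V_{\alpha,2n-2-\alpha}(p^2)$, $U_p$, $U_{p^2}$ acting in exactly the same combinatorial way on the pairs $(M,S)$ in both cases.

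Next I would argue that the generalized Maass relation, being an identity among finitely many linear functionals applied to the Fourier coefficients indexed by pairs $(M,S)$ with fixed invariants away from $p$, holds for $\phi_m$ \emph{if and only if} it holds with every occurrence of the variable at the place $p$ specialised appropriately. That is: Theorem~\ref{thm:maass_e_half} gives the identity with the matrix $A_{2,2n-1}^p(p^{k-\frac{n+2}{2}-\frac12})\cdot\mathrm{diag}(1,p^{1/2},\dots)$, where the entry $p^{k-\frac{n+2}{2}-\frac12}$ is precisely the value $X$ one must plug into $\tilde F_p(T,X)$ for the Cohen-Eisenstein series. For the Ikeda lift one plugs in the Satake parameter $\alpha_p$ of $f$ instead. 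Since $A_{2,2n-1}^p(X)\in\C[X+X^{-1}]$ by Lemma~\ref{lem:axinvariant}, and since the relation among Fourier coefficients is polynomial in the $p$-local parameter, the \emph{same} matrix identity holds with $p^{k-\frac{n+2}{2}-\frac12}$ replaced by $\alpha_p$; the only residual difference is the overall power of $p$ coming from the different weight normalisations of half-integral weight $k-\frac12$ in degree $2n-2$ versus the normalisations appearing in Theorem~\ref{thm:maass_e_half}, which accounts for the exponent $k(2n-3)-2n^2-n+\frac{11}{2}$ replacing $k(n-1)-\frac12(n^2+5n-5)$ after setting $n\mapsto 2n-2$ there. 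The matching of the $3\times2$ transition matrix $\begin{pmatrix}0&p^{2k-3}\\p^{k-2}&p^{k-2}\left(\frac{-m}{p}\right)\\0&1\end{pmatrix}$ and of $U_{p^2},U_p$ is then automatic, since these came from the degree-one computation (Lemma~\ref{lem:Eisen_eins}) and Proposition~\ref{prop:fourier_jacobi}, whose combinatorics of $f$, $D_0$ and $W_d$ depend only on $m$ and not on whether we sit over Eisenstein or cusp forms.

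To make the ``same $\tilde F_q$'' step rigorous I would recall the key structural input: the local Siegel series $\tilde F_q(T,X)$ appearing in Ikeda's formula is exactly the local factor governing both the Fourier coefficients of $E_k^{(2n)}$ (hence of $\mathcal H^{(2n-1)}_{k-\frac12}$ after applying $\sigma$ and taking Fourier-Jacobi coefficients) and of $I_{2n}(h)$; this is precisely why the Miyawaki-Ikeda lift exists. The action of an index-shift map $\tilde V_{\alpha,2n-2-\alpha}(p^2)$ on a Fourier coefficient $C(M,S)$ is, by the computations underlying \S\ref{ss:iota}--\S\ref{ss:compati}, a $\Z$-linear combination of coefficients $C(M',S')$ where the pairs $(M',S')$ differ from $(M,S)$ only by $p$-local data, and the coefficients of this combination are universal (independent of the modular form in question, only of its being in the relevant plus-space of index $1$). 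Therefore the entire identity of Theorem~\ref{th:maass_relation_half_cusp} follows formally from Theorem~\ref{thm:maass_e_half} by replacing the $p$-parameter $p^{k-\frac{n+2}{2}-\frac12}$ of the Cohen-Eisenstein series by the Satake parameter $\alpha_p$ of $f$, tracking only the global normalisation constant.

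The hard part will be making the last paragraph genuinely airtight rather than a slogan: one must verify that the explicit Laurent polynomial $\tilde F_q(T,X)$ that enters Ikeda's formula is literally the same rational function (in $X$, after the substitution $X\mapsto q^{-s}$ type change of variable) as the local density appearing in Boecherer's Satz~7 for the Fourier coefficients of $E_k^{(2n)}$, including all the $2$-adic normalisations --- since our statement must also cover $p=2$, where $\tilde V_{\alpha,2n-2-\alpha}(4)$ was only \emph{defined} via $\iota_{\mathcal M}$ in \S\ref{ss:hecke_p2}. I expect the cleanest route is: first prove the identity for odd $p$ by the specialisation argument above (where Proposition~\ref{prop:e_k_M_V} and Lemma~\ref{lem:coe_cm} line up term by term), then handle $p=2$ by the same commutative-diagram bookkeeping used to define the $p=2$ operators, noting that the matrix $A^2_{2,2n-1}(\alpha_2)$ is well-defined by Lemma~\ref{lem:axinvariant}. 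A secondary technical point to check is the convergence/absolute-convergence needed to integrate the Fourier-Jacobi expansion of $F$ term by term against $\overline{g(\omega)}$ in the definition of $\mathcal F_{h,g}$, but that is routine since all the $\phi_m$ are cusp forms and only enters in the \emph{application} of this theorem in \S\ref{s:maass_relation_siegel_cusp}, not in its proof.
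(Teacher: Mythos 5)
Your proposal follows essentially the same route as the paper's proof: express the Fourier coefficients of $\phi_m|\tilde{V}_{\alpha,2n-2-\alpha}(p^2)$ via Lemma~\ref{lem:coe_cm} as universal (form-independent) linear combinations of the products $\prod_{q}\tilde{F}_q(\,\cdot\,,\alpha_q)$, observe that the coefficients of $e^{(2n-2)}_{k-\frac12,m}$ have the identical shape with $\alpha_q=q^{k-n-\frac12}$, and transfer Theorem~\ref{thm:maass_e_half} by the substitution $p^{k-n-\frac12}\mapsto\alpha_p$ together with the bookkeeping $n\mapsto 2n-2$ in the normalising power of $p$ (which you verified correctly), handling $p=2$ through the definition of $\tilde{V}_{\alpha,2n-2-\alpha}(4)$ via $\iota_{\mathcal M}$. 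The one step you flag as needing tightening --- why a Laurent-polynomial identity in $X$ verified at the Eisenstein specialisation extends to $X=\alpha_p$ --- is exactly the ``well-known argument'' the paper also leaves implicit (after dividing out the global factor $p^{k(2n-3)}$ the coefficients are independent of $k$, so letting $k$ vary supplies infinitely many specialisation points), so your proof is at the same level of completeness as the paper's.
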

 \begin{proof}
 Let
 \begin{eqnarray*}
   \left(\phi_m|\tilde{V}_{\alpha,2n-2-\alpha}(p^2)\right)\!(\tau,z)
   &=&
   \sum_{\begin{smallmatrix}M \in Sym_{2n-2}^+,\ S\in \Z^{(2n-2,1)} \\ 4Mmp^2 - S{^t S} > 0 \end{smallmatrix}}
    C_m(\alpha;M,S)\ e(M\tau + S {^t z}).
 \end{eqnarray*}
 be the Fourier expansion of $\phi_m|\tilde{V}_{\alpha,2n-2-\alpha}(p^2)$.
 We first calculate the Fourier coefficients $C_m(\alpha;M,S)$.
 There exist matrices $N \in \Z^{(2n-1,2n-1)}$ and $R \in \Z^{(2n-1,1)}$ which satisfy
 $  4N - R {^t R}
  =
    \begin{pmatrix}
      M & \frac12 S \\
      \frac12 {^t S} & mp^2
    \end{pmatrix}$.
 We put
 $
  T
  =
  \begin{pmatrix}
    N & \frac12 R \\
    \frac12 {^t R} & 1
  \end{pmatrix}
 $.
 Due to Proposition~\ref{prop:iota_hecke} and
 due to the definition of $\tilde{V}_{\alpha,2n-2-\alpha}(4)$
 in \S\ref{ss:hecke_p2}, we can take $N$ and $R$ which satisfy 
 \begin{eqnarray*}
   T
   &=&
   \begin{pmatrix}
     N' & \frac12 R' \\
     \frac12 {^t R'} & \mathcal{M}\left[\smat{p}{0}{0}{1}\right]
   \end{pmatrix}
 \end{eqnarray*}
 with matrices $N' \in \Z^{(2n-2,2n-2)}$ and $R' \in \Z^{(2n-2,2)}$.

 We assume that $p$ is an odd prime.
 Let
 $\left\{\left(
     \begin{pmatrix}
       p^2 {^t D_i}^{-1} & B_i \\
       0 & D_i \end{pmatrix},
      \gamma_i\ p^{-n+1} \left(\det D_i\right)^{\frac12} \right) \right\}_i$
 be a complete set of the representatives of
 $\Gamma_0^{(n)}(4)^* \backslash \Gamma_0^{(n)}(4)^* Y \Gamma_0^{(n)}(4)^*$,
 where $Y$ is
 $Y = (\mbox{diag}(1_{\alpha},p 1_{2n-2-\alpha},
                p^2 1_{\alpha}, p 1_{2n-2-\alpha}),p^{\alpha/2})$
 and $\gamma_i$ is a root of unity
 (see~\cite[Prop.7.1]{Zhu:hecke} or ~\cite[Lemma 3.2]{Zhu:euler}
 for the detail of these representatives).
 Then by a straightforward calculation and from Lemma~\ref{lem:coe_cm} we obtain
 \begin{eqnarray} \label{eq:cm_alpha}
   C_m(\alpha;M,S)
   &=&
   p^{k(2n-3)+2n^2-\frac12-4n(n-1)} 
   c(|D_T|) f_T^{k-n-\frac12}\\
   \notag
   &&
   \times
   \sum_i \gamma_i \left(\det D_i\right)^{-n} e\!\left(\frac{1}{p^2} N {^t D_i} B_i\right)
   \prod_{q|f_{T\left[Q_i\right]}} \tilde{F}_q\left(T\left[Q_i\right],\alpha_q \right),
 \end{eqnarray}
 where
 $D_T$ is the fundamental discriminant and $f_T > 0$ is the natural number
 which satisfy 
 $\det(2T)= |D_T| {f_T}^2$,
 and where $Q_i = \mbox{diag}(p^{-1} {^t D_i}, p^{-1},1) \in \Qq^{(2n,2n)}$.
 The number $c(|D_T|)$ is the $|D_T|$-th Fourier coefficient of $h$.
 
 By virtue of the definition of $\tilde{V}_{\alpha,2n-2-\alpha}(4)$
 the identity~(\ref{eq:cm_alpha}) also holds for $p=2$.
 
 For any prime $p$ the $(M,S)$-th Fourier coefficients of
 $\phi_{\frac{m}{p^2}}|U_{p^2}$,
 $\phi_m|U_p$
 and $\phi_{mp^2}$ are
 $C_\frac{m}{p^2}(M,p^{-2} S)$,
 $C_m(M,p^{-1} S)$
 and
 $C_{mp^2}(M,S)$, respectively.
 These are
 \begin{eqnarray*}
     C_\frac{m}{p^2}(M,p^{-2} S)
   &=&
     p^{-2(k-n-\frac12)} c(|D_T|) f_T^{k-n-\frac12}
     \prod_{q|f_Tp^{-2}} \tilde{F}_q\left( T_0, \alpha_q \right),
   \\
     C_m(M,p^{-1} S)
   &=&
     p^{-(k-n-\frac12)} c(|D_T|) f_T^{k-n-\frac12}
     \prod_{q|f_Tp^{-1}} \tilde{F}_q\left( T_1, \alpha_q \right)
 \end{eqnarray*}
 and
 \begin{eqnarray*}
     C_{mp^2}(M,S)
   &=&
     c(|D_T|) f_T^{k-n-\frac12}
     \prod_{q|f_T} \tilde{F}_q\left( T, \alpha_q \right),
 \end{eqnarray*}
 respectively,
 where we put
 $T_0 = T \left[\left(\begin{smallmatrix}
                             1_{2n-2} & 0 & 0 \\ 
                             0 & p^{-2} & 0 \\
                             0 & 0 & 1
                                \end{smallmatrix}\right)\right]$
 and
 $T_1 = T \left[\left(\begin{smallmatrix}
                             1_{2n-2} & 0 & 0 \\ 
                             0 & p^{-1} & 0 \\
                             0 & 0 & 1
                                \end{smallmatrix}\right)\right]$.
 Note that if $p^{-1} S \in \Z^{(2n-2,1)}$, then $f_T$ is  divisible by $p$,
 and if $p^{-2} S \in \Z^{(2n-2,1)}$, then $f_T$ is divisible by $p^2$.

 Note that the Fourier coefficients of $e_{k-\frac12,m}^{(2n-2)}|\tilde{V}_{\alpha,2n-2-\alpha}(p^2)$,
 $e_{k-\frac12,\frac{m}{p^2}}^{(2n-2)}|U_{p^2}$,
 $e_{k-\frac12,m}^{(2n-2)}|U_p$ and $e_{k-\frac12,mp^2}^{(2n-2)}$ have the
 same form of the above expressions by substituting
 $\alpha_q = q^{k-n-\frac12}$ and
 by replacing $c(|D_T|)$ by $c\, h_{k-n+\frac12}(|D_T|)$,
 where $h_{k-n+\frac12}(|D_T|)$ is the $|D_T|$-th Fourier coefficient
 of the Cohen-Eisenstein series $\mathcal{H}_{k-n+\frac12}^{(1)}$ of weight $k-n+\frac12$,
and where
$c := c_{k,2n} = 2^n \zeta(1-k)^{-1} \prod_{i=1}^n \zeta(1+2i-2k)^{-1}$.
 Hence from Theorem~\ref{thm:maass_e_half} and from a well-known argument,
 we can conclude that the polynomial
  \begin{eqnarray*}
    p^{k(2n-3)+2n^2-\frac12-4n(n-1)} 
   \sum_i \gamma_i \left(\det D_i\right)^{-n} e\!\left(\frac{1}{p^2} N {^t D_i} B_i\right)
   \prod_{q|f_{T\left[Q_i\right]}} \tilde{F}_q\left(T\left[Q_i\right],X \right)
 \end{eqnarray*}
 of $X$ coincides the $(\alpha+1)$-th component of the vector
 \begin{eqnarray*}
   &&
      p^{k(2n-3)-2 n^2 - n + \frac{11}{2}}\\
   &&
   \times
   \left( 
       p^{-2(k-n-\frac12)} \prod_{q|f_Tp^{-2}} \tilde{F}_q\left( T_0, X \right),\
       p^{-(k-n-\frac12)} \prod_{q|f_Tp^{-1}} \tilde{F}_q\left( T_1, X \right),\
       \prod_{q|f_T} \tilde{F}_q\left( T, X \right)
   \right)
  \\
  &&
   \times
     \begin{pmatrix}
           0 & p^{2k-3} \\
           p^{k-2} & p^{k-2} \left(\frac{-m}{p}\right) \\
           0 & 1 \end{pmatrix}
    A_{2,2n-1}^{p}\!\left(X\right)\,
   \mbox{diag}(1,p^{1/2},...,p^{(2n-2)/2}).
 \end{eqnarray*}

 Therefore $C_m(\alpha;M,S)$ coincides the $(\alpha+1)$-th component of the vector
 \begin{eqnarray*}
  &&
     p^{k(2n-3)-2 n^2 - n + \frac{11}{2}}
    \left(C_{\frac{m}{p^2}}(M,p^{-2}S),\, C_m(M,p^{-1}S),\, C_{mp^2}(M,S) \right)\\
  &&
    \times
    \begin{pmatrix}
            0 & p^{2k-3} \\
            p^{k-2} & p^{k-2} \left(\frac{-m}{p}\right) \\
            0 & 1 \end{pmatrix}
    A_{2,2n-1}^{p}\!\left(\alpha_p\right)\,
    \mbox{diag}(1,p^{1/2},...,p^{(2n-2)/2}).
 \end{eqnarray*}
 Thus we conclude this theorem.
 \end{proof}

Let $\tilde{T}_{\alpha,2n-2-\alpha}(p^2)$ be the Hecke operator
introduced in \S\ref{ss:hecke_op_siegel_half}
and let $L(s,\mathcal{F})$ be the $L$-function
for a Hecke eigenform $\mathcal{F} \in S_{k-\frac12}^+(\Gamma_0^{(m)}(4))$
introduced in \S\ref{ss:L_function_siegel_half}.
\begin{theorem}\label{th:main}
  Let $k$ be an even integer and $n$ be a non-negative integer.
  Let $h \in S_{k-n+\frac12}^+(\Gamma_0^{(1)}(4))$
  and $g \in S_{k-\frac12}^+(\Gamma_0^{(1)}(4))$
  be eigenforms for all Hecke operators.
  Then there exists a $\mathcal{F}_{h,g} \in S_{k-\frac12}^+(\Gamma_0^{(2n-2)})$.
  Under the assumption that $\mathcal{F}_{h,g}$ is not identically zero, then $\mathcal{F}_{h,g}$
  is an eigenform with the $L$-function which satisfies
  \begin{eqnarray*}
    L(s,\mathcal{F}_{h,g})
    &=&
    L(s,g) \prod_{i=1}^{2n-3} L(s-i,h).
  \end{eqnarray*}
\end{theorem}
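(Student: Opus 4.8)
The plan is to combine the generalized Maass relation for the Fourier--Jacobi coefficients $\phi_m$ of $F$ (Theorem~\ref{th:maass_relation_half_cusp}) with the integral-against-$g$ construction of $\mathcal{F}_{h,g}$. First I would record that the map $\phi\mapsto\mathcal{F}_{h,g}$ intertwines the index-shift operators $\tilde V_{\alpha,2n-2-\alpha}(p^2)$ on $\phi_m$ with the Hecke operators $\tilde T_{\alpha,2n-3-\alpha}(p^2)$ on the degree-$(2n-2)$ side, together with the $\omega$-integral against $g$. Concretely, writing $\mathcal{F}_{h,g}$ as the Petersson-type integral of $F\bigl(\smat{\tau}{0}{0}{\omega}\bigr)\overline{g(\omega)}$ and expanding $F$ in Fourier--Jacobi series, the $m$-th term contributes only through $\phi_m$ restricted to $z=0$, i.e.\ through the Witt operator $\mathbb{W}(\phi_m)$, twisted by the $m$-th Fourier coefficient of $g$. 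Since $g$ is a Hecke eigenform with eigenvalues recorded by its $p$-parameters $\{\mu_{0,p}^2,\mu_{1,p}^\pm\}$, the relation between $c_g(m)$, $c_g(mp^2)$, $c_g(m/p^2)$ is exactly the one that lets the $2\times(n+1)$ matrix in Theorem~\ref{th:maass_relation_half_cusp} be contracted against the vector $(c_g(m/p^2),c_g(m),c_g(mp^2))$, producing a \emph{scalar} multiple of $\mathbb{W}(\phi_m)$ summed over $m$ --- that scalar being the eigenvalue of $\mathcal{F}_{h,g}$ under $\tilde T_{\alpha,2n-3-\alpha}(p^2)$.

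Next I would carry out the bookkeeping of this contraction. Theorem~\ref{th:maass_relation_half_cusp} expresses $\phi_m|(\tilde V_{0,2n-2}(p^2),\dots,\tilde V_{2n-2,0}(p^2))$ as
\[
 p^{k(2n-3)-2n^2-n+\frac{11}{2}}\bigl(\phi_{m/p^2}|U_{p^2},\,\phi_m|U_p,\,\phi_{mp^2}\bigr)
 \begin{pmatrix}0&p^{2k-3}\\ p^{k-2}&p^{k-2}\left(\tfrac{-m}{p}\right)\\ 0&1\end{pmatrix}
 A_{2,2n-1}^p(\alpha_p)\,\mathrm{diag}(1,p^{1/2},\dots,p^{n-1}).
\]
Applying $\mathbb{W}$ and integrating against $\overline{g}$ turns $\phi_{m/p^2}|U_{p^2}$, $\phi_m|U_p$, $\phi_{mp^2}$ (evaluated at $z=0$) into contributions weighted by $c_g(m/p^2)$, $c_g(m)$, $c_g(mp^2)$, after the change of summation variable $m\mapsto mp^2$ etc.\ inside the integral; here one uses that the Witt restriction commutes with $U_a$ up to the substitution $z=0$, and that $\tilde V_{\alpha,\bullet}(p^2)$ restricts under $\mathbb{W}$ to $\tilde T_{\alpha,\bullet}(p^2)$ (the same compatibility used in the proof of Proposition~\ref{prop:imageofpsim}). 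The eigenform hypothesis on $g$ gives $c_g(mp^2)$ and $c_g(m/p^2)$ as polynomials in the $p$-parameters times $c_g(m)$, so the whole expression collapses to $c_g(m)$ times a fixed vector depending only on $p$, $\alpha_p$ and the $p$-parameters of $g$. Reading off the resulting scalars as $\lambda_{\mathcal{F}_{h,g}}(\tilde T_{\alpha,2n-3-\alpha}(p^2))$ and then invoking the isomorphism $\Psi_{2n-3}$ of \S\ref{ss:L_function_siegel_half} (with the explicit matrix form from Proposition~\ref{prop:imageofpsim}) identifies the Euler factor $Q_{\mathcal{F}_{h,g},p}$ with the product of $Q_{g,p}(z)$ and the factors coming from $A_{2,2n-1}^p$. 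Since $A_{2,2n-1}^p(\alpha_p)=\prod_{l=2}^{2n-2}B_{l,l+1}(p^{n-l}\alpha_p)$ and $(\alpha_p+\alpha_p^{-1})p^{k-n-1/2}$ is the $p$-th coefficient of $f$, matching these factors against the Satake parameters of the Ikeda lift $I_{2n}(h)$ yields $\prod_{i=1}^{2n-3}(1-\alpha_p p^{k-n-1/2+i-s+\cdots})(1-\alpha_p^{-1}p^{\cdots})$, i.e.\ $\prod_{i=1}^{2n-3}L(s-i,h)$ after the substitution $s\mapsto s-i$; hence $L(s,\mathcal{F}_{h,g})=L(s,g)\prod_{i=1}^{2n-3}L(s-i,h)$.

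Before all of this one must verify the elementary claims: that $\mathcal{F}_{h,g}$ lies in $S_{k-\frac12}^+(\Gamma_0^{(2n-2)}(4))$ (it is a cusp form because $F$ is, and it lies in the plus-space because $F$ does and the plus-space condition on Fourier coefficients survives restriction and integration), and that the $n=0,1$ degenerate cases are consistent (for $n\le 1$ the product $\prod_{i=1}^{2n-3}$ is empty or the statement is vacuous). I would also note the normalization: with the Fourier coefficient of $f$ at $e^{2\pi iz}$ set to $1$, the constant $c=c_{k,2n}$ appearing in the comparison with the Cohen--Eisenstein case drops out of the eigenvalue computation, so the eigenvalues are exactly those predicted by $A_{2,2n-1}^p$.

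The hard part will be the last matching step: transporting the generalized Maass relation, which is stated for the \emph{vector} of Hecke operators $\tilde T_{\alpha,2n-3-\alpha}(p^2)$ indexed by $\alpha$, through the isomorphism $\Psi_{2n-3}$ to a clean identity of Euler factors, and in particular checking that the right-hand matrix $\begin{pmatrix}0&p^{2k-3}\\ p^{k-2}&p^{k-2}\left(\tfrac{-m}{p}\right)\\ 0&1\end{pmatrix}$ contracted with the $g$-coefficient vector reproduces precisely $Q_{g,p}$ and no spurious factor. This requires knowing that the $p$-parameters of $g$ satisfy $\mu_{0,p}^2\mu_{1,p}=p^{2k-3}$ (the relation recorded in \S\ref{ss:L_function_siegel_half} for $m=1$) and that the Legendre-symbol term assembles into the correct middle coefficient of $Q_{g,p}$; the bookkeeping of $p$-powers coming from the various normalizing factors ($p^{k(2n-3)-2n^2-n+11/2}$, the $\mathrm{diag}$, the weight shifts in $B_{l,l+1}$) must be tracked carefully, but it is mechanical once the structure above is in place.
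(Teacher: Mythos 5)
Your proposal follows essentially the same route as the paper: restrict via the Witt operator, recognize the contraction of the Maass-relation matrix against the $g$-data as the degree-one Hecke operator $\tilde{T}_{1,0}(p^2)$ acting in the $\omega$-variable (equivalently, the Fourier-coefficient recursion for the eigenform $g$ that you invoke), and then read off the $p$-parameters through Proposition~\ref{prop:imageofpsim} to identify the Euler factors. The only correction needed is an indexing slip: since $\mathcal{F}_{h,g}$ has degree $2n-2$, the relevant Hecke operators and Hecke-ring isomorphism are $\tilde{T}_{\alpha,2n-2-\alpha}(p^2)$ and $\Psi_{2n-2}$, not $\tilde{T}_{\alpha,2n-3-\alpha}(p^2)$ and $\Psi_{2n-3}$.
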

  \begin{proof}
  The construction of $\mathcal{F}_{h,g}$ is stated in the above:
  \begin{eqnarray*}
      \mathcal{F}_{h,g}(\tau)
      &=&
      \frac16
      \int_{\Gamma_1\backslash \H_1}
        F\!\left(
              \begin{pmatrix}
                 \tau & 0\\
                 0 & \omega
              \end{pmatrix}
            \right)
        \overline{g(\omega)}\,
        \mbox{Im}(\omega)^{k-\frac52} \, d\omega,
  \end{eqnarray*}
  where $F \in S_{k-\frac12}^{+}(\Gamma_0^{2n-1}(4))$ is constructed from $h$.
  By the definition of $\tilde{V}_{\alpha,2n-2-\alpha}(p^2)$ and due to
  Theorem~\ref{th:maass_relation_half_cusp} we have
  \begin{eqnarray*}
    &&  \!\!\!\!\!\!\!\!\!\!
      \phi_m(\tau,0)|\left(\tilde{T}_{0,2n-2}(p^2),...,\tilde{T}_{2n-2,0}(p^2)\right)\\
    &=&
      \left(\phi_m|\left(\tilde{V}_{0,2n-2}(p^2),...,\tilde{V}_{2n-2,0}(p^2) \right)\right)(\tau,0)\\
    &=&
      p^{k(2n-3)-2 n^2 - n + \frac{11}{2}}
      \left(\left(\phi_{\frac{m}{p^2}}|U_{p^2}\right)(\tau,0),\
             \left(\phi_m|U_p\right)(\tau,0),\
             \phi_{mp^2}(\tau,0) \right)\\
    &&
      \times
      \begin{pmatrix}
              0 & p^{2k-3} \\
              p^{k-2} & p^{k-2} \left(\frac{-m}{p}\right) \\
              0 & 1 \end{pmatrix}
      A_{2,2n-1}^{p}\!\left(\alpha_p\right)
        \mbox{diag}(1,p^{1/2}, \cdots , p^{(2n-2)/2})
    \\
    &=&
        p^{k(2n-3)-2 n^2 - n + \frac{11}{2}}
    \left(\phi_{\frac{m}{p^2}}(\tau,0),\
         \phi_m(\tau,0),\
         \phi_{mp^2}(\tau,0) \right)\\
    &&
    \times
    \begin{pmatrix}
          0 & p^{2k-3} \\
          p^{k-2} & p^{k-2} \left(\frac{-m}{p}\right) \\
          0 & 1 \end{pmatrix}
    A_{2,2n-1}^{p}\!\left(\alpha_p\right)
     \mbox{diag}(1,p^{1/2}, \cdots , p^{(2n-2)/2}).
  \end{eqnarray*}
  We remark
  \begin{eqnarray*}
    &&  \!\!\!\!\!\!\!\!\!\!
      \sum_{\begin{smallmatrix} m \\ m \equiv 0, 3 \!\!\! \mod 4 \end{smallmatrix}}
      \left(
        p^{2k-3} \phi_{\frac{m}{p^2}}(\tau,0) +
        p^{k-2}\left(\frac{-m}{p}\right) \phi_m(\tau,0) +
        \phi_{mp^2}(\tau,0)
      \right)
      e(m\omega) \\
    &=&
      F\left(\begin{pmatrix} \tau & 0 \\ 0 & \omega \end{pmatrix}\right)
      |\tilde{T}_{1,0}(p^2).
  \end{eqnarray*}  
  Thus
  \begin{eqnarray*}
    &&  \!\!\!\!\!\!\!\!\!\!
      F\left(\begin{pmatrix} \tau&0\\0&\omega \end{pmatrix}\right)
      \bigg| \left(\tilde{T}_{0,2n-2}(p^2),...,\tilde{T}_{2n-2,0}(p^2) \right)\\
    &=&
        \sum_{\begin{smallmatrix} m \\ m \equiv 0, 3 \!\!\! \mod 4 \end{smallmatrix}}
        \left\{
          \phi_m(\tau,0)\bigg| \left(\tilde{T}_{0,2n-2}(p^2),...,\tilde{T}_{2n-2,0}(p^2) \right)
        \right\} e(m\omega)\\
    &=&
         p^{k(2n-3)-2 n^2 - n + \frac{11}{2}}
        \sum_{\begin{smallmatrix} m \\ m \equiv 0, 3 \!\!\! \mod 4 \end{smallmatrix}}
        \Bigg\{
            \left(\phi_{\frac{m}{p^2}}(\tau,0),\
               \phi_m(\tau,0),\
               \phi_{mp^2}(\tau,0) \right)\\
    &&
      \times
      \begin{pmatrix}
            0 & p^{2k-3} \\
            p^{k-2} & p^{k-2} \left(\frac{-m}{p}\right) \\
            0 & 1 \end{pmatrix}
      e(m\omega)
      \Bigg\}\
      A_{2,2n-1}^{p}\!\left(\alpha_p\right)
              \mbox{diag}(1,p^{1/2}, \cdots , p^{(2n-2)/2})\\
    &=&
         p^{k(2n-3)-2 n^2 - n + \frac{11}{2}}
      \left(
        F\left(\begin{pmatrix} \tau & 0 \\ 0 & \omega \end{pmatrix}\right)
        \bigg|_{\omega}
        \left(\tilde{T}_{0,1}(p^2),\ \tilde{T}_{1,0}(p^2)\right)
      \right)\\
    &&
      \times
      A_{2,2n-1}^{p}\!\left(\alpha_p\right)
              \mbox{diag}(1,p^{1/2}, \cdots , p^{(2n-2)/2}).
  \end{eqnarray*}

  Hence
  \begin{eqnarray*}
    &&  \!\!\!\!\!\!\!\!\!\!
      \mathcal{F}_{h,g}|\left(\tilde{T}_{0,2n-2}(p^2),...,\tilde{T}_{2n-2,0}(p^2) \right)\\
    &=&
      \int_{\Gamma_1 \backslash \H_1}
        \left(
          F\left(
              \begin{pmatrix}
                \tau & 0 \\
                0 & \omega
              \end{pmatrix}
            \right)
          \bigg|_{\tau}\left(\tilde{T}_{0,2n-2}(p^2),...,\tilde{T}_{2n-2,0}(p^2) \right) 
        \right)
        \overline{g(\omega)}\,
        \mbox{Im}(\omega)^{k-\frac52}\, d\omega\\
    &=&
      p^{k(2n-3)-2 n^2 - n + \frac{11}{2}}\\
    &&
      \times
      \int_{\Gamma_1 \backslash \H_1}
        \left(
          F\left(
              \begin{pmatrix}
                \tau & 0 \\
                0 & \omega
              \end{pmatrix}
            \right)
          \bigg|_{\omega}
          \left(\tilde{T}_{0,1}(p^2),\ \tilde{T}_{1,0}(p^2)\right)
        \right)
        \overline{g(\omega)} \, \mbox{Im}(\omega)^{k-\frac52}\, d\omega\\
    &&
      \times
      A_{2,2n-1}^{p}\!\left(\alpha_p\right)
      \mbox{diag}(1,p^{1/2}, \cdots, p^{(2n-2)/2}).
 \end{eqnarray*}
   Let $b(p)$ be the eigenvalue of $g$ with respect to $\tilde{T}_{1,0}(p^2)$.
 We remark that $b(p)$ is a real number.
 We have
 \begin{eqnarray} \label{eq:eigenvalue_Ffg}
 \begin{aligned}
   &
      \mathcal{F}_{h,g}|\left(\tilde{T}_{0,2n-2}(p^2),...,\tilde{T}_{2n-2,0}(p^2) \right)\\
   &=
      p^{k(2n-3)-2 n^2 - n + \frac{11}{2}}
      \mathcal{F}_{h,g}(\tau)\\
   & \qquad \times
      \left\{
        (p^{k-2}, b(p))
        A_{2,2n-1}^{p}\!\left(\alpha_p\right)
        \mbox{diag}(1,p^{1/2}, \cdots , p^{(2n-2)/2})
      \right\}.
    \end{aligned}
  \end{eqnarray}
  Therefore $\mathcal{F}_{h,g}$ is an eigenform for any
  $\tilde{T}_{\alpha,2n-2-\alpha}(p^2)$.

  Let $\{\beta_p^{\pm}\}$ be the set of complex numbers which satisfy
  \begin{eqnarray*}
    1 - b(p) z + p^{2k-3} z^2
    &=&
    (1 - \beta_p        p^{k-3/2} z)
    (1 - \beta_p^{-1} p^{k-3/2} z).
  \end{eqnarray*}
  Let
  $\{\mu_{0,p}^2,\mu_{1,p}^{\pm},...\mu_{2n-2,p}^{\pm}\}$ be the $p$-parameters
  of $\mathcal{F}_{h,g}$ (see \S\ref{ss:L_function_siegel_half}
  for the definition of $p$-parameters).
  We remark $\mu_{0,p}^2 \mu_{1,p} \cdots \mu_{2n-2,p} = p^{2(n-1)(k-n)}$.

  We now assume that $p$ is an odd prime.

  Let $\Psi_{2n-2}(K_{\alpha}^{(2n-2)}) \in R_{2n-2}$ be the Laurent polynomial
  of $\{z_i\}_{i = 0,...,2n-2}$ introduced in
  \S\ref{ss:L_function_siegel_half}.
  The explicit formula of $\Psi_{2n-2}(K_{\alpha}^{(2n-2)})$ was obtained in
  Proposition~\ref{prop:imageofpsim}.
  The eigenvalue of $\mathcal{F}_{h,g}$ for
  $\tilde{T}_{\alpha,2n-2-\alpha}(p^2)$ $(\alpha = 0,...,2n-2)$
  is obtained by substituting $z_i = \mu_i$ into
  $\Psi_{2n-2}(K_{\alpha}^{(2n-2)})$.
  We remark that the eigenvalue of $\mathcal{F}_{h,g}$ for $\tilde{T}_{0,2n-2}(p^2)$
  is $p^{(n-1)(2k-4n+1)}$.

  From the identities~(\ref{eq:eigenvalue_Ffg}) and~(\ref{eq:isom_p_half}),
  we obtain
  \begin{eqnarray}
  \begin{aligned}\label{eq:mu_and_alpha}
    & p^{2n^2-6n+5} (p^{-1/2}, \mu_{1,p} + \mu_{1,p}^{-1})
    \prod_{l=2}^{2n-2} B_{l,l+1}(\mu_{l,p}) \\
    &=
    p^{2n^2-6n+5}(p^{-1/2}, \beta_p + \beta_p^{-1})
    \prod_{l=2}^{2n-2} B_{l,l+1}(p^{n-l} \alpha_p).
  \end{aligned}
  \end{eqnarray}
  Here the components of the vectors in the above identity (\ref{eq:mu_and_alpha})
  are eigenvalues of $\mathcal{F}_{h,g}$ for
  \[
  \tilde{T}_{0,2n-2}(p^2)^{-1} \tilde{T}_{\alpha,2n-2-\alpha}(p^2) \qquad (\alpha = 0,...,2n-2).
  \]

  If we substitute $z_1 = \beta_p$ and $z_i = p^{n-i} \alpha_p$ $(i = 2,...,2n-2)$
  into the Laurent polynomial $(\Psi_{2n-2}(K_0^{(2n-2)}))^{-1} \Psi_{2n-2}(K_{\alpha}^{(2n-2)})$,
  then due to (\ref{eq:mu_and_alpha}) this value is
  the eigenvalue of $\mathcal{F}_{h,g}$ for
  $\tilde{T}_{0,2n-2}(p^2)^{-1} \tilde{T}_{\alpha,2n-2-\alpha}(p^2)$.
  Because $R_{2n-2}$ is generated by $\Psi_{2n-2}(K_{\alpha}^{(2n-2)})$
  $(\alpha = 0,...,2n-2)$ and $\Psi_{2n-2}(K_0^{(2n-2)})^{-1}$
  and  because of the fact that the $p$-parameters are uniquely determined up to the action of
  the Weyl group $W_2$, 
  we therefore can take the $p$-parameters
  $\left\{\mu_{1,p}^{\pm}, \cdots, \mu_{2n-2,p}^{\pm}\right\}$ of $\mathcal{F}_{h,g}$ as
  \[
  \left\{ \beta_p^{\pm},
   p^{n-2}\alpha_p^{\pm}, p^{n-3}\alpha_p^{\pm}, \cdots, p^{-n+2}\alpha_p^{\pm} \right\}.
  \]
  Hence the Euler $p$-factor $Q_{\mathcal{F}_{h,g},p}(z)$
  of $\mathcal{F}_{h,g}$
  for odd prime $p$ is
  \begin{eqnarray}\label{eq:euler_p_factor}
  \begin{aligned}
    Q_{\mathcal{F}_{h,g},p}(z)
    &=
    \prod_{i=1}^{2n-2}
    \left\{ \left( 1 - \mu_{i,p} z \right)
    \left( 1 - \mu_{i,p}^{-1} z \right)
    \right\} \\
    &=
    \left(1 - \beta_p z \right)
    \left(1 - \beta_p^{-1} z \right)
    \prod_{i=1}^{2n-3}
    \left\{
    \left(1 - \alpha_p p^{-n+i} z \right)
    \left(1 - \alpha_p^{-1} p^{-n+i}  z\right)      
    \right\}.
  \end{aligned}
  \end{eqnarray}

  We now consider the case $p=2$.
  The identity~(\ref{eq:eigenvalue_Ffg}) is also valid for $p=2$.
   Because $\tilde{\gamma}_{j,2}$ is defined in the same formula
   as in the case of odd primes, we also obtain  the identity (\ref{eq:euler_p_factor})
  for $p=2$.

  Thus we conclude
  \begin{eqnarray*}
   L(s,\mathcal{F}_{h,g})
  &=&
   \prod_p     \prod_{i=1}^{2n-2}
    \left\{ \left( 1 - \mu_{i,p} p^{-s+k-\frac32} \right)
    \left( 1 - \mu_{i,p}^{-1} p^{-s+k-\frac32} \right)
    \right\}^{-1} \\
  &=&
  L(s,g) \prod_{i=1}^{2n-3} L(s-i,h).
  \end{eqnarray*}
  \end{proof}

\section{Examples of non-vanishing}\label{s:examples_non_vanishing}

\begin{lemma}
 The form $\mathcal{F}_{h,g}$ in Theorem~\ref{th:main} is not identically zero,
 if $(n,k)$ $=$ $(2,12)$, $(2,14)$, $(2,16)$, $(2,18)$, $(3,12)$, $(3,14)$, $(3,16)$,
$(3,18)$,
$(3,20)$, $(4,10)$, $(4,12)$, $(4,14)$, $(4,16)$, $(4,18)$, $(4,20)$, $(5,14)$, $(5,16)$,
$(5,18)$, $(5,20)$, $(6,12)$, $(6,14)$, $(6,16)$, $(6,18)$ or $(6,20)$.
\end{lemma}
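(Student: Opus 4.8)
\emph{Proof plan.}
The claim is that for each listed pair $(n,k)$ the lift $\mathcal{F}_{h,g}$ does not vanish identically, and since no unconditional non-vanishing statement is available (this is precisely the hypothesis left open in Theorem~\ref{th:main}), the only route is to exhibit, for each such pair, one Fourier coefficient of $\mathcal{F}_{h,g}$ that is provably non-zero. First I would check that the relevant spaces are non-trivial so that $h$ and $g$ exist: by the Shimura--Kohnen correspondence $S_{k-n+\frac12}^+(\Gamma_0^{(1)}(4))\cong S_{2(k-n)}(\Gamma_1)$ and $S_{k-\frac12}^+(\Gamma_0^{(1)}(4))\cong S_{2k-2}(\Gamma_1)$, and $\dim S_{2(k-n)}(\Gamma_1)\ge 1$, $\dim S_{2k-2}(\Gamma_1)\ge 1$ for every pair in the list; fix any Hecke eigenforms $h$, $g$ in these spaces (there may be Galois-conjugate choices when the corresponding elliptic cusp form space has dimension $>1$, e.g.\ for $k=16,18,20$ on the $g$-side, or on the $h$-side when $\dim S_{2(k-n)}(\Gamma_1)=2$).

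Next I would reduce the non-vanishing to a finite computation. By construction $\mathcal{F}_{h,g}(\tau)$ is, up to the constant $\tfrac16$, the Petersson projection onto $g$ of the cusp form $\omega\mapsto F(\smat{\tau}{0}{0}{\omega})$; since $g$ is a Hecke eigenform this vanishes identically in $\tau$ if and only if that $\omega$-function has zero $g$-component for every $\tau$. Writing $F(\smat{\tau}{0}{0}{\omega})=\sum_{m}\mathbb{W}(\phi_m)(\tau)\,e(m\omega)$ and extracting the coefficient of $e(M\tau)$ for a fixed $M\in\mbox{Sym}_{2n-2}^+$, one obtains a plus-space cusp form $G_M(\omega)=\sum_{m\equiv 0,3\,(4)}\bigl(\sum_{S}C_m(M,S)\bigr)e(m\omega)$ in $S_{k-\frac12}^+(\Gamma_0^{(1)}(4))$, whose Petersson pairing with $g$ is the $M$-th Fourier coefficient of $\mathcal{F}_{h,g}$. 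So it suffices to produce one $M$ with $\langle G_M,g\rangle\neq 0$. The coefficients $C_m(M,S)$ are given explicitly by Lemma~\ref{lem:coe_cm} in terms of the Fourier coefficients of the Ikeda lift $I_{2n}(h)$, namely $c(|D_T|)\,f_T^{k-n-\frac12}\prod_{q\mid f_T}\tilde{F}_q(T,\alpha_q)$ with $T\in\mbox{Sym}_{2n}^+$ assembled from $M$, $S$, $m$ as in that lemma; the $c(\cdot)$ are the Fourier coefficients of the Shimura lift $h$ of the weight-$2(k-n)$ eigenform $f$, the $\alpha_q$ are its Satake parameters, and the $\tilde{F}_q(T,X)$ are the explicit local Siegel-series polynomials. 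Choosing $M$ of minimal determinant keeps the set of matrices $T$ that occur finite and of bounded size, so $G_M$ is computed in exact arithmetic; then $\langle G_M,g\rangle\neq 0$ is decided by transporting $G_M$ into $S_{2k-2}(\Gamma_1)$ under Shimura--Kohnen and checking, by linear algebra on finitely many Fourier coefficients, that its projection onto the $g$-eigencomponent is non-zero (equivalently, by a truncated Rankin--Selberg evaluation of the pairing).

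The step I expect to be the main obstacle is not conceptual but combinatorial: for the larger values of $n$ — up to $n=6$, where $\mathcal{F}_{h,g}$ has degree $2n-2=10$ and the indices $T$ lie in $\mbox{Sym}_{12}^+$ — one must enumerate the reduced positive-definite forms $T$ of bounded determinant, compute each $(D_T,f_T)$ and each local polynomial $\tilde{F}_q(T,X)$, and carry the (possibly real-quadratic) Hecke data of $f$ and $g$ through exactly. For each of the $24$ listed pairs the plan is then to run this computation, record the chosen Fourier coefficient of $\mathcal{F}_{h,g}$, and observe that it is non-zero; the numerical evaluation of these coefficients supplies the values, and an exact recomputation certifies the non-vanishing.
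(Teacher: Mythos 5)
Your plan is essentially the paper's own argument: the paper restricts $F$ to the diagonal, forms the coefficients $K(M,m)=\sum_S C_m(M,S)$ exactly from the Ikeda-lift Fourier coefficients (via Katsurada's formula for the Siegel series), and extracts $A(M;\mathcal{F}_{h,g})$ from the relation $K(M,m)=\sum_g \langle g,g\rangle^{-1}A(M;\mathcal{F}_{h,g})\,A(m;g)$ by linear algebra on the known coefficients $A(m;g)$ of the plus-space eigenforms. This is precisely your projection of $G_M$ onto the $g$-eigencomponent, so the two computations coincide.
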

\begin{proof}

Let $h \in S_{k-n+\frac12}^+(\Gamma_0^{(1)}(4))$,
$F \in S_{k-\frac12}^+(\Gamma_0^{(2n-1)}(4))$
and
$\mathcal{F}_{h,g} \in S_{k-\frac12}^+(\Gamma_0^{(2n-2)}(4))$ be the same symbols in \S\ref{s:maass_relation_siegel_cusp}.
We have
\begin{eqnarray}\label{eq:Fhgg}
  F\left(\begin{pmatrix} \tau & 0 \\ 0 & \omega \end{pmatrix} \right)
  &=&
  \sum_{g} \frac{1}{\langle g, g \rangle} \mathcal{F}_{h,g}(\tau) g(\omega).
\end{eqnarray}
Here in the summation $g$ runs over a basis of $S_{k-\frac12}^+(\Gamma_0^{(1)}(4))$
which consists of Hecke eigenforms.

On the other hand, we have
\begin{eqnarray}\label{eq:Fhgg2}
  F\left(\begin{pmatrix} \tau & 0 \\ 0 & \omega \end{pmatrix} \right)
  &=&
  \sum_{M \in Sym_{2n-2}^+,\, m \in Sym_1^+} K(M,m) e(N\tau) e(m\omega),
\end{eqnarray}
where
\begin{eqnarray*}
  K(M,m)
  &=&
  \sum_{\begin{smallmatrix}S \in \Z^{(2n-2,1)} \\ 4Mm - S {^t S} > 0 \end{smallmatrix}} C_m(M,S)
\end{eqnarray*}
and where $C_m(M,S)$ is the $\begin{pmatrix} M & S \\ {^t S} & m \end{pmatrix}$-th
Fourier coefficient of $F$.
By using a computer algebraic system and Katsurada's formula for Siegel series \cite{Katsu},
we can compute the explicit values
of Fourier coefficients $C_m(M,S)$.
Hence we can also compute some Fourier coefficients $K(M,m)$.

By virtue of the identities (\ref{eq:Fhgg}) and (\ref{eq:Fhgg2}), we obtain
\begin{eqnarray*}
  K(M,m)
  &=&
  \sum_{g} \frac{1}{\langle g,g \rangle} A(M; \mathcal{F}_{h,g}) A(m;g),
\end{eqnarray*}
where $A(M;\mathcal{F}_{h,g})$ is the $M$-th Fourier coefficient of $\mathcal{F}_{h,g}$
and where $A(m;g)$ is the $m$-th Fourier coefficient of $g$.
Here Fourier coefficients $A(m;g)$ are calculated through the structure theorem
of Kohnen plus space~\cite{Ko}.
Therefore we can calculate some Fourier coefficients $A(M;\mathcal{F}_{h,g})$.

For example, if $(n,k) = (2,10)$, then $k-1/2 = 19/2$ and $k-n+1/2= 17/2$.
We have $\dim S_{19/2}^+(\Gamma_0^{(1)}(4)) = \dim S_{17/2}^+(\Gamma_0^{(1)}(4)) = 1$.
Let $g \in S_{19/2}^+(\Gamma_0^{(1)}(4))$ and $h \in S_{17/2}^+(\Gamma_0^{(1)}(4))$
be Hecke eigenforms such that the Fourier coefficients satisfy $A(3;g) = A(1;h) = 1$.
We remark that all Fourier coefficients of $g$ and $h$ are real numbers.
Let $K(M,m)$ be the number defined in (\ref{eq:Fhgg2}),
where $F \in S_{19/2}^+(\Gamma_0^{(3)}(4))$ is the Siegel modular form constructed from $h$.
Because $\dim S_{19/2}^+(\Gamma_0^{(1)}(4)) = 1$, we need to check $K(M,m) \neq 0$
for a pair $(M,m) \in Sym_{2n-2}^+ \times Sym_1^+$.
We take $M = \begin{pmatrix}  3&1\\1&3 \end{pmatrix}$ and $m = 3$, then
\begin{eqnarray*}
  K(M,m) &=&
  C_3\!\left(M,\begin{pmatrix} 2 \\ 2 \end{pmatrix} \right)
+   C_3\!\left(M,\begin{pmatrix} 2 \\ -2 \end{pmatrix} \right)
+   C_3\!\left(M,\begin{pmatrix} -2 \\ 2 \end{pmatrix} \right)
+   C_3\!\left(M,\begin{pmatrix} -2 \\ -2 \end{pmatrix} \right)\\
&=&
  -336 -168 -168 -336 \\
 & \neq &
 0.
\end{eqnarray*}
Therefore $\mathcal{F}_{h,g} \not \equiv 0$ for $(n,k) = (2,10)$.

Similarly, by using a computer algebraic system,
we can also check $\mathcal{F}_{h,g} \not \equiv 0$ for any $h$ and $g$
for other $(n,k)$ in the lemma.
\end{proof}

\ \\

\begin{flushleft}
S.  Hayashida\\
Department of Mathematics, Joetsu University of Education,\\
1 Yamayashikimachi, Joetsu, Niigata 943-8512, JAPAN\\
e-mail hayasida@juen.ac.jp
\end{flushleft}

\end{document}